\DeclareSymbolFont{rsfs}{U}{rsfs}{m}{n}
\DeclareSymbolFontAlphabet{\mathscrsfs}{rsfs}
\newtheorem{theorem}{Theorem}
\newtheorem{definition}[theorem]{Definition}
\newtheoremstyle{myremark} % name
    {\topsep}                    % Space above
    {\topsep}                    % Space below
    {\rm}                        % Body font
    {}                           % Indent amount
    {\bf}                        % Theorem head font
    {.}                          % Punctuation after theorem head
    {.5em}                       % Space after theorem head
    {}  % Theorem head spec (can be left empty, meaning normal)
\newtheorem{lemma}[theorem]{Lemma}
\newtheorem{corollary}[theorem]{Corollary}
\newtheorem{proposition}[theorem]{Proposition}
\newlength{\dhatheight}
\newcommand{\doublehat}[1]{%
    \settoheight{\dhatheight}{\ensuremath{\hat{#1}}}%
    \addtolength{\dhatheight}{-0.35ex}%
    \hat{\vphantom{\rule{1pt}{\dhatheight}}%
    \smash{\hat{#1}}}}
\theoremstyle{myremark}
\newtheorem{remark}{Remark}[section]
\renewcommand{\P}{\operatorname{\mathbb{P}}}
\newcommand{\Q}{\operatorname{\mathbb{Q}}}
\newcommand{\E}{\operatorname{\mathbb{E}}}
\newcommand{\Z}{\mathbb{Z}}
\newcommand{\R}{\mathbb{R}}
\newcommand{\T}{T}
\newcommand{\D}{\boldsymbol{D}}
\newcommand{\Dm}{\D(\m)}
\newcommand{\F}{\mathcal{F}}
\def\sH{{\mathscr{H}}}
\newcommand{\rmd}{\mathrm{d}}
\DeclareMathOperator{\cov}{\mathrm{cov}}
\renewcommand{\hat}{\widehat}
\DeclareMathOperator*{\argmin}{arg\,min}
\newcommand{\op}{\mbox{\tiny\rm op}}
\newcommand{\bA}{\bm{A}}
\newcommand{\bB}{\bm{B}}
\newcommand{\bI}{\bm{I}}
\newcommand{\bY}{\bm{Y}}
\newcommand{\bW}{\bm{W}}
\newcommand{\normal}{\mathcal N}
\newcommand{\cF}{{\mathcal F}}
\newcommand{\wtSigma}{\widetilde{\Sigma}}
\newcommand{\wtW}{\widetilde{W}}
\newcommand{\wtbW}{\widetilde{\boldsymbol W}}
\newcommand{\wtsigma}{\widetilde{\sigma}}
\newcommand{\wtgamma}{\widetilde{\gamma}}
\newcommand{\wtbeta}{\widetilde{\beta}}
\newcommand{\wtbz}{\widetilde{\bz}}
\newcommand{\wtbu}{\widetilde{\bu}}
\newcommand{\wtbv}{\widetilde{\bv}}
\newcommand{\wtm}{\widetilde{\m}}
\newcommand{\Unif}{{\sf Unif}}
\newcommand{\onu}{\overline{\nu}}
\newcommand{\spher}{\mbox{\rm\tiny spher}}
\newcommand{\salg}{\mbox{\rm\tiny alg}}
\newcommand{\sdyn}{\mbox{\rm\tiny dyn}}
\newcommand{\sTAP}{\mbox{\rm\tiny TAP}}
\newcommand{\ons}{{\rm ONS}}
\newcommand{\eps}{\varepsilon}
\newcommand{\n}{\boldsymbol{n}}
\newcommand{\kalg}{k_{\salg}}
\DeclareMathOperator{\atanh}{arctanh}
\newcommand{\Dh}{D_{-\bh}}
\newcommand{\pl}{{\mbox{\rm\tiny pl}}}
\newcommand{\rd}{{\mbox{\rm\tiny rd}}}
\def\hq{\hat{q}}
\def\GOE{{\sf GOE}}
\def\info{{\sf I}}
\def\fS{{\mathfrak{S}}}
\def\bc{{\boldsymbol c}}
\def\RS{{\rm RS}}
\def\bgamma{{\boldsymbol \gamma}}
\def\bG{{\boldsymbol{G}}}
\def\obG{{\overline{\boldsymbol{G}}}}
\def\bg{{\boldsymbol{G}}}
\def\bSigma{{\boldsymbol{\Sigma}}}
\def\bD{{\boldsymbol{D}}}
\def\bu{{\boldsymbol{u}}}
\def\bv{{\boldsymbol{v}}}
\def\bw{{\boldsymbol{w}}}
\def\bx{{\boldsymbol{x}}}
\def\by{{\boldsymbol{y}}}
\def\hby{\hat{\boldsymbol y}}
\def\bz{{\boldsymbol{z}}}
\def\hbz{\hat{\boldsymbol{z}}}
\def\bh{h}
\def\m{{\boldsymbol{m}}}
\def\g{{\boldsymbol{g}}}
\def\hm{\hat{\boldsymbol{m}}}
\def\b0{{\boldsymbol{0}}}
\def\bfone{{\boldsymbol{1}}}
\def\normal{{\sf N}}
\def\AMP{{\sf AMP}}
\def\NGD{{\sf NGD}}
\def\tG{\widetilde{G}}
\def\tF{\widetilde{F}}
\def\bfe{{\boldsymbol e}}
\def\sNGD{\mbox{\tiny \sf NGD}}
\def\sAMP{\mbox{\tiny \sf AMP}}
\def\ALG{{\sf ALG}}
\def\sNGD{\mbox{\tiny \sf NGD}}
\def\sABXY{\mbox{\tiny \rm ABXY}}
\def\sEA{\mbox{\tiny \rm EA}}
\def\ssh{\mbox{\tiny \rm sh}}
\def\Lip{{\rm Lip}}
\def\MSE{{\sf MSE}}
\def\Par{{\sf P}}
\def\ed{\stackrel{{\rm d}}{=}}
\def\cuP{\mathscrsfs{P}}
\def\cuF{\mathscrsfs{F}}
\def\cuG{\mathscrsfs{G}}
\def\cL{{\mathcal L}}
\def\obeta{\bar{\beta}}
\def\oxi{\hat{\xi}}
\def\Treg{{\Gamma}}
\def\hcuF{\mathscrsfs{F}}
\def\Cov{{\rm Cov}}
\def\oH{\bar{H}}
\def\<{{\langle}}
\def\>{{\rangle}}
\def\de{{\rm d}}
\def\cC{{\mathcal C}}
\def\bX{{\boldsymbol{X}}}
\def\bY{{\boldsymbol{Y}}}
\def\sb{{\sf b}}
\DeclareMathOperator*{\plim}{p-lim}
\def\id{{\boldsymbol I}}
\def\Ball{{\sf B}}
\def\sT{{\sf T}}
\def\bfzero{{\boldsymbol 0}}
\def\lt{\left}
\def\rt{\right}
\def\la{\langle}
\def\ra{\rangle}
\def\sH{{\mathscr{H}}}
\newcommand{\Gp}[1]{{\boldsymbol G}^{(#1)}}
\def\bbE{{\mathbb{E}}}
\def\bbR{{\mathbb{R}}}
\def\diam{{\rm diam}}
\def\cC{{\mathcal C}}
\def\sep{\mathrm{sep}}
\numberwithin{equation}{section}
\numberwithin{theorem}{section}
\begin{document}

\title{Sampling from Mean-Field Gibbs Measures via\\ Diffusion Processes}

\author{Ahmed El Alaoui\thanks{Department of Statistics and Data Science, Cornell University}, 
\;\; Andrea Montanari\thanks{Department of Statistics and Department of Mathematics, 
Stanford University}, \;\; Mark Sellke\thanks{Department of Statistics, Harvard University}}

\date{}
\maketitle

\begin{abstract}
We consider Ising mixed $p$-spin glasses at high-temperature and without external field, and study the problem of sampling from the Gibbs distribution 
$\mu$ in polynomial time. We develop a new sampling 
algorithm with complexity of the same order as evaluating the gradient of the Hamiltonian
and, in particular, at most linear in the input size.
We prove that, at sufficiently high-temperature, it produces samples from a distribution $\mu^{\salg}$
which is close in normalized Wasserstein distance to $\mu$.
Namely, there exists a coupling of $\mu$ and  $\mu^{\salg}$ such that
if $(\bx,\bx^{\salg})\in\{-1,+1\}^n\times \{-1,+1\}^n$ is a pair drawn from this coupling,
then $n^{-1}\E\{\|\bx-\bx^{\salg}\|_2^2\}=o_n(1)$. For the case of 
the Sherrington-Kirkpatrick model, our algorithm succeeds in the full replica-symmetric phase. 
Previously, \cite{adhikari2022spectral,anari2023universality} showed that  Glauber dynamics 
succeeds in sampling under a stronger assumption on the temperature. (However, these works
prove sampling in total variation distance.)

We complement this result with a negative one for sampling algorithms satisfying a certain 
`stability' property, which is verified by many standard techniques.
 No stable algorithm can approximately sample at temperatures below the onset of 
shattering, even under the normalized Wasserstein metric.
Further, no algorithm can sample at temperatures below the onset of replica symmetry breaking.

Our sampling method implements a discretized version of a diffusion process
that has become recently popular in machine learning under the name of `denoising diffusion.'
We derive the same process from the general construction of stochastic localization. 
Implementing the diffusion process requires to efficiently
approximate the mean of  the tilted measure. To this end, we use an approximate message passing 
algorithm that, as we prove, achieves sufficiently accurate mean estimation.  
\end{abstract}

\tableofcontents

\section{Introduction}

\subsection{Background: Sampling from spin glass Gibbs measures}

Fix an integer $P \ge 2$ and a sequence $(c_p)_{2\leq p\leq P}$ 
 with $c_p\ge 0$. For each $2\leq p\leq P$, let
 $\Gp{p} \in \lt(\bbR^n\rt)^{\otimes p}$ be an independent $p$-tensor with i.i.d. 
 $\normal(0,1)$ entries and $\bg=(\Gp{p})_{2\leq p\leq P}$. 
 The mixed $p$-spin Hamiltonian $H_n$ is the function $H_n:\R^n\to\R$
 defined by
\begin{equation}
  \label{eq:def-hamiltonian}
  H_n(\bx) =\sum_{2\leq p\leq P} \frac{c_p}{n^{(p-1)/2}} \,\la \Gp{p}, \bx^{\otimes p} \ra\, .
\end{equation}
Setting
\[
  \xi(t)=\sum_{p=2}^P c_p^2 t^p\, ,
\]
$H_n$ is equivalently characterized as the centered Gaussian process on $\bbR^n$ with covariance
\[
  \bbE\big[H_n(\bx)H_n(\bx')\big] 
  =
  n\xi(\langle \bx,\bx'\rangle/n)\, .
\]
The associated (Ising) Gibbs measure 
is the probability distribution over $\{-1,+1\}^n$ given by 
\begin{align}
\label{eq:sk}
  \mu_{\bg}(\bx) = \frac{1}{Z(\beta,\bG)}\, e^{\beta H_n(\bx)} \, ,\;\;\;\;
  \bx\in\{+1,-1\}^n\, ,
\end{align}  
where $\beta \ge  0$ is referred to as the  inverse temperature. The parameter $\beta$ 
is fixed and we will leave implicit the dependence of $\mu$ upon $\beta$,
 unless mentioned otherwise. We let $\sH_n$ denote the space of tensor sequences 
 $\bg=(\Gp{p})_{2\leq p\leq P}$, $\Gp{p}\in(\bbR^n)^{\otimes p}$.
 The case $c_2\neq 0$, $c_p = 0$ for all $p>2$ is also known as
  Sherrington-Kirkpatrick (SK) model, and has played a special historical
  role in the theory of mean-field spin glasses \cite{sherrington1975solvable}.

The variant of model \eqref{eq:sk}, in which the uniform measure over
$\{-1,+1\}^n$ is replaced by the uniform measure over the sphere of radius $\sqrt{n}$, 
is also known as the `spherical $p$-spin model.' Our treatment will be focused on the Ising
case here, generalizing it to the spherical one is straightforward.
While model \eqref{eq:def-hamiltonian} might appear somewhat exotic to
the reader encountering it for the first time, it is the prototype of large
class of high-dimensional probability measures with a complex structure,
including examples from optimization, high dimensional statistics, machine learning 
\cite{MezardMontanari,montanari2022short}.

In this paper, we consider the problem of efficiently sampling from the measure \eqref{eq:sk}. 
We seek a 
randomized algorithm that accepts as input the sequence $\bg=(\Gp{p})_{2\leq p\leq P}$ and 
generates $\bx^{\salg}\sim \mu^{\salg}_{\bg}$,
such that: 
\begin{enumerate}
  \item The algorithm runs in polynomial time for any $\bg$.
  \item The distribution  $\mu^{\salg}_{\bg}$ is close to $\mu_{\bg}$ for 
typical realizations of $\bG$, in the sense that 
\[
  \E\big[{\sf dist}(\mu_{\bg},\mu^{\salg}_{\bg})\big]=o_n(1) \, ,
\] 
for some choice of distance ${\sf dist}$ on probability measures.
\end{enumerate}

Gibbs sampling, also known in this context as Glauber dynamics, provides
an algorithm to approximately sample from $\mu_{\bg}$. However, standard techniques to 
bound its mixing time (e.g., Dobrushin condition \cite{aizenman1987rapid})
only imply polynomial mixing for a vanishing interval of temperatures
$\beta=O(n^{-1/2})$.
By contrast, for more than forty years physicists \cite{sompolinsky1981dynamic,SpinGlass,kirkpatrick1987connections,
cugliandolo1993analytical,bouchaud1998out} have conjectured
fast convergence to equilibrium for a non-vanishing interval of temperatures.
Namely, at least for certain observables, they predict convergence to 
within a $\eps$-error from equilibrium values 
(Gibbs averages) in a physical time of order one (i.e., $O(n)$ Glauber updates) for all 
$\beta<\beta_{\sdyn}(\xi)$.
The inverse temperature  $\beta_{\sdyn}(\xi)$ is strictly positive and known as 
the inverse critical temperature for the dynamical phase transition. 

The location of $\beta_{\sdyn}(\xi)$ depends on the nature of the so-called 
replica symmetry breaking (RSB) phase transition, occurring at $\beta_c(\xi)$
(see \cite{panchenko2013sherrington} for a rigorous introduction to RSB; the reader unfamiliar
with this notion can safely skip the next paragraph).
Two scenarios are considered in the physics literature:
\begin{itemize}
\item In the first scenario, the phase transition at $\beta_c(\xi)$ is `continuous.'
Namely, denoting by $q_{\sEA}(\beta)$ the supremum of the overlap distribution, we have 
$q_{\sEA}(\beta)\downarrow 0$ as $\beta\downarrow \beta_c(\xi)$. 
In this case, it is predicted that $\beta_{\sdyn}(\xi) = \beta_c(\xi)$.
This is expected to be the case for the SK model and small perturbations of it.
In this case $\beta_{\sdyn}(\xi) = \beta_c(\xi) = 1/\xi''(0)$. 
\item In the  second scenario, the phase transition at $\beta_c(\xi)$ is `discontinuous.'
Namely, we have 
$\lim_{\beta\downarrow\beta_c}q_{\sEA}(\beta)> 0$ strictly. 
In this case, it is predicted that $\beta_{\sdyn}(\xi) <\beta_c(\xi)$ strictly,
and that $\beta_{\sdyn}(\xi)$ coincides with the temperature
for onset of shattering $\beta_{\ssh}(\xi)$ (see \cite{alaoui2023shattering,gamarnik2023shattering} for
recent references on the latter).

This scenario is expected to hold for `pure' $p$-spin models, i.e., if $\xi(t)= \xi_p(t) := t^p$,
$p\ge 3$. 
We provide the explicit (conjectural) formula for $\beta_{\sdyn}(\xi)$
in Appendix \ref{app:prelim}.
For large $p$, this formula yields 
\begin{align}
\beta_{\sdyn}(\xi_p) =\sqrt{\frac{2\log p}{p}}\cdot \big(1+o_p(1)\big)\, ,
\end{align}
while $\beta_c(\xi_p) = \sqrt{2\log 2} \cdot (1+o_p(1))$, a fact first conjectured in \cite{gross1984simplest}. 
\end{itemize}
A quite general argument shows that mixing of Glauber dynamics (or Langevin 
dynamics for the spherical model) must be slow beyond the shattering phase transition 
$\beta>\beta_{\ssh}(\xi)$
(see, e.g., \cite[Appendix D]{montanari2006rigorous} or \cite{ben2018spectral}).

Significant progress on the sampling question was achieved only recently.
For the SK model,  Bauerschmidt and Bodineau \cite{bauerschmidt2019very} showed that, 
for $\beta<1/4$, the measure 
$\mu_{\bg}$ can be decomposed into a log-concave mixture of product measures.
They use this decomposition to prove that $\mu_{\bg}$ satisfies a log-Sobolev inequality, although
not for the Dirichlet form of Glauber 
dynamics\footnote{We note in passing that their result immediately suggests a sampling algorithm:
 sample from the log-concave mixture using Langevin dynamics, and then sample from the corresponding
 component using the product form.}.
Eldan, Koehler, Zeitouni \cite{eldan2021spectral} prove that, in the same region $\beta<1/4$,
 $\mu_{\bg}$ satisfies a Poincar\'e inequality for the Dirichlet form of Glauber dynamics. 
 As a consequence,  Glauber dynamics mixes in $O(n^2)$ spin flips in total variation distance. 
 This mixing time estimate was improved to $O(n\log n)$ by \cite{anari2021entropic} 
 using a modified log Sobolev inequality, see also \cite[Corollary 51]{chen2022localization}.
 The aforementioned results apply
 deterministically to any matrix $\bG^{(2)}$ satisfying $\beta(\lambda_{\max}(\bG^{(2)})-\lambda_{\min}(\bG^{(2)}))
 \le 1-\eps$ but are specific to quadratic Hamiltonians. 
 
 Even more recently, \cite{adhikari2022spectral} showed that a general $p$-spin models 
  also obey a Poincar\'e inequality for Glauber dynamics for $\beta\le \beta_{\sABXY}(\xi)$,
 by an induction over $n$. 
 However, the constant $\beta_{\sABXY}(\xi)$ is significantly smaller than the predicted threshold
 $\beta_{\sdyn}(\xi)$. In particular, for the pure $p$-spin model and large $p$, we have
 \begin{align}
 \beta_{\sABXY}(\xi_p) \asymp \frac{1}{\sqrt{p^3\log p}}\, .
 \end{align}
 A modified log-Sobolev inequality in the same regime of temperatures was proven by different techniques in
 \cite{anari2023universality}.
 
For \emph{spherical} spin glasses, it is shown in \cite{gheissari2019spectral} that Langevin dynamics
 has a polynomial spectral gap at high temperature. Meanwhile
  \cite{ben2018spectral} proves that at sufficiently low temperature, the mixing times of 
  Glauber and Langevin dynamics are exponentially large in Ising and spherical spin glasses, 
  respectively.

\subsection{Background: Sampling via diffusion processes}

In this paper we develop a different approach which is not based on a 
Monte Carlo Markov Chain strategy. We build on the well known remark that 
approximate sampling can be reduced to approximate computation of expectations of 
the measure $\mu_{\bg}$, and of a family of measures obtained from $\mu_{\bg}$.
One well known method to achieve this reduction is via sequential sampling 
\cite{jerrum1986random,chen2005sequential,blitzstein2011sequential}. 
A sequential sampling approach to $\mu_{\bg}$ would proceed as follows.
Order the variables $x_1,\dots, x_n\in \{-1,+1\}$ arbitrarily. At step $i$ compute
the marginal distribution of $x_i$, conditional to 
$x_1,\dots,x_{i-1}$ taking the previously chosen values:
$p^{(i)}_s := \mu_{\bg}(x_i=s|x_1,\dots,x_{i-1})$, $s\in\{-1,+1\}$. 
Fix $x_i=+1$ with probability $p^{(i)}_{+1}$ and  $x_i=-1$ with probability $p^{(i)}_{-1}$. 

We follow a different route, which is similar in spirit, but 
more convenient technically.
Our reduction was originally motivated by the stochastic localization technique of
\cite{eldan2020taming} but is in fact equivalent to the denoising diffusion method
of  \cite{sohl2015deep,song2019generative,ho2020denoising,song2021score}. 
Given any probability measure $\mu$ on $\R^n$ with finite second moment, positive time $t>0$,
and vector $\by \in \R^n$, define the tilted measure
\begin{equation}\label{eq:sktilted}
\mu_{\by,t}(\de \bx) :=\frac{1}{Z(\by)} e^{\<\by,\bx\>-\frac{t}{2}\|\bx\|_2^2}\, \mu(\de\bx )\, ,
\end{equation}  
and let its mean vector be
\begin{equation}\label{eq:meanvec}
\m(\by,t) := \int_{\R^n}  \bx \, \mu_{\by,t}(\de\bx)\,.
\end{equation}  
Consider the stochastic differential 
equation\footnote{If $\mu$ is has finite variance, then $\by \to \m(\by,t)$ is Lipschitz 
and so this  SDE is well posed with unique strong solution.} (SDE)
\begin{equation}\label{eq:GeneralSDE}
\rmd \by(t) = \m(\by(t),t) \rmd t + \rmd \bB(t),~~~~ \by(0)=0 \, ,
\end{equation}  
where $(\bB(t))_{t \ge 0}$ is a standard Brownian motion in $\R^n$. 
Then,  the measure-valued process 
$(\mu_{\by(t),t})_{t \ge 0}$ is a martingale
and (almost surely) $\mu_{\by(t),t}\Rightarrow \delta_{\bx^\star}$ as $t\to\infty$,
for some random $\bx^{\star}$
(i.e., the measure localizes). As a consequence of the martingale property, 
$\E[\int \varphi(\bx)\mu_{\by(t),t}(\de\bx)]$  is a constant for any 
bounded continuous function $\varphi$, whence
$\E[\varphi(\bx^{\star})] = \int \varphi(\bx)\mu(\de\bx)$. In other words, $\bx^{\star}$
is a sample from $\mu$.

An equivalent definition of this process is obtained by noting that
there exists a $\bx\sim \mu$, independent of a (different) Brownian motion $(\bB'(t))_{t\ge 0}$
such that
\begin{align}
\by(t) = t\bx+\bB'(t)\, ,
\end{align}
and $\mu_{\by,t}$ is nothing but the conditional distribution of $\bx$ given $\by(t)=\by$
\cite{el2022information}.
 For further information on this process, we refer to Section~\ref{sec:stochloc}.

In order to use this process as an algorithm to sample from the Gibbs measure
$\mu=\mu_{\bg}$, we need to overcome two problems:
\begin{itemize}
\item \emph{Mean computation.} We need to be able to compute the mean vector 
$\m(\by,t)$ efficiently. To this end, we use an approximate message passing (AMP) algorithm
for which we can   establish that
$\|\m(\by)-\hm_{\AMP}(\by)\|^2_2/n=o_n(1)$ along the algorithm trajectory.
(Note that the Gibbs measure is supported on vectors with $\|\bx\|_2^2=n$, and
hence the quadratic component of the tilt in Eq.~\eqref{eq:sktilted} drops out.
We will therefore write $\m(\by)$ or $\m(\bG,\by)$ instead of $\m(\by,t)$ for the
 mean of the Gibbs measure, and similarly for the mean approximation $\hm(\by)$.)
\item \emph{Discretization.} We need to discretize the SDE \eqref{eq:GeneralSDE} in time,
and still guarantee that the discretization closely tracks the original process.  
This is of course possible only if the approximate mean estimation 
map $\by\mapsto \hm(\by)$ is sufficiently regular. In fact, we will prove 
that the approximation we construct is Lipschitz continuous, with Lipschitz constant 
bounded as $n\to\infty$.
\end{itemize}
\begin{remark}
To the best of our knowledge, 
the idea of an algorithmic implementation of the stochastic localization process
was first presented in the conference version of this paper \cite{alaoui2022sampling}.
The present paper generalizes the approach of \cite{alaoui2022sampling}
to other mean-field spin glasses beyond the Sherrington-Kirkpatrick model.

After the conference publication \cite{alaoui2022sampling}, it became apparent 
that the latter (in its most standard form
 used here) is equivalent to the `denoising diffusions' method in machine learning.
 We refer to \cite{montanari2023sampling} for a discussion of the connection and 
 some generalizations.
 
A recent paper by
 Nam, Sly and Zhang~\cite{nam2022ising} uses the same process
  to show that the Ising measure on the infinite $k$-regular tree is a factor of IID,
  for $\beta\le C/\sqrt{k}$. This temperature threshold is particularly significant because
  it is within a constant factor from the Kesten--Stigum, or ``reconstruction", 
   threshold. Their construction can easily be transformed into a sampling algorithm.       
\end{remark}

\subsection{Summary of results}

In order to state our results, we define the normalized 2-Wasserstein 
distance between two probability 
measures $\mu , \nu$ on $\R^n$ with finite second moments as
\begin{equation}  
W_{2,n}(\mu,\nu)^2 =  \inf_{\pi \in \cC(\mu,\nu)} \frac{1}{n} 
\E_{\pi} \Big[\big\|\bX - \bY\big\|_2^2\Big] \, , 
\end{equation}   
where the infimum is over all couplings $(\bX,\bY) \sim \pi$ with marginals
 $\bX \sim \mu$ and 
$\bY \sim \nu$.  

In this paper, we establish two main results.
\begin{description}
\item[Sampling algorithm for $\beta<\bar\beta(\xi)$.] We prove that the strategy outlined above 
yields an algorithm which makes $O(1)$ queries to $\nabla H_n$ and samples from a distribution $\mu_{\bg}^{\salg}$ such that 
$W_{2,n}(\mu_{\bg}^{\salg},\mu_{\bg}) = o_{\mathbb P,n}(1)$. The complexity of 
computing $\nabla H_n$ is $O(n^P)$, i.e. linear in the size of the input. 

In the case of the SK model, we have $\bar\beta(\xi)=1/(2\xi''(0))$, which is a factor $2$
away from the conjectured critical temperature. For our other running example, the pure
$p$-spin model, we get $\bar\beta(\xi_p)\asymp 1/p$, which improves over state of the art
 \cite{adhikari2022spectral,anari2023universality}.
(However, the $W_{2,n}$ sampling guarantee is weaker than the guarantees of 
 \cite{adhikari2022spectral,anari2023universality}.)
 More interestingly, our approach is based on a very different algorithmic technique.
\item[Hardness for stable algorithms for $\beta>\beta_{\sdyn}(\xi)$.] We prove that no algorithm 
satisfying a certain \emph{stability}
property can sample from the Gibbs measure (under the
same criterion $W_{2,n}(\mu_{\bg}^{\salg},\mu_{\bg}) = o_{\mathbb P,n}(1)$) under
either of the following conditions: $(i)$~When replica symmetry is broken,
i.e., for $\beta>\beta_c(\xi)$; $(ii)$~In the shattering phase, which is conjectured to hold for 
discontinuous models when  $\beta\in (\beta_{\sdyn}(\xi),\beta_c(\xi))$.

 Roughly speaking, stability formalizes
the notion that the algorithm output 
behaves continuously with respect to the disorder $\bG$. 
\end{description}
Our hardness results  are proven using the notion of 
disorder chaos. This is analogous to the use of the \emph{overlap gap property} for
random optimization, estimation, and constraint satisfaction 
problems~\cite{gamarnik2014limits, rahman2017independent, gamarnik2017performance, 
chen2019suboptimality, gamarnik2019overlap, gamarnik2020optimization, wein2020independent,
 gamarnik2021partitioning, bresler2021ksat, gamarnik2021circuit, huang2021tight}. 
 While the overlap gap property has been used to rule out stable algorithms for this class of 
 problems, and variants have been used to rule out efficient sampling by specific Markov chain 
 algorithms, to the best of our knowledge this is the first argument ruling out stable sampling algorithms 
 using  disorder chaos. 
 
 In sampling there is no hidden solution or set of solutions to be found, 
 and therefore overlap gap arguments are not directly applicable. Instead, we argue directly 
 that the distribution to be sampled from is unstable in a $W_{2,n}$ sense at low temperature,
  and hence cannot be approximated by any stable algorithm.

\paragraph{Conference version.} A conference version of this paper was presented at
FOCS 2022 \cite{el2022sampling}, and ony only treated the case of the SK model. This version extends
\cite{el2022sampling} by treating general mixed $p$-spin models.

\subsection{Notations}

We use the standard big-Oh notation $O_n(\,\cdot\,)$, $o_n(\,\cdot\,)$ and so on, where
the subscript indicates the variable $n\to\infty$.
 We use $o_{n,\P}(1)$ 
for a quantity tending to $0$ in probability. If $X$ is a random variable, 
then $\mathcal L(X)$ indicates its law. The quantity $C(\beta)$ refers to a constant depending 
on $\beta$. For $\bx\in\R^n$ and $\rho\in\R_{\geq 0}$, we denote the open ball 
of center $\bx$ and radius $\rho$ by 
$\Ball^n(\bx;\rho):=\{\by\in \R^n: \|\by-\bx\|_2<\rho\}$. 
When the center is the origin, we will use the shorthand 
$\Ball^n(\rho) = \Ball^n(\bfzero;\rho)$.
The set of probability distributions over a measurable space $(\Omega,\cF)$
is denoted by $\cuP(\Omega)$. 
For a sequence of random variables $(X_n)$, we write $\plim_{n \to \infty} X_n = c$ if $X_n$ converges to the constant $c$ in probability. 

%{\color{red}
%We say the sequence $(E_n)$ of events holds with exponentially good probability or is exponentially likely if for some $c$ and $n_0$,
%\begin{equation}
%\label{eq:exp-prob}
%  \bbP[E_n]\geq 1-e^{-cn},\quad \forall~n\geq n_0.
%\end{equation} 
%We let 
%\[
%  \|\bG^{(p)}\|_{op}
%  =
%  \sup_{\|\bu\|_2\leq 1}
%  \langle \bG^{(p)},\bu^{\otimes p}\rangle
%\]
%be the tensor injective norm, and $\|\bG\|_{op}=\sum_{p=2}^P \|\bG^{(p)}\|_{op}$. By \cite{chen2013aizenman}, the limit $E_p=\plim_{n\to\infty} \bG^{(p)}$ exists. A simple concentration bound implies that 
%\begin{equation}
%\label{eq:GopC}
%  \|\bG\|_{op}\leq C
%\end{equation}
%is exponentially likely for some constant $C$.
%}

\section{Main results: I. Sampling algorithm for $\beta$ small enough}
\label{sec:Sampling} 

In this section we describe the sampling algorithm and formally
state the result of our analysis. As pointed out in the introduction, 
a main component of the algorithm is a procedure to approximate
the mean of the tilted Gibbs measure:
\begin{align}
\label{eq:mu_tilted}
  \mu_{\bG,\by}(\bx)
  :=
  \frac{1}{Z(\bG,\by)} e^{\beta H_n(\bx)+\langle \by,\bx\rangle},
  \quad \bx\in\{-1,+1\}^n\, .
\end{align}
We describe the algorithm to approximate this mean in Section 
\ref{sec:MeanApprox}, the overall sampling procedure 
(which uses this estimator as a subroutine) in Section \ref{sec:SamplingAlg},
and our Wasserstein-distance guarantee in  Section \ref{sec:SamplingTheorem}.

\subsection{Approximating the mean of the Gibbs measure}
\label{sec:MeanApprox}

\begin{algorithm}
\label{alg:Mean}
\DontPrintSemicolon % Some LaTeX compilers require you to use \dontprintsemicolon instead
\KwIn{Data $\bG\in\sH_n$, $\by\in \mathbb R^n$, parameters $\beta,\eta,\Treg>0$, $q\in (0,1)$, iteration numbers 
$K_{\sAMP}$, $K_{\sNGD}$.}
$\hm^{-1} = \bz^{0}= 0$,\\
\For{$k = 0,\cdots,K_{\sAMP}-1$} { 
$\hm^{k} = \tanh(\bz^{k} ) \, , ~~~~~~~ \hat{q}^k = \frac{1}{n}\sum_{i=1}^n \tanh^2(z^{k}_i) \, ,~~~~~~~\sb_{k}= \beta^2 
(1-\hat{q}^k) \xi''( \hat{q}^k)$\, ,\\
$\bz^{k+1} = \beta \nabla H_n
    \big(\hm^{k}\big) + \by - \sb_{k} \hm^{k-1}$\, , \label{eq:AMP-main-step}\\
}
$\bu^0 = \bz^{K_{\sAMP}}$, $\hm^{+,0} = \hm^{K_{\sAMP}}$, \label{alg:NGD-begin}\\
\For{$k = 0,\cdots,K_{\sNGD}-1$} { 
$\bu^{k+1} = \bu^k - \eta \cdot\nabla \widehat{\cuF}_{\sTAP}(\hm^{+,k};\by,q)$,   \\
$\hm^{+,k+1} = \tanh(\bu^{k+1})$,
}
\Return{$\hm^{+,K_{\sNGD}}$}\label{alg:NGD-end}\\
\caption{{\sc Mean of the tilted Gibbs measure}}
\end{algorithm}

We will denote our approximation of the mean of the Gibbs measure 
$\mu_{\bG,\by}$ by $\hm(\bG,\by)$, while the actual mean will be $\m(\bG,\by)$.

 The algorithm to compute
$\hm(\bG,\by)$ is given in Algorithm \ref{alg:Mean}, and is composed of two phases: 
\begin{enumerate}
\item An Approximate Message Passing (AMP) 
algorithm is run for $K_{\sAMP}$ iterations and constructs a first estimate  of
the mean. 
We denote by $\AMP(\bG,\by ; k)$ the estimate produced after $k$
AMP iterations
\begin{equation} 
\label{eq:AMP}
\AMP(\bG,\by ; k) := \hm^{k}\,.
\end{equation}
\item Natural gradient descent (NGD) is run for $K_{\sNGD}$ iterations
with initialization given by vector computed at the end of the first phase.
This phase attempts to minimize a modified version of the Thouless--Anderson--Palmer (TAP) free energy.
 The usual TAP free energy~\cite{thouless1977solution,talagrand2011mean1}  takes the form 
\begin{align}\label{eq:TAP}
\cuF_{\sTAP}(\m ; \by) &:= 
  -\beta H_n(\m) - \langle \by , \m \rangle - \sum_{i=1}^n h(m_i) - \ons\big(Q(\m)\big) \, ,\\
 \ons\big(Q\big) &:= \frac{\beta^2n}{2}\Big( \xi(1) - \xi(Q) - (1-Q)\xi'(Q) \Big)\, ,\\
  Q(\m) 
  &= 
  \frac{1}{n} \|\m\|^2 \, ,~~~\mbox{and}~~~ h(m) = -\frac{1+m}{2}\log \left(\frac{1+m}{2}\right) - \frac{1-m}{2}\log \left(\frac{1-m}{2}\right)  \, .
\end{align}
Our modified version replaces $\ons\big(Q(\m)\big)$ by a quadratic function
of $Q(\m)$, namely $\ons(q) + \ons'(q)(Q(\m)-q)+
n\Treg\beta(Q(\m)-q)^2/8$, where $q$, $\Treg$ are
 appropriately chosen constants (see line~\ref{line:q} for Algorithm~\ref{alg:Sampling}):
\begin{equation}\label{eq:TAP_reg}
% \wh{\cuF}_{\sTAP}(\m ; \by, q) :=   \cuF_{\sTAP}(\m ; \by) - \frac{\beta^2n}{4} \Big(-\xi''(q)+(1-q)\xi'''(q)\Big)\Big(Q(\m)-q\Big)^2 \, ,
 \widehat{\cuF}_{\sTAP}(\m ; \by, q) :=  -\beta H_n(\m) - \langle \by , \m \rangle - \sum_{i=1}^n h(m_i) - \ons(q) - \ons'(q)(Q(\m)-q) 
 +\frac{n\Treg\beta}{8}\big(Q(\m)-q\big)^2\, .
\end{equation}
\end{enumerate}
This procedure is motivated by the objective to find a solution
of  the so-called TAP equations. Indeed  the Gibbs mean of a high-temperature spin glass is known
to be an approximate solution of the TAP equations
 \cite{SpinGlass,talagrand2011mean1}. By construction, 
both fixed points of AMP and stationary points of the 
 function $\widehat{\cuF}_{\sTAP}(\m ; \by, q)$ solve the TAP equations
 (albeit of slightly different versions of these equations).
 
 We will show in Lemma~\ref{lem:TAP-stationary} that the first stage above constructs 
 an approximate stationary point $\hm^{K_{\sAMP}}$ for $\widehat{\cuF}_{\sTAP}(\m ; \by, q)$. 
 Next, we prove that $\widehat{\cuF}_{\sTAP}(\,\cdot\, ; \by, q)$  is
 suitably convex in a neighborhood of $\hm^{K_{\sAMP}}$, 
 cf. Lemma \ref{lem:local-convex} .
As a consequence, the second stage quickly converges to a stationary point (in fact, a local minimum)
of  $\widehat{\cuF}_{\sTAP}(\,\cdot\, ; \by, q)$.
On the other hand, because of the local strong convexity, and the fact that
$\widehat{\cuF}_{\sTAP}(\,\cdot\, ; \by, q)$ depends on $\by$ through a linear function
 of $\m$, the stationary point is Lipschitz continuous in $\by$.
 Hence, the second stage does not reduce significantly the estimation 
 error but allows us to control the error incurred by
 discretizing time in line~\ref{step:DiscreteSDE} of Algorithm, and 
 the accumulation of error across iterations.

Let us emphasize that we use this two-stage algorithms for technical reasons. 
Indeed a simpler algorithm, that runs AMP for a larger number of 
iterations and does not run NGD at all, is expected to work.
However, proving Lipschitz continuity of the resulting estimator is currently 
an open problem.  
The hybrid algorithm above allows us to exploit known properties of AMP 
(precise analysis via state evolution)
and of $\widehat{\cuF}_{\sTAP}(\m ; \by, q)$ (Lipschitz continuity of the minimizer in $\by$). 

The modification of the Onsager term $\ons$ introduced in Eq.~\eqref{eq:TAP_reg} is also 
introduced for technical reasons. Indeed it allows for a simplified analysis of the Hessian
 of the TAP functional, which in turn guarantees the previously mentioned Lipschitz property.
  The free parameter $q$ will be set close to the theoretical prediction
  for
  \begin{align}
  q_*(\beta,t) = \lim_{k\to\infty} \plim_{n\to\infty} Q(\hm^k) \, .\label{eq:LimitQstar}
  \end{align}
  (These limits will be shown to exist as a consequence of the theory developed below.)
Therefore replacing $\cuF_{\sTAP}(\m ; \by)$ by $\widehat{\cuF}_{\sTAP}(\,\cdot\, ; \by, q)$
is expected to have  a small effect on the location of the minimum.

\subsection{Sampling via stochastic localization}
\label{sec:SamplingAlg}

\begin{algorithm}
\label{alg:Sampling}
\DontPrintSemicolon % Some LaTeX compilers require you to use \dontprintsemicolon instead
\KwIn{Data $\bG\in\sH_n$, parameters $(\beta,\eta,K_{\sAMP},K_{\sNGD},L,\delta)$}
$\hby_0=0$,\\
\For{$\ell = 0,\cdots,L-1$} {
Draw $\bw_{\ell+1}\sim\normal(0,\bI_n)$ independent of everything so far;\\
Set $q= q_{*}(\beta,t=\ell\delta)$; \label{line:q}\\
Set $\hm(\bG,\hby_{\ell})$ the output of Algorithm \ref{alg:Mean}, 
with parameters $(\beta,\eta,q,K_{\sAMP},K_{\sNGD})$;\\
Update $\hby_{\ell+1} = \hby_{\ell} + \hm(\bG,\hby_{\ell}) \, \delta + \sqrt{\delta} \, \bw_{\ell+1}$
\label{step:DiscreteSDE}
}
Set $\hm(\bG,\hby_{L})$ the output of Algorithm \ref{alg:Mean}, 
with parameters $(\eta,q,K_{\sAMP},K_{\sNGD})$;\\
Draw $\{x_i^{\salg}\}_{i\le n}$  conditionally independent
with 
$\E[x_i^{\salg}|\by,\{\bw_{\ell}\}] = \widehat{m}_i(\bG,\hby_{L})$\\
\Return{$\bx^{\salg}$}\;
\caption{{\sc Approximate sampling from the Gibbs measure}}
\end{algorithm}

Our sampling algorithm is presented as Algorithm~\ref{alg:Sampling}.
The algorithm makes uses of constants $q_*:=q_*(\beta,t)$
which have the interpretation given in Eq.~\eqref{eq:LimitQstar}.
As explained in Section \ref{sec:AMP}, $q_*(\beta,t)$ can be computed by a simple
recursive characterization, known as `state evolution.' 
For $W\sim \normal(0,1)$ a standard Gaussian,  and $k,\beta,t\ge 0$,
define
\begin{equation}\label{eq:q_k}
  q_{k+1} = \E\Big[\tanh\Big(\sqrt{\beta^2\xi'(q_k)+t}\,W + \beta^2\xi'(q_k)+t \Big)^2\Big]\, ,
  \quad q_0=0\, ,
  \quad
  q_* = \lim_{k\to\infty}q_k\, .
\end{equation}
This iteration can be implemented via a one-dimensional integral,
and the limit $q_*$ is approached exponentially fast in $k$ 
for the values of $\beta$ of interest here (see Lemma \ref{lem:properties} below). 
The values
$q_*(\beta,t=\ell\delta)$ for $\ell\in\{0,\dots,L\}$ can be precomputed
and are independent of the input $\bG$. For the sake of simplicity, we will neglect errors
in this calculation. 

The core of the sampling procedure is step \ref{step:DiscreteSDE},
which is a standard Euler discretization of the SDE \eqref{eq:GeneralSDE},
with step size $\delta$, over the time interval $[0,T]$, $T=L\delta$. The
mean of the Gibbs measure $\m(\bG,\by)$ is replaced by the output 
of Algorithm \ref{alg:Mean} which we recall is denoted by $\hm(\bG,\by)$.
We reproduce the Euler iteration here for future reference
\begin{equation}\label{eq:approx}
\hby_{\ell+1} = \hby_{\ell} + \hm(\bG,\hby_{\ell}) \, \delta + \sqrt{\delta} \, \bw_{\ell+1}\, .
\end{equation}

The output of the iteration is $\hm(\bG,\hby_{L})$, which should be thought of as an approximation of 
$\m(\bG,\by(T))$, $T=L\delta$, which is the mean of $\mu_{\bG,\by(T)}$. 
According to the discussion in the introduction,
for large $T$, $\mu_{\bG,\by(T)}$ concentrates around $\bx^{\star}\sim \mu_{\bg}$. In other words,
$\m(\bG,\by(T))$ is close to the corner $\bx^{\star}$ of the hypercube. We
round its coordinates independently to produce the output $\bx^{\salg}$.

\subsection{Theoretical guarantee}
\label{sec:SamplingTheorem}

Our main positive result is the following.
\begin{theorem}
\label{thm:main}
Define the functions $h(x):=-((1+x)/2)\log((1+x)/2) - ((1+x)/2)\log((1+x)/2)$,
$\psi(\gamma) = \E[\tanh(\gamma+\sqrt{\gamma} \, G)]$ (with expectation with respect to 
$G\sim\normal(0,1)$), and $\phi=\psi^{-1}$ its inverse.
Let $\bar{\beta}(\xi):=\min(\beta_1(\xi),\beta_2(\xi),\beta_3(\xi))>0$, where 
(for a sufficiently small numerical constant $C_0$)
\begin{align}
 \beta_1(\xi)  &:=  \inf_{q\in (0,1)}   \sqrt{\frac{\phi'(q)}{\xi''(q)}} \, ,
 \label{eq:Beta0Def}\\ 
\beta_2(\xi)&:= \sup\Big \{\beta>0: \; \beta^2\xi(q)+h(q)-\log(2)<0 \;\;\forall q\in (0,1)\Big\}\, ,
\label{eq:Beta2Def}\\
\beta_3(\xi)&:= \begin{dcases}
\frac{1}{2\sqrt{\xi''(0)}} & \mbox{ if $\xi(t) = c_2^2t^2$,}\\
\frac{C_0}{\sqrt{\xi''(1)\log\oxi^{(8)}(1)}} & \mbox{ otherwise,}
\end{dcases}
\label{eq:Beta3Def}
\end{align}
 depend uniquely on the mixture polynomial $\xi$ (here
 $\oxi^{(\ell)}(1)  :=\sum_{p=2}^Pc_p^2p^\ell$). 
 Then, for any $\eps>0$ and $\beta< \bar{\beta}(\xi)$,
  there exist 
$\eta,K_{\sAMP},K_{\sNGD},\Treg,\delta$ independent of $n$, so that the following holds.
The sampling algorithm~ takes as input $\bG$  and
parameters  $(\eta,K_{\sAMP},K_{\sNGD},\Treg,\delta)$
and outputs a random point
 $\bx^{\salg} \in \{-1,+1\}^n$ with law $\mu_{\bg}^{\salg}$ such that with probability $1-o_n(1)$ 
 over $\bG$,
\begin{equation}
\label{eq:main}
    W_{2,n}( \mu_{\bg}^{\salg} , \mu_{\bg}^{\phantom{\salg}} ) \leq \eps \, .
\end{equation}    
The total complexity of this algorithm is $O(n^2)$.
\end{theorem}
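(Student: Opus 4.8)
The plan is to show that the discrete process $(\hby_\ell)_{\ell\le L}$ produced by Algorithm~\ref{alg:Sampling} stays uniformly close — in the normalized $\ell_2$ sense, on a high-probability event over $\bG$ — to the idealized stochastic localization trajectory $\by(t)$ solving the SDE~\eqref{eq:GeneralSDE} with the \emph{exact} Gibbs mean $\m(\bG,\by)$, and then to use the martingale/localization property of $(\mu_{\bG,\by(t)})_{t\ge 0}$ to conclude that the terminal law is $W_{2,n}$-close to $\mu_{\bg}$. Concretely, I would pick $T=L\delta$ large enough that the localization error $\E\, W_{2,n}(\mu_{\bG,\by(T)},\mu_{\bg})^2$ (which, by the equivalent representation $\by(t)=t\bx+\bB'(t)$, is controlled by $\E\|\m(\bG,\by(T))-\bx^\star\|_2^2/n$ and decays because $\mu_{\by(t),t}\Rightarrow\delta_{\bx^\star}$) is at most $\eps^2/C$; then rounding coordinates independently contributes only the difference between a product measure with given marginals and $\mu_{\bG,\by(T)}$, which is again small once $\mu_{\bG,\by(T)}$ is nearly a point mass.

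The core estimate is a discrete Grönwall argument on $\Delta_\ell := \|\hby_\ell - \by(\ell\delta)\|_2/\sqrt n$. Writing one Euler step of~\eqref{eq:approx} against the exact increment of~\eqref{eq:GeneralSDE} with the \emph{same} Brownian path (coupling $\bw_{\ell+1}$ to the increment $\bB(\ell\delta+\delta)-\bB(\ell\delta)$), the discrepancy accumulates from three sources: (i) the mean-estimation error $\|\hm(\bG,\hby_\ell)-\m(\bG,\hby_\ell)\|_2/\sqrt n$, which the AMP analysis (state evolution, $q_*$ as in~\eqref{eq:q_k}) together with the NGD refinement bounds by $o_n(1)$ uniformly along the trajectory — this is where $\beta<\beta_2(\xi)$ enters, ensuring the replica-symmetric/AMP fixed point is the relevant one and that $q_k\to q_*$ exponentially; (ii) the Lipschitz defect of $\by\mapsto\hm(\bG,\by)$ interacting with $\Delta_\ell$, contributing a factor $(1+L_{\hm}\delta)$ per step, where the local strong convexity of $\widehat{\cuF}_{\sTAP}$ (Lemma~\ref{lem:local-convex}, requiring $\beta<\beta_1(\xi)$ so that $\phi'(q)/\xi''(q)>\beta^2$ controls the Hessian) gives a dimension-free $L_{\hm}$; (iii) the standard Euler discretization error of an SDE with Lipschitz drift, of order $\delta^{1/2}$ per unit time. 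Summing over $\ell\le L=T/\delta$ gives $\max_\ell\Delta_\ell \le e^{L_{\hm}T}\big(o_n(1)+T\sqrt\delta\big)$; choosing first $\delta$ small, then $n$ large, makes this $\le\eps/C$.

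The main obstacle is establishing the two properties of Algorithm~\ref{alg:Mean} that the Grönwall step consumes as black boxes: that $\hm(\bG,\by)$ is an accurate mean estimate \emph{and} that it is Lipschitz in $\by$ with an $n$-independent constant, \emph{simultaneously and uniformly over the (random, adaptively chosen) $\by=\hby_\ell$ that actually arise}. Accuracy comes from AMP state evolution — but state evolution is stated for fixed or lightly-dependent inputs, so one must argue the trajectory $\hby_\ell$ does not correlate badly with the disorder $\bG$ (a conditioning/"fresh randomness" argument on the Brownian increments, or a net over possible $\by$'s with a Lipschitz continuity-in-$\by$ bound on the AMP output itself); Lipschitz continuity comes from the implicit-function theorem applied at a local minimizer of the \emph{strongly convex} modified TAP free energy $\widehat{\cuF}_{\sTAP}$, which is exactly why the Onsager term was replaced by a quadratic and why NGD is appended — but one must check that AMP's output $\hm^{K_{\sAMP}}$ lands inside the convexity basin (Lemma~\ref{lem:TAP-stationary}) so that NGD converges there, and that $\beta_3(\xi)$ is chosen small enough to make the Hessian bounds and the basin radius compatible. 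Once these two lemmas are in hand, the rest is bookkeeping: collect the $o_n(1)$ and $O(\delta^{1/2})$ terms, take $L,K_{\sAMP},K_{\sNGD}$ constants depending only on $\eps,\beta,\xi$, and note the complexity is $L\cdot(K_{\sAMP}+K_{\sNGD})$ gradient evaluations of $H_n$, each $O(n^P)$ — but since the per-iteration count is $O(1)$ and, for the running examples one tracks $O(n^2)$, the total is $O(n^2)$.
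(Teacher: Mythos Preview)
Your high-level architecture matches the paper's: a Gr\"onwall-type induction on $A_\ell:=\|\hby_\ell-\by(\ell\delta)\|_2/\sqrt n$ and $B_\ell:=\|\hm(\bG,\hby_\ell)-\m(\bG,\by(\ell\delta))\|_2/\sqrt n$ (Lemma~\ref{lem:discretization-works}), fed by accuracy and Lipschitzness of $\hm$, then the localization bound $W_{2,n}(\mu_\bG,\cL(\m(\bG,\by(T))))\le T^{-1/2}$ (Lemma~\ref{lem:W2bound}) and a rounding lemma. But you have the roles of the $\beta_i$ scrambled: the inequality $\beta^2\xi''(q)<\phi'(q)$ is the condition $\beta<\beta_1$, and it drives the \emph{state-evolution contraction} $q_k\to q_*$ (Lemma~\ref{lem:contraction}), not the TAP Hessian; local convexity of $\widehat{\cuF}_{\sTAP}$ (Lemma~\ref{lem:local-convex}) is governed by $\beta<\beta_3$ via an operator-norm bound on $\D(\m)^{-1/2}\nabla^2 H_n(\m)\D(\m)^{-1/2}$; and $\beta<\beta_2$ plays no role in the AMP analysis whatsoever.

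More importantly, you correctly flag as the main obstacle that state evolution must be valid at the random, $\bG$-dependent input $\by(t)$, but your suggested patches (conditioning on fresh Brownian increments, or an $\eps$-net over $\by$) do not work: $\by(t)$ depends on $\bG$ through the drift $\m(\bG,\cdot)$ of the SDE, so the increments do not decouple it, and state evolution is not available uniformly over deterministic $\by$. The paper's key device, which your proposal is missing, is to pass to a \emph{planted model} $\P$: draw $\bx\sim\onu$, set $\bG^{(p)}=\beta c_p n^{-(p-1)/2}\bx^{\otimes p}+\bW^{(p)}$ with $\bW$ i.i.d.\ Gaussian, and $\by(t)=t\bx+\bB(t)$ with $\bB$ independent of everything. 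Under $\P$ the joint law of $(\bx,\bG,\by(t))$ has exactly the independence structure for which AMP state evolution (Proposition~\ref{prop:state_evolution}) and the Bayes-optimality argument (Theorem~\ref{thm:IT}, Proposition~\ref{prop:amp-posterior-mean}) go through, yielding $\|\hm^k-\m(\bG,\by(t))\|_2^2/n\to 0$. All the high-probability estimates in Lemma~\ref{lem:local-landscape} and Lemma~\ref{lem:discretization-works} are proved under $\P$ and then transferred to the original random model $\Q$ by \emph{mutual contiguity} (Proposition~\ref{prop:contig}), which holds because the rescaled partition function $Z_\xi(\bG)$ converges in law to a log-normal --- and that is exactly what $\beta<\beta_2$ buys (Theorem~\ref{thm:fluctuations_Z}). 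Without this planted-model detour, your step (i) has no proof.
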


\begin{remark}\label{rmk:mainThm}
Each of the three inverse temperatures $\beta_1(\xi),\beta_2(\xi),\beta_3(\xi)$
originates in a different part of the analysis. Here we briefly comment on these conditions:        
\begin{itemize}
\item
 $\beta_1$ is the inverse temperature below which the iteration defining $q_k(\beta,t)$,
  Eq.~\eqref{eq:q_k} has a unique fixed point for all $t>0$, i.e., the large $k$ limit $q_*(\beta,t)$ 
  does not depend on the initial value $q_0$; see Lemma~\ref{lem:contraction}. 
  In other words, we have the equivalent characterization:
\begin{equation}
\label{eq:beta1}
\beta_1(\xi):= \sup\big\{\beta:\; q = \psi(t+\tilde\beta^2\xi'(q)) 
\mbox{ has a unique solution for all }(\tilde\beta,t)\in [0,\beta)\times\R_{\ge 0} 
\big\}
   \, .
\end{equation}
%where $\phi$ is the inverse of the continuous and strictly increasing function $\psi(\gamma) = \E \tanh(\sqrt{\gamma}Z+\gamma)$, $Z \sim \normal(0,1)$. 
The uniqueness of this fixed point guarantees that the two-step Algorithm~\ref{alg:Mean} is able to accurately approximate the mean $\m(\bG,\by_{t})$ of the tilted Gibbs measure $\mu_{\bG,\by(t)}$ for all $t>0$.      
\item $\beta_2$ is a sufficient upper bound on $\beta$ for contiguity between the
 joint distribution of $(\bG,(\by(t))_{t \in [0,T]})$ and a certain planted model 
 defined in Section~\ref{sec:planted} to hold, see Theorem~\ref{thm:fluctuations_Z}. 
 In fact the contiguity holds for all $\beta<\beta_c$ as shown very recently in \cite[Corollary 2.4]{dey2022hypergraph} using cluster expansion techniques.
 
 By Taylor expanding the function $\beta^2\xi(q)+h(q)-\log(2)$ around $q=0$,
 we observe that: $(1)$~$\beta_2\le 1/\sqrt{\xi''(0)}$; $(2)$~$\beta_2= 1/\sqrt{\xi''(0)}$
 for the SK model ($\xi(t) = c_2^2 t^2$) and small perturbations of it. 
 \item Finally, the condition $\beta<\beta_3$
 is sufficient for  for the (modified) TAP free energy functional $\widehat{\cuF}_{\sTAP}$
 to be globally convex (with respect with a suitable metric that is relevant for NGD). 
 This is required in the analysis of the second stage of Algorithm~\ref{alg:Mean}. 
\end{itemize}
\end{remark}

\begin{remark}[Condition $\beta<\beta_1$]\label{rmk:Beta0}
It turns out that the condition $\beta<\beta_1$ is necessary for the algorithm presented here to be
effective, unlike conditions $\beta<\beta_2$ (which can be weakened to
$\beta<\beta_c$ using \cite{dey2022hypergraph}) and $\beta<\beta_3$ (which is a proof 
artifact).
 Indeed, as proven in Section \ref{sec:ConvergenceToFP} (see Lemma \ref{lem:Beta0}),
for $\beta>\beta_1$, the state evolution iteration \eqref{eq:q_k} has multiple fixed points
for some $t\in \R_{>0}$.
This signals a phase transition in the tilted measure $\mu_t$.
In the case of sequential sampling, this phase transition was first studied heuristically in
\cite{montanari2007solving,ricci2009cavity}.
See~\cite{ghio2023sampling} for a recent heuristic study 
of the phase diagram case of the $3$-spin model, under the stochastic localization
process considered here. 

A possible solution to this limitation could be to use a different stochastic localization mechanism,
for instance by evolving the randomness $\bG$ with $t$. See \cite{montanari2023posterior,montanari2023sampling}  
for proposals of this type. However as discussed in Remark \ref{rmk:BarLargeP}
below, this is not the bottleneck to the present proof.
\end{remark}

\begin{remark}[SK model]
In the case of the SK model $\xi(t) =t^2/2$,
a direct calculation with the formulas above yields $\beta_1 = \beta_2  = 1$
and  $\beta_3 = 1/2$.
 Indeed, the conference version of this paper~\cite{alaoui2022sampling} was limited to the
 SK case and assumed the condition $\beta<1/2$, which we recover here. 
 In subsequent work, Celentano~\cite{celentano2022sudakov} removed this spurious condition thereby 
 showing that the above algorithm succeeds in the sense of Theorem~\ref{thm:main} for all $\beta<1$.   
 The temperature $\beta_c=1$ is the critical temperature for replica symmetry breaking
 and, as discussed in the next section, a large class of algorithms are known to fail for $\beta>\beta_c$.
\end{remark}

\begin{remark}[Pure $p$-spin]\label{rmk:BarLargeP}
For the pure $p$-spin model $\xi(t) = \xi_p(t):= t^p$, and $p$ large, it is possible to
show that the most constraining condition is given by $\beta_3$.
Namely we have $\beta_1(\xi_p) \asymp 1/\sqrt{p}$, $\beta_2(\xi_p)\asymp 1$ and 
$\beta_3 =  (\overline{C}/p\sqrt{\log p})(1+o_p(1))$,
implying 
\begin{align}
\obeta(\xi_p) = \frac{\overline{C}}{p\sqrt{\log p}}\cdot\big(1+o_p(1)\big)\, .
\end{align}
As anticipated, we cover a larger range of temperatures than in the 
recent works  \cite{adhikari2022spectral,anari2023universality}.
The bottleneck of our analysis is in proving local convexity of the
TAP free energy, an obstacle that can probably overcome using the
 approach of \cite{celentano2022sudakov}. 
\end{remark}

\section{Main results: II. Hardness of stable sampling from RSB or shattering}

Our sampling algorithm enjoys stability properties
with respect to changes in the inverse temperature $\beta$ and the disorder $\bG$
which are shared by many natural efficient algorithms. We will 
use the fact that the actual Gibbs measure does not enjoy this stability property
$\beta>\beta_c$ to conclude that sampling is hard for
all stable algorithms. Here $\beta_c=\beta_c(\xi)$ is the critical temperature for 
replica symmetry breaking. We refer to Appendix \ref{app:prelim} 
for a review of the analytic characterization of 
$\beta_c(\xi)$. 
We will finally show that the same conclusions on hardness follow from the existence of a shattered decomposition
 for $\mu_{\bG,\beta}$, which was recently proven to exist for pure models in a non-trivial 
 portion of the replica symmetric phase in \cite{alaoui2023shattering,gamarnik2023shattering}.

Throughout this section, we denote the
Gibbs and algorithmic output distributions by $\mu_{\bG,\beta}$ and $\mu_{\bG,\beta}^{\salg}$ 
respectively to emphasize the dependence on $\beta$.
\begin{definition}\label{def:Stable}
Let $\{\ALG_n\}_{n\geq 1}$ be a family of randomized sampling algorithms, i.e., measurable 
maps  
\[
    \ALG_n:(\bG,\beta,\omega) \mapsto \ALG_n(\bG,\beta,\omega)\in [-1,1]^n \, ,
\]
where $\omega$ is a random seed (a point in a probability space $(\Omega,\cF,\P)$.) 
Let $\bG_1$ and $\bG_0$ be independent copies of the disorder, and consider perturbations 
\[
  \bG_s=\sqrt{1-s^2} \bG_0 + s \bG_1
\]
for $s \in [0,1]$, i.e., $\bG_s^{(p)}=\sqrt{1-s^2} \bG_0^{(p)} + s \bG_1 ^{(p)}$ for $2\leq p\leq P$.
Finally, denote by $\mu_{\bG_s,\beta}^{\salg}$ the law of the algorithm output,
i.e., the distribution of $\ALG_n(\bG_s,\beta,\omega)$ for $\omega\sim \P$ independent 
of $\bG_s,\beta$ which are fixed.

We say $\ALG_n$ is \emph{stable with respect to disorder}, at inverse temperature $\beta$, if
\begin{align}
\lim_{s\to 0} \, \plim_{n\to\infty} \, W_{2,n}(\mu_{\bG,\beta}^{\salg},\mu_{\bG_s,\beta}^{\salg})=0\, .
\end{align}

We say $\ALG_n$ is \emph{stable with respect to temperature} at inverse temperature $\beta$, if
\begin{align}
\lim_{\beta'\to\beta}\, \plim_{n\to\infty} \, W_{2,n}(\mu_{\bG,\beta}^{\salg},\mu_{\bG,\beta'}^{\salg})=0\, .
\end{align}
\end{definition}

We begin by establishing the stability of the proposed sampling algorithm. 
\begin{theorem}[Stability of the sampling Algorithm~\ref{alg:Sampling}]
\label{thm:stable}
For any $\beta\in (0,\infty)$ and fixed parameters 
$(\eta,$ $K_{\sAMP},$ $K_{\sNGD},$ $\Treg,$ $\delta)$, Algorithm~\ref{alg:Sampling} is
stable with respect to disorder and with respect to temperature. 
\end{theorem}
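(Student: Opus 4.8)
The goal is to show that the map $\bG \mapsto \mu_{\bG,\beta}^{\salg}$ produced by Algorithm~\ref{alg:Sampling} is continuous in $W_{2,n}$, uniformly in $n$, both when $\bG$ is perturbed to $\bG_s$ and when $\beta$ is perturbed to $\beta'$. The natural strategy is to track how a perturbation in $\bG$ (or $\beta$) propagates through the finitely many steps of the algorithm, using that every map involved is Lipschitz with a dimension-free constant. Since the parameters $(\eta, K_{\sAMP}, K_{\sNGD}, \Treg, \delta)$ — hence also $L = T/\delta$ — are held fixed and do not depend on $n$, the total number of arithmetic operations in the algorithm is $O_n(1)$ "blocks," so a stability estimate that loses a constant factor per block still yields a dimension-free bound. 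The cleanest coupling is the obvious one: run both instances with the \emph{same} Brownian increments $\{\bw_\ell\}$ and the same random seed $\omega$ for the final rounding step.

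\textbf{Key steps, in order.} First I would establish a Lipschitz bound for the mean-estimation subroutine (Algorithm~\ref{alg:Mean}) as a function of its inputs $(\bG,\by)$, in $\ell_2/\sqrt{n}$ norm. The AMP phase is a fixed finite number $K_{\sAMP}$ of iterations, each of which applies $\nabla H_n(\cdot)$ (linear in $\bG$, and — restricted to the relevant bounded region $\|\hm^k\|_2 \le \sqrt{n}$ — Lipschitz in $\hm$ with a constant controlled by $\|\bG\|_{\op}$, which is $O(1)$ with high probability), then adds $\by$ and the Onsager correction, then applies $\tanh$ coordinatewise (which is $1$-Lipschitz). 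So $\AMP(\bG,\by;K_{\sAMP})$ is Lipschitz in $(\bG,\by)$ with a constant depending only on $K_{\sAMP}$, $\beta$, $\xi$ and the operator norms of the $\Gp{p}$. For the NGD phase, I would invoke the local strong convexity of $\widehat{\cuF}_{\sTAP}(\,\cdot\,;\by,q)$ near the AMP output (Lemma~\ref{lem:local-convex}, cited in the excerpt) together with the fact that $\widehat{\cuF}_{\sTAP}$ depends on $\by$ only through the linear term $-\langle \by,\m\rangle$: this gives that the NGD iterates, and the limiting stationary point, are Lipschitz in $\by$ with a dimension-free constant, and a similar argument controls dependence on $\bG$ (entering through $-\beta H_n(\m)$, again linear). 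Combining, $\hm(\bG,\by)$ is jointly Lipschitz in $(\bG,\by)$, uniformly in $n$ on the high-probability event where spectral norms are controlled.

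\textbf{Propagating through the Euler recursion.} With the subroutine Lipschitz bound in hand, I would run a discrete Gr\"onwall-type argument over the $L$ Euler steps \eqref{eq:approx}. Writing $\hby_\ell$ and $\hby_\ell'$ for the two trajectories driven by the same $\{\bw_\ell\}$ but with disorders $\bG$ and $\bG_s$ (or temperatures $\beta$, $\beta'$), the recursion gives
\begin{equation*}
\|\hby_{\ell+1}-\hby_{\ell+1}'\|_2 \le \|\hby_\ell-\hby_\ell'\|_2 + \delta\,\|\hm(\bG,\hby_\ell)-\hm(\bG_s,\hby_\ell')\|_2\, ,
\end{equation*}
and the subroutine bound controls the last term by $\delta L_{\mathrm{sub}}\,(\|\hby_\ell-\hby_\ell'\|_2 + \|\bG-\bG_s\|\,\sqrt n)$ (with the analogous $|\beta-\beta'|$ term in the temperature case). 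Iterating over $\ell=0,\dots,L-1$ yields $\frac{1}{\sqrt n}\|\hby_L-\hby_L'\|_2 \le C(\beta,\xi,L,\delta)\cdot \frac{1}{\sqrt n}\|\bG-\bG_s\|$, and since $\frac1{\sqrt n}\|\bG_s - \bG\| = \frac1{\sqrt n}\|s(\bG_1 - \bG_0) + (\sqrt{1-s^2}-1)\bG_0\| \to 0$ in probability as $s\to 0$ (using $\frac1{\sqrt n}\|\Gp{p}\|_F = O_{\P}(n^{(p-1)/2})$ suitably normalized, or more simply the operator-norm scaling), the right-hand side is $o_{s}(1)\cdot o_{\P,n}(1)$. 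One more application of the subroutine Lipschitz bound transfers this to $\frac1{\sqrt n}\|\hm(\bG,\hby_L) - \hm(\bG_s,\hby_L')\|_2 \to 0$. Finally, for the rounding step: conditionally on the estimated means, $\bx^{\salg}$ has independent coordinates with $\E[x_i^{\salg}] = \hat m_i$, so coupling the two rounded outputs by the standard monotone (quantile) coupling of each coordinate — or just using that two product measures on $\{\pm1\}^n$ with mean vectors $\hm, \hm'$ admit a coupling with $\frac1n\E\|\bx - \bx'\|_2^2 \le \frac{C}{n}\|\hm - \hm'\|_1 \le C\,\frac1{\sqrt n}\|\hm-\hm'\|_2$ — shows $W_{2,n}(\mu_{\bG,\beta}^{\salg}, \mu_{\bG_s,\beta}^{\salg})^2$ is bounded by the same vanishing quantity. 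The temperature case is identical, with $\bG$ fixed and $\beta$ perturbed; $\beta$ enters the subroutine Lipschitz-continuously (through $\beta H_n$, the Onsager coefficients $\sb_k$, and the constants $q = q_*(\beta,t)$, all smooth in $\beta$ on compacts), so the same Gr\"onwall argument applies.

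\textbf{Main obstacle.} The delicate point is the dimension-free Lipschitz continuity of the NGD output (equivalently, of the minimizer of $\widehat{\cuF}_{\sTAP}$) in its inputs, on a region that the AMP iterate provably enters. This is exactly why the paper introduced the two-stage algorithm and the quadratic modification of the Onsager term, and it rests on the local-convexity lemma (Lemma~\ref{lem:local-convex}) and the stationarity lemma (Lemma~\ref{lem:TAP-stationary}); granting those, stability is a soft consequence. A secondary bookkeeping issue is that all Lipschitz constants depend on $\|\Gp{p}\|_{\op}$, so the estimates must be stated on a high-probability event and the $\plim$ in the definition of stability handles the $o_n(1)$-probability bad event — one should check that on the bad event $W_{2,n}$ is still bounded (it is, since outputs lie in $\{\pm1\}^n$ so $W_{2,n} \le 2$), so it contributes negligibly in probability.
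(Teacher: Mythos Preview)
Your overall architecture---couple the two runs via the same Brownian increments $\{\bw_\ell\}$, establish a per-step Lipschitz bound for Algorithm~\ref{alg:Mean}, and push through the $L$ Euler steps by discrete Gr\"onwall---matches the paper's proof (Section~\ref{sec:algo_stable}). The paper packages this as a general statement (Proposition~\ref{prop:standard-alg-stable}) about iterative algorithms whose updates are Lipschitz functions of $\bz^j$, $\beta\nabla H_n(\m^j)$, $\beta^2\m^j$, and $\bw^j$, then checks Algorithm~\ref{alg:Sampling} has this form; the key input is Lemma~\ref{lem:grad-stable}, which bounds $\|\nabla H_n^{(0)}(\bu)-\nabla H_n^{(s)}(\bv)\|$ by $C(\|\bu-\bv\|+s\sqrt{n})$ directly, rather than going through a norm on $\bG-\bG_s$ as you sketch.

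There is, however, a genuine gap in your treatment of the NGD phase. You propose to control it via the local strong convexity of $\widehat{\cuF}_{\sTAP}$ (Lemma~\ref{lem:local-convex}) and the Lipschitz dependence of the minimizer on $\by$. But Lemma~\ref{lem:local-convex} is only proved for $\beta<\beta_3$, and the fact that AMP lands in the convex neighborhood uses the planted-model analysis valid only for $\beta<\bar\beta$; Theorem~\ref{thm:stable} is asserted for \emph{all} $\beta\in(0,\infty)$. The paper sidesteps this entirely: it does not use any landscape property of $\widehat{\cuF}_{\sTAP}$ for stability. Instead it observes that each of the $K_{\sNGD}$ NGD steps is itself a Lipschitz map of its inputs---writing line~\ref{line:explicit-NGD-step} as $\bz^{k+1}=\bz^k+\eta[\beta\nabla H_n(\tanh\bz^k)+\hby-\bz^k-\beta^2(1-q)\tanh\bz^k+\ldots]$, this is affine in $(\beta\nabla H_n,\hby,\beta^2\m)$ with bounded coefficients, and $\nabla H_n$ is Lipschitz on $[-1,1]^n$ with high probability (Lemma~\ref{lemma:LipGrad}). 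Since $K_{\sNGD}$ is fixed, composing $K_{\sNGD}$ such maps still gives a dimension-free Lipschitz constant, regardless of whether NGD converges to anything. Your plan conflates the role of local convexity: it is essential for the \emph{accuracy} guarantee (Theorem~\ref{thm:main}), but irrelevant for \emph{stability}.
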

As a consequence, the Gibbs measures $\mu_{\bG,\beta}$ enjoy similar stability 
properties for $\beta<\beta_1$, which amount (as discussed below) to the absence of chaos in 
both temperature and disorder:
\begin{corollary}
\label{cor:stable}
For any $\beta<\obeta(\xi)$, the following properties hold for the Gibbs measure 
$\mu_{\bG,\beta}$, cf. Eq.~\eqref{eq:sk}:
\begin{enumerate}
    \item \label{it:disorder-stability} $\lim_{s\to 0}\plim_{n\to\infty} W_{2,n}(\mu_{\bG,\beta},\mu_{\bG_s,\beta})=0$.
    \item $\lim_{\beta'\to\beta}\plim_{n\to\infty} W_{2,n}(\mu_{\bG,\beta},\mu_{\bG,\beta'})=0$.
\end{enumerate}
\end{corollary}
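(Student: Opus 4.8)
The plan is to combine the positive result of Theorem~\ref{thm:main} with the stability of Algorithm~\ref{alg:Sampling} (Theorem~\ref{thm:stable}) via the triangle inequality for $W_{2,n}$. Fix $\beta<\obeta(\xi)$ and let $\eps>0$. By Theorem~\ref{thm:main}, there is a choice of parameters $(\eta,K_{\sAMP},K_{\sNGD},\Treg,\delta)$, independent of $n$, such that with probability $1-o_n(1)$ over $\bG$ we have $W_{2,n}(\mu^{\salg}_{\bG,\beta},\mu_{\bG,\beta})\le \eps$. The first step is to observe that the same parameters yield the same guarantee for the perturbed disorder: since $\bG_s$ has the same law as $\bG$ for every fixed $s\in[0,1]$, Theorem~\ref{thm:main} also gives $W_{2,n}(\mu^{\salg}_{\bG_s,\beta},\mu_{\bG_s,\beta})\le \eps$ with probability $1-o_n(1)$. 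Similarly, for $\beta'$ close enough to $\beta$ that $\beta'<\obeta(\xi)$ still holds, we may (after possibly shrinking the neighborhood and re-choosing parameters uniformly on a compact interval of inverse temperatures) guarantee $W_{2,n}(\mu^{\salg}_{\bG,\beta'},\mu_{\bG,\beta'})\le\eps$ with probability $1-o_n(1)$.

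For part~\ref{it:disorder-stability}, I would then write, on the high-probability event where all three Wasserstein bounds hold simultaneously,
\[
W_{2,n}(\mu_{\bG,\beta},\mu_{\bG_s,\beta}) \le W_{2,n}(\mu_{\bG,\beta},\mu^{\salg}_{\bG,\beta}) + W_{2,n}(\mu^{\salg}_{\bG,\beta},\mu^{\salg}_{\bG_s,\beta}) + W_{2,n}(\mu^{\salg}_{\bG_s,\beta},\mu_{\bG_s,\beta}) \le 2\eps + W_{2,n}(\mu^{\salg}_{\bG,\beta},\mu^{\salg}_{\bG_s,\beta}).
\]
Now take $n\to\infty$ and then $s\to 0$: by Theorem~\ref{thm:stable} the middle term converges to $0$ in probability, so $\plim_{n\to\infty}W_{2,n}(\mu_{\bG,\beta},\mu_{\bG_s,\beta})\le 2\eps$ for all small $s$, hence $\lim_{s\to0}\plim_{n\to\infty}W_{2,n}(\mu_{\bG,\beta},\mu_{\bG_s,\beta})\le 2\eps$. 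Since $\eps>0$ was arbitrary, the limit is $0$. Part~2 is identical with $\bG_s$ replaced by the temperature perturbation $\beta'$ and the middle term handled by the temperature-stability half of Theorem~\ref{thm:stable}; here one should keep $\beta'$ in a fixed compact neighborhood of $\beta$ contained in $(0,\obeta(\xi))$ so that a single choice of algorithm parameters works throughout and $o_n(1)$ error terms are uniform.

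The one point requiring a little care — and the main (mild) obstacle — is the interchange of limits and the quantifier on the algorithm parameters. Theorem~\ref{thm:main} fixes parameters \emph{depending on $\eps$}; we need these parameters to serve simultaneously for $\beta$ and all nearby $\beta'$ (and, trivially, for the law-identical $\bG_s$). For the disorder perturbation this is immediate. For the temperature perturbation, one either invokes a version of Theorem~\ref{thm:main} with parameters chosen uniformly over a compact sub-interval of $(0,\obeta(\xi))$ — which follows by inspection of the proof, since all the relevant thresholds and constants ($\beta_1,\beta_2,\beta_3$, the state-evolution contraction rate, the TAP convexity constant) vary continuously and are bounded away from the critical values on a compact set — or, alternatively, one fixes the parameters tuned to $\beta$ and notes that Theorem~\ref{thm:stable} already controls $W_{2,n}(\mu^{\salg}_{\bG,\beta},\mu^{\salg}_{\bG,\beta'})$ for \emph{that} fixed parameter set, so only the single bound $W_{2,n}(\mu^{\salg}_{\bG,\beta},\mu_{\bG,\beta})\le\eps$ at the base temperature is needed from Theorem~\ref{thm:main}, at the cost of using the algorithmic law $\mu^{\salg}_{\bG,\beta'}$ (not the tuned one) as the intermediary. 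The second route is cleanest: the triangle inequality chain becomes $\mu_{\bG,\beta}\to\mu^{\salg}_{\bG,\beta}\to\mu^{\salg}_{\bG,\beta'}\to\mu_{\bG,\beta'}$, where the first leg is Theorem~\ref{thm:main} at $\beta$, the second is Theorem~\ref{thm:stable}, and the third is Theorem~\ref{thm:main} at $\beta'$ (valid since $\beta'<\obeta(\xi)$), all with a common parameter set, and then $\eps\to0$ finishes.
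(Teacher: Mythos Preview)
Your proposal is correct and follows essentially the same route as the paper: triangulate through the algorithmic distribution, using Theorem~\ref{thm:main} on the outer legs and Theorem~\ref{thm:stable} on the middle one, then send $\eps\to 0$. The paper's proof is terser and handles the parameter-uniformity issue for the temperature perturbation with a single parenthetical remark (that the parameters can be taken independent of $\beta$ for $\beta<\beta_1$), whereas you spell out two ways to deal with it; your discussion is more careful but not different in substance.
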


\begin{proof}
Take $\eps>0$ and choose  parameters 
$(\eta,K_{\sAMP},K_{\sNGD},\Treg,\delta)$ of Algorithm~\ref{alg:Sampling} with the desired tolerance $\eps$ so that Theorem~\ref{thm:main} holds. 
Combining with Theorem~\ref{thm:stable} using the same parameters $(\eta,K_{\sAMP},K_{\sNGD},\Treg,\delta)$
implies the result since $\eps$ can be arbitrarily small. 
(Recall that $(\eta,K_{\sAMP},K_{\sNGD},\Treg,\delta)$ can be chosen independent of $\beta$
for $\beta< \beta_1$.)
\end{proof}

\begin{remark}
We emphasize that Corollary \ref{cor:stable} makes no reference to the sampling algorithm, and is instead a purely structural property of
the Gibbs measure. The sampling algorithm, however, is the key tool of our proof.
\end{remark}

Stability is related to the opposite notion of chaos, which is a well studied and 
important property of spin glasses, see e.g.
\cite{chatterjee2009disorder,chen2013disorder,chatterjee2014superconcentration,chen2015disorder,
chen2018disorder}. In particular, ``disorder chaos" refers to 
the following phenomenon. Draw $\bx^0\sim \mu_{\bG,\beta}$ independently of  
$\bx^s\sim \mu_{\bG_s,\beta}$, and denote by $\mu^{(0,s)}_{\bG,\beta}:=\mu_{\bG,\beta}\otimes
\mu_{\bG^s,\beta}$ their joint distribution. Disorder chaos holds at inverse temperature
$\beta$ if 
\begin{align}
\lim_{s\to 0}\lim_{n\to\infty}\E\mu^{(0,s)}_{\bG,\beta} \Big\{\Big(\frac{1}{n}\<\bx^0,\bx^s\>\Big)^2\Big\}= 0
\, .\label{eq:FirstDisorderChaos}
\end{align}
Note that disorder chaos is not necessarily a surprising property.
For instance when $\beta=0$, the distribution $\mu_{\bG_s,\beta}$ is simply the uniform measure over the hypercube $\{-1,+1\}^n$ for all $s$, and this example exhibits disorder chaos
in the sense of Eq.~\eqref{eq:FirstDisorderChaos}. 
In fact, the Gibbs measure exhibits disorder chaos at all $\beta\in [0,\infty)$ 
\cite{chatterjee2009disorder}. However, for $\beta>\beta_c$, 
Eq.~\eqref{eq:FirstDisorderChaos} leads to a stronger conclusion where disorder chaos holds
 in the optimal transport sense.
In the results below, we will assume $\xi$ is even, i.e. $c_p=0$ for all odd $p$. 
\begin{theorem}[Transport disorder chaos in $W_{2,n}$ distance] 
\label{thm:disorder_chaos_sk}
For even $\xi$ and all $\beta>\beta_c$, 
\begin{equation}
\label{eq:transport_disorder_chaos}
   \inf_{s\in (0,1)} \, \liminf_{n\to\infty} \, \E\big[W_{2,n}(\mu_{\bG,\beta},\mu_{\bG_s,\beta})\big]>0 \, .
\end{equation}
\end{theorem}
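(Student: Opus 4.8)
The plan is to deduce transport disorder chaos in $W_{2,n}$ from two ingredients: (i) the classical fact that for $\beta>\beta_c$ the overlap distribution of $\mu_{\bG,\beta}$ does not concentrate at $0$, i.e. the ``self-overlap'' (more precisely the typical squared overlap of two independent samples from the \emph{same} disorder) is bounded away from $0$; and (ii) the fact, due to \cite{chatterjee2009disorder}, that the \emph{cross}-overlap $\frac1n\langle\bx^0,\bx^s\rangle$ of samples from $\bG$ and $\bG_s$ tends to $0$ as $s\to 0$ in the sense of Eq.~\eqref{eq:FirstDisorderChaos}. Intuitively, if two measures were $W_{2,n}$-close then a near-optimal coupling would force their overlap structures to agree, contradicting the jump from ``overlap concentrated away from $0$'' (within each measure) to ``overlap $\approx 0$'' (across the two measures).

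The key steps, in order. First I would reduce to overlaps: for any coupling $\pi$ of $\mu_{\bG,\beta}$ and $\mu_{\bG_s,\beta}$ with $(\bx^0,\bx^s)\sim\pi$, and using $\|\bx^0\|_2^2=\|\bx^s\|_2^2=n$ on the hypercube, we have
\[
\frac1n\E_\pi\|\bx^0-\bx^s\|_2^2 = 2 - \frac2n\E_\pi\langle\bx^0,\bx^s\rangle\, ,
\]
so $W_{2,n}(\mu_{\bG,\beta},\mu_{\bG_s,\beta})^2 = 2 - \tfrac2n\sup_\pi \E_\pi\langle\bx^0,\bx^s\rangle$; it therefore suffices to show that $\sup_\pi \tfrac1n\E_\pi\langle\bx^0,\bx^s\rangle$ is bounded away from $1$, with high probability over $\bG$, uniformly in $s\in(0,1)$. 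Second, I would show that the independent coupling already essentially achieves the supremum up to the relevant error, or more robustly, bound \emph{every} coupling: by concentration of the overlap for a single disorder (Ghirlanda–Guerra / Parisi theory, or Panchenko's results in \cite{panchenko2013sherrington}), two independent draws $\bx^0,(\bx^0)'\sim\mu_{\bG,\beta}$ have $\tfrac1n\langle\bx^0,(\bx^0)'\rangle$ concentrating on the support of the (nonrandom, since $\xi$ even and $\beta$ fixed) overlap distribution, which for $\beta>\beta_c$ has $q_{\mathrm{EA}}(\beta)>0$ as its maximum and — crucially for an even model with no external field — is symmetric, so $\big(\tfrac1n\langle\bx^0,(\bx^0)'\rangle\big)^2$ is bounded below by some $q_*(\beta)^2>0$ in expectation. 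Third, I would combine this with disorder chaos Eq.~\eqref{eq:FirstDisorderChaos}: pick $s_0$ small enough that $\E\big(\tfrac1n\langle\bx^0,\bx^{s_0}\rangle\big)^2<q_*(\beta)^2/100$, say. Fourth — the genuinely delicate step — I would upgrade the overlap statement into a $W_{2,n}$ lower bound via a ``three-point'' or second-moment argument: given a coupling $\pi$, introduce $\bx^0,\bx^s$ from $\pi$ and an independent fresh $(\bx^s)'\sim\mu_{\bG_s,\beta}$; then $\tfrac1n\langle\bx^0,\bx^s\rangle$ close to $1$ would, after one more comparison, force $\tfrac1n\langle\bx^0,(\bx^s)'\rangle$ to inherit a nonvanishing value (since $\bx^s$ and $(\bx^s)'$ have overlap concentrated near $\pm q_{\mathrm{EA}}$ with both signs, by symmetry), contradicting chaos. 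Making this contradiction quantitative — i.e. converting ``$W_{2,n}$ small'' into ``cross-overlap large in squared expectation'' — is where the care is needed, because $W_{2,n}$ small only controls $\E_\pi\langle\bx^0,\bx^s\rangle$, not $\E_\pi\langle\bx^0,\bx^s\rangle^2$; one passes from the first to the second using that on the hypercube $\langle\bx^0,\bx^s\rangle/n\le 1$ together with the concentration of the overlap within each measure so that the relevant quantity is essentially $\pm$-valued around its concentration points.

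I expect the main obstacle to be exactly this last upgrade from a bound on the \emph{mean} transported cost (equivalently the mean overlap) to a contradiction with disorder chaos, which is naturally phrased in terms of the \emph{squared} overlap. The cleanest route is probably: (a) use within-disorder overlap concentration to argue that under any near-optimal coupling $\tfrac1n\langle\bx^0,\bx^s\rangle$ must itself concentrate (if it didn't, one could produce a strictly better coupling or contradict concentration for $\mu_{\bG,\beta}$ alone), hence $\E_\pi\langle\bx^0,\bx^s\rangle\approx c\,n$ for a nonrandom $c$ close to $1$; then (b) $\E_\pi\big(\tfrac1n\langle\bx^0,\bx^s\rangle\big)^2\approx c^2$ is close to $1$, directly contradicting Eq.~\eqref{eq:FirstDisorderChaos} for small $s$. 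Steps (a)–(b) need the full strength of overlap concentration for mixed even $p$-spin models at fixed $\beta$ (a consequence of the Ghirlanda–Guerra identities, cited via \cite{panchenko2013sherrington}), plus the positivity $q_{\mathrm{EA}}(\beta)>0$ for $\beta>\beta_c$ and the symmetry of the overlap distribution for even $\xi$ with no field; all of these are standard and may be invoked. The remaining arguments — the hypercube identity, taking $\liminf_n$ and $\inf_s$, and the high-probability-over-$\bG$ qualifier (handled by Gaussian concentration of $W_{2,n}(\mu_{\bG,\beta},\mu_{\bG_s,\beta})$, which is a Lipschitz function of $\bG$) — are routine.
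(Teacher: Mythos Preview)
Your ``cleanest route'' (a)--(b) contains a genuine error: disorder chaos Eq.~\eqref{eq:FirstDisorderChaos} is a statement about the \emph{product} coupling $\mu_{\bG_0,\beta}\otimes\mu_{\bG_s,\beta}$, not about a near-optimal transport coupling $\pi$. Showing $\E_\pi\big(\tfrac1n\langle\bx^0,\bx^s\rangle\big)^2\approx 1$ says nothing about the product-measure overlap and therefore contradicts nothing --- under the product coupling the cross-overlap is $\approx 0$, under a near-optimal $\pi$ it is $\approx 1$, and both hold simultaneously whenever $W_{2,n}$ is small. So (a)--(b) cannot close the argument.

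The correct route is exactly the fresh-sample idea you already sketched in step~4, and once written cleanly it \emph{is} the paper's proof. Define $f(\mu,\nu)=\E_{\mu\otimes\nu}\big[|\langle\bx,\bx'\rangle|/n\big]$ on the product measure. The inequality $\big||\langle\bx_1,\bx'\rangle|-|\langle\bx_2,\bx'\rangle|\big|\le \|\bx_1-\bx_2\|\sqrt{n}$, integrated against an optimal coupling of $(\bx_1,\bx_2)$ and an independent $\bx'\sim\nu$, gives $|f(\mu_1,\nu)-f(\mu_2,\nu)|\le W_{2,n}(\mu_1,\mu_2)$ --- this is precisely your ``compare $\bx^0$ and $\bx^s$ against a fresh $(\bx^s)'$'' step, with the absolute value (or square) dissolving the mean-vs-square worry for free. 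Then $\E W_{2,n}(\mu_{\bG_0,\beta},\mu_{\bG_s,\beta})\ge \E f(\mu_{\bG_0,\beta},\mu_{\bG_0,\beta})-\E f(\mu_{\bG_s,\beta},\mu_{\bG_0,\beta})$; the second term vanishes by Chatterjee's disorder chaos, and the first is bounded below for $\beta>\beta_c$ directly from differentiability of the Parisi free energy (Gaussian integration by parts gives $\lim_n\E_{\mu_{\bG_0}^{\otimes 2}}[\xi(\langle\bx_1,\bx_2\rangle/n)]=\int\xi\,d\zeta^*_\beta>0$ since $\zeta^*_\beta\neq\delta_0$), with no appeal to Ghirlanda--Guerra or overlap concentration. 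Your final remark about Gaussian concentration of $W_{2,n}$ in $\bG$ is also unnecessary (and not obviously true): the statement is about $\E[W_{2,n}]$.
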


While \eqref{eq:transport_disorder_chaos} is implied by the onset of replica-symmetry breaking, it also holds in a larger range of temperatures. Following \cite{alaoui2023shattering}, we say $\mu_{\bG,\beta}$ is \emph{shattered} if it admits a decomposition as in Definition~\ref{def:Shattering} below, for some parameters $(c,r,s)$ which are independent of $n$.
Informally, shattering means $\{-1,+1\}^n$ admits a $\bG$-dependent partition into 
exponentially many clusters which are separated by linear Hamming distance, each 
cluster having exponentially small weight for $\mu_{\bG,\beta}$, yet the clusters 
collectively cover nearly all the mass of $\mu_{\bG,\beta}$.

\begin{definition}\label{def:Shattering}
We say the sets (clusters) $\{\cC_1,\dots,\cC_M\}$ with $\cC_m\subseteq\{-1,+1\}^n$ are a 
\emph{symmetric shattering decomposition} of the probability measure $\mu$, 
with parameters $(c,r,s)$, if the following hold:
 \begin{enumerate}[label={\sf S\arabic*},ref={\sf S\arabic*}]
    \item 
        \label{it:clusters-cover}
        The clustering is origin symmetric: for each $\cC_i$, there exists $j\neq i$ such that $\cC_j=-\cC_i$.
        \item
        \label{it:small-prob}
        Each cluster has exponentially small probability:
        \[
        \max_{1\leq m\leq M}\,\, \mu(\cC_m)\leq e^{-cn}\, .
        \]
        \item 
        \label{it:small-diam}
        Each cluster is geometrically small:
        \[
        \diam(\cC_m)\leq r \sqrt{n} \quad\forall~m\in [M]\, .
        \]
        %  \item 
        % \label{it:clusters-isolated}
        % Each cluster is surrounded by a bottleneck:
        % \[
        % \mu_{\beta}(\cC_m)\geq (1-e^{-cN})\mu_{\beta}\big(\Ball_{r_b}(\cC_m)\big) \, .
        % \]
        \item 
        \label{it:clusters-separate}
        Distinct clusters are well-separated: 
        \[
        \min_{1\leq m_1<m_2\leq M} d\big(\cC_{m_1},\cC_{m_2}\big)\geq s \sqrt{n} \, .
        \]
        \item 
        \label{it:clusters-cover}
        Collectively, the clusters carry nearly all of the Gibbs mass:
        \[
        \mu\lt(
        \bigcup\limits_{m=1}^M \cC_m
        \rt)
        \geq 1-e^{-cn}\, .
        \] 
        \item 
        \label{it:3r-sep}
          Most cluster pairs are $3r$-separated:
          defining for $\Delta>0$ the event 
          \[
          \sep_{m_1,m_2,\Delta}
          =
          \lt\{
          d\big(\cC_{m_1},\cC_{m_2}\big)\geq \Delta \sqrt{n}
          \rt\},
          \]
          we have (for $r$ as in \ref{it:small-diam}):
          \[
          \sum_{1\leq m_1<m_2\leq M}
          \mu(\cC_{m_1})
          \mu(\cC_{m_2})
          (1-1_{\sep_{m_1,m_2,3r}})
          \leq 0.01 \, .
          \]
        \end{enumerate} 
\end{definition}

\begin{remark}
Alternative definitions of shattering are possible in which 
some of the above  conditions are omitted,
in particular condition {\sf S3} and {\sf S6}. The specific definition given here  was introduced
in \cite{alaoui2023shattering}, and is motivated by two considerations:
$(i)$~The additional conditions allow to establish disorder chaos; 
$(ii)$~They hold for $p$-spin model in a large sub-interval of 
$(\beta_{\sdyn},\beta_c)$ (and potentially in the whole interval).
\end{remark}

\begin{proposition}[{\cite[Theorem 5.1]{alaoui2023shattering}}]
Let $(c,r,s)$ be positive constants. Suppose $\xi$ is even and $(\xi,\beta)$ is such that $\mu_{\bG,\beta}$ is $(c,r,s)$-shattered with uniformly positive probability. 
  Then Eq.~\eqref{eq:transport_disorder_chaos} holds.
\end{proposition}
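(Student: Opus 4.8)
I would argue by contradiction. Write $\mu:=\mu_{\bG,\beta}$ and $\nu:=\mu_{\bG_s,\beta}$, and suppose toward a contradiction that $W_{2,n}(\mu,\nu)^2=:\eps$ is small. The plan is to show that $\eps$ small forces the shattering decompositions of $\mu$ and $\nu$ to be in near-perfect correspondence — matching cluster \emph{positions and weights} — and that this is incompatible with disorder chaos. Since $\bG_s\ed\bG$, with probability at least $p_0-o_n(1)$ (where $p_0>0$ is the uniform shattering probability) both $\mu$ and $\nu$ admit $(c,r,s)$-shattering decompositions $\{\cC^\mu_i\}$, $\{\cC^\nu_j\}$ with weights $a_i=\mu(\cC^\mu_i)\le e^{-cn}$, $b_j=\nu(\cC^\nu_j)\le e^{-cn}$; we condition on this. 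The $e^{-cn}$ smallness of these weights is what makes the final bound \emph{uniform over $s\in(0,1)$}: under $\bG\mapsto\bG_s$ the Hamiltonian $H_n$ becomes $\sqrt{1-s^2}\,H_n+(\text{an independent Gaussian field of standard deviation }\Theta(s)\sqrt n)$, so each already-tiny cluster weight is multiplied by a factor $e^{\pm\Theta(s)\sqrt n}$, and the weighted correspondence is destroyed for every fixed $s>0$.

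\textbf{Step 1: a near-optimal coupling is essentially a cluster matching.} Let $\pi$ be an optimal coupling of $\mu,\nu$, $(\bX,\bY)\sim\pi$, so $\frac1n\E_\pi\|\bX-\bY\|_2^2=\eps$. By Markov, $\|\bX-\bY\|_2\le\rho\sqrt n$ with probability $\ge 1-\eps/\rho^2$; fix $\rho$ much smaller than $r$ and $s$. If $(\bX,\bY)$ and $(\bX',\bY')$ are independent $\pi$-draws with $\bX,\bX'$ in the same $\mu$-cluster, then $\|\bX-\bX'\|_2\le r\sqrt n$ (diameter bound {\sf S3}), hence $\|\bY-\bY'\|_2\le(r+2\rho)\sqrt n$; since distinct $\nu$-clusters are $\ge s\sqrt n$ apart ({\sf S4}) and $r+2\rho<s$, $\bY$ and $\bY'$ lie in the \emph{same} $\nu$-cluster. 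Thus $\pi$ induces a (partial, and by symmetry injective) map $\phi$ with $\phi(m(\bX))=m(\bY)$ holding $\pi$-a.s.\ up to exceptional mass $O(\sqrt\eps/\rho)$ ($m(\cdot)$ denoting cluster membership; {\sf S5} is used to discard the $e^{-cn}$ mass outside the clusters). Because the cluster-level coupling $P_{ij}=\P_\pi(\bX\in\cC^\mu_i,\bY\in\cC^\nu_j)$ is then concentrated on the graph of $\phi$ and has marginals $(a_i),(b_j)$, we get both $d(\cC^\mu_i,\cC^\nu_{\phi(i)})\le\rho\sqrt n$ for the matched pairs and $\sum_i|a_i-b_{\phi(i)}|=O(\sqrt\eps/\rho)$: the matching $\phi$ aligns positions \emph{and} weights.

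\textbf{Step 2: no such matching exists — the main obstacle.} I would show that, with probability $1-o_n(1)$ over $(\bG,\bG_1)$ and for every fixed $s\in(0,1)$, once $r,\rho$ are small enough \emph{no} injective partial matching $\phi$ with $d(\cC^\mu_i,\cC^\nu_{\phi(i)})\le\rho\sqrt n$ can satisfy $\sum_{i\in\mathrm{dom}\,\phi}\min(a_i,b_{\phi(i)})\ge 1/2$. This is where the heart of the argument lies. Conditionally on $\bG$, the $\nu$-clusters and their weights are measurable functions of $\bG_s=\sqrt{1-s^2}\,\bG+s\,\bG_1$ with $\bG_1$ independent of $\bG$; expressing the weights through partition functions of $\mu$ (resp.\ $\mu_{\bG_s}$) restricted to small Hamming balls, one bounds the expected ``matched mass'' by a first/second-moment computation in the Gaussian disorder, combining (i) a union bound over the at most $e^{O(n)}$ pairs of clusters with the $e^{-cn}$ weight cap; (ii) a large-deviation estimate — this is where overlap disorder chaos \cite{chatterjee2009disorder}, the evenness of $\xi$, and {\sf S6} enter — showing that under $\mu\otimes\nu$ the event that independent samples land in co-located clusters has exponentially small probability; and (iii) the decoupling fact that requiring co-location \emph{and} weight-matching asks a $\bG$-measurable quantity to agree with a $\bG_1$-measurable one, which is far rarer than either event alone. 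The delicate point — and the part I expect to require the most care — is upgrading a per-pair estimate to one valid simultaneously for all matchings $\phi$ (a bipartite near-perfect-matching non-existence statement), for which the second-moment method together with the separation {\sf S4} (bounding the number of candidate partners of a cluster) is used.

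\textbf{Conclusion.} Steps 1 and 2 contradict one another as soon as $\eps<\eps_0$ for a constant $\eps_0>0$ depending only on $(c,r,s)$ and not on $s\in(0,1)$. Hence on the good event $W_{2,n}(\mu,\nu)^2\ge\eps_0$, and since this event has probability $\ge p_0/2$ for $n$ large, $\E[W_{2,n}(\mu_{\bG,\beta},\mu_{\bG_s,\beta})]\ge\tfrac{p_0}{2}\sqrt{\eps_0}$ for all large $n$, uniformly in $s\in(0,1)$; taking $\liminf_n$ and then $\inf_s$ gives \eqref{eq:transport_disorder_chaos}. The remaining work is bookkeeping: propagating the several ``exceptional mass'' budgets (from {\sf S4}, {\sf S5}, {\sf S6}, and the Markov step) consistently, which is what forces $\rho\ll r\ll s$ and makes $\eps_0$ an explicit small function of the shattering parameters.
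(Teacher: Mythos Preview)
The paper does not give a self-contained proof of this proposition: it simply invokes \cite[Theorem~5.1]{alaoui2023shattering} and remarks that the argument there (written for pure models) extends verbatim to general even $\xi$. So there is no in-paper argument to compare against; I assess your plan directly.

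A minor point on Step~1: the inequality $r+2\rho<s$ you use to conclude that $\bY,\bY'$ share a $\nu$-cluster is not available, since $(c,r,s)$ are \emph{fixed} parameters of the hypothesized shattering and Definition~\ref{def:Shattering} does not impose $r<s$ (only $\rho$ is yours to choose; your closing remark that the bookkeeping ``forces $\rho\ll r\ll s$'' reads as if $r,s$ were free). One can reformulate Step~1 to avoid this --- e.g.\ by only recording that $\nu$ places mass $\ge 1-O(\eps/\rho^2)$ on the $\rho\sqrt n$-neighborhood of $\bigcup_i \cC^\mu_i$, or by using~\ref{it:3r-sep} in place of~\ref{it:clusters-separate} to obtain injectivity on $3r$-separated pairs --- so this is not fatal.

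The genuine gap is Step~2. The heuristic that under $\bG\mapsto\bG_s$ each cluster weight is ``multiplied by a factor $e^{\pm\Theta(s)\sqrt n}$'' is not correct: weights are ratios of restricted to full partition functions, and both move under the perturbation. More importantly, the three tools you list do not combine to rule out a matching. Overlap disorder chaos, even in an exponentially strong form, only bounds $\sum_{(i,j)\,\text{co-located}}a_ib_j$; but for a hypothetical near-perfect matching $\phi$ with $b_{\phi(i)}\approx a_i$ this sum is $\sum_i a_i^2\le e^{-cn}$ automatically by~\ref{it:small-prob}, so its smallness is perfectly consistent with matched mass $\sum_i\min(a_i,b_{\phi(i)})\approx 1$. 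Your point~(iii) posits a decoupling between a $\bG$-measurable and a $\bG_1$-measurable quantity, but the $\nu$-clusters and their weights are functions of $\bG_s=\sqrt{1-s^2}\,\bG+s\,\bG_1$ and hence of \emph{both} tensors; there is no clean independence to exploit. And a union bound over $e^{O(n)}$ cluster pairs against an $e^{-cn}$ per-pair cap works only when $c$ dominates the cluster entropy, which is not assumed --- indeed the paper records (proof of Corollary~\ref{cor:concrete-shattering-hardness}) that the constant in~\eqref{eq:transport_disorder_chaos} depends only on $(r,s)$, not on $c$. You have the right two-step architecture, but the mechanism that actually forces the $\nu$-mass away from the $\mu$-clusters --- the substantive content of \cite[Theorem~5.1]{alaoui2023shattering} --- is not present in the outline.
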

(Technically \cite[Theorem 5.1]{alaoui2023shattering} is stated for pure models only, but the above follows by the exact same proof.)

Finally, we obtain the desired hardness results by reversing the implication 
in Corollary \ref{cor:stable}.  In the presence of replica-symmetry breaking or shattering, 
no stable algorithm  can approximately sample from the measure 
$\mu_{\bG,\beta}$ in $W_{2,n}$ sense.
\begin{theorem}[Hardness for stable sampling]  
\label{thm:disorder-stable-LB}
Let $\{\ALG_n\}_{n\geq 1}$ be a family of randomized algorithms  
which is stable with respect to disorder as per Definition \ref{def:Stable}
at inverse temperature $\beta$. 
Assume $\xi$ is even and let $\mu^{\salg}_{\bG,\beta}$ be the law of the output 
$\ALG_n(\bG,\beta,\omega)$ conditional on $\bG$. Then
\[
   \liminf_{n\to\infty} \E \big[W_{2,n}(\mu^{\salg}_{\bG,\beta},~\mu_{\bG,\beta}) \big] >0 \, ,
\]
under either one of the following conditions:
\begin{enumerate}[label={\sf C\arabic*},ref={\sf C\arabic*}]
  \item 
  \label{it:shattering-implies-hardness-Ising}
  $\beta>\beta_c$.
  \item 
  \label{it:shattering-implies-hardness-sphere}
  $\mu_{\bG,\beta}$ is shattered with probability at least $1/2$.
\end{enumerate}
\end{theorem}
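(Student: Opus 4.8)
The plan is to reverse the implication proven in Corollary~\ref{cor:stable}: there we used stability of the algorithm \emph{plus} closeness to the Gibbs measure to deduce stability of the Gibbs measure itself; now, knowing the Gibbs measure is \emph{unstable} (via transport disorder chaos), we conclude that a stable algorithm cannot track it. Concretely, fix a stable algorithm $\{\ALG_n\}$ and suppose for contradiction that $\liminf_{n\to\infty}\E[W_{2,n}(\mu^{\salg}_{\bG,\beta},\mu_{\bG,\beta})]=0$ along some subsequence. The key ingredient is the triangle inequality for $W_{2,n}$: for the perturbed disorder $\bG_s$,
\begin{equation}
W_{2,n}(\mu_{\bG,\beta},\mu_{\bG_s,\beta}) \le W_{2,n}(\mu_{\bG,\beta},\mu^{\salg}_{\bG,\beta}) + W_{2,n}(\mu^{\salg}_{\bG,\beta},\mu^{\salg}_{\bG_s,\beta}) + W_{2,n}(\mu^{\salg}_{\bG_s,\beta},\mu_{\bG_s,\beta})\, .
\end{equation}
The first term is small by assumption (in $\E$ or in probability along the subsequence); the third term has the same law as the first since $\bG_s\ed \bG$, so it is also small; the middle term is small by disorder-stability of $\ALG_n$ as $s\to 0$ (after $n\to\infty$). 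Taking first $n\to\infty$ and then $s\to 0$ would force $\inf_{s}\liminf_n \E[W_{2,n}(\mu_{\bG,\beta},\mu_{\bG_s,\beta})]=0$, contradicting Theorem~\ref{thm:disorder_chaos_sk} under condition \ref{it:shattering-implies-hardness-Ising} ($\beta>\beta_c$), and contradicting the Proposition quoted from \cite[Theorem 5.1]{alaoui2023shattering} under condition \ref{it:shattering-implies-hardness-sphere} (shattering with probability $\ge 1/2$, which gives the uniformly positive probability hypothesis of that Proposition, hence \eqref{eq:transport_disorder_chaos}).

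The main technical care needed is in the interchange of limits and in handling the ``with probability'' and expectation qualifiers cleanly. I would first reduce to a statement purely in expectation: assume $\liminf_n \E[W_{2,n}(\mu^{\salg}_{\bG,\beta},\mu_{\bG,\beta})]=\varepsilon_0$; we want to show $\varepsilon_0>0$. Choose a subsequence along which the $\liminf$ is attained. On this subsequence, apply the triangle inequality above and take expectations. Since $W_{2,n}$ is bounded (both measures live on $[-1,1]^n$, so $W_{2,n}\le 2$), all terms are bounded and we may freely pass to $\liminf$. The subtlety is that disorder-stability is phrased with $\plim_{n\to\infty}$ (convergence in probability) rather than in expectation; but again boundedness of $W_{2,n}$ upgrades $\plim_{n\to\infty} W_{2,n}(\mu^{\salg}_{\bG,\beta},\mu^{\salg}_{\bG_s,\beta})=0$ as $s\to 0$ to convergence in $L^1$, so $\lim_{s\to 0}\limsup_{n\to\infty}\E[W_{2,n}(\mu^{\salg}_{\bG,\beta},\mu^{\salg}_{\bG_s,\beta})]=0$. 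Likewise the first and third terms: if their $\E$-liminf is $\varepsilon_0$ small, then along a sub-subsequence (using boundedness and, e.g., that $\bG\mapsto W_{2,n}$ is measurable) we control them. Putting these together yields $\inf_s\liminf_n \E[W_{2,n}(\mu_{\bG,\beta},\mu_{\bG_s,\beta})]\le 2\varepsilon_0$, so if $\varepsilon_0=0$ we contradict the strict positivity in \eqref{eq:transport_disorder_chaos}.

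I expect the only real obstacle to be bookkeeping: making sure that ``$\liminf$ along a subsequence'' of the algorithm error, combined with ``$\lim_{s\to 0}$ after $\limsup_n$'' of the stability term, can be assembled without the quantifier order going wrong — one must fix a target error level, then choose $s$ small (depending on the stability modulus), then go to $n$ large along the chosen subsequence, and only then compare to the constant lower bound from disorder chaos, which does not depend on the subsequence. Since every quantity involved is uniformly bounded by $2$, dominated convergence and the triangle inequality do all the work, and there is no delicate estimate to grind through; the content is entirely in Theorem~\ref{thm:disorder_chaos_sk} and the shattering Proposition, which we are allowed to assume. Finally, I would remark that the identical argument, replacing disorder perturbations $\bG_s$ and Theorem~\ref{thm:disorder_chaos_sk} by temperature perturbations $\beta'\to\beta$ and the corresponding temperature-chaos statement, would give a hardness result for temperature-stable algorithms as well, though the theorem as stated only claims the disorder version.
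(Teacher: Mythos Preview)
Your proposal is correct and follows essentially the same route as the paper: the triangle inequality on $W_{2,n}$, the equality in law $\bG_s\ed\bG$ to identify the two outer terms, the boundedness of $W_{2,n}$ to upgrade the $\plim$ in the stability definition to convergence in expectation, and finally the contradiction with \eqref{eq:transport_disorder_chaos} (supplied by Theorem~\ref{thm:disorder_chaos_sk} for $\beta>\beta_c$ and by the cited shattering proposition otherwise). The paper's proof is shorter and skips the limit-bookkeeping you spell out, but the argument is the same.
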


In the case of pure models, the latter condition~\ref{it:shattering-implies-hardness-sphere} 
implies  concrete conclusions about the failure of stable algorithms 
 using the results of \cite{alaoui2023shattering,gamarnik2023shattering} (which show 
 that shattering holds in the two cases below). We state such conclusions
 both for the case of Ising spin glasses (treated in the rest of this paper and in
 \cite{gamarnik2023shattering})
 and of spherical spin glasses (treated in \cite{alaoui2023shattering}).
 We recall that spherical spin glasses are defined as in \eqref{eq:sk}, but with reference measure 
 the uniform measure over the sphere of radius 
 $\sqrt{n}$ (which we denote by $\mu^{\spher}_0$), instead of the uniform measure over $\{-1,+1\}^n$. Namely
 \begin{align}
\label{eq:spherical}
  \mu^{\spher}_{\bG,\beta}(\de \bx) = \frac{1}{Z(\beta,\bG)}\, e^{\beta H_n(\bx)} \, \mu^{\spher}_0(\de\bx)\, ,
\end{align}  

\begin{corollary}
\label{cor:concrete-shattering-hardness}
In the setting of Theorem~\ref{thm:disorder-stable-LB}, let $\xi(t)=t^{p}$ and suppose that either:
  \begin{enumerate}[label={(\arabic*)},ref={(\arabic*)}]
  \item 
  \label{it:shattering-Ising}
  For the Ising spin glass model of  
  Eq.~\eqref{eq:sk}: $\beta\in (\sqrt{\log 2},\sqrt{2\log 2})$, and $p\geq P(\beta)$. 
  \item 
  \label{it:shattering-spherical}
  For the spherical spin glass model of Eq.~\eqref{eq:spherical}:
  $p,\beta\geq C$ for $C$ an absolute constant and 
  $\beta\neq\beta_c(\xi)$. 
\end{enumerate}
  Then 
  \[
   \liminf_{n\to\infty} \E \big[W_{2,n}(\mu^{\salg}_{\bG,\beta},~\mu_{\bG,\beta}) \big] >0 \, .
  \]
\end{corollary}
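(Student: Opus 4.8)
The plan is to read both statements off of Theorem~\ref{thm:disorder-stable-LB}, so that the whole task reduces to checking that hypothesis {\sf C2} of that theorem — namely that the relevant Gibbs measure is shattered in the sense of Definition~\ref{def:Shattering} with probability at least $1/2$ over $\bG$ — holds in each of the two parameter regimes, and then (for the spherical case) that Theorem~\ref{thm:disorder-stable-LB} transfers verbatim to the spherical measure \eqref{eq:spherical}. Since the shattering results I intend to cite produce such a decomposition with probability $1-o_n(1)$ and with $n$-independent parameters $(c,r,s)$, the threshold $1/2$ is cleared for all large $n$, which is all that the $\liminf_{n\to\infty}$ requires.

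For the Ising model, case~\ref{it:shattering-Ising}, I would invoke \cite{gamarnik2023shattering}, which shows that for each $\beta\in(\sqrt{\log 2},\sqrt{2\log 2})$ there is a finite $P(\beta)$ such that, for even $p\ge P(\beta)$, the cube $\{-1,+1\}^n$ decomposes with probability $1-o_n(1)$ into clusters obeying items {\sf S1}--{\sf S6} of Definition~\ref{def:Shattering}. The only mild point is that the auxiliary conditions {\sf S3} (bounded diameter) and {\sf S6} ($3r$-separation of most cluster pairs) are sometimes omitted from statements of shattering; as already noted in the excerpt for \cite[Theorem~5.1]{alaoui2023shattering}, they are nonetheless consequences of the same first- and second-moment/interpolation estimates that give {\sf S1}, {\sf S2}, {\sf S4}, {\sf S5}. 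Granting this, {\sf C2} holds and Theorem~\ref{thm:disorder-stable-LB} finishes the Ising case.

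For the spherical model, case~\ref{it:shattering-spherical}, the shattered decomposition — with respect to exactly Definition~\ref{def:Shattering}, whose formulation originates in \cite{alaoui2023shattering} — is delivered by \cite{alaoui2023shattering} for $p,\beta$ above an absolute constant with $\beta\neq\beta_c(\xi)$, again with probability $1-o_n(1)$. It then remains to supply the spherical analogue of Theorem~\ref{thm:disorder-stable-LB}, since the version in the excerpt is stated for the Ising measure \eqref{eq:sk}; I would obtain it by retracing the same proof. Concretely: (i) a spherical shattered decomposition implies the transport disorder chaos bound \eqref{eq:transport_disorder_chaos} directly via \cite[Theorem~5.1]{alaoui2023shattering}, which is itself stated for pure spherical models; (ii) the deduction of the $W_{2,n}$ lower bound from transport disorder chaos uses only the triangle inequality for $W_{2,n}$, the assumed disorder stability of $\ALG_n$, and the fact that $\bG_s\ed\bG$ — none of which involves the reference measure; and (iii) on the sphere of radius $\sqrt n$ the functional $W_{2,n}$ is uniformly bounded, so the convergences in probability that appear can be upgraded to convergence in expectation, which is what makes the diagonal limit over $(n,s)$ close. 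Combining (i)--(iii) yields the spherical analogue, hence the claim.

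I expect the only real obstacle to be bookkeeping rather than mathematics: confirming that the decompositions constructed in \cite{gamarnik2023shattering} and \cite{alaoui2023shattering} satisfy all six items of Definition~\ref{def:Shattering} (and not merely a weaker variant), and checking that the proof of Theorem~\ref{thm:disorder-stable-LB} is genuinely insensitive to the Ising-versus-spherical distinction. No new probabilistic input beyond the cited shattering theorems and the assumed disorder stability of $\ALG_n$ is needed.
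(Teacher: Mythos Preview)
Your approach is essentially the paper's: invoke the shattering results of \cite{gamarnik2023shattering} and \cite{alaoui2023shattering} to verify condition~\ref{it:shattering-implies-hardness-sphere} of Theorem~\ref{thm:disorder-stable-LB}, and note that the deduction of hardness from transport disorder chaos \eqref{eq:transport_disorder_chaos} is insensitive to whether the reference measure is Ising or spherical.

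One inaccuracy worth flagging: you write that \cite{alaoui2023shattering} delivers a shattered decomposition ``for $p,\beta$ above an absolute constant with $\beta\neq\beta_c(\xi)$.'' In fact the shattering result there covers only $\beta\in(C,\beta_c)$; above $\beta_c$ the Gibbs measure is in the RSB phase, not shattered in the sense of Definition~\ref{def:Shattering}. For $\beta>\beta_c$ in the spherical case you must instead invoke the spherical analogue of condition~\ref{it:shattering-implies-hardness-Ising} (RSB $\Rightarrow$ transport disorder chaos, via the spherical Parisi formula and the argument of Theorem~\ref{thm:disorder_chaos_sk}). The paper's own proof is terse on this split as well, but your sentence as written misattributes the $\beta>\beta_c$ regime to shattering.
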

The results of this section are proved in Section~\ref{sec:disorder}. 
We expect that shattering and hence impossibility of stable sampling holds for all 
$\beta>\beta_{\sdyn}$ as defined in the introduction, but proving this
remains an open problem. (See also Appendix \label{app:prelim} for the explicit
expression of $\beta_{\sdyn}(\xi)$ as predicted in the physics literature).

\begin{remark}
\label{rem:no-degeneration-near-criticality}
  Although Corollary~\ref{cor:concrete-shattering-hardness} part~\ref{it:shattering-spherical} does not directly address the critical temperature $\beta=\beta_c$, the proof shows that $\liminf_{\beta\uparrow\beta_c}\liminf_{n\to\infty} \E \big[W_{2,n}(\mu^{\salg}_{\bG,\beta},~\mu_{\bG,\beta}) \big]>0$.
  Equivalently in Corollary~\ref{cor:concrete-shattering-hardness} part~\ref{it:shattering-spherical}, the values $(\beta_n)_{n\geq 1}$ may converge up to $\beta_c$, as long as the convergence is sufficiently slow.
\end{remark}

\section{Properties of stochastic localization}
\label{sec:stochloc}

We collect in this section the main properties of the stochastic localization 
process needed for our analysis. To be definite, we will focus
on the stochastic localization process for the Gibbs measure \eqref{eq:sk},
although most of what we will say generalizes to other probability measures in $\R^n$,
 under suitable tail conditions. Throughout this section, the disorder $\bG$ is viewed as fixed.
 
Recalling the tilted measure $\mu_{\bG,\by}$ of Eq.~\eqref{eq:sktilted},
and the SDE of Eq.~\eqref{eq:GeneralSDE}, we introduce the shorthand
\[
 \mu_t = \mu_{\bG,\by(t)}\, .
 \]

 The following properties are well known. See for instance~\cite[Propositions 9, 10]{eldan2022log} 
 or~\cite{eldan2020taming}. We provide proofs for the reader's convenience.
 \begin{lemma}\label{prop:stochloc1}
For all $t \ge 0$ and all $\bx \in \{-1,+1\}^n$, 
\begin{equation}\label{eq:L}
 \rmd \mu_t(\bx) = \mu_t(\bx) \langle \bx - \m_{\bG,\by(t)} , \rmd \bB(t)\rangle  \, .
 \end{equation}
As a consequence, for any function $\varphi : \R^n \to \R^m$, the process 
$\big(\E_{\bx \sim \mu_t}\big[\varphi(\bx)\big]\big)_{t \ge 0}$ is a martingale.  
 \end{lemma}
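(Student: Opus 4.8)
The plan is to derive the stochastic differential equation \eqref{eq:L} for $\mu_t(\bx)$ directly from the representation $\by(t) = t\bx_0 + \bB'(t)$, where $\bx_0\sim\mu$, and then deduce the martingale property. First I would recall that for fixed $\bx\in\{-1,+1\}^n$, the tilted measure value is
\[
\mu_t(\bx) = \frac{e^{\<\by(t),\bx\>}\,\mu(\bx)}{\sum_{\bx'} e^{\<\by(t),\bx'\>}\,\mu(\bx')}\, ,
\]
using the fact that $\|\bx'\|_2^2 = n$ for all $\bx'$ in the support, so the quadratic term in \eqref{eq:sktilted} cancels between numerator and denominator. Then I would compute $\rmd\mu_t(\bx)$ via It\^o's formula applied to the function $(\by,t)\mapsto e^{\<\by,\bx\>}/Z(\by)$, treating $\by(t)$ as the solution of the SDE \eqref{eq:GeneralSDE}, i.e.\ $\rmd\by(t) = \m_t\,\rmd t + \rmd\bB(t)$ with $\m_t = \m_{\bG,\by(t)}$.

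The key computation is the following. Write $f_{\bx}(\by) = \<\by,\bx\> - \log Z(\by)$, so $\mu_t(\bx) = e^{f_{\bx}(\by(t))}\mu(\bx)$. One checks that $\nabla_{\by} \log Z(\by) = \m_{\bG,\by}$ and $\nabla^2_{\by}\log Z(\by) = \Cov_{\mu_{\bG,\by}}(\bx)$, hence $\nabla f_{\bx}(\by) = \bx - \m_{\bG,\by}$ and $\nabla^2 f_{\bx}(\by) = -\Cov_{\mu_{\bG,\by}}(\bx)$. It\^o's formula then gives
\[
\rmd f_{\bx}(\by(t)) = \<\bx - \m_t, \rmd\bB(t)\> + \<\bx-\m_t,\m_t\>\,\rmd t - \frac12\Tr\big(\Cov_{\mu_t}(\bx)\big)\,\rmd t + \frac12\|\bx-\m_t\|_2^2\,\rmd t\, ,
\]
where the last two terms come from the quadratic variation of $\bB(t)$. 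Exponentiating via It\^o again (or directly applying It\^o to $e^{f_{\bx}(\by(t))}$, which contributes an extra $\frac12\|\nabla f_{\bx}\|_2^2 = \frac12\|\bx-\m_t\|_2^2$ term with opposite sign), the drift terms must cancel — this is forced by the fact that $\mu_t$ is a probability measure for every $t$, equivalently $\sum_{\bx}\rmd\mu_t(\bx) = 0$. After the cancellation one is left precisely with $\rmd\mu_t(\bx) = \mu_t(\bx)\<\bx-\m_t,\rmd\bB(t)\>$, which is \eqref{eq:L}. I would double-check the cancellation by noting that $\sum_{\bx}\mu_t(\bx)(\bx-\m_t) = 0$ by definition of $\m_t$, so the stochastic term integrates to zero against the measure as required, and by verifying the It\^o correction terms match using $\sum_{\bx}\mu_t(\bx)\|\bx-\m_t\|_2^2 = \Tr\Cov_{\mu_t}(\bx)$.

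For the martingale claim, given $\varphi:\R^n\to\R^m$ I would write $\E_{\bx\sim\mu_t}[\varphi(\bx)] = \sum_{\bx}\varphi(\bx)\mu_t(\bx)$ and apply \eqref{eq:L} termwise:
\[
\rmd\,\E_{\bx\sim\mu_t}[\varphi(\bx)] = \Big\la \sum_{\bx}\varphi(\bx)\mu_t(\bx)(\bx-\m_t),\, \rmd\bB(t)\Big\ra = \big\la \Cov_{\mu_t}(\varphi(\bx),\bx),\,\rmd\bB(t)\big\ra\, ,
\]
which has zero drift, hence is a local martingale; since $\varphi$ is bounded on $\{-1,+1\}^n$ (the support is finite) the integrand is bounded and it is a true martingale. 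I do not anticipate a serious obstacle here — the main care needed is the bookkeeping of It\^o correction terms and confirming that the drift genuinely vanishes; alternatively, one can bypass It\^o on the ratio entirely by first establishing $\rmd\by(t)$ has the stated form and then using the known conditional-law interpretation $\mu_t = \Law(\bx_0\mid \by(t))$ together with the general fact that conditional expectations along an observation process form a martingale (the innovation representation), but the direct It\^o computation is cleaner to present and self-contained.
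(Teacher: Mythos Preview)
Your approach is the same as the paper's --- compute $\rmd\log\mu_t(\bx)$ by It\^o and then exponentiate --- but the bookkeeping is off in a way that matters, and the argument you offer for the drift cancellation is not valid as stated.

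The displayed formula for $\rmd f_{\bx}(\by(t))$ is wrong: the It\^o correction from the quadratic variation of $\bB$ contributes only $\tfrac12\Tr(\nabla^2 f_{\bx})\,\rmd t=-\tfrac12\Tr(\Cov_{\mu_t})\,\rmd t$; there is no additional $+\tfrac12\|\bx-\m_t\|^2\,\rmd t$ at this stage. Moreover, applying It\^o to $e^{f_{\bx}}$ adds $+\tfrac12\|\nabla f_{\bx}\|^2\,\rmd t=+\tfrac12\|\bx-\m_t\|^2\,\rmd t$ with the \emph{same} sign, not the opposite. With these corrections the drift of $\rmd\mu_t(\bx)/\mu_t(\bx)$ is $\langle\bx-\m_t,\m_t\rangle-\tfrac12\Tr(\Cov_{\mu_t})+\tfrac12\|\bx-\m_t\|^2$, and one verifies it vanishes for \emph{each} $\bx\in\{-1,+1\}^n$ by expanding and using $\|\bx\|^2=n=\E_{\mu_t}\|\bx\|^2$; equivalently, the paper shows $\rmd\log\mu_t(\bx)=\langle\bx-\m_t,\rmd\bB_t\rangle-\tfrac12\|\bx-\m_t\|^2\,\rmd t$, after which exponentiation cancels the drift exactly.

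The more serious issue is your fallback justification: the fact that $\sum_{\bx}\mu_t(\bx)=1$ only forces $\sum_{\bx}(\text{drift of }\mu_t(\bx))=0$, not that each individual drift vanishes, which is what \eqref{eq:L} asserts. The two checks you propose ($\sum_{\bx}\mu_t(\bx)(\bx-\m_t)=0$ and $\sum_{\bx}\mu_t(\bx)\|\bx-\m_t\|^2=\Tr\Cov_{\mu_t}$) are likewise statements about sums over $\bx$ and do not establish the pointwise claim. You must actually carry out the computation and invoke $\|\bx\|^2=n$ on the support; once the sign errors above are fixed this is straightforward, and your martingale argument for $\E_{\mu_t}[\varphi(\bx)]$ is then fine.
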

 \begin{proof}
 Let us evaluate the differential of $\log \mu_t$. By writing $Z_t$
 for the normalization constant $Z(\by(t))$ of Eq.~\eqref{eq:sktilted}, we get
\begin{equation}\label{eq:logL}
\rmd \log \mu_t (\bx) =  \langle \rmd \by(t) , \bx\rangle - \rmd \log Z_t\, .
\end{equation}
 Using It\^{o}'s formula for $Z_t$ we have
\begin{align*}
    \rmd Z_t 
    &= 
    \rmd \sum_{\bx \in \{-1,+1\}^n} e^{\beta H_n(\bx) + \langle \by(t) ,\bx\rangle}
    \\
    &=  
    \sum_{\bx \in \{-1,+1\}^n}  \big(\langle \rmd  \by(t) , \bx\rangle + \frac{1}{2}\|\bx\|_2^2 \rmd t\big) 
    e^{\beta H_n(\bx) + \langle  \by(t) , \bx\rangle} \, .
\end{align*}
Therefore, denoting by $[Z]_t$ the quadratic variation process associated to $Z_t$,
\begin{align*}
\rmd \log Z_t &= \frac{\rmd Z_t}{Z_t} - \frac{1}{2} \frac{\rmd [Z]_t}{Z_t^2} \\
&= \langle \rmd  \by(t) , \m_{\bG,\by(t)}\rangle  +  \frac{1}{2} \E_{\mu_t}[\|\bx\|^2] \rmd t -  \frac{1}{2} \|\m_{\bG,\by(t)}\|^ 2 \rmd t \, \\
&= \langle \rmd  \by(t) , \m_{\bG,\by(t)}\rangle  +  \frac{n}{2}  -  \frac{1}{2} \|\m_{\bG,\by(t)}\|^ 2 \rmd t \, .
\end{align*}
Substituting in~\eqref{eq:logL} we obtain
\begin{align*}
\rmd \log \mu_t (\bx) &= \langle \rmd  \by(t) , \bx - \m_{\bG,\by(t)}\rangle -  \frac{n}{2} \rmd t +  \frac{1}{2} \|\m_{\bG,\by(t)}\|^ 2 \rmd t \\
&= \langle \rmd \bB_t , \bx - \m_{\bG,\by(t)}\rangle - \frac{1}{2} \|\bx - \m_{\bG,\by(t)}\|^ 2 \rmd t \, .
\end{align*}
Applying It\^{o}'s formula to $e^{\log \mu_t(\bx)}$ yields the desired result. 

Finally, Eq.~\eqref{eq:L} implies that $\mu_t(\bx)$ is a martingale for every
$\bx\in\{-1,+1\}^n$. Since $\E_{\bx \sim \mu_t}\big[\varphi(\bx)\big]$ is
a linear combination of martingales, it is itself a martingale.
\end{proof}

 \begin{lemma}[\cite{eldan2020taming}]\label{lem:covbound}
For all $t >0$,
\begin{equation}\label{eq:covbound}
\E \cov(\mu_t) \preceq \frac{1}{ t } \bI_n \, .
 \end{equation}
 \end{lemma}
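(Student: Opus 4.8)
The plan is to exploit the martingale structure established in Lemma~\ref{prop:stochloc1} together with the equivalent representation $\by(t)=t\bx+\bB'(t)$, under which $\mu_t$ is the conditional law of $\bx\sim\mu$ given $\by(t)$. First I would fix any unit vector $\bv\in\R^n$ and consider the scalar martingale $M_t:=\E_{\bx\sim\mu_t}[\langle\bv,\bx\rangle]=\langle\bv,\m_{\bG,\by(t)}\rangle$. By Eq.~\eqref{eq:L} its quadratic variation satisfies $\rmd[M]_t=\big(\E_{\mu_t}[\langle\bv,\bx\rangle^2]-\langle\bv,\m_t\rangle^2\big)\rmd t=\langle\bv,\cov(\mu_t)\bv\rangle\,\rmd t$, so that
\begin{equation}
\E\big[\langle\bv,\m_{\bG,\by(t)}\rangle^2\big]-\langle\bv,\m_{\bG,\by(0)}\rangle^2=\E\int_0^t\langle\bv,\cov(\mu_s)\bv\rangle\,\rmd s\, .
\end{equation}
Since the left side is bounded by $\E[\langle\bv,\m_{\bG,\by(t)}\rangle^2]\le\|\bv\|_2^2=1$ (as $\m_t$ lies in the hypercube, $\|\m_t\|_\infty\le 1$, but more simply $\langle\bv,\m_t\rangle^2\le\|\bv\|^2\|\m_t\|^2$ and $\|\m_t\|^2\le n$ is too weak — the right bound comes from conditional variance being at most the unconditional one), I need a sharper input.

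The cleaner route is to use the conditional-expectation representation directly. Write $\m_{\bG,\by(t)}=\E[\bx\mid\by(t)]$ with $\by(t)=t\bx+\bB'(t)$. Then for fixed $\bv$, the process $t\mapsto\langle\bv,\m_{\bG,\by(t)}\rangle$ is a martingale with terminal value $\langle\bv,\bx\rangle$ as $t\to\infty$ (localization), and $M_0=\langle\bv,\E_\mu[\bx]\rangle$. The key identity is the tower/variance decomposition: by It\^o, $\E[\langle\bv,\bx\rangle^2]-\E[M_0^2]=\E\int_0^\infty\langle\bv,\cov(\mu_s)\bv\rangle\,\rmd s$, and since the integrand is nonnegative, for any $t>0$,
\begin{equation}
t\cdot\E\big[\langle\bv,\cov(\mu_t)\bv\rangle\big]\ \le\ \E\int_0^t\langle\bv,\cov(\mu_s)\bv\rangle\,\rmd s\ \le\ \E[\langle\bv,\bx\rangle^2]\le 1\, ,
\end{equation}
provided I can show $s\mapsto\E\langle\bv,\cov(\mu_s)\bv\rangle$ is nonincreasing. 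That monotonicity is exactly the point: from $\rmd\m_{\bG,\by(t)}=\cov(\mu_t)\,\rmd\bB(t)$ one gets $\rmd\E[\cov(\mu_t)]=-\E\big[\text{(third-moment tensor terms)}\big]\rmd t$, and the relevant scalar derivative $\frac{\rmd}{\rmd t}\E\langle\bv,\cov(\mu_t)\bv\rangle$ equals $-\E\big[\sum_i(\E_{\mu_t}[(\langle\bv,\bx-\m_t\rangle)(x_i-m_{t,i})^2]-\dots)\big]$; more transparently, applying It\^o's formula to the function $\bw\mapsto\langle\bv,\cov$ is messy, so instead I would differentiate $\E[\langle\bv,\m_t\rangle^2]$ twice in the Doob sense: it is convex and increasing in $t$ as an average of martingale-square increments, hence its derivative $\E\langle\bv,\cov(\mu_t)\bv\rangle$ — wait, that shows the derivative is the integrand, not its monotonicity. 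The actual monotonicity follows because $\cov(\mu_t)$ is, for the tilted measures, itself a reverse supermartingale; concretely Eq.~\eqref{eq:L} gives $\rmd\cov(\mu_t)_{ij}=(\text{martingale})-\sum_k\E_{\mu_t}[(x_i-m_i)(x_j-m_j)(x_k-m_k)]^2\text{-type}\ \rmd t\preceq(\text{mart})$, so $t\mapsto\E[\cov(\mu_t)]$ is nonincreasing in the PSD order.

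Granting that, the bound is immediate: $\E\langle\bv,\cov(\mu_t)\bv\rangle\le t^{-1}\int_0^t\E\langle\bv,\cov(\mu_s)\bv\rangle\,\rmd s\le t^{-1}\E[\langle\bv,\bx\rangle^2]\le t^{-1}$ for every unit $\bv$, which is precisely $\E\cov(\mu_t)\preceq t^{-1}\bI_n$. The main obstacle I anticipate is making the monotonicity step fully rigorous: one must justify interchanging expectation and the $t\to\infty$ limit (uniform integrability of $\langle\bv,\m_t\rangle^2$, which holds since it is bounded by $1$ using $\|\m_t\|_\infty\le 1$ and choosing $\bv$ a coordinate — for general $\bv$ one works coordinatewise and sums, giving $\Tr\E\cov(\mu_t)\le n/t$, but the PSD statement needs the per-direction argument, so I would record $|\langle\bv,\m_t\rangle|\le\|\bv\|_1\le\sqrt n\|\bv\|_2$ — too lossy; the correct move is to note $\langle\bv,\m_t\rangle^2\le\E_{\mu_t}[\langle\bv,\bx\rangle^2]$ and the latter is a bounded martingale-dominated quantity). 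I would therefore present the proof via the integrated identity $\E\int_0^\infty\langle\bv,\cov(\mu_s)\bv\rangle\rmd s=\var(\langle\bv,\bx\rangle)\le 1$ combined with the supermartingale property of $t\mapsto\E\cov(\mu_t)$, citing \cite{eldan2020taming} for the latter if a self-contained derivation proves too long.
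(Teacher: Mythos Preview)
Your proposal identifies the right building blocks---the martingale structure of $\m_t$ and the fact that $t\mapsto\E[\cov(\mu_t)]$ is PSD-nonincreasing---but the way you combine them does not close. Two concrete problems:

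\emph{(i) Quadratic variation.} From Eq.~\eqref{eq:L} one has $\rmd M_t=\langle A_t\bv,\rmd\bB(t)\rangle$ with $A_t:=\cov(\mu_t)$, so $\rmd[M]_t=\|A_t\bv\|^2\,\rmd t=\langle\bv,A_t^2\bv\rangle\,\rmd t$, not $\langle\bv,A_t\bv\rangle\,\rmd t$. Hence the integral identity you invoke is for $\E\int\langle\bv,A_s^2\bv\rangle\,\rmd s$, not for $\E\int\langle\bv,A_s\bv\rangle\,\rmd s$.

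\emph{(ii) The variance bound.} Even granting your monotonicity step, the inequality $\var_{\mu}(\langle\bv,\bx\rangle)\le 1$ for unit $\bv$ is false on the hypercube: take $\mu$ uniform on $\{+\bfone,-\bfone\}$ and $\bv=\bfone/\sqrt n$, for which the variance equals $n$. You flag this yourself (``too lossy'') but do not resolve it; without it, the chain $t\,\E\langle\bv,A_t\bv\rangle\le\int_0^t\E\langle\bv,A_s\bv\rangle\,\rmd s\le 1$ breaks.

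The missing idea is to use the drift of $A_t$ directly as a Riccati inequality rather than through an integrated bound. Applying It\^o to $A_t$ via Eq.~\eqref{eq:L} gives $\rmd A_t=(\mbox{martingale})-A_t^2\,\rmd t$, hence $\frac{\rmd}{\rmd t}\E[A_t]=-\E[A_t^2]\preceq -(\E[A_t])^2$ by operator Jensen. For any unit $\bv$ set $f(t)=\langle\bv,\E[A_t]\bv\rangle$; then
\[
\dot f(t)=-\langle\bv,\E[A_t^2]\bv\rangle\le -\langle\bv,(\E A_t)^2\bv\rangle=-\|(\E A_t)\bv\|^2\le -f(t)^2,
\]
and integrating $\dot f\le -f^2$ yields $f(t)\le 1/t$ \emph{irrespective of} $f(0)$. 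This is precisely $\E\cov(\mu_t)\preceq t^{-1}\bI_n$, with no appeal to any a priori bound on $\cov(\mu)$. (The paper itself does not reprove this and simply cites \cite{eldan2020taming}; the above is Eldan's argument.)
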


\begin{lemma}\label{lem:W2bound}
For all $t>0$, 
\begin{equation}\label{eq:W2bound}
W_{2,n}\big(\mu_{\bG}, \cL(\m_{\bG,\by(t)})\big)^2 \le \frac{1}{t}\, .
 \end{equation}
In particular, the mean vector $\m_{\bG,\by(t)}$ converges in distribution to a random vector $\bx^\star \sim \mu_{\bG}$ as $t\to \infty$. 
\end{lemma}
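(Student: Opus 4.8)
The plan is to derive the Wasserstein bound directly from the covariance estimate of Lemma~\ref{lem:covbound} together with the martingale structure established in Lemma~\ref{prop:stochloc1}. The key observation is that, because $\big(\E_{\bx\sim\mu_t}[\varphi(\bx)]\big)_{t\ge0}$ is a martingale for every bounded $\varphi$ and $\mu_{\by(0),0}=\mu_{\bG}$, the mean vector $\m_t := \m_{\bG,\by(t)}$ is itself a martingale; moreover, for each fixed coordinate function $\varphi(\bx)=x_i$ we get $\E[m_{t,i}] = \E_{\bx\sim\mu_{\bG}}[x_i]$, and more usefully, the pair $(\bx^\star,\m_t)$ can be coupled through the representation $\by(t)=t\bx^\star+\bB'(t)$ with $\bx^\star\sim\mu_{\bG}$ and $\m_t = \E[\bx^\star\mid\by(t)]$, as recalled in the introduction. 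Under this coupling $\m_t$ is exactly the conditional expectation of $\bx^\star$ given the $\sigma$-algebra $\cF_t=\sigma(\by(s):s\le t)$.

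With this coupling in hand, the natural candidate transport plan between $\mu_{\bG}=\cL(\bx^\star)$ and $\cL(\m_t)$ is simply the joint law of $(\bx^\star,\m_t)$. Then
\begin{equation}
W_{2,n}\big(\mu_{\bG},\cL(\m_t)\big)^2 \le \frac1n\,\E\big[\|\bx^\star-\m_t\|_2^2\big]
= \frac1n\,\E\big[\|\bx^\star - \E[\bx^\star\mid\cF_t]\|_2^2\big]
= \frac1n\,\E\big[\Tr\cov(\mu_t)\big]\, ,
\end{equation}
where the last equality uses the tower property: conditionally on $\cF_t$, the law of $\bx^\star$ is $\mu_t$, so $\E[\|\bx^\star-\m_t\|_2^2\mid\cF_t] = \Tr\cov(\mu_t)$. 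Now apply Lemma~\ref{lem:covbound}: taking traces in $\E\cov(\mu_t)\preceq \tfrac1t\bI_n$ gives $\E[\Tr\cov(\mu_t)]\le n/t$, and dividing by $n$ yields $W_{2,n}(\mu_{\bG},\cL(\m_t))^2\le 1/t$, which is exactly \eqref{eq:W2bound}.

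For the second assertion, that $\m_t\Rightarrow\bx^\star\sim\mu_{\bG}$ as $t\to\infty$: the bound just proved shows $W_{2,n}(\mu_{\bG},\cL(\m_t))\to0$, and convergence in $W_2$ (even in the normalized version, on a fixed finite-dimensional space for each $n$) implies weak convergence; alternatively one can note $\E\|\bx^\star-\m_t\|_2^2\to0$ along the coupling, so $\m_t\to\bx^\star$ in $L^2$ and hence in distribution. I do not anticipate a serious obstacle here: the only point requiring mild care is justifying the coupling representation $\m_t=\E[\bx^\star\mid\cF_t]$ and the identity $\E[\|\bx^\star-\m_t\|^2\mid\cF_t]=\Tr\cov(\mu_t)$, both of which are standard given the observation (already made in the excerpt, citing \cite{el2022information}) that $\mu_t$ is the conditional law of $\bx^\star$ given $\by(t)$. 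One should also make sure the infimum defining $W_{2,n}$ is over couplings on $\R^n\times\R^n$ and that $\cL(\m_t)$ indeed has finite second moment, which is immediate since $\m_t\in[-1,1]^n$.
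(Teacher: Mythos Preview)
Your proof is correct and arrives at the same bound via a slightly different route than the paper. The paper argues as follows: from Lemma~\ref{lem:covbound} one has $\E\big[W_{2,n}(\mu_t,\delta_{\m_t})^2\big]\le 1/t$ (since $W_{2,n}(\mu_t,\delta_{\m_t})^2=\tfrac1n\Tr\cov(\mu_t)$), and then uses the joint convexity of $(\mu,\nu)\mapsto W_{2,n}^2(\mu,\nu)$ together with $\mu_{\bG}=\E[\mu_t]$ and $\cL(\m_t)=\E[\delta_{\m_t}]$ to push the expectation inside. You instead construct an explicit coupling via the representation $\by(t)=t\bx^\star+\bB'(t)$ and $\m_t=\E[\bx^\star\mid\by(t)]$, so that the transport cost is directly $\tfrac1n\E\|\bx^\star-\m_t\|^2=\tfrac1n\E[\Tr\cov(\mu_t)]$. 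Your approach is arguably more elementary in that it avoids invoking convexity of $W_2^2$, at the cost of relying on the alternative ``observation process'' description of stochastic localization (which the paper quotes from \cite{el2022information} but does not use in its own proof here). Both arguments reduce to the same application of the covariance bound; the difference is purely in how the coupling between $\mu_{\bG}$ and $\cL(\m_t)$ is exhibited.
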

\begin{proof}
 By Lemma~\ref{lem:covbound},
 \[\E \big[\E_{\bx \sim \mu_t} [\|\bx - \m_{\bG,\by(t)}\|^2]  \big]\le \frac{n}{t} \, ,\]  
 therefore 
\[\E \Big[W_{2,n}\big(\mu_t, \delta_{\m_{\bG,\by(t)}}\big)^2\Big] \le \frac{1}{t} \, .\] 
Notice that $(\mu,\nu)\mapsto W_{2,n}^2(\mu,\nu)$ is jointly convex.
Since $\mu_{\bG} = \E [\mu_t] $, this implies
\[
    W_{2,n}\big(\mu_{\bG}, \cL(\m_{\bG,\by(t)})\big)^2 \le \E \Big[W_{2,n}\big(\mu_t, \delta_{\m_{\bG,\by(t)}}\big)^2\Big] \le \frac{1}{t}\, .
\]
\end{proof}

%%%%%%%%%%%%%%%%%
%

\section{Analysis of the sampling algorithm and proof of Theorem \ref{thm:main}}
\label{sec:analysis}

 This section is devoted to the analysis of Algorithm~\ref{alg:Sampling} described in the previous section.
 An important simplification is obtained by 
 working under a corresponding \emph{planted} model. This approach has two advantages:
 $(i)$~The joint distribution of the disorder $\bG$ and the process $(\by(t))_{t\ge 0}$ 
 in~\eqref{eq:GeneralSDE} is significantly simpler in the planted model;
 $(ii)$~Analysis in the planted model can be cast as a statistical estimation problem.
 In the latter, Bayes-optimality considerations can be exploited to relate the output of 
 the AMP algorithm $\AMP(\bG,\by;k)$ to the true mean vector 
 $\m(\bG,\by)$.       

 This section is organized as follows. Section \ref{sec:planted} introduces the 
 planted model and its relation to the original model. We then analyze the AMP
 component of our algorithm in Section \ref{sec:AMP}, and the NGD component in
 Section \ref{sec:ngd}. Finally, Section \ref{sec:proofMain} puts the various elements
 together and proves Theorem \ref{thm:main}.

%%%%%%%%%%%%%%%%
\subsection{The planted model and contiguity}
\label{sec:planted}

In what follows, will use the notation $\mu_{\rd}(\rmd \bx, \rmd \bG)$ for the joint distribution
of $\bx$, $\bG$ introduced above.  Namely $\bG=(\Gp{p})_{2\leq p\leq P}$
has i.i.d.  $\normal(0,1)$ entries, and $\mu_{\rd}(\de\bx|\bG)=\mu_{\bG}(\de\bx)$
is the Gibbs measure \eqref{eq:sk}.

We next introduce a second `planted' measure, as follows.
Let $\onu$ be the uniform distribution over $\{-1,+1\}^n$ and consider pairs $(\bx,\bG) \in \{-1,+1\}^n \times \sH_n$ with law
\begin{equation}
    \mu_{\pl}(\rmd \bx, \rmd \bG) 
    = 
    \frac{1}{Z_{\pl}}\, 
    \exp\Big(-\frac{1}{2}\sum_{p=2}^P
    \lt\|\bG^{(p)} - \frac{\beta c_p \bx^{\otimes p}}{n^{(p-1)/2}} \rt\|_{F}^2 \, 
    \Big)  \, \onu(\rmd \bx) \, \rmd \bG \, ,
\end{equation}
where $\rmd \bG$ is Lebesgue measure on $\sH_n$, and the normalizing constant
\begin{align}\label{eq:Zpl}
    Z_{\pl} := \int\! \exp\Big(- \frac{1}{2}\sum_{p=2}^P
    \lt\|\bG^{(p)} - \frac{\beta c_p\bx^{\otimes p}}{n^{(p-1)/2}} \rt\|_{F}^2 \, 
    \Big)\, \rmd \bG 
\end{align}
is independent of $\bx \in \{-1,+1\}^n$. 
It is easy to see  that the marginal distribution of $\bx$ under $\mu_{\pl}$ is $\onu$. Meanwhile, under the conditional law $\mu_{\pl}( \, \cdot \, | \bx)$ the tensors $\bG^{(2)},\dots,\bG^{(P)}$ are independent with rank-one spikes $\frac{\beta c_p}{ n^{\frac{p-1}{2}}} \bx^{\otimes p}$. Namely, under $\mu_{\pl}( \, \cdot \, | \bx)$, we have
 \begin{align}\label{eq:G_planted}
     \bG^{(p)} = \frac{\beta c_p}{n^{(p-1)/2}}  \bx^{\otimes p}
     +
     \bW^{(p)}\, ,~~~~~~ \bW^{(p)}_{i_1,\cdots, i_p} \stackrel{i.i.d.}{\sim} \normal(0,1)\, ,~~ 1 \le i_1,\cdots,i_p \le n\, ,~~ 1 \le p \le P \, .
 \end{align}
On the other hand, the conditional law $\mu_{\pl}( \, \cdot \, | \bG)$ of $\bx$ given 
$\bG$ is the Gibbs measure $\mu_{\bG}$ (since $\|\bx\|_2^2$ is constant for $\bx\in \{-1,+1\}^n$).  
For this reason, we will omit the subscript ${\rm pl}$ or  ${\rm rd}$ from this conditional distribution. 

Note that the marginal of $\bG$ under $\mu_{\pl}$ takes the form
\begin{equation}\label{eq:lr}
    \mu_{\pl}(\rmd \bG) 
    =
    \mu_{\rd}(\rmd \bG)\, Z_{\xi}(\bG)
    \, ,
\end{equation}
with $\mu_{\rd}$  the standard Gaussian measure on $\sH_n$, and $Z_{\xi}(\bG)$ is the (rescaled)
 partition function for the corresponding mixed $p$-spin model,
\begin{equation}
    Z_{\xi}(\bG) = \frac{1}{2^n} \sum_{\bx\in \{-1,+1\}^n} e^{\beta H_n(\bx) - n\beta^2\xi(1)/2} \, .
\end{equation}
(Of course, since $\mu(\de\bx|\bG)$ is the same under the two distributions, we also have
$\mu_{\pl}(\de\bx,\rmd \bG)  = Z_{\xi}(\bG) \,\mu_{\rd}(\de\bx,\rmd \bG)$.)

In order to show that $\mu_{\pl}$ and $\mu_{\rd}$ are close (in a sense to be made precise shortly), 
we need to control the fluctuations of their ratio, which is given by the rescaled partition 
function $Z_{\xi}(\bG)$. This will impose a constraint on how large $\beta$ can be. 
\begin{theorem}\label{thm:fluctuations_Z}
Let $\bG \sim \mu_{\rd}$, $h(x):=-((1+x)/2)\log((1+x)/2) - ((1+x)/2)\log((1+x)/2)$
and define $\beta_2$ as per Eq.~\eqref{eq:Beta2Def}.
If $\beta<\beta_2$ (which, in particular, implies $\beta^2\xi''(0)<1$), 
then
\begin{equation}
Z_{\xi}(\bG) \xrightarrow[n \to \infty]{\rmd} \exp (W) \, , \label{eq:ConvergenceZ}
\end{equation}
where $\sigma^2 = \frac{1}{4} (-\log(1-\beta^2\xi''(0)) - \beta^2\xi''(0))$ and 
$W\sim \normal \big(- \sigma^2, 2\sigma^2\big)$. 

Consequently,  $ \mu_{\pl}(\de\bG)$ and $\mu_{\rd}(\de\bG)$ are mutually contiguous for all
 $\beta<\beta_2$. Namely, for any sequence of events $E_n$ we have
  $\mu_{\rd}(E_n) \to 0$ if and only if $\mu_{\pl}(E_n) \to 0$.   
\end{theorem}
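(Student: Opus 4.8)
The statement has two parts: a distributional limit for $Z_\xi(\bG)$, and the resulting contiguity. The plan is to prove the first part by the second-moment / small-subgraph-conditioning philosophy, which here takes the especially clean form of a CLT for $\log Z_\xi(\bG)$ via expansion in Wiener chaos (Hermite/tensor expansion of the exponential functional). Concretely, write $Z_\xi(\bG) = 2^{-n}\sum_{\bx} e^{\beta H_n(\bx) - n\beta^2\xi(1)/2}$ and expand the exponential of the Gaussian process $H_n$ in its chaos decomposition: for fixed $\bx$, $e^{\beta H_n(\bx)-n\beta^2\xi(1)/2}$ is a Wick-ordered exponential, and averaging over the uniform $\bx$ produces, order by order in the chaos degree, the overlap moments $\E_{\bx,\bx'}[(\langle\bx,\bx'\rangle/n)^k]$. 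First I would compute $\E[Z_\xi(\bG)] = 1$ and $\E[Z_\xi(\bG)^2] = \E_{\bx,\bx'}\exp(n\beta^2(\xi(\langle\bx,\bx'\rangle/n) - \xi(1) + \text{correction}))$; for $\bx,\bx'$ uniform the overlap $R=\langle\bx,\bx'\rangle/n$ concentrates near $0$ with the large-deviations rate governed by $h$, so the second moment is finite iff $\beta^2\xi(q) + h(q) - \log 2 < 0$ for all $q\in(0,1)$, i.e. exactly $\beta<\beta_2$, and in that case the dominant contribution comes from $R = O(n^{-1/2})$.

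\textbf{Key steps, in order.} (1) Reduce to the overlap integral: express $\E[Z_\xi^2]$, and more generally the joint moments or the Laplace transform of $\log Z_\xi$, in terms of an integral over the overlap distribution of two (or more) i.i.d.\ uniform spin configurations; use the binomial/Stirling estimate to identify the rate function $h$ and confirm that $\beta<\beta_2$ makes the relevant exponent strictly negative away from $q=0$. (2) Localize near $q=0$: on the scale $R\sim n^{-1/2}$ the Gaussian approximation $\binom{n}{(1+R)n/2}2^{-n}\approx \sqrt{2/(\pi n)}\,e^{-nR^2/2}$ applies and $\xi(R)\approx \tfrac12\xi''(0)R^2$, so the second moment converges to a Gaussian integral $\int e^{-nR^2/2 + n\beta^2\xi''(0)R^2/2}\,(\ldots)$ which is finite precisely when $\beta^2\xi''(0)<1$ and evaluates to $(1-\beta^2\xi''(0))^{-1/2}$; matching with $\E[e^{2W}]$ for $W\sim\normal(-\sigma^2,2\sigma^2)$ pins down $\sigma^2 = -\tfrac14(\log(1-\beta^2\xi''(0)) + \beta^2\xi''(0))$. (3) Promote the second-moment computation to a CLT: either invoke the known result that for mixed $p$-spin models in the replica-symmetric high-temperature regime $\log Z$ is asymptotically Gaussian (this is the classical theorem of Aizenman--Lebowitz--Ruelle for SK, extended by Talagrand and by Bovier et al.; the reference \cite{dey2022hypergraph} is cited for the sharpest version), or give a self-contained proof via the chaos expansion: show that $\log Z_\xi(\bG) = \sum_{p\geq 1}\beta^p c\text{-coefficients}\cdot(\text{multilinear Gaussian polynomials})$ with the degree-$p$ term contributing variance $\sim \tfrac1{2p}(\beta^2\xi''(0))^p \cdot(\text{combinatorial factor})$, summing to $2\sigma^2$, and that the remainder is negligible — a standard martingale-CLT or fourth-moment argument on each chaos. (4) Deduce contiguity: since $Z_\xi(\bG) = \de\mu_{\pl}/\de\mu_{\rd}$ and $Z_\xi(\bG)\xrightarrow{\rmd}e^W$ with $e^W>0$ a.s.\ and $\E[e^W]=1$, the likelihood ratio is uniformly integrable and tight away from $0$, so by Le Cam's first lemma $\mu_{\pl}$ and $\mu_{\rd}$ are mutually contiguous; explicitly, if $\mu_{\rd}(E_n)\to 0$ then $\mu_{\pl}(E_n) = \E_{\rd}[\indi_{E_n}Z_\xi]\to 0$ by uniform integrability, and the converse follows because $e^{-W}$ is also a.s.\ finite (equivalently, $1/Z_\xi(\bG)$ is tight, which one gets from the same computation applied to a truncation, or from $Z_\xi\xrightarrow{\rmd}e^W$ together with a lower bound on $Z_\xi$ via Jensen, $Z_\xi\geq \exp(\beta\, 2^{-n}\sum_{\bx}H_n(\bx) - n\beta^2\xi(1)/2) = e^{-n\beta^2\xi(1)/2}$, which is too weak — so the cleaner route is uniform integrability of $Z_\xi$ plus the distributional limit, giving convergence of $\mu_{\pl}(E_n)$ to $\E[\indi e^W]$ along subsequences).

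\textbf{Main obstacle.} The genuinely delicate point is step (3): upgrading from "the second moment converges to the right constant" to "$\log Z_\xi$ is asymptotically normal." The second-moment method alone only shows $\{Z_\xi\}$ is bounded in $L^2$ and hence uniformly integrable — which, as noted, already suffices for the contiguity conclusion (the one the paper actually needs downstream) — but the precise Gaussian limit \eqref{eq:ConvergenceZ} requires either citing the CLT for the free energy in the RS regime or reproducing its proof via the Wiener-chaos / martingale-CLT argument, controlling that higher-order overlap corrections ($R^3$ and beyond, and the subleading Stirling terms) do not contribute to the limiting variance. I would handle this by isolating the chaos expansion of $\log Z_\xi$, showing term $p$ converges to an independent Gaussian with variance $\tfrac1{2}\cdot\tfrac1{p}(\beta^2)^p\,(\text{something like }\sum_{p'}\ldots)$ — in fact the structure $\sigma^2 = \tfrac14\sum_{k\geq 2}\tfrac1k(\beta^2\xi''(0))^k\cdot(\text{from }-\log(1-x)-x)$ strongly suggests the degree-$k$ chaos carries variance $\tfrac{1}{2k}(\beta^2\xi''(0))^k$ — and then a Lindeberg/fourth-moment bound to sum them. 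For the parts of this paper that use Theorem~\ref{thm:fluctuations_Z}, only the contiguity is needed, so a reader willing to cite \cite{dey2022hypergraph} (or the classical SK results) for the CLT itself can treat step (3) as a black box and focus the work on steps (1), (2), (4).
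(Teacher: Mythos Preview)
Your proposal is correct in outline and would work, but it takes a different route from the paper. You propose either (a) citing a CLT for $Z_\xi$ in the general mixed model, or (b) proving one directly via a chaos expansion of $\log Z_\xi$. The paper does neither: it instead \emph{reduces to the SK case}. Writing $\oH_n=\oH_n^{(2)}+\oH_n^{(>2)}$ for the degree-$2$ and higher-degree parts of the centered Hamiltonian, the paper defines the SK partition function $Z^{(2)}_\xi(\bG)=2^{-n}\sum_{\bx}e^{\beta\oH_n^{(2)}(\bx)}$ and shows directly that $\E\big[(Z_\xi-Z^{(2)}_\xi)^2\big]\to 0$. The computation is exactly the overlap integral you identified; the crucial observation is that $\xi_{>2}(q)=\xi(q)-\tfrac12\xi''(0)q^2=O(|q|^3)$ near $0$, so on the scale $q\sim n^{-1/2}$ the factor $(e^{n\beta^2\xi_{>2}(q)}-1)$ is $O(n^{-1/2})$ rather than $O(1)$, making the near-zero contribution vanish, while the $\beta<\beta_2$ condition kills the contribution from $|q|$ bounded away from $0$. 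Once $Z_\xi-Z^{(2)}_\xi\to 0$ in probability, the distributional limit follows from the classical Aizenman--Lebowitz--Ruelle result for $Z^{(2)}_\xi$, and contiguity from Le Cam's first lemma.

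The advantage of the paper's route is that it avoids the ``genuinely delicate point'' you flagged in step (3): there is no need to establish a new CLT or control a full chaos expansion, only a single $L^2$ estimate plus a citation to the SK case. Your approach would buy a more self-contained treatment of the general mixed model (and the structure $\sigma^2=\tfrac14\sum_{k\ge 2}\tfrac1k(\beta^2\xi''(0))^k$ you spotted is exactly what the cycle-count/chaos picture predicts), but at the cost of reproducing or importing a harder theorem. Note also that the reduction argument explains transparently \emph{why} only $\xi''(0)$ appears in the limiting law: the higher-degree terms are negligible in $L^2$, not merely in the variance of $\log Z$.
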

\begin{proof}
%Without loss of generality we will set $\beta=1$; the case of general $\beta$ can be recovered by rescaling $\xi$.

Let $\oH_n(\bx)=H_n(\bx)-n\xi(1)/2$,
$\oH^{(2)}_n(\bx) := c_2\<\bx,\bG^{(2)}\bx\>/\sqrt{n}-n\xi''(0)/4$ and 
$\oH^{(>2)}_n(\bx)= \oH_n(\bx)-\oH^{(2)}_n(\bx)$. Note that
$\oH^{(2)}_n(\, \cdot \, )$ and $\oH^{(>2)}_n(\,\cdot\,)$ are two independent Gaussian processes with
\begin{align}
\E \oH^{(2)}_n(\bx) = -n\xi''(0)/4\, ,&~~~~~ \E \oH^{(>2)}_n(\bx) = -n\xi_{>2}(1)/2\, ,\\
\Cov(\oH^{(2)}_n(\bx_1);\oH^{(2)}_n(\bx_2)) = n\xi''(0)Q_{12}^2/2\, ,&~~~~~
\Cov(\oH^{(>2)}_n(\bx_1);\oH^{(>2)}_n(\bx_2)) = n\xi_{>2}(Q_{12})\, ,
\end{align}
where $\xi_{>2}(q) :=\xi(q)-\xi''(0)q^2/2$, and we introduced the shorthand 
$Q_{12} := \<\bx_1,\bx_2\>/n$.
 Define
\begin{equation}
    Z^{(2)}_{\xi}(\bG) = \frac{1}{2^n} \sum_{\bx\in \{-1,+1\}^n} e^{\beta\oH_n^{(2)}(\bx) } \, .
\end{equation}
Then we have, for $\Lambda_n : = (2\Z-n)/n$
\begin{align*}
\E\Big[ \big(Z_{\xi}(\bG)-Z^{(2)}_{\xi}(\bG)\big)^2\Big] &= \frac{1}{4^n}
 \sum_{\bx_1,\bx_2 \in \{-1,+1\}^n}
 \E \Big[e^{\beta \oH^{(2)}_n(\bx_1)+ \beta\oH^{(2)}_n(\bx_2)}\Big]
  \E\Big[\Big(e^{\beta \oH^{(>2)}_n(\bx_1)}-1\Big)\Big(e^{ \beta\oH^{(>2)}_n(\bx_2)}-1\Big)\Big]\\
&=  \frac{1}{4^n}
 \sum_{\bx_1,\bx_2 \in \{-1,+1\}^n}
 e^{n\beta^2\xi''(0)Q_{12}^2/2}\Big(e^{n\beta^2\xi_{>2}(Q_{12})}-1\Big)\\
 &=  \frac{1}{2^n} \sum_{q \in \Lambda_n;\; |q|\le 1}
 \binom{n}{n(1+q)/2}e^{n\beta^2\xi''(0)q^2/2}\Big(e^{n\beta^2\xi_{>2}(q)}-1\Big)\,.
\end{align*}
Letting $F(q):=\xi(q)+h(q)-\log(2)$, $F^{(2)}(q):=\xi''(0)q^{2}/2+h(q)-\log(2)$, we have 
by standard bounds on binomial coefficients,
setting $q_0(n) := (C\log n)^{1/2}/ n^{1/2}$, for $C=C(\beta)$ a sufficiently large constant,
\begin{align*}
\E\Big[ \big(Z_{\xi}(\bG)-Z^{(2)}_{\xi}(\bG)\big)^2\Big]&\le 
\sum_{q \in \Lambda_n;\; q_0(n)\le |q|\le 1} e^{nF^{(2)}(q)}\Big(e^{n\beta^2\xi_{>2}(q)}-1\Big)
+\frac{C_0}{\sqrt{n}}\sum_{q \in \Lambda_n;\;  |q|\le q_0(n)}
e^{nF^{(2)}(q)}\Big(e^{n\beta^2\xi_{>2}(q)}-1\Big)\\
& \le n\, e^{nF(q_0(n))} + \frac{C_0}{\sqrt{n}}
\max_{|t|\le 1/2}\xi'''(t)\sum_{q \in \Lambda_n;\;  |q|\le q_0(n)}
e^{nF^{(2)}(q)}n  |q|^3\\
& \le \frac{1}{n^4} + \frac{C_1(\log n)^{3/2}}{n}
\sum_{q \in \Lambda_n;\;  |q|\le q_0(n)}
e^{nF^{(2)}(q)}\, .
\end{align*}
Now by the assumption $\beta<\beta_2$, we have $F''(0)<0$ and $F(q)<0$ strictly for all 
$0<|q|\le 1$. Hence, there exists $c>0$ (independent of $n$ such that $F(q)\le -c q^2$ 
for all $|q|\le 1$. Therefore,
\begin{align*}
\E\Big[ \big(Z_{\xi}(\bG)-Z^{(2)}_{\xi}(\bG)\big)^2\Big]&\le  
\frac{1}{n^4} + \frac{C_1(\log n)^{3/2}}{n}
\sum_{q \in \Lambda_n}
e^{-c q^2}\le \frac{C_1(\log n)^{3/2}}{n} \, .
\end{align*}
In particular, $\plim_{n\to\infty}(Z_{\xi}(\bG)-Z^{(2)}_{\xi}(\bG))=0$.
By \cite[Proposition 2.2]{aizenman1987some} $Z^{(2)}_{\xi}(\bG)  \xrightarrow{\rmd} e^W$,
whence the claim \eqref{eq:ConvergenceZ} follows.
Contiguity is an immediate consequence of the above and Le Cam's first lemma. 
\end{proof}
 
For the purpose of our analysis we will need a  result about the joint distributions of 
$(\bx, \bG, \by)$ under our ``random'' model and a planted model which we now introduce.  
 
Recall that $\m(\bG,\by)$ denotes the mean of the Gibbs measure $\mu_{\bG,\by}$ in
Eq.~\eqref{eq:sktilted}.
For a fixed $T \ge 0$, we define two Borel distributions $\P$ and $\Q$ on 
$(\bx, \bG,\by)\in\{-1,+1\}^n \times\sH_n\times C([0,T], \R^n)$ as follows:
\begin{align}   
\Q ~~&:~~
\begin{dcases}
\bG &\sim~~ \mu_{\rd} \, , \\
\bx & \sim~~ \mu(\,\cdot\,| \bG)\, ,\\
\by(t) &=~~ t \bx + \bB(t) \, , ~~~ t \in [0,T] \, ,
\end{dcases}
\hspace{.5cm} &\text{(random)}\label{eq:Q}
\\
~
\P ~~&:~~
\begin{dcases}
\bx &\sim~~ \onu \, , \\
\bG &\sim~~ \mu_{\pl}( \,\cdot\, |\, \bx) \, ,\\%~~\frac{\beta}{\sqrt{n}} \m \m^\top + \bW \, \\
\by(t) &=~~ t \bx + \bB(t) \, , ~~~ t \in [0,T] \, \,
\end{dcases}
\hspace{.5cm} &\text{(planted)}\label{eq:P}
\end{align}
where $(\bB(t))_{t \ge 0}$ is a standard Brownian motion in $\R^n$ independent of everything else.
Note that the marginal distributions of $(\bx,\bG)$ under
$\Q$ and $\P$ coincide, respectively, with $\mu_{\rd}$ and $\mu_{\pl}$.

Note the SDE defining the process $\by = (\by(t))_{t\in [0,T]}$ in 
Eq.~\eqref{eq:Q} is a restatement of the stochastic localization equation~\eqref{eq:GeneralSDE} 
applied to the Gibbs measure $\mu_{\bG}$. 
\begin{proposition}\label{prop:contig}
The probability distributions $\Q$ and $\P$ admit the equivalent description
\begin{align}   
\Q ~~&:~~
\begin{dcases}
\bG &\sim~~ \mu_{\rd} \, , \\
\bx & \sim~~ \mu_(\,\cdot\,| \bG)\, ,\\
\by(t) &=~~ \int_{0}^t \m(\bG,\by(s)) \, \rmd s + \bW(t) \, , ~~~ t \in [0,T] \, ,
\end{dcases}
\hspace{.5cm} &\textup{(random)}\label{eq:Q-bis}
\\
~
\P ~~&:~~
\begin{dcases}
\bx &\sim~~ \onu \, , \\
\bG &\sim~~ \mu_{\pl}( \,\cdot\, |\, \bx) \, ,\\%~~\frac{\beta}{\sqrt{n}} \m \m^\top + \bW \, \\
\by(t) &=~~\int_{0}^t \m(\bG,\by(s)) \, \rmd s + \bW(t) \,  ,
\end{dcases}
\hspace{.5cm} &\textup{(planted)}\label{eq:P-bis}
\end{align}
for $\bW$ a standard Brownian motion.

Further $\P$ is absolutely continuous with respect to $\Q$ and for all $(\bG,\by) \in  \sH_n \times  C([0,T], \R^n)$, 
\begin{align}
    \frac{\rmd \P}{\rmd \Q}(\bx,\bG, \by) = Z_{\xi}(\bG) \, .\label{eq:RN-PQ}
\end{align}
In particular, for all $\beta<\beta_2$, $\P$ and $\Q$ are mutually contiguous.
\end{proposition}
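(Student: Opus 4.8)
The plan is to prove the three claims of Proposition~\ref{prop:contig} in order: first the equivalent SDE description, then the Radon--Nikodym formula, and finally contiguity as a corollary.

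\textbf{Step 1: Equivalent SDE description.} The key observation is that for a fixed realization of $\bG$ and of $\bx\sim\mu(\cdot\mid\bG)$, the process $\by(t) = t\bx + \bB(t)$ is a specific instance of an observation-of-a-signal-in-Gaussian-noise process, and by the well-known equivalence between such processes and their associated nonlinear filtering SDEs (see the discussion around Eq.~\eqref{eq:GeneralSDE} and the reference \cite{el2022information}), it has the same law as the strong solution of $\rmd \by(t) = \m(\bG,\by(t))\,\rmd t + \rmd\bW(t)$, $\by(0)=0$. Concretely, conditionally on $\bG$, under $\Q$ the pair $(\bx,\by(\cdot))$ is distributed as signal-plus-Brownian-motion, so the conditional law of $\bx$ given $\{\by(s):s\le t\}$ (and $\bG$) is exactly the tilted measure $\mu_{\bG,\by(t)}$ of Eq.~\eqref{eq:sktilted}, whose mean is $\m(\bG,\by(t))$; the innovations process $\bW(t) := \by(t) - \int_0^t \m(\bG,\by(s))\,\rmd s$ is then a Brownian motion in the filtration generated by $\by$. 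Since $\m(\bG,\cdot)$ is Lipschitz (finite variance, cf.\ the footnote after Eq.~\eqref{eq:GeneralSDE}), this SDE has a unique strong solution, giving the claimed description~\eqref{eq:Q-bis}; the same argument conditioned on $\bG$ applies verbatim under $\P$ to give~\eqref{eq:P-bis}, because the conditional law $\mu_{\pl}(\cdot\mid\bG) = \mu(\cdot\mid\bG)$ is the same Gibbs measure and the filtering SDE depends on the joint law only through this conditional.

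\textbf{Step 2: Radon--Nikodym derivative.} Here I would argue that the relevant $\sigma$-algebra generated by $(\bx,\bG,\by)$ can be reduced to that generated by $(\bx,\bG)$ alone, because given $(\bx,\bG)$ the conditional law of $\by(\cdot)$ is the same under $\P$ and $\Q$ (namely $\by(t)=t\bx+\bB(t)$ with $\bB$ an independent Brownian motion — this is manifest from~\eqref{eq:Q},~\eqref{eq:P}, and the equivalence in Step 1 shows the filtering description carries the same conditional law). Therefore $\frac{\rmd\P}{\rmd\Q}(\bx,\bG,\by)$ equals $\frac{\rmd\mu_{\pl}}{\rmd\mu_{\rd}}(\bx,\bG)$, and this ratio is computed from the displayed identity $\mu_{\pl}(\rmd\bx,\rmd\bG) = Z_{\xi}(\bG)\,\mu_{\rd}(\rmd\bx,\rmd\bG)$ stated just before Theorem~\ref{thm:fluctuations_Z} (which itself follows from $\mu(\cdot\mid\bG)$ being common and Eq.~\eqref{eq:lr}). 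This gives~\eqref{eq:RN-PQ}. One must be slightly careful to check $\P \ll \Q$: since $\Q$-a.s.\ $\mu_{\rd}$ puts positive density everywhere and $Z_\xi(\bG)$ is a.s.\ finite and positive, absolute continuity holds, and indeed $\Q\ll\P$ as well since $Z_\xi(\bG)>0$ a.s.

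\textbf{Step 3: Contiguity.} This is immediate: by Theorem~\ref{thm:fluctuations_Z}, for $\beta<\beta_2$ the likelihood ratio $Z_{\xi}(\bG)$ converges in distribution (under $\Q$, equivalently $\mu_{\rd}$) to $e^W$ with $W\sim\normal(-\sigma^2,2\sigma^2)$, so $\E_{e^W}[1] = e^{-\sigma^2 + \sigma^2} = 1$ — the limit has expectation $1$ — which is precisely the uniform integrability condition in Le Cam's first lemma ensuring mutual contiguity of $\P$ and $\Q$; alternatively one simply invokes the already-established contiguity of $\mu_{\pl}(\rmd\bG)$ and $\mu_{\rd}(\rmd\bG)$ from Theorem~\ref{thm:fluctuations_Z} together with Step 2. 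I expect Step 1 (the filtering/innovations identification) to be the only point requiring genuine care, since it invokes the equivalence between the signal-plus-noise representation and the stochastic-localization SDE; the cleanest route is to cite \cite{el2022information} and the footnote on well-posedness rather than re-derive Lévy's characterization of the innovations Brownian motion, but a short self-contained paragraph verifying that $\bW$ defined by $\bW(t) = \by(t)-\int_0^t\m(\bG,\by(s))\,\rmd s$ has the correct quadratic variation and is a martingale in the $\by$-filtration would also suffice.
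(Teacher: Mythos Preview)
Your proposal is correct and follows essentially the same approach as the paper: cite \cite{el2022information} for the equivalence of the signal-plus-noise and filtering SDE descriptions, reduce the Radon--Nikodym derivative to $\frac{\rmd\mu_{\pl}}{\rmd\mu_{\rd}}(\bx,\bG)=Z_\xi(\bG)$ by noting that the conditional law of $\by$ given $(\bx,\bG)$ is identical under $\P$ and $\Q$, and then invoke Theorem~\ref{thm:fluctuations_Z} for contiguity. If anything, your Step~1 is more detailed than the paper's one-line citation.
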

\begin{proof}
Definitions \eqref{eq:Q-bis}, \eqref{eq:P-bis}, differ from
 \eqref{eq:Q}, \eqref{eq:P}, only in the definition of the conditional distribution of 
 $\by$ given $(\bx,\bG)$. The equivalence was established in a general context in 
 \cite{el2022information}.
 
 Finally, Eq.~\eqref{eq:RN-PQ} follows because the conditional distribution of $\by$ given $(\bx,\bG)$
 is the same under the two models and therefore
 \begin{align}
    \frac{\rmd \P}{\rmd \Q}(\bx,\bG, \by) = 
      \frac{\rmd \mu_{\pl}}{\rmd \mu_{\rd}}(\bx,\bG,) = 
    Z_{\xi}(\bG) \, ,
\end{align}
  where the last equality was proven above.
\end{proof}

For the remainder of the proof of Theorem \ref{thm:main}, we work under the planted distribution $\P$. All results proven under $\P$ transfer to $\Q$ 
by Proposition~\ref{prop:contig}.

%%%%%%%%%%%%%%%%
\subsection{Approximate Message Passing}
\label{sec:AMP}

In this section we analyze the AMP iteration of Algorithm~\ref{alg:Mean},
which we copy here for the reader's convenience
\begin{align}
\nonumber
    \hm^{-1} 
    &= \bz^{-1} =
    \bz^{0}= 0\, ,
    \\
\label{eq:AMPreminder}
    \hm^{k} 
    &= 
    \tanh(\bz^{k} ) \, ,
     ~~~~
    \hat{q}^k = 
    \frac{1}{n}\sum_{i=1}^n  \tanh^2(z^{k}_i) \, ,
    ~~~~
    \sb_{k} = \beta^2 (1-\hat{q}^k)\xi''\big(\hat{q}^k) \,  ~~~~~~~  \forall k\ge 0\, ,
    \\
\nonumber
    \bz^{k+1} 
    &= 
    \beta \nabla H_n
    \big(\hm^{k}\big) 
    + 
    \by - \sb_{k} \hm^{k-1}\, .
\end{align}
When needed, we will specify the dependence on $\bG,\by$ by writing 
$\hm^k = \hm^k(\bG,\by)=\AMP(\bG,\by;k)$
and  $\bz^k = \bz^k(\bG,\by)$. Throughout this section $(\bG,\by)\sim \P$ will be distributed
according to the planted model introduced above.

We will prove two results: (1)  AMP approximately computes the posterior mean $\m(\bG,\by(t))$; this is the content of Proposition~\ref{prop:amp-posterior-mean}, and (2) the posterior mean $\m(\bG,\by(t))$ has a uniform continuity property with respect to the time parameter $t$; this is the content of Lemma~\ref{lem:uniform-path}.  

 %
% \begin{proposition}
% \label{prop:amp-posterior-mean}
% Fix $\beta<\beta_1$, $\T>0$ and let $t \in (0,\T]$. Then
% \begin{align}
% \label{eq:m-converge}
%     &\lim_{k\to\infty}\sup_{t\in (0,\T)}\plim_{n\to\infty} 
%     \frac{\|\m(\bG,\by(t))-\hm^k(\bG,\by(t))\|_2}{\|\m(\bG,\by(t))\|_2}=0 \, .
%     \end{align}
%     Moreover
%     \begin{align}
% \label{eq:z-converge}
%     &\lim_{k\to\infty}\sup_{t\in (0,\T)}\plim_{n\to\infty} \frac{\|\bz^{k+1}-\bz^k\|}{\|\bz^k\|}=0 \, .
% \end{align}
% \end{proposition}
%
%%We conclude this subsection with a lemma controlling the regularity of the posterior  path $t\mapsto \m(\bG,\by(t))$, which will be useful later.
% %
% \begin{lemma}
% \label{lem:uniform-path}
% Fix $\beta<\beta_1$ and $0\leq t_1<t_2\leq \T$. Then 
% \begin{align}
%     \plim_{n\to\infty}\sup_{t\in [t_1,t_2]} \frac{1}{n} \big\|\m(\bG,\by(t))-\m(\bG,\by(t_1))\big\|_2^2&= 
%     \plim_{n\to\infty} \frac{1}{n} \big\|\m(\bG,\by(t_2))-\m(\bG,\by(t_1)) \big\|_2^2\\
%     &=q_*(\beta,t_2)-q_*(\beta,t_1)\, . %\frac{\gamma_*(\beta,t_2)-\gamma_*(\beta,t_1)}{\beta^2} \, .
%     \label{eq:uniform-path}
%     %
% \end{align}
% \end{lemma}
% 
% The rest of this section is devoted to the proof of the above two results.  
%
\subsubsection{State Evolution}
Our analysis will be based on the state evolution results of~\cite{ams20} for mixed tensors (see~\cite{bayati2011dynamics,javanmard2013state} for the matrix case).
They imply the following asymptotic characterization for the iterates. Set $q_0(\beta,t)=0$ and $\Lambda_{0,i}(\beta,t)=0$ for all $i$. Next, recursively define
\begin{align}
   \label{eq:AMP-sig}
        \gamma_k(\beta,t) &= \beta^2 \cdot \xi'\big(q_k(\beta,t)\big)\,,
    ~~~~~ \Sigma_{k,j}(\beta,t) = \beta^2 \cdot \xi'\big(\Lambda_{k,j}(\beta,t)\big)\, ,  
    \\
     \label{eq:AMP-nu}  
    q_{k+1}(\beta,t)
    &=
   \E\big[\tanh\left(\gamma_k(\beta,t)+t+W_k\right)\big] \, ,
    \\
    \label{eq:AMP-lam}
    \Lambda_{k+1,j+1}(\beta,t)
    &= 
    \E\big[\tanh\left(\gamma_k(\beta,t)+t+W_k\right)\cdot\tanh\left(\gamma_j(\beta,t)+t+W_j\right)\big] \, ,
\end{align}
where $\bW=(W_{j})_{0\leq j\le k}\in\bbR^{k+1}$ are jointly Gaussian, with zero mean and covariance 
$\bSigma_{\le k}+t\bfone\bfone^\top$, $\bSigma_{\le k}:=(\Sigma_{i,j})_{i,j\le k}$.

\begin{proposition}
\label{prop:state_evolution}
For $(\bx,\bG,\by)\sim \mathbb \P$ and any $k\in\mathbb Z_{\ge 0}$, the empirical distribution of the 
coordinate of the AMP iterates converges almost surely in $W_2(\mathbb R^{k+2})$ as follows:
\begin{align}
    &\frac{1}{n}\sum_{i=1}^n \delta_{(z^1_i,\cdots,z^k_i,x_i,y_i)}
    \xrightarrow[n \to \infty]{W_2}
    \cL\Big(\bgamma_{\le k}(\beta,t) X+\bW+Y\bfone,X,Y\Big) \, ,\\
    &\bgamma_{\le k}(\beta,t) = \big(\gamma_1(\beta,t),\dots,\gamma_k(\beta,t)\big)\, ,
    \;\;\;\;\; \bW \sim \normal(0,\bSigma_{\le k})\, .
\end{align}
On the right-hand side, $X$ is uniformly random in $\{-1,+1\}$,  $Y=tX+\sqrt{t}Z$ where
 $Z\sim\normal(0,1)$ and $X,\bW,Z$ are mutually independent.
\end{proposition}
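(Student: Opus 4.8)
The plan is to deduce Proposition~\ref{prop:state_evolution} from the general state evolution theorem for AMP with mixed-order tensor nonlinearities, as developed in \cite{ams20}. The first step is to recognize the AMP iteration \eqref{eq:AMPreminder} as an instance of the abstract iteration $\bz^{k+1}=\beta\nabla H_n(\hm^k)+\by-\sb_k\hm^{k-1}$, where $\nabla H_n(\m)=\sum_p \frac{\beta c_p p}{n^{(p-1)/2}}\langle \Gp{p},\m^{\otimes(p-1)}\rangle$ aggregates contributions of all tensor orders $2\le p\le P$. Under the planted model $\P$, we have $\Gp{p}=\frac{\beta c_p}{n^{(p-1)/2}}\bx^{\otimes p}+\bW^{(p)}$ with $\bW^{(p)}$ i.i.d.\ Gaussian; substituting this, $\nabla H_n(\hm^k)$ splits into a \emph{signal term} proportional to $\langle \hm^k,\bx\rangle^{p-1}\bx$ (after normalization, driven by the overlap) and a \emph{noise term} that is a symmetrized Gaussian tensor contraction. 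The key technical point is that the paper's iteration already includes the Onsager correction $-\sb_k\hm^{k-1}$ with $\sb_k=\beta^2(1-\hat q^k)\xi''(\hat q^k)$, which is exactly the divergence-of-nonlinearity term needed to cancel the ``backward'' noise feedback; one checks $\sb_k$ matches $\sum_p (\beta c_p p)\cdot(\text{appropriate derivative factor})\cdot(\text{previous }b_{k-1})$ summed over $p$, i.e.\ $\beta^2\xi''(\hat q^k)\cdot\frac1n\sum_i(1-\tanh^2 z^k_i)$. This is the bookkeeping that makes the general theorem applicable.

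Second, I would invoke the master state evolution result: for AMP iterations of this form, the joint empirical distribution of $(z^1_i,\dots,z^k_i)$ together with the planted signal coordinate $x_i$ and the observation coordinate $y_i$ converges in $W_2$ to the law of a Gaussian vector whose mean is a linear function of $(X,Y)$ and whose covariance obeys the recursion \eqref{eq:AMP-sig}--\eqref{eq:AMP-lam}. Concretely, each $z^k$ behaves coordinatewise like $\gamma_{k-1}(\beta,t)X + Y + W_{k-1}$ where the deterministic ``signal strength'' $\gamma_{k-1}=\beta^2\xi'(q_{k-1})$ comes from differentiating the $p$-spin signal terms (each order $p$ contributing $\beta^2 c_p^2 p\, q_{k-1}^{p-1}$, summing to $\beta^2\xi'(q_{k-1})$), and $(W_j)_{j<k}$ is the jointly Gaussian noise process with covariance $\Sigma_{i,j}=\beta^2\xi'(\Lambda_{i,j})$ determined by the cross-overlaps $\Lambda_{i,j}=\E[\tanh(\gamma_{i-1}+t+W_{i-1})\tanh(\gamma_{j-1}+t+W_{j-1})]$. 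One then reads off that $q_k=\E[\tanh(\gamma_{k-1}+t+W_{k-1})]$ (using the Nishimori/Bayes-optimality identity $\E[Xm]=\E[m^2]$ valid in the planted model, which lets one collapse the first-moment and second-moment recursions and identify $q_{k+1}$ with \eqref{eq:AMP-nu}), matching the claimed limit with $X\sim\Unif\{\pm1\}$, $Y=tX+\sqrt t Z$, and $\bW\sim\normal(0,\bSigma_{\le k})$ independent of $(X,Z)$.

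Third, a few items need verification rather than citation: (a) the nonlinearities $\hm^k=\tanh(\bz^k)$ and $\hat q^k=\frac1n\|\tanh(\bz^k)\|^2$ are Lipschitz / pseudo-Lipschitz so the general theorem's regularity hypotheses hold, with the extra observation coordinate $\by$ handled by treating it as a fixed ``side information'' channel output (this is standard — $\by$ enters the iteration only additively); (b) the fact that $q_0=0,\Lambda_{0,i}=0$ correctly initializes the recursion, matching $\hm^{-1}=\bz^0=0$; and (c) that the $W_2(\R^{k+2})$ convergence is almost sure, which follows from the almost-sure version of the AMP state evolution theorem (concentration of empirical averages of pseudo-Lipschitz test functions). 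I expect the main obstacle to be purely one of \emph{translation and bookkeeping}: carefully casting the multi-tensor iteration into the exact normalized form required by \cite{ams20}, verifying that the Onsager coefficient $\sb_k=\beta^2(1-\hat q^k)\xi''(\hat q^k)$ is the correct one for the \emph{mixed} model (the sum over $p$ of the per-order Onsager terms telescopes into $\xi''$), and confirming that incorporating the fixed observation $\by=t\bx+\bB(t)$ as an additional coordinate is legitimate within that framework. There is no genuinely new probabilistic content — the proof is an application of the cited state evolution machinery once the correspondence is set up correctly.
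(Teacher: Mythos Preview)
Your approach is essentially the same as the paper's: reduce to the state evolution theorem of \cite{ams20} after decomposing $\bG$ into signal plus noise under the planted model. However, you undersell one concrete step. The cited result \cite[Proposition~3.1]{ams20} treats only the centered Gaussian case (no planted spike), and after you substitute $\Gp{p}=\frac{\beta c_p}{n^{(p-1)/2}}\bx^{\otimes p}+\bW^{(p)}$ the resulting iteration on $\bW$ has \emph{data-dependent} coefficients: the signal term is $\beta^2\xi'(\langle\bx,\hm^k\rangle/n)\,\bx$ and the Onsager term uses the empirical $\hat q^k$. Neither is deterministic, so the iteration is not yet in the form to which \cite{ams20} applies. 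The paper handles this by introducing a \emph{surrogate} iteration in which these empirical quantities are replaced by their state-evolution predictions $q_k$ (so the signal term becomes $\beta^2\xi'(q_k)\bx$ and $\bar\sb_k=\beta^2(1-q_k)\xi''(q_k)$); this surrogate \emph{is} covered by \cite{ams20} conditionally on $(\bx,\by)$, and one then closes the argument by an induction showing $\|\bz^k-\bar\bz^k\|/\sqrt n\to 0$ in probability, using Lipschitz bounds on $\nabla H_{\bW}$ and on $q\mapsto(1-q)\xi''(q)$. This is not deep, but it is a specific maneuver rather than pure bookkeeping, and it is exactly where your ``translation'' would need to become explicit. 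Also note that the Nishimori/Bayes-optimality collapse you invoke is not needed for Proposition~\ref{prop:state_evolution} itself; the statement asserts convergence to the \emph{general} recursion \eqref{eq:AMP-sig}--\eqref{eq:AMP-lam}, and the simplification $\Lambda_{k,j}=q_{k\wedge j}$ is the content of the separate Proposition~\ref{prop:bayes-opt-SE}.
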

\begin{proof}
We will reduce the proof to the state evolution result in~\cite[Proposition 3.1]{ams20}, the difference being that in the present case, there is a planted vector $\bx_0$ which needs to be accounted for. Proposition 3.1 in~\cite{ams20} only considers the purely Gaussian case where $\bx_0=0$. 
For ease of notation let us denote $H_{\bG}$ the Hamiltonian~\eqref{eq:def-hamiltonian} with coefficients given by the sequence of tensors $\bG \in \sH_n$. Recall that $\bG$ has a spike component $\bx_0$ in the planted model, see Eq.~\eqref{eq:G_planted}. We denote $H_{\bW}$ the same Hamiltonian where $\bW$ is Gaussian, as in Eq.~\eqref{eq:G_planted}.  
Now let us consider the surrogate AMP iteration
\begin{align}
\nonumber
    \bar{\m}^{-1} 
    &= \bar{\bz}^{-1} =
    \bar{\bz}^{0}= 0\, ,
    \\
\label{eq:AMPsurrogate}
   \bar{\m}^{k} 
    &= 
    \tanh\big(\bar{\bz}^{k} \big) \, ,
     ~~~~
    %\bar{q}^k = 
    %\frac{1}{n}\sum_{i=1}^n  \tanh^2(\bar{z}^{k}_i) \, ,
    ~~~~
    \bar{\sb}_{k} = \beta^2 (1-q_k)\xi''\big(q_k) \,  ~~~~~~~  \forall k\ge 0\, ,
    \\
\nonumber
    \bar{\bz}^{k+1} 
    &= \beta\nabla H_{\bW}
    \big(\bar{\m}^{k}\big)  + \beta^2 \xi'(q_k) \bx_0
    +  
    \by - \bar{\sb}_{k} \bar{\m}^{k-1}\, .
\end{align}
In the above, $q_k$ is defined in Eq.~\eqref{eq:AMP-nu}. (Note that this is not an iteration that can be executed in practice.) 
Now, we apply the state evolution result of~\cite[Proposition 3.1]{ams20} to the above iteration for each realization of $\bx_0$ and $\by$. We see that this iteration satisfies the  conclusions of Proposition~\ref{prop:state_evolution}. It remains to show that the trajectories of Eq.~\eqref{eq:AMPreminder} and Eq.~\eqref{eq:AMPsurrogate} are close in $\ell_2$ distance.   Let 
\begin{align}
\eps_{n,k} &= \Big|\xi'\big(\langle \bx_0,\bar{\m}^{k}\rangle/n\big) - \xi'(q_k)\Big|\, ,~~~ \eta_{n,k} = \frac{1}{\sqrt{n}}\big\|\bz^{k} - \bar{\bz}^k\big\|_2\, ,~~~\mbox{and}~~~\rho_{n,k} = \frac{1}{n}\big\|\bar{\m}^k\big\|_2^2 - q_k \, .
\end{align}
Since state evolution holds for the surrogate iteration  Eq.~\eqref{eq:AMPsurrogate}, we have $\plim_{n \to \infty} \eps_{n,k}=\plim_{n \to \infty} \rho_{n,k}=0$ for all $k \ge 1$. We now argue that $\plim_{n \to \infty} \eta_{n,k} =0$ by induction over $k\ge 0$. The base cases  $k=-1$ and $k=0$ are clear, since $\eta_{n,-1} = \eta_{n,0}=0$. Next assume that $\plim_{n \to \infty} \eta_{n,k-1} = \plim_{n \to \infty} \eta_{n,k} =0$.
We derive abound on $\eta_{n,k+1}$. We have
\begin{align}
\eta_{n,k+1} = \frac{1}{\sqrt{n}}\big\|\bz^{k+1}-\bar{\bz}^{k+1}\big\|_2 &\le 
\frac{\beta}{\sqrt{n}}\big\|\nabla H_{\bG}(\hm^{k}) - \nabla H_{\bW}(\bar{\m}^k)- \beta^2 \xi'(q_k) \bx_0\big\|_2 \\
&~~~+\frac{\beta^2\|\xi''\|_{\infty}}{\sqrt{n}}\big\|\hm^{k-1} - \bar{\m}^{k-1}\big\|_2
+ |\bar{\sb}_{k}-\sb_{k}| \, .
\end{align}
Expanding $\bG$ as per Eq.~\eqref{eq:G_planted},
\[\nabla H_{\bG}(\hm^{k}) = \nabla H_{\bW}(\hm^{k}) + \xi'\big(\langle \bx_0,\hm^{k}\rangle/n\big) \bx_0\, ,\]
therefore 
\begin{align}
\frac{1}{\sqrt{n}}\big\|\nabla H_{\bG}(\hm^{k}) - \nabla H_{\bW}(\bar{\m}^k)- \beta^2 \xi'(q_k) \bx_0\big\|_2  &\le 
\sup_{\m \in [-1,1]^n} \big\|\nabla^2 H_{\bW}(\m)\big\|_{\op} \cdot \frac{1}{\sqrt{n}}\big\|\hm^{k} -  \bar{\m}^k\big\|_2\\
&~~~+\beta^2\Big|\xi'\big(\langle \bx_0,\hm^{k}\rangle/n\big) - \xi'(q_k) \Big|\\
&\le K \eta_{n,k} + \beta^2 \eps_{n,k}+ \|\xi''\|_{\infty} \eta_{n,k}  \, ,
\end{align}
where the last inequality holds for some $K= K(\xi)>0$ with probability as least $1-e^{-cn}$, $c>0$; see e.g.,~\cite[Proposition 2.3]{huang2021tight}, and by the fact that the map $x\mapsto \tanh(x)$ is 1-Lipschitz. 
Next, 
\begin{align}
|\bar{\sb}_{k}-\sb_{k}| &\le \beta^2(\|\xi'''\|_{\infty}+\|\xi''\|_{\infty})|\hat{q}^k-q_k| \\
&\le C\frac{1}{n}\sum_{i=1}^n |\tanh^2(z_i) - \tanh^2(\bar{z}_i)| + C\rho_{n,k}\\
&\le C \frac{1}{\sqrt{n}}\big\|\bz^{k}-\bar{\bz}^{k}\big\|_2 +  C\rho_{n,k}\, .
\end{align}
It follows that for some constant $C=C(\beta,\xi)>0$ we have with high probability as $n \to \infty$, 
\begin{align}
\eta_{n,k+1} \le C\big(\eta_{n,k} +\eta_{n,k-1} + \eps_{n,k} + \rho_{n,k}\big)\, ,~~~~ \forall k \ge0\, .
\end{align}
We obtain that $\plim_{n \to \infty} \eta_{n,k+1} =0$, and therefore the AMP iteration Eq.~\eqref{eq:AMPreminder} satisfies the conclusions of Proposition~\ref{prop:state_evolution}.  
\end{proof}

As in \cite[Eqs.\ (69,70)]{deshpande2017asymptotic} we argue that the state evolution equations
 \eqref{eq:AMP-nu}, \eqref{eq:AMP-sig} take a simple form thanks to our specific choice of AMP 
 non-linearity $\tanh(\cdot)$.

\begin{proposition}
\label{prop:bayes-opt-SE}
For any $t\in\mathbb R_{\geq 0}$ and $k,j\in\mathbb Z_{\geq 0}$,
\begin{align}
    \Lambda_{k,j}(\beta,t) &=q_{k\wedge j}(\beta,t)\, , ~~~\mbox{and}~~~ 
    \Sigma_{k,j}(\beta,t)=\gamma_{k\wedge j}(\beta,t)\, .
\end{align}
\end{proposition}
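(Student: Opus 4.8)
The plan is to prove the identities
$\Lambda_{k,j}(\beta,t)=q_{k\wedge j}(\beta,t)$ and $\Sigma_{k,j}(\beta,t)=\gamma_{k\wedge j}(\beta,t)$ simultaneously by induction on $\max(k,j)$, exploiting the fact that the state evolution nonlinearity $\tanh(\cdot)$ is precisely the Bayes-optimal denoiser for the scalar channel $Y = \gamma X + W$ with $X\in\{-1,+1\}$ uniform and $W$ Gaussian. The key algebraic fact is the \emph{Nishimori-type identity}: if $W_k\sim\normal(0,\gamma_k(\beta,t))$ (note that with the relation $\Sigma_{k,k}=\gamma_k$ to be established, the variance of $W_k$ equals its mean shift plus $t$, matching Eq.~\eqref{eq:q_k}), then
\begin{equation}
\E\big[\tanh(\gamma_k+t+W_k)\big] = \E\big[\tanh(\gamma_k+t+W_k)^2\big]\, .
\end{equation}
This holds because $\tanh(\gamma_k+t+W_k)=\E[X\mid Y]$ for an appropriate Gaussian channel with signal-to-noise ratio $\gamma_k+t$ provided the noise variance equals $\gamma_k+t$; under this matching, $\E[X\,\E[X\mid Y]] = \E[\E[X\mid Y]^2]$ by the tower property, which is exactly the displayed identity once one checks $\E[X\mid Y]=\tanh((\gamma_k+t)Y/(\gamma_k+t)\cdot\ldots)$ — more precisely, one verifies directly from the definition that for $\gamma\ge 0$, $\tanh(\gamma + \sqrt{\gamma}Z)$ with $Z\sim\normal(0,1)$ is the posterior mean of a $\pm1$ prior observed through additive Gaussian noise of variance $1/\gamma$ after rescaling, and the Nishimori identity gives equality of first and second moments.

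The induction proceeds as follows. For the base case $k=j=0$ (or the relevant initialization), both sides vanish by the choice $q_0=\Lambda_{0,j}=\Sigma_{0,j}=0$. For the inductive step, assume $\Lambda_{a,b}=q_{a\wedge b}$ and $\Sigma_{a,b}=\gamma_{a\wedge b}$ for all $a,b$ with $\max(a,b)\le k$. First, from \eqref{eq:AMP-sig}, $\Sigma_{k,j}=\beta^2\xi'(\Lambda_{k,j})=\beta^2\xi'(q_{k\wedge j})=\gamma_{k\wedge j}$ using the inductive hypothesis on $\Lambda$, which also extends $\Sigma_{k+1,j}$ once $\Lambda_{k+1,j}$ is known. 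So it suffices to propagate the $\Lambda$ identity. By \eqref{eq:AMP-lam}, for $j\le k$,
\begin{equation}
\Lambda_{k+1,j+1} = \E\big[\tanh(\gamma_k+t+W_k)\tanh(\gamma_j+t+W_j)\big]\, ,
\end{equation}
where $(W_k,W_j)$ is the relevant Gaussian pair with covariance entries given by $\Sigma_{\le k}+t\bfone\bfone^\top$; by the inductive hypothesis these entries are $\Sigma_{k,k}=\gamma_k$, $\Sigma_{j,j}=\gamma_j$, $\Sigma_{k,j}=\gamma_{k\wedge j}=\gamma_j$. Now invoke the Bayesian interpretation: conditionally, $\tanh(\gamma_k+t+W_k)$ and $\tanh(\gamma_j+t+W_j)$ are two posterior-mean estimators of the \emph{same} underlying $\pm1$ variable $X$ under two nested Gaussian observations (since the covariance structure $\gamma_{k\wedge j}=\min$ realizes a Markov/refinement chain of channels), so by the tower property the expectation of their product collapses to $\E[X\,\tanh(\gamma_j+t+W_j)] = \E[\tanh(\gamma_j+t+W_j)^2] = \E[\tanh(\gamma_j+t+W_j)] = q_{j+1}$, where the middle equality is the Nishimori identity above and the last is \eqref{eq:AMP-nu}. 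This gives $\Lambda_{k+1,j+1}=q_{(j+1)\wedge(k+1)}$, completing the step; the diagonal case $j=k$ is the Nishimori identity itself, and symmetry in $(k,j)$ handles $j>k$.

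The main obstacle is setting up the \emph{joint} Gaussian structure of $(W_0,W_1,\dots,W_k)$ correctly so that the ``nested channels'' / Markov-refinement picture is rigorous — i.e., verifying that the covariance being $\gamma_{i\wedge j}$ on the off-diagonal (plus the rank-one $t\bfone\bfone^\top$ shift) is exactly what is needed to write $\gamma_i+t+W_i = $ an observation refining $\gamma_j+t+W_j$ for $i\ge j$, in the precise sense that $(X, \gamma_j+t+W_j, \gamma_i+t+W_i)$ forms a Markov chain. Concretely one writes $W_i = W_j + W_i'$ with $W_i'\perp W_j$ of variance $\gamma_i-\gamma_j\ge 0$ (monotonicity of $q_k$, hence of $\gamma_k=\beta^2\xi'(q_k)$ since $\xi'$ is increasing, to be justified or cited from Lemma~\ref{lem:properties}), and checks that the extra Gaussian increment plus the $t$-shift is consistent with an additional independent Gaussian observation of $X$; then the tower-property collapse is immediate. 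Once this decomposition is in hand the rest is bookkeeping. I expect the cleanest route is to follow \cite[Eqs.~(69,70)]{deshpande2017asymptotic} closely, importing their verification of the scalar Bayes-optimality and Nishimori identities essentially verbatim.
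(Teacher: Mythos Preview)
Your proposal is correct and follows essentially the same approach as the paper: induction on the iteration index, interpreting $\tanh(\gamma_j+t+W_j)$ as the Bayes posterior mean $\E[X\mid V_j]$, and using the tower/sufficient-statistic property (equivalently, your Markov refinement decomposition $W_i=W_j+W_i'$) to collapse the cross-moment to $q_{j+1}$. The paper phrases the key step as ``$V_{k-1}$ is a sufficient statistic for $X$ given $(V_j)_{j\le k-1}$'' and computes $\E[X\mid V_{k-1}]=\tanh(V_{k-1})$ directly by Bayes' rule, which is exactly the nested-channel structure you identify as the main obstacle.
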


\begin{proof}
 It will be convenient to use the notations
\begin{align*}
    \wtgamma_{k}(\beta,t)&=\gamma_{k}(\beta,t)+t \, ,~~~\mbox{and}~~~ 
    \wtSigma_{k,j}(\beta,t)=\Sigma_{k,j}(\beta,t)+t\, .
\end{align*}

The two claims are equivalent and we proceed by induction. The base case $k=0$ holds by 
definition, so we may assume $\Lambda_{i,j}(\beta,t)=q_{i\wedge j}(\beta,t)$ for $i,j\le k-1$. Set
 \[
    V_{j}=\wtgamma_{j}X+\wtW_{j}
\]
where $\wtbW \sim \normal(0,\widetilde{\bSigma}_{\le k-1})$. By the induction hypothesis, $V_{k-1}$ is a sufficient statistic for $X$ given 
 $(V_j)_{j\le k-1}$.
 Using Bayes' rule, and writing $\wtsigma_{k-1}^2:=\wtSigma_{k-1,k-1}$, one easily computes
\begin{equation}
\label{eq:bayes-tanh}
\begin{aligned}
    \mathbb E[X|V_{k-1}]
    &=
    \frac{e^{\wtgamma_{k-1}V_{k-1}/\wtsigma_{k-1}^2}-e^{-\wtgamma_{k-1}V_{k-1}/\wtsigma_{k-1}^2}}{e^{\wtgamma_{k-1}V_{k-1}/\wtsigma_{k-1}^2}+e^{-\wtgamma_{k-1}V_{k-1}/\wtsigma_{k-1}^2}}
    \\
    &=\tanh(V_{k-1}) \, .
\end{aligned}
\end{equation}
Therefore using Eq.~\eqref{eq:AMP-nu}, the fact that $\tanh$ is an odd function and $ZX \stackrel{\rmd}{=} X$,
\begin{align*}
    \E\big[ \tanh(\wtgamma_{k-1}X+\wtsigma_{k-1} Z)\tanh(\wtgamma_{j-1}X+\wtsigma_{j-1} Z) \big]
    &= 
    \E\big[\E[X|V_{k-1}]\E[X|V_{j-1}] \big]
    \\
    & \stackrel{(a)}{=} \E\big[X\E[X|V_{j-1}] \big]
    \\
    &=\E\left[ X  \tanh(\wtgamma_{j-1}X+\wtsigma_{j-1} Z)\right]
    \\
    &=\E\left[ \tanh(\wtgamma_{j-1}+\wtsigma_{j-1} Z)\right] 
    .
\end{align*}
Step $(a)$ used the sufficient statistic property. This yields yields $\Lambda_{k,j}=q_j(\beta,t)$. Applying $\beta^2\xi'(\cdot)$ to the previous relation yields $\Sigma_{k,j}=\gamma_j(\beta,t)$. This completes the induction and proof.
\end{proof}

\subsubsection{Convergence to the State Evolution fixed point}
\label{sec:ConvergenceToFP}
%Define the function $\mmse:\mathbb R\to\mathbb R$ given by
%\[
%    \mmse(\gamma)\equiv 1-\E\big[\tanh(\gamma+\sqrt{\gamma}Z)^2\big]=1-\E\big[\E[X|\gamma X+\sqrt{\gamma}Z]^2\big].
%\]
It follows from Proposition~\ref{prop:bayes-opt-SE} that the state
evolution recursion \eqref{eq:AMP-nu} and \eqref{eq:AMP-lam} 
can be expressed just in terms of a scalar recursion for $q_k(\beta,t)$. 
Define
$\psi:\bbR_{\geq 0}\to [0,1)$ as in the statement of Theorem \ref{thm:main},
namely   
\begin{equation}\label{eq:psi}
  \psi(\gamma)=\bbE\big[\tanh(\gamma + \sqrt{\gamma}Z)\big] \, , ~~~~Z\sim\normal(0,1) \, .
\end{equation}
We further denote by $\phi$ the inverse of $\psi$ on $\R_{\ge 0}$. 
It is not hard to show that both $\psi$ and $\phi$ are smooth and strictly increasing with 
$\psi(0)=\phi(0)=0$. 
Moreover, $\psi$ is concave and $\phi$ is convex.

We have the recursion (c.f.\ Eq.~\eqref{eq:q_k})
\begin{align}
\label{eq:q-recusion}
  q_{k+1} & = f_t(q_k)\, ,\;\;\;\;\;\; q_0=0\, ,\\
    f_t(q)& := \psi\big(\beta^2 \xi'(q)+t\big)\, .
\end{align} 

Recall the definition of $\beta_1$ given in the statement of Theorem \ref{thm:main},
cf.  Eq.~\eqref{eq:Beta0Def}.
Equivalently,
\begin{equation}
\label{eq:beta0-equiv}
    \beta\leq\beta_1\quad\iff \quad\beta^2\xi''(q)\leq \phi'(q)
    ~~
    \forall q\in [0,1] \, .
\end{equation}
The next lemma clarifies the significance of $\beta_1$.
\begin{lemma}
\label{lem:contraction}
    If $\beta<\beta_1$, then for all $q\geq 0$, we have
    \[
        0\le f_t'(q)\leq (\beta/\beta_1)^2<1 \, .
    \]
In other words, $f_t$ is a contraction $|f_t(q_1)-f_t(q_2)|\le c(\beta) |q_1-q_2|$
for some $c(\beta)<1$.
\end{lemma}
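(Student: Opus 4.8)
The plan is to differentiate $f_t(q) = \psi(\beta^2\xi'(q)+t)$ directly and bound the result using the characterization of $\beta_1$ in Eq.~\eqref{eq:beta0-equiv}. By the chain rule,
\[
f_t'(q) = \psi'\big(\beta^2\xi'(q)+t\big)\cdot \beta^2\xi''(q)\, .
\]
Nonnegativity is immediate: $\psi$ is increasing (so $\psi'\ge 0$) and $\xi''(q)\ge 0$ since $\xi$ has nonnegative coefficients, hence $f_t'(q)\ge 0$.

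For the upper bound, the key structural input is that $\psi$ is concave with $\psi(0)=0$, which forces $\psi'$ to be decreasing; therefore $\psi'(\beta^2\xi'(q)+t) \le \psi'(\beta^2\xi'(q))$ for all $t\ge 0$. So it suffices to handle $t=0$, i.e.\ to show $\psi'(\beta^2\xi'(q))\cdot\beta^2\xi''(q) \le (\beta/\beta_1)^2$. Now write $\gamma := \beta^2\xi'(q)$ and observe that $q' := \psi(\gamma)$ satisfies $\phi(q')=\gamma$, so $\psi'(\gamma) = 1/\phi'(q')$. Since $\psi$ maps $\R_{\ge 0}$ into $[0,1)$ and $\psi(\gamma)\le q$ is \emph{not} automatic; instead I would argue more carefully: we actually want to compare $\psi'$ at the point $\gamma = \beta^2\xi'(q)$ with $\phi'$ at some related point. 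Let me instead use the defining inequality directly. By Eq.~\eqref{eq:beta0-equiv}, $\beta^2\xi''(q) \le (\beta/\beta_1)^2\,\beta_1^2\xi''(q)$ and when $\beta=\beta_1$ we have $\beta_1^2\xi''(q)\le \phi'(q)$. The cleanest route: set $u=\psi(\gamma)$ where $\gamma=\beta^2\xi'(q)+t$; then $\psi'(\gamma)=1/\phi'(u)$, and since $\phi$ is convex with $\phi(0)=0$, $\phi'$ is increasing, so if $u\le q$ then $\phi'(u)\le\phi'(q)$ and hence $\psi'(\gamma)\ge 1/\phi'(q)$ — which is the wrong direction. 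The correct comparison point is therefore obtained by noting that $\psi'$ is \emph{decreasing} (concavity), so $\psi'(\gamma)\le \psi'(\beta^2\xi'(q))$, and then I claim $\psi'(\beta^2\xi'(q)) \le 1/\phi'(q)$: indeed $\phi'(q)\cdot\psi'(\phi(q))=1$ by the inverse function theorem, and $\beta^2\xi'(q) \le \phi(q)$ whenever $\beta\le\beta_1$ (integrate $\beta^2\xi''\le\phi'$ from $0$ to $q$, using $\xi'(0)=\phi(0)=0$), so by monotonicity of $\psi'$ downward, $\psi'(\beta^2\xi'(q))\ge\psi'(\phi(q))=1/\phi'(q)$ — again backwards. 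So the honest statement is $\psi'(\beta^2\xi'(q))\ge 1/\phi'(q)$, and I should not chain through $t$ this way.

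Given these sign subtleties, the robust plan is: start from $f_t'(q) = \psi'(\beta^2\xi'(q)+t)\,\beta^2\xi''(q)$; apply $\psi'$ decreasing to drop $t$, getting $f_t'(q)\le \psi'(\beta^2\xi'(q))\,\beta^2\xi''(q)$; then use the condition $\beta<\beta_1$ in the form $\beta^2\xi''(q)\le (\beta/\beta_1)^2\,\phi'(q)$ — but this needs $\beta_1^2\xi''(q)\le\phi'(q)$, which is exactly Eq.~\eqref{eq:beta0-equiv} at the boundary value. Substituting, $f_t'(q)\le (\beta/\beta_1)^2\,\psi'(\beta^2\xi'(q))\,\phi'(q)$, and the remaining task is to show $\psi'(\beta^2\xi'(q))\,\phi'(q)\le 1$. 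Since $\beta^2\xi'(q)\le\beta_1^2\xi'(q)\le\phi(q)$ (the last by integrating Eq.~\eqref{eq:beta0-equiv}) and $\psi'$ is decreasing, $\psi'(\beta^2\xi'(q))\le\psi'(\phi(q))\cdot$ — no: decreasing means the value at the \emph{smaller} argument is \emph{larger}. Hmm. The resolution is that one should \emph{not} drop the $t$ first; rather, keep $\gamma = \beta^2\xi'(q)+t\ge \beta^2\xi'(q)$, and separately the relevant fixed-point value $q^* = \psi(\gamma) \ge \psi(\beta^2\xi'(q))\cdot$... I will handle this by working with $\phi$ throughout: write $q^\dagger := \psi(\beta^2\xi'(q)+t)$ so that $\phi(q^\dagger)=\beta^2\xi'(q)+t \ge \beta^2\xi'(q)$. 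I expect that the clean inequality is $q^\dagger \ge \psi(\beta^2\xi'(q))$ is not needed; instead $\phi(q)\ge \beta^2\xi'(q)$ gives (since $\phi$ increasing) $q \ge \psi(\beta^2\xi'(q))$, hence $q\ge$ nothing directly about $q^\dagger$.

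The genuine main obstacle is pinning down the right monotonicity chain; the correct and standard argument (which I would write out carefully) is: $f_t'(q)=\psi'(\phi(q^\dagger))\,\beta^2\xi''(q) = \beta^2\xi''(q)/\phi'(q^\dagger)$ where $q^\dagger=f_t(q)\in[0,1)$, using $\psi'(\phi(s))=1/\phi'(s)$; then since $\phi'$ is increasing (convexity of $\phi$) we only get a useful bound if $q^\dagger\ge q$, which fails in general — so instead bound $\phi'(q^\dagger)\ge\phi'(0)=\psi'(0)^{-1}$, giving $f_t'(q)\le\beta^2\xi''(q)\,\psi'(0)$, which is $O(1)$ but not obviously $<1$ uniformly. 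Ultimately the honest and clean path, which I would commit to, is: show $f_t'(q)\le\psi'(\beta^2\xi'(q))\,\beta^2\xi''(q)$ by concavity of $\psi$ (so $t\ge0$ only helps), and then prove the pointwise inequality $\psi'(\beta^2\xi'(q))\,\beta^2\xi''(q)\le(\beta/\beta_1)^2$ by rescaling: the left side is increasing in $\beta$ at fixed $q$ times... — concretely, define $g(\beta):=\psi'(\beta^2\xi'(q))\beta^2\xi''(q)$; one shows $g(\beta)/\beta^2$ is nonincreasing in $\beta$ (since $\psi'$ decreasing and $\beta^2\xi'(q)$ increasing in $\beta$), so $g(\beta)/\beta^2\le g(\beta_1)/\beta_1^2 = \psi'(\beta_1^2\xi'(q))\xi''(q)\le \psi'(\phi(q))\xi''(q)\cdot$ — using $\beta_1^2\xi'(q)\le\phi(q)$ and $\psi'$ decreasing gives $\psi'(\beta_1^2\xi'(q))\ge\psi'(\phi(q))=1/\phi'(q)$, wrong way again; so in fact one uses $\beta_1^2\xi''(q)\le\phi'(q)$ directly combined with $\psi'(\beta_1^2\xi'(q))\le\psi'(0)$ won't close it either. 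I therefore expect the real proof (which I would reconstruct from the paper's Lemma) uses the sharper fact that $q\mapsto\psi(\beta_1^2\xi'(q))$ is itself a contraction with derivative $\le 1$ \emph{by definition of $\beta_1$}, equivalently $\psi'(\beta_1^2\xi'(q))\,\beta_1^2\xi''(q)\le 1$ is exactly the $\beta=\beta_1$, $t=0$ case of the statement, so the content reduces to the monotonicity $g(\beta)/\beta^2$ nonincreasing plus the boundary identity — and that boundary identity ($f_0'(q)\le1$ at $\beta=\beta_1$) is precisely the definition in Eq.~\eqref{eq:beta0-equiv}/\eqref{eq:Beta0Def}. I would organize the final writeup around exactly that: (1) $f_t'\ge0$ trivially; (2) $f_t'(q)=\psi'(\beta^2\xi'(q)+t)\beta^2\xi''(q)$, decreasing in $t$; (3) at $t=0$, factor out $(\beta/\beta_1)^2$ and use that $\beta\mapsto f_0'(q)/\beta^2$ is nonincreasing together with $f_0'(q)\big|_{\beta=\beta_1}\le1$ from the definition of $\beta_1$; (4) conclude $f_t'(q)\le(\beta/\beta_1)^2<1$ for $\beta<\beta_1$, so $f_t$ is a $c(\beta)$-contraction with $c(\beta)=(\beta/\beta_1)^2$.
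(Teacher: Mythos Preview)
Your repeated ``wrong direction'' observations are correct and go to the heart of the matter, but the final plan does not escape them. In step~(3) you use that $\beta\mapsto f_0'(q)/\beta^2=\xi''(q)\,\psi'(\beta^2\xi'(q))$ is nonincreasing; this is true, but it means that for $\beta<\beta_1$ the value is \emph{larger} than at $\beta_1$, so factoring out $(\beta/\beta_1)^2$ yields only the \emph{lower} bound $f_0'(q)\ge(\beta/\beta_1)^2\,f_0'(q)\big|_{\beta=\beta_1}$, not an upper bound. You then assert that $f_0'(q)\big|_{\beta=\beta_1}\le 1$ ``is precisely the definition'' of $\beta_1$; but Eq.~\eqref{eq:beta0-equiv} says $\beta_1^2\xi''(q)\le\phi'(q)=1/\psi'(\phi(q))$, whereas you need the strictly stronger $\beta_1^2\xi''(q)\le 1/\psi'(\beta_1^2\xi'(q))$ (stronger because $\beta_1^2\xi'(q)\le\phi(q)$ and $\psi'$ is decreasing, so $1/\psi'(\beta_1^2\xi'(q))\le\phi'(q)$). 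Neither half of step~(3) delivers the claimed upper bound.

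The paper's own argument runs into the same obstruction: its first displayed inequality simultaneously enlarges one factor, $\beta^2\xi''(q)\to(\beta/\beta_1)^2\phi'(q)$, and shrinks the other, $\psi'(\beta^2\xi'(q)+t)\to\psi'(\phi(q)+t)$ (the argument increased and $\psi'$ is decreasing), so the product comparison is not justified as written. In fact the bound $f_t'(q)\le(\beta/\beta_1)^2$ cannot hold for all $q\ge 0$ in general: for pure $p$-spin with $p\ge 3$ the infimum defining $\beta_1$ is attained at some $q^{\star}\in(0,1)$; since $\xi''(0)=0<1=\phi'(0)$ one has $\beta_1^2\xi'(q^{\star})<\phi(q^{\star})$ strictly, whence $f_0'(q^{\star})\big|_{\beta=\beta_1}=\beta_1^2\xi''(q^{\star})\,\psi'(\beta_1^2\xi'(q^{\star}))>\phi'(q^{\star})\,\psi'(\phi(q^{\star}))=1$. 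What \emph{does} hold, and is what Lemma~\ref{lem:properties} actually uses, is the bound on $[0,q_*(\beta,t)]$: since $\phi(q)-\beta^2\xi'(q)-t$ has derivative $\phi'(q)-\beta^2\xi''(q)>0$ for $\beta<\beta_1$, the fixed point $q_*$ is unique and $f_t(q)\ge q$ on $[0,q_*]$; writing $u=f_t(q)\ge q$ and using that $\xi''$ is nondecreasing, one gets $f_t'(q)=\beta^2\xi''(q)/\phi'(u)\le\beta^2\xi''(u)/\phi'(u)\le(\beta/\beta_1)^2$.
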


\begin{proof}
    Recall from \eqref{eq:beta0-equiv} that $\beta_1^2\xi''(q)\le \phi'(q)$ and by integration, 
    $\beta_1^2 \xi'(q)\le \phi(q)$. Thus
    \begin{align*}
        f_t'(q)
        &=
        \beta^2 \xi''(q) \psi'(\beta^2 \xi'(q)+t)
        \\
        &\leq (\beta/\beta_1)^2
        \phi'(q) \psi'(\phi(q)+t)
        \\
         &\stackrel{(a)}{\leq} (\beta/\beta_1)^2
        \phi'(q) \psi'(\phi(q))
        \\
        &=
        (\beta/\beta_1)^2
        \frac{\rmd }{\rmd q} \psi(\phi(q))
        \\
        &=
        (\beta/\beta_1)^2\, ,
    \end{align*}
    where in $(a)$ we used the fact that $\psi$ is concave \cite{deshpande2017asymptotic}.
\end{proof}

We now prove some useful regularity properties for $q_{k}(\beta,t)$.
\begin{lemma}%[Lemma 6.1 of \cite{deshpande2017asymptotic}]
\label{lem:properties}
Let $\beta<\beta_1$. Then the following properties hold, where $\{q_k(\beta,t)\}_{k\geq 0}$ is as defined in Eq.~\eqref{eq:q-recusion}.
\begin{enumerate}[label=(\alph*)]
    \item 
    \label{it:mmse-decreasing-convex} 
    $\gamma \mapsto \psi(\gamma)$ is differentiable,  strictly increasing, and strictly concave in 
    $\gamma\in\mathbb R_{\geq 0}$.
    \item
    \label{it:mmse-boundary-values} 
    $\psi(0)=0$, $\psi'(0)=1$ and $\lim_{\gamma\to\infty}\psi(\gamma)=1$.
    \item\label{it:mmseExistence} For $t\geq 0$, the fixed point equation
     \begin{equation}
        \label{eq:gamma*}
       q=\psi\big(\beta^2\xi'(q)+t \big)\, .
    \end{equation}
   has a 
     unique non-negative solution which we denote by $q_*=q_*(\beta,t)$.
   Further, the sequence $q_k$ is monotone increasing with  $\lim_{k\to \infty} q_k = q_*$.
    \item \label{it:mmseSolSmooth} The function $(\beta,t)\mapsto q_*(\beta,t)$ is 
    continuously differentiable ($C^1$) for $(\beta,t)\in [0,\beta_1)\times\R_{\ge 0}$. 
    \item \label{it:ExpConv} For all $t>0$, 
    \begin{equation}
    \label{eq:uniform-gamma-limit}
        1-(\beta/\beta_1)^{2k} \le \frac{q_{k}(\beta,t)}{q_*(\beta,t)} \le 1 \, .
    \end{equation}
    \item 
    \label{it:gamma*-near0}
    For $\T>0$, there exist constants $c(\beta,\T),C(\beta,\T)\in (0,\infty)$
    such that, for all $t\in (0,\T]$, 
    \begin{equation}
    \label{eq:gamma*-near0}
        c(\beta,\T)\leq \frac{q_*(\beta,t)}{t}\leq C(\beta,\T) \, .
    \end{equation}
    \item
    \label{it:gamma*-Lipschitz}
    For any $t_1,t_2\in (0,T)$,
    \begin{align}
    \label{eq:a*-Lipschitz}
        q_*(\beta,t_1)-q_*(\beta,t_2)&\leq 
        C(\beta,T) |t_1-t_2| \, , 
        \\
    \label{eq:gamma*-Lipschitz}    
        \gamma_*(\beta,t_1)-\gamma_*(\beta,t_2)&\leq C(\beta,T) |t_1-t_2|\, .
    \end{align}
    (Recall that we defined $\gamma_k(\beta,t) := \beta^2\xi'(q_k(\beta,t))$,
    and correspondingly we let $\gamma_*(\beta,t) := \beta^2\xi'(q_*(\beta,t))$.)
\end{enumerate}
\end{lemma}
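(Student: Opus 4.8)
The plan is to establish the properties essentially in the order listed, since each relies on the previous ones. Properties \ref{it:mmse-decreasing-convex} and \ref{it:mmse-boundary-values} are standard facts about the scalar MMSE/tanh function $\psi(\gamma)=\E[\tanh(\gamma+\sqrt\gamma Z)]$: one writes $\psi(\gamma)=\E[\tanh(\gamma+\sqrt\gamma Z)]$ and recognizes it, via a Gaussian channel interpretation $Y=\gamma X+\sqrt\gamma Z$ with $X\in\{\pm1\}$ uniform, as $\E[\tanh(Y)\cdot\mathrm{sign\text{-}free}]$; concavity and monotonicity then follow from the I-MMSE relation or from a direct computation of $\psi'(\gamma)=\E[(1-\tanh^2(\gamma+\sqrt\gamma Z))(1+Z/(2\sqrt\gamma))]$ combined with Gaussian integration by parts to simplify. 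The boundary behavior $\psi(0)=0$, $\psi'(0)=1$, $\psi(\gamma)\to 1$ is an elementary limit computation. These are the kind of facts cited from \cite{deshpande2017asymptotic}, so I would state them with a one-line justification or a pointer.

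For \ref{it:mmseExistence}, I would use Lemma~\ref{lem:contraction}: since $f_t$ is a contraction on $\R_{\ge0}$ with constant $(\beta/\beta_1)^2<1$, Banach's fixed point theorem gives existence and uniqueness of $q_*(\beta,t)$. Monotonicity of the sequence $q_k$ follows because $q_0=0\le q_1=f_t(0)$ and $f_t$ is monotone increasing (as $\xi'$ and $\psi$ are), so by induction $q_k\le q_{k+1}$; boundedness by $q_*$ is similar, and the monotone bounded sequence converges, necessarily to the unique fixed point. For \ref{it:mmseSolSmooth}, I would apply the implicit function theorem to $G(q,\beta,t):=q-\psi(\beta^2\xi'(q)+t)$; the partial derivative $\partial_q G = 1-f_t'(q)\ge 1-(\beta/\beta_1)^2>0$ is bounded away from zero by Lemma~\ref{lem:contraction}, and $G$ is $C^1$ jointly, so $q_*$ is $C^1$ in $(\beta,t)$ on $[0,\beta_1)\times\R_{\ge0}$.

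Property \ref{it:ExpConv} is the quantitative version of the convergence: iterating the contraction bound, $q_*-q_k = f_t(q_*)-f_t(q_{k-1})\le (\beta/\beta_1)^2(q_*-q_{k-1})$, so $q_*-q_k\le (\beta/\beta_1)^{2k}(q_*-q_0)=(\beta/\beta_1)^{2k}q_*$, which rearranges to \eqref{eq:uniform-gamma-limit} (using $q_k\le q_*$ for the upper bound). For \ref{it:gamma*-near0}, the upper bound on $q_*/t$ follows since $q_*=\psi(\beta^2\xi'(q_*)+t)\le \psi'(0)(\beta^2\xi'(q_*)+t)=\beta^2\xi'(q_*)+t$ by concavity of $\psi$ with $\psi(0)=0$; writing $\xi'(q_*)\le \xi''(1)q_*$ (as $\xi'$ is convex, $\xi'(0)=0$) gives $q_*(1-\beta^2\xi''(1))\le t$ when $\beta^2\xi''(1)<1$ — one must check this holds under $\beta<\beta_1$, which it does since $\beta_1^2\xi''(q)\le\phi'(q)$ and $\phi'(0)=1$ forces $\beta_1^2\xi''(0)\le1$; for the full range one uses the weaker bound with a $t$-dependent constant, or argues directly that $q_*$ stays bounded away from $1$ on $t\in(0,T]$. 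The lower bound uses $\psi(\gamma)\ge \psi'(\gamma_0)\gamma$ type concavity estimates on a bounded interval, giving $q_*\ge c(\beta,T)\,t$. Finally \ref{it:gamma*-Lipschitz} follows from \ref{it:mmseSolSmooth}: $q_*$ is $C^1$ hence locally Lipschitz on the compact interval $t\in[\epsilon,T]$, and near $t=0$ the bound $q_*(\beta,t)\le C(\beta,T)t$ from \ref{it:gamma*-near0} handles the degenerate endpoint; combining, $|q_*(\beta,t_1)-q_*(\beta,t_2)|\le C(\beta,T)|t_1-t_2|$, and the corresponding bound for $\gamma_*=\beta^2\xi'(q_*)$ follows since $\xi'$ is Lipschitz on $[0,1]$.

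The main obstacle I anticipate is \ref{it:gamma*-near0}: controlling $q_*/t$ uniformly down to $t=0$ requires care because the fixed point equation degenerates there, and the clean linearized bound $q_*(1-\beta^2\xi''(1))\le t$ only works directly when $\beta^2\xi''(1)<1$, which need not hold throughout $\beta<\beta_1$ for general mixtures. Handling this likely needs the observation that $q_*(\beta,t)$ is bounded away from $1$ uniformly for $t\le T$ (since $q_*=1$ would require $\gamma=\infty$), together with a compactness argument on the interval $q_*\in[0,1-\delta]$ where $\psi'$ and $\xi'$ are comparable to their values at the relevant scale. Everything else is routine: contraction, monotone convergence, and the implicit function theorem.
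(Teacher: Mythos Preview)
Your plan for parts \ref{it:mmse-decreasing-convex}--\ref{it:ExpConv} and \ref{it:gamma*-Lipschitz} matches the paper's proof essentially line for line: cite \cite{deshpande2017asymptotic} for \ref{it:mmse-decreasing-convex}--\ref{it:mmse-boundary-values}, Banach contraction plus monotonicity for \ref{it:mmseExistence}, implicit function theorem for \ref{it:mmseSolSmooth}, iterate the contraction for \ref{it:ExpConv}, and $C^1$ on a compact interval for \ref{it:gamma*-Lipschitz}.

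The one place you diverge is \ref{it:gamma*-near0}, and you correctly flag it as the obstacle. Your linearized bound $q_*(1-\beta^2\xi''(1))\le t$ does \emph{not} work in general: the condition $\beta<\beta_1$ only gives $\beta^2\xi''(0)\le 1$, not $\beta^2\xi''(1)\le 1$, so for mixtures with higher-degree terms the coefficient can be negative. The paper avoids this entirely by reusing part \ref{it:mmseSolSmooth}: since $q_*$ is $C^1$ on $[0,\beta_1)\times\R_{\ge 0}$ with $q_*(\beta,0)=0$, the ratio $q_*(\beta,t)/t$ extends continuously to $t=0$ with value $\partial_t q_*(\beta,0)$, and the implicit function theorem formula
\[
\partial_t q_*(\beta,0)=\frac{\partial_t f(0;\beta,0)}{1-\partial_q f(0;\beta,0)}
\]
is finite and strictly positive because $\partial_t f(0;\beta,0)=\psi'(0)=1$ and $\partial_q f<1$ by Lemma~\ref{lem:contraction}. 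Continuity of $t\mapsto q_*(\beta,t)/t$ on the compact set $[0,T]$ then gives both bounds in \eqref{eq:gamma*-near0} at once. This also lets you prove \ref{it:gamma*-Lipschitz} directly from $C^1$ on $[0,T]$ without splitting off the endpoint $t=0$.
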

\begin{proof}
Lemma 6.1 in \cite{deshpande2017asymptotic} proves that $\gamma\mapsto\psi(\gamma)$ 
 is differentiable,  strictly increasing, and concave in $\gamma\in\mathbb R_{\geq 0}$.
 Note that the statement of that Lemma does not claim differentiability, but this is actually proved 
 there by a simple application of dominated convergence.
  Further \cite{deshpande2017asymptotic}, only claims $\gamma\mapsto\psi(\gamma)$  is concave
  (not strictly concave). However, this function is real analytic on $\R_{>0}$ and hence it 
  must be strictly concave.
  This proves point 
 \ref{it:mmse-decreasing-convex}.
 
  Point \ref{it:mmse-boundary-values} follows by a direct calculation, c.f. \cite{deshpande2017asymptotic}.
  Indeed,  by Stein's lemma (Gaussian integration by parts), with $V=\gamma+Z\sqrt{\gamma}$,
\begin{align*}
    \psi'(\gamma)&=\frac{\rmd~}{\rmd\gamma}\E[\tanh(\gamma+Z\sqrt{\gamma})^2]\\
    &=\E[2\tanh(V)\tanh'(V)+\tanh'(V)^2 +\tanh(V)\tanh''(V)]
\end{align*}
Evaluating at $\gamma=0$ shows
\[
    \psi'(0)=1.
\]
Also, dominated convergence yields the desired limit values.

Consider next claim \ref{it:mmseExistence}. The existence and uniqueness of  
a fixed point, i.e. a solution of $q=f_t(q)$ follows from the fact that  
$f_t$ is a contraction, and $f_t(0) =\psi(t)\ge 0$.  
As for the fact that $q_k\uparrow q_*$, this is trivial for $t=0$ ($q_*=0$ in this case).
 For $t>0$, we further
have $f_t(q)>q$ for $q\in [0,q_*)$ and  $f_t(q)<q$ for $q\in (q_*,\infty)$,
By induction $q_k<q_*$ for all $k$ and $q_{k+1} = f_t(q_k)>q_k$,
whence we have that $q_k$ is an increasing sequence. Defining limit 
 $q_{\infty} := \lim_{k\to\infty} q_k\le q_*$, we have $q_{\infty} = f_t(q_{\infty})$ (continuity) and 
therefore $q_{\infty}=q_*$.

Writing for clarity $f_t(q) = f(q;\beta,t)$, claim \ref{it:mmseSolSmooth}
follows from the definition $q_*(\beta,t)-f(q_*(\beta,t);\beta,t)=0$ using the
fact that $\partial_q f(q_*(\beta,t);\beta,t)<1$  proven
in Lemma \ref{lem:contraction}, together with the implicit function theorem.
 Note that the implicit function theorem argument also gives expressions for the derivatives 
 of  of $q_*$. Namely
 \begin{align*}
    \partial_{\beta} q_* = \frac{\partial_{\beta} f(q_*;\beta,t)}{1-\partial_q f(q_*;\beta,t)}\, ,\;\;\;\;
   \partial_{t} q_* = \frac{\partial_t f(q_*;\beta,t)}{1-\partial_q f(q_*;\beta,t)}\, .
   \end{align*}
Since $f$ is $C^1$, and  $q_*$ is continuous, we deduce that
it is also $C^1$. So is $\gamma_*(\beta,t) =
\beta^2\xi'((q_*(\beta,t))$, because $(\beta,q)\mapsto \beta^2\xi'((q_*(\beta,t))$
is $C^1$.

Claim \ref{it:ExpConv} follows from the contraction property proven
in Lemma \ref{lem:contraction}, since this implies
\begin{equation}
\label{eq:gamma-linear-convergence}
    0
    \leq (q_*-q_{k+1})
=
    \big (f_t(q_*)-f_t(q_k))
    \leq 
    (\beta/\beta_1)^2(q_*-q_k) \, .
\end{equation}

Equation \eqref{eq:gamma*-near0} follows if we can prove $\partial_tq_*(\beta,t=0)\in (0,\infty)$.
This follows immediately from the above proof of differentiability, since
\begin{align}
\partial_{t}q_*(\beta,0) = \left.\frac{\partial_t f(q;\beta,t)}{1-\partial_q f(q;\beta,t)}
\right|_{q=0,t=0}\, ,
\end{align}
and it is easy to see that $\partial_t f(0;\beta,0)>0$.

Finally, Eqs.~\eqref{eq:a*-Lipschitz} and \eqref{eq:gamma*-Lipschitz} follows from the fact that
$q_*$, $\gamma_*\in C^1([0,\beta_1)\times\R_{\ge 0})$, and hence their derivative is bounded on 
$[0,T]$. 
\end{proof}

As mentioned in Remark \ref{rmk:Beta0}, 
an equivalent characterization of $\beta_1$ is given by the following result. 
\begin{lemma}\label{lem:Beta0}
We have
\begin{align}
\beta_1(\xi)&= \sup\big\{\beta:\; q = \psi(t+\tilde\beta^2\xi'(q)) 
\mbox{ has a unique solution for all }(\tilde\beta,t)\in [0,\beta)\times\R_{\ge 0} 
\big\}\, .
\end{align}
\end{lemma}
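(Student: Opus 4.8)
Write $B(\xi)$ for the supremum on the right‐hand side of the claimed identity; the plan is to prove $\beta_1(\xi)\le B(\xi)$ and $B(\xi)\le\beta_1(\xi)$ separately. The starting point is a reformulation. By Lemma~\ref{lem:properties}, $\psi$ is a continuous, strictly increasing bijection of $\R_{\ge 0}$ onto $[0,1)$ with smooth strictly increasing inverse $\phi$, so for a fixed $\tilde\beta\ge 0$ a number $q\ge 0$ solves $q=\psi(t+\tilde\beta^2\xi'(q))$ if and only if $q\in[0,1)$ and $g_{\tilde\beta}(q)=t$, where $g_{\tilde\beta}(q):=\phi(q)-\tilde\beta^2\xi'(q)$ on $[0,1)$. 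This function is $C^1$, with $g_{\tilde\beta}(0)=0$, $g_{\tilde\beta}(q)\to+\infty$ as $q\uparrow1$ (since $\phi(q)\to+\infty$ while $\tilde\beta^2\xi'(q)$ stays bounded on $[0,1]$), and $g_{\tilde\beta}'(q)=\phi'(q)-\tilde\beta^2\xi''(q)$. By the equivalent form \eqref{eq:beta0-equiv} of the definition of $\beta_1$, the inequality $\tilde\beta\le\beta_1$ holds exactly when $g_{\tilde\beta}$ is nondecreasing on $[0,1)$, while $\tilde\beta>\beta_1$ forces $g_{\tilde\beta}'(q^\dagger)<0$ for some $q^\dagger\in(0,1)$.

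For $\beta_1(\xi)\le B(\xi)$: if $\beta<\beta_1$ and $\tilde\beta\in[0,\beta)$, then $\tilde\beta<\beta_1$, so Lemma~\ref{lem:contraction} (with $\tilde\beta$ in place of $\beta$) shows that $q\mapsto\psi(t+\tilde\beta^2\xi'(q))$ is a strict contraction of $[0,\infty)$ for every $t\ge0$, hence has a unique fixed point. Thus every $\beta<\beta_1$ lies in the set defining $B(\xi)$, and taking the supremum gives $\beta_1(\xi)\le B(\xi)$.

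For $B(\xi)\le\beta_1(\xi)$ it suffices to show that for each $\beta>\beta_1$ there exist $\tilde\beta\in[0,\beta)$ and $t\ge0$ for which $q=\psi(t+\tilde\beta^2\xi'(q))$ has at least two nonnegative solutions; this forces $\beta\notin$ the defining set, hence $B(\xi)\le\beta$, and letting $\beta\downarrow\beta_1$ concludes. Fix $\tilde\beta\in(\beta_1,\beta)$ and set $g:=g_{\tilde\beta}$; by the reformulation, $g'(q^\dagger)<0$ for some $q^\dagger\in(0,1)$, so there are $a<b$ in $(0,1)$ with $g(a)>g(b)$. If $g$ takes a nonpositive value at some $\bar q\in(0,1)$, then using $g(0)=0$, $g(q)\to+\infty$, and the intermediate value theorem on $(\bar q,1)$, the level $t=0$ is attained both at $q=0$ and at a point of $(\bar q,1)$, giving two solutions at $t=0$. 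Otherwise $g>0$ on $(0,1)$, so $g(a)>g(b)>0$; then for any $t\in(g(b),g(a))$ the intermediate value theorem yields one solution in $(a,b)$ and, since $g(q)\to+\infty$ as $q\uparrow1$, another in $(b,1)$. Combining the two bounds gives $B(\xi)=\beta_1(\xi)$. The only delicate point is this last dichotomy — correctly turning non‑uniqueness into a failure of monotonicity of $g_{\tilde\beta}$ and locating a level value attained twice; I expect it to be the (mild) main obstacle. It is also exactly what underlies Remark~\ref{rmk:Beta0}: the witnessing time $t$ can be taken strictly positive in the second case, but only equals $0$ when $g_{\tilde\beta}$ dips nonpositive, as happens for the SK model where $\beta_1=1/\sqrt{\xi''(0)}$.
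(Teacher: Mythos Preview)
Your proof is correct. The direction $\beta_1\le B$ is the same as the paper's (both invoke the contraction property of Lemma~\ref{lem:contraction}/\ref{lem:properties}). For the harder direction $B\le\beta_1$, however, you take a genuinely different and more elementary route. The paper argues contrapositively in the $\gamma$-variable: writing $F(x)=\xi'(\psi(x))$ so that $\gamma_*$ solves $\gamma=\beta^2 F(\gamma+t)$, it shows that uniqueness for all $t$ at some $\bar\beta<\beta_1'$ forces $\bar\beta^2\sup_x F'(x)\le 1$ (using that $\gamma_*(\bar\beta,t)$ sweeps through all values as $t$ varies), and then identifies $\inf_x 1/\sqrt{F'(x)}$ with $\beta_1$. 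You instead work in the $q$-variable via $g_{\tilde\beta}(q)=\phi(q)-\tilde\beta^2\xi'(q)$ and, whenever $\tilde\beta>\beta_1$ so that $g_{\tilde\beta}'$ is somewhere negative, you \emph{construct} two solutions directly by the intermediate value theorem. Your approach is shorter and yields the explicit witnessing $(\tilde\beta,t)$; the paper's approach has the advantage of producing the sharp global bound $\bar\beta^2\sup F'\le 1$ as a byproduct. One minor remark: your closing comment linking Case~1 to Remark~\ref{rmk:Beta0} is slightly off --- that remark asserts multiple fixed points at some $t\in\R_{>0}$, whereas your Case~1 only produces $t=0$; this does not affect the lemma itself, but if you want to recover the strictly positive $t$ you would need to argue a bit more in Case~1 (e.g.\ using that $g_{\tilde\beta}$ is non-monotone to find a small positive level attained twice).
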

\begin{proof}
Write $\beta_1'$ for the value defined on the right-hand side. Then of course 
$\beta_1\le \beta_1'$ because by Lemma \ref{lem:properties}, the solution is unique for all 
$(\beta,t)\in [0\beta_1)\times \R_{\ge 0}$. 

To prove $\beta_1\ge \beta_1'$, we recall that $\gamma_*(\beta,t) = \beta^2\xi'(q_*(\beta,t))$.
%and prove differentiability of $(\beta,t)\mapsto \gamma_*(\beta,t)$ on $[0,\beta_1)\times (0,\infty)$. 
Notice that this solves $\gamma = \beta^2 F(\gamma+t)$, where $F(x):= \xi'(\psi(x))$.
First of all $t\mapsto \gamma_*(\beta,t)$ is continuous for $\beta\in [0,\beta'_0)$.
Indeed, if it weren't at a point $t_0$
we could define $\gamma_{-}(\beta,t_0)<\gamma_{+}(\beta,t_0)$ to be the limits as $t\uparrow t_0$,
$t\downarrow t_0$ of $\gamma_*(\beta,t)$. But then by continuity of $f_t$, it
would follow that both $\gamma_{-}(\beta,t_0)$ and $q_{+}(\beta,t_0)$ are fixed points
at $t=t_0$, hence contradicting the assumption. 

Next, we claim that $\beta^2F'(\gamma_*(\beta,t)+t)<1$ for all $(\beta,t)\in [0\beta'_0)\times \R_{\ge 0}$. 
Let $\obeta\in(\beta,\beta'_0)$.
By uniqueness of solutions, we have $\obeta^2F'(\gamma_*(\obeta,t)+t)\le 1$ for all $t$. 
On the other hand,
$t\mapsto \gamma_*(\obeta,t)$ increases continuously from $0$ to $\obeta^2\xi'(1)$
as $t$ goes from $0$ to $\infty$.  Therefore $\obeta^2\sup_{x\ge}F'(x)\le 1$.
Finally
\begin{align}
\beta^2 F'(\gamma_*(\beta,t)+t) \le \beta^2\sup_{x\ge}F'(x) \le \left(\frac{\beta}{\obeta}\right)^2<1\, .
\end{align}
Hence
\begin{align}
\beta_1'\le \inf_{x\ge 0} \frac{1}{\sqrt{F'(x)}} = \inf_{x\ge 0} \frac{1}{\sqrt{\xi''(\psi(x))
\psi'(x)}} = \beta_1\,.
\end{align} 
\end{proof}

\subsubsection{Mean squared error}

 For $(\bG,\bx,\by)\sim \P$ (the planted model), define
 \begin{equation}\label{eq:MSE_AMP}
     \MSE_{\AMP}(k;\beta,t)=
     \lim_{n\to\infty}
     \frac{1}{n}
     \E \big\|\bx-\hm^k(\bG,\by(t)) \big\|_2^2 \, ,
     \;\;\;\hm^k(\bG,\by(t)):=\AMP(\bG,\by(t);k)\,,
 \end{equation}
 where the limit is guaranteed to exist by Proposition~\ref{prop:state_evolution}. 

 \begin{lemma}
 \label{lem:MSE-k}
 We have
 \begin{align*}
     \MSE_{\AMP}(k;\beta,t)
     %&=1-\plim_{n\to\infty}\frac{1}{n} \big\|\AMP_{\beta}(\bG,\by(t);k)\big\|_2^2\\
     &=1-q_{k+1}(\beta,t) \, .
 \end{align*}
 In particular,
 \begin{align*}
 \lim_{k\to\infty}\MSE_{\AMP}(k;\beta,t)&=1-q_*(\beta,t) \, .
 \end{align*}
 \end{lemma}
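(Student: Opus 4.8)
The plan is to use the state evolution characterization from Proposition~\ref{prop:state_evolution} together with the simplification of Proposition~\ref{prop:bayes-opt-SE}. By Proposition~\ref{prop:state_evolution}, the joint empirical distribution of the coordinates $(z_i^1,\dots,z_i^k,x_i,y_i)$ converges in $W_2$ to $\cL(\bgamma_{\le k}(\beta,t)X+\bW+Y\bfone,X,Y)$. In particular, applying this with the continuous test function $(z_k,x)\mapsto (x-\tanh(z_k))^2$ (which has at most quadratic growth, so convergence of expectations follows from $W_2$ convergence) gives
\begin{align}
\MSE_{\AMP}(k;\beta,t)=\lim_{n\to\infty}\frac1n\sum_{i=1}^n\big(x_i-\tanh(z_i^k)\big)^2
=\E\Big[\big(X-\tanh(\gamma_k(\beta,t)X+\wtW_k+Y)\big)^2\Big]\,,
\end{align}
where $\wtW_k\sim\normal(0,\Sigma_{k,k})$ and $Y=tX+\sqrt tZ$ with $Z$ independent of $\bW$.

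Next I would identify the argument of $\tanh$. Setting $\wtgamma_k:=\gamma_k(\beta,t)+t$ and using Proposition~\ref{prop:bayes-opt-SE} (so $\Sigma_{k,k}=\gamma_k(\beta,t)$ and the relevant variance is $\wtSigma_{k,k}=\gamma_k(\beta,t)+t=\wtgamma_k$), the quantity $V_k:=\gamma_k(\beta,t)X+\wtW_k+Y=\wtgamma_k X+(\wtW_k+\sqrt tZ)$ satisfies $V_k=\wtgamma_k X+\sqrt{\wtgamma_k}\,Z'$ for a standard Gaussian $Z'$, by matching means and variances (this is exactly the computation behind Eq.~\eqref{eq:bayes-tanh}, where $\E[X\mid V_k]=\tanh(V_k)$). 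Therefore, expanding the square and using that $X\in\{-1,+1\}$ so $X^2=1$,
\begin{align}
\MSE_{\AMP}(k;\beta,t)=\E\big[1-2X\tanh(V_k)+\tanh(V_k)^2\big]
=1-2\,\E\big[X\tanh(V_k)\big]+\E\big[\tanh(V_k)^2\big]\,.
\end{align}
Now $\E[X\tanh(V_k)]=\E[\tanh(\wtgamma_k+\sqrt{\wtgamma_k}Z')]$ by oddness of $\tanh$ and $XZ'\ed Z'$, which by the definition \eqref{eq:psi} of $\psi$ (with $\gamma=\wtgamma_k$, noting $\psi(\gamma)=\E[\tanh(\gamma+\sqrt\gamma Z)]$) and the Nishimori-type identity $\E[\tanh(\gamma+\sqrt\gamma Z)]=\E[\tanh(\gamma+\sqrt\gamma Z)^2]$ equals $\psi(\wtgamma_k)$; likewise $\E[\tanh(V_k)^2]=\psi(\wtgamma_k)$. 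Hence $\MSE_{\AMP}(k;\beta,t)=1-2\psi(\wtgamma_k)+\psi(\wtgamma_k)=1-\psi(\wtgamma_k)=1-\psi(\gamma_k(\beta,t)+t)=1-q_{k+1}(\beta,t)$, where the last equality is the definition \eqref{eq:AMP-nu} of $q_{k+1}$ (equivalently \eqref{eq:q_k}).

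The second claim is then immediate: by Lemma~\ref{lem:properties}\ref{it:mmseExistence} (or \ref{it:ExpConv}), $q_{k+1}(\beta,t)\to q_*(\beta,t)$ as $k\to\infty$, so $\lim_{k\to\infty}\MSE_{\AMP}(k;\beta,t)=1-q_*(\beta,t)$. The only mildly delicate point is justifying the passage from $W_2$ convergence of empirical measures to convergence of the expectation of a quadratic-growth functional together with the expectation over the disorder in \eqref{eq:MSE_AMP}; this is standard given the uniform integrability that Proposition~\ref{prop:state_evolution} provides (the $\tanh$ term is bounded, and $\|\bx\|_2^2=n$ exactly, so the only term needing care is $\frac1n\E\langle\bx,\tanh(\bz^k)\rangle$, which is controlled by boundedness of $\tanh$). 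I expect the main obstacle to be nothing deep here — the substantive work is all in Propositions~\ref{prop:state_evolution} and \ref{prop:bayes-opt-SE}, which we may assume — so this lemma is essentially a bookkeeping consequence of the Bayes-optimal state evolution.
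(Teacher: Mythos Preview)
Your proof is correct and follows essentially the same route as the paper: apply state evolution to compute $\E[(X-\tanh(V_k))^2]$ with $V_k=\wtgamma_k X+\sqrt{\wtgamma_k}Z'$, expand the square, and use the Bayes/Nishimori identity $\E[X\tanh(V_k)]=\E[\tanh(V_k)^2]=q_{k+1}$ (which in the paper is phrased as $\Lambda_{k+1,k+1}=q_{k+1}$ via Proposition~\ref{prop:bayes-opt-SE}) to conclude $1-q_{k+1}$. The only cosmetic difference is that you invoke $\psi$ and the Nishimori identity directly, whereas the paper routes the same step through the notation $\Lambda_{k+1,k+1}$.
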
 
 \begin{proof}
We recall the notation  $\wtgamma_{k}(\beta,t)=\gamma_{k}(\beta,t)+t$, $\wtsigma_{k}(\beta,t)=\sigma_k(\beta,t)+t$, and $\sigma_k(\beta,t)=\Sigma_{k,k}(\beta,t)$. By state evolution
 \begin{align*}
     \MSE_{\AMP}(k;\beta,t)&=\lim_{n\to\infty}\frac{1}{n}\E\big\|\hm^k(\bG,\by(t))-\bx\big\|_2^2\\
     &= \E\big[\big(\tanh(\gamma_k X+\sigma_k Z+Y)-X\big)^2\big]\\
     &= \E\big[\big(\tanh(\wtgamma_k X+\wtsigma_k Z)-X\big)^2\big]\\
     &= 1 - 2 \E[\tanh(\wtgamma_k X+\wtsigma_k Z)X]+\E[\tanh(\wtgamma_k X+\wtsigma_k Z)^2]\\
     &= 1 - 2 q_{k+1} + \Lambda_{k+1,k+1} \\
     &=1 - q_{k+1}\, ,
 \end{align*}
 where the last line follows from Proposition~\ref{prop:bayes-opt-SE}.
 \end{proof}

 We next show that,
 for any $t>0$, the mean square error achieved by AMP is the same as the Bayes optimal error,
 i.e., the mean squared error achieved by the posterior expectation $\m(\bG,\by(t))$.
 The proof of this relies on the characterization of the mutual information between 
 $\bx$ and the set of observations $(\bG,\by(t))$ as defied in the planted model 
 Eq.~\eqref{eq:P}.  Let us write $\bG=\bG(\beta)$ to emphasize the dependence of the joint distribution
 of $(\bx,\bG)$ on $\beta$.  
 Let $I(X;Y)$ denote the mutual information between random variables $X,Y$
 on the same probability space. Letting $X\sim\Unif(\{-1,+1\})$ independent of 
 $Z\sim\normal(0,1)$,  define the function 
 \begin{align}
 \info(\gamma) &:= I\big(X;\gamma X+\sqrt{\gamma} Z\big) \\
 & = \gamma -\E\log\cosh\big(\gamma +\sqrt{\gamma} Z)\, .
 \end{align}
 We also define the function
 \begin{align}\label{eq:psi}
 \Psi(q;\beta,t):= \frac{\beta^2}{2}\Big(\xi(1) - \xi(q) - (1-q)\xi'(q)\Big) + \info\big(\beta^2\xi'(q)+t\big)\, . 
 \end{align}

Then we have then following `single letter' characterization.
\begin{theorem}\label{thm:IT}
For $\beta < \min(\beta_1,\beta_2)$ and all $t$ we have
  \begin{align}\label{eq:IT}
  &\lim_{n\to\infty} \frac{1}{n}I(\bx;\bG(\beta),\by(t))=\Psi(q_*(\beta,t);\beta,t)=:\Psi_*(\beta,t)\, ,\\
 \label{eq:MMSE}
   &  \lim_{n\to\infty} \E\Big[\frac{1}{n}\big\|\bx-\m(\bG,\by(t))\big\|_2^2\Big]=
     1 - q_{*}(\beta,t) \, .
 \end{align}
\end{theorem}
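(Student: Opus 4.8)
The plan is to prove \eqref{eq:IT} first via an interpolation/I-MMSE argument, and then deduce \eqref{eq:MMSE} by differentiating in $t$. For the mutual information formula \eqref{eq:IT}, I would combine two ingredients. The lower bound on $\frac1n I(\bx;\bG(\beta),\by(t))$ follows from a standard application of Guerra-type interpolation: interpolate between the full model at temperature $\beta$ (with the side observation $\by(t)=t\bx+\bB(t)$) and a decoupled scalar channel with effective signal-to-noise ratio $\beta^2\xi'(q)+t$; the convexity of $\xi$ and the choice $q=q_*(\beta,t)$ make the interpolation monotone and yield $\liminf_n \frac1n I \ge \Psi_*(\beta,t)$ by optimizing over the interpolation parameter. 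For the matching upper bound I would invoke the contiguity established in Theorem~\ref{thm:fluctuations_Z}/Proposition~\ref{prop:contig} (valid since $\beta<\beta_2$): under the planted law $\P$ the problem is a Bayes-optimal inference problem for which the replica-symmetric formula is known to be an upper bound on the free energy, and the fluctuation control $Z_\xi(\bG)\to e^W$ shows the planted and random free energies agree to leading order. Alternatively, one can cite the rigorous single-letter formula for mixed $p$-spin inference (e.g.\ via the Aizenman–Sims–Starr scheme or the adaptive interpolation of Barbier–Macris) directly; the role of $\beta<\beta_1$ is precisely to guarantee, via Lemma~\ref{lem:properties}\ref{it:mmseExistence}, that the fixed-point equation \eqref{eq:gamma*} has a \emph{unique} solution, so that the stationary point $q_*(\beta,t)$ of $q\mapsto\Psi(q;\beta,t)$ is the global optimizer and the variational formula collapses to $\Psi_*(\beta,t)$.

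Given \eqref{eq:IT}, the MMSE formula \eqref{eq:MMSE} follows from the I-MMSE relation. Since $\by(t)=t\bx+\bB(t)$ is a Gaussian channel with (matrix) SNR $t\,\id$ observing $\bx$, augmented by the fixed side information $\bG(\beta)$, we have
\[
\frac{\de}{\de t}\,\frac1n I(\bx;\bG(\beta),\by(t)) = \frac{1}{2n}\,\E\big\|\bx-\m(\bG,\by(t))\big\|_2^2\, .
\]
On the other hand, differentiating the explicit formula $\Psi_*(\beta,t)=\Psi(q_*(\beta,t);\beta,t)$ in $t$, the envelope theorem (stationarity of $q\mapsto\Psi(q;\beta,t)$ at $q_*$, which is exactly the fixed-point equation \eqref{eq:gamma*}) kills the term coming from $\partial_t q_*$, leaving only $\partial_t \info(\beta^2\xi'(q_*)+t) = \tfrac12(1-\psi(\beta^2\xi'(q_*)+t)) = \tfrac12(1-q_*(\beta,t))$, where I used $\info'(\gamma)=\tfrac12(1-\psi(\gamma))$ (itself a one-dimensional I-MMSE identity) and the fixed-point relation $\psi(\beta^2\xi'(q_*)+t)=q_*$. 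Matching the two derivatives gives $\lim_n \frac1n\E\|\bx-\m(\bG,\by(t))\|_2^2 = 1-q_*(\beta,t)$ for a.e.\ $t$; the $C^1$ regularity of $q_*$ from Lemma~\ref{lem:properties}\ref{it:mmseSolSmooth} together with monotonicity/continuity of the MMSE in $t$ upgrades this to all $t$.

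A few technical points need care. First, justifying the differentiation-under-the-limit step requires uniform integrability / a convexity argument: $\frac1n I$ is concave in $t$ (as a mutual information along a Gaussian channel) with uniformly bounded derivatives since $\|\bx\|_\infty\le 1$, so pointwise convergence of the convex functions forces convergence of the derivatives wherever the limit is differentiable, which is everywhere by the explicit formula. Second, one should confirm the side information $\bG(\beta)$ does not spoil the I-MMSE identity — it does not, since conditioning on an independent-of-the-channel-noise random variable preserves the Gaussian-channel structure in $\by(t)$. The main obstacle I expect is the upper bound half of \eqref{eq:IT}: establishing the replica-symmetric formula as an \emph{equality} (not just an upper or lower bound) for general even mixed $p$-spin inference at all $t$. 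This is where the hypothesis $\beta<\min(\beta_1,\beta_2)$ does real work: $\beta<\beta_2$ gives contiguity to the planted (Bayes-optimal) model so that Nishimori-type identities and the known RS formula apply, while $\beta<\beta_1$ gives uniqueness of the critical point so the variational formula has no spurious competing branch. Assembling these — ideally by directly quoting an existing single-letter theorem for this inference model and checking its hypotheses against $\beta<\min(\beta_1,\beta_2)$ — is the crux; everything else is bookkeeping with the one-dimensional $\psi,\phi,\info$ identities already catalogued in Lemma~\ref{lem:properties}.
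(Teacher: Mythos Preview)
Your proposal is plausible in outline, but it takes a genuinely different route from the paper's proof, and in fact reverses the logical order: the paper proves \eqref{eq:MMSE} \emph{first} via a self-contained ``integral sandwich'' argument, and only then obtains \eqref{eq:IT} by integrating the I-MMSE relation. It does not invoke Guerra interpolation or any external single-letter theorem.

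The paper's key device is to use the AMP analysis already in place as the source of the matching bound. By Lemma~\ref{lem:MSE-k}, AMP achieves $\MSE_{\AMP}(k;\beta,t)\to 1-q_*(\beta,t)$ as $k\to\infty$; since the posterior mean is the optimal estimator, this immediately yields the one-sided bound $\limsup_n \frac{1}{n}\E\|\bx-\m(\bG,\by(t))\|_2^2 \le 1-q_*(\beta,t)$. Separately, the paper computes the mutual information only at the two endpoints: at $t=0$ the fluctuation result (Theorem~\ref{thm:fluctuations_Z}, which is where $\beta<\beta_2$ enters) gives $\frac{1}{n}I\to\beta^2\xi(1)/2=\Psi_*(\beta,0)$, while as $t\to\infty$ data-processing against the scalar channel gives $\frac{1}{n}I\to\log 2=\Psi_*(\beta,\infty)$. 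Integrating the I-MMSE relation from $0$ to $\infty$ and comparing with $\int_0^\infty \partial_t\Psi_*\,\de t$ (computed via the envelope theorem, exactly as you describe), the pointwise inequality on the integrand together with equality of the integrals forces the MMSE to equal $1-q_*(\beta,t)$ for a.e.\ $t$; the upgrade to all $t$ is as you said.

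The advantage of the paper's argument is that it is entirely internal to the paper: the only nontrivial inputs are the partition-function fluctuations at $t=0$ and the AMP state evolution, both already established in Sections~\ref{sec:planted}--\ref{sec:AMP}. Your route instead outsources the hard direction of \eqref{eq:IT} to an external RS formula (adaptive interpolation, Aizenman--Sims--Starr, etc.), which is legitimate but imports machinery the paper otherwise avoids; in exchange it is more modular and does not rely on having an explicit algorithm that attains the Bayes error.
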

\begin{proof}
This can be proved by the same argument already introduced in \cite{deshpande2017asymptotic}.
First notice that $q_*(\beta,t=0) = 0$, whence $\Psi_*(\beta,t=0) = \beta^2\xi(1)/2$.
On the other hand, denoting by $\frac{\de\mu_{\pl}}{\de\mu_{\rd}}(\bG(\beta)|\bx)$
the Radon-Nikodym derivative of $\mu_{\pl}(\bG(\beta)|\bx)$ with respect to 
$\mu_{\rd}(\de\bG)$,  we have
\begin{align*}
I(\bx;\bG(\beta),\by(t=0)) &= -\E_{\pl}\log \frac{\de\mu_{\pl}}{\de\mu_{\rd}}(\bG(\beta))+
\E_{\pl}\log\frac{\de\mu_{\pl}}{\de\mu_{\rd}}(\bG(\beta)|\bx)\\
& = -\E_{\pl} \log Z_{\xi}(\bG) +\frac{n}{2} \beta^2\xi(1)\\
& = -\E_{\rd} Z_{\xi}(\bG)\log Z_{\xi}(\bG) +\frac{n}{2} \beta^2\xi(1)\\
& = \frac{n}{2} \beta^2\xi(1) +O(1)\, ,
\end{align*}
where the last equality follows from Theorem~\ref{thm:fluctuations_Z}. Summarizing,
we have
\begin{align}
\lim_{n\to\infty}\frac{1}{n}I(\bx;\bG(\beta),\by(t=0)) = \Psi_*(\beta,t=0)\, .
\label{eq:Psi0}
\end{align}

On the other hand, by the data-processing inequality
\begin{align*}
\log 2 \ge \frac{1}{n}I(\bx;\bG(\beta),\by(t))\ge  \frac{1}{n}I(\bx;\by(t)) = \info(t)\, .
\end{align*}
By dominated convergence, we obtain $\info(t)\to \log 2$ as $t\to\infty$, and therefore 
\begin{align}
\lim_{T\to\infty}\lim_{n\to\infty}\frac{1}{n}I(\bx;\bG(\beta),\by(T)) = 
\Psi_*(\beta,t=\infty)=\log 2\, .\label{eq:PsiInfty}
\end{align}

By de Brujin's identity (also known as I-MMSE relation \cite{Guo05mutualinformation})  we have
 \begin{align}
 \frac{\de\phantom{t}}{\de t}I(\bx;\bG(\beta),\by(t)) &=
 \frac{1}{2}\E\left[\big\|\bx-\m(\bG,\by(t))\big\|^2\right]   \, , \label{eq:DerivativeIBeta}
 \end{align}
and therefore, using Eqs.~\eqref{eq:Psi0}, \eqref{eq:PsiInfty}, we get
\begin{align}
\lim_{T\to\infty}\lim_{n\to\infty} \frac{1}{2n}\int_{0}^{T}\E\left[\big\|\bx-\m(\bG,\by(t))\big\|^2\right]  
\, \de t = \Psi_*(\beta,\infty) - \Psi_*(\beta,0)
  \, , \label{eq:Integral}
 \end{align}

On the other hand, using the the differentiability of $q_*$ with respect to $t$
(see Lemma \ref{lem:properties})
and the fact that $\partial_q\Psi(q;\beta,t)|_{q=q_*}=0$ (which follows from the fixed 
point condition to define $q_*$), we obtain
\begin{align}
 \frac{\de\phantom{t}}{\de t} \Psi_*(\beta,t) = 1-q_*(\beta,t)=   
 \lim_{k\to\infty}\MSE_{\AMP}(k;\beta,t)\, .
\end{align}
Therefore, by optimality of the conditional expectation,
\begin{align}
\lim\sup_{n\to\infty} \frac{1}{n}\E\left[\big\|\bx-\m(\bG,\by(t))\big\|^2\right] \le 
 \frac{\de\phantom{t}}{\de t} \Psi_*(\beta,t) = 1-q_*(\beta,t) \, . \label{eq:BoundMSE}
\end{align}
Using Eq.~\eqref{eq:Integral}, we get
\begin{align*}
\Psi_*(\beta,\infty) - \Psi_*(\beta,0) &=  
\lim_{T\to\infty}\lim_{n\to\infty} \frac{1}{2n}\int_{0}^{T}\E\left[\big\|\bx-\m(\bG,\by(t))\big\|^2\right]  
\, \de t
\\
&\stackrel{(a)}\le \lim_{T\to\infty} \frac{1}{2}\int_{0}^{T} \big( 1-q_*(\beta,t) \big) \, \de t\\
&=\lim_{T\to\infty} \frac{1}{2}\int_{0}^{T} \frac{\de\phantom{t}}{\de t} \Psi_*(\beta,t)\, \de t\\
&=\Psi_*(\beta,\infty) - \Psi_*(\beta,0) \,,
\end{align*}
where in $(a)$ we used dominated convergence to exchange limit and integral.
Since the first tem of the chain of inequalities coincides with the last, all
the inequalities must hold with equality.
Because of Eq.~\eqref{eq:BoundMSE}, this implies that for almost every $t$,
\begin{align}
\lim_{n\to\infty} \frac{1}{n}\E\left[\big\|\bx-\m(\bG,\by(t))\big\|^2\right] = 1-q_*(\beta,t)\, .
\end{align}
Since  $t\mapsto q_*(\beta,t)$ is continuous by Lemma \ref{lem:properties} and
the left-hand side is monotone (by monotonicity of the mean squared error), the above must 
hold for every $t$. This proves Eq.~\eqref{eq:MMSE}. Equation \eqref{eq:IT}
follows by applying once more the I-MMSE relation.
\end{proof}

 It follows that AMP approximately computes the posterior mean 
 $\m(\bG,\by(t))$ 
 in the following sense.
 \begin{proposition}
 \label{prop:amp-posterior-mean}
 Fix $\beta<\beta_1$, $\T>0$ and let $t \in (0,\T]$. Recalling that 
 $\hm^k(\bG,\by(t)):=\AMP(\bG,\by(t);k)$ denotes the AMP estimate after $k$ iterations, 
 and that $\bz^k$ is defined by Eq.~\eqref{eq:AMPreminder}, we have for all $\eps>0$
  \begin{align}
 \label{eq:m-converge}
     &\limsup_{n\to\infty}\, \P\left(
     \frac{\|\m(\bG,\by(t))-\hm^k(\bG,\by(t))\|_2}{\|\m(\bG,\by(t))\|_2} \ge \eps \right)\le \frac{8}{\eps^2}(\beta/\beta_1)^{2k+2}\, .
     \end{align}
% \limsup_{n\to\infty} \P\left(
%     \frac{\|\m(\bG,\by(t))-\hm^k(\bG,\by(t))\|_2}{\|\m(\bG,\by(t))\|_2} \ge \eps \right)=0
% \begin{align}
% \label{eq:m-converge}
%     &\lim_{k\to\infty}\sup_{t\in (0,\T)}\plim_{n\to\infty} 
%     \frac{\|\m(\bG,\by(t))-\hm^k(\bG,\by(t))\|_2}{\|\m(\bG,\by(t))\|_2}=0 \, .
%     \end{align}
     Moreover
     \begin{align}
 \label{eq:z-converge}
     &\lim_{k\to\infty}\sup_{t\in (0,\T)}\plim_{n\to\infty} \frac{\|\bz^{k+1}-\bz^k\|_2}{\|\bz^k\|_2}=0 \, .
 \end{align}
 \end{proposition}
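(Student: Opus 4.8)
I will prove the two displays separately. For \eqref{eq:m-converge}, the plan is to reduce the relative error $\|\m-\hm^k\|_2/\|\m\|_2$ to the gap between two mean-squared errors, both of which are pinned down by state evolution. Under $\P$ the AMP iterate $\hm^k(\bG,\by(t))=\AMP(\bG,\by(t);k)$ is a deterministic function of the observation $(\bG,\by(t))$, while $\m(\bG,\by(t))=\E[\bx\mid\bG,\by(t)]$; hence the orthogonality principle gives $\E\langle\bx-\m,\,\m-\hm^k\rangle=0$, so that $\E\|\bx-\hm^k\|_2^2=\E\|\bx-\m\|_2^2+\E\|\m-\hm^k\|_2^2$. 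Dividing by $n$, letting $n\to\infty$, and invoking Lemma~\ref{lem:MSE-k} together with Theorem~\ref{thm:IT}, we obtain $\lim_{n\to\infty}\frac{1}{n}\E\|\m(\bG,\by(t))-\hm^k(\bG,\by(t))\|_2^2=q_*(\beta,t)-q_{k+1}(\beta,t)$, which by Lemma~\ref{lem:properties}\ref{it:ExpConv} is at most $q_*(\beta,t)(\beta/\beta_1)^{2k+2}\le(\beta/\beta_1)^{2k+2}$.

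Next I would turn this $L^2$ estimate into the probability bound \eqref{eq:m-converge}. By Proposition~\ref{prop:state_evolution} and Proposition~\ref{prop:bayes-opt-SE} (applying $W_2$-convergence of the coordinate empirical law to the bounded test function $z\mapsto\tanh^2 z$, exactly as in the proof of Lemma~\ref{lem:MSE-k}) one has $\frac{1}{n}\|\hm^k(\bG,\by(t))\|_2^2\to q_{k+1}(\beta,t)$ in probability; combining this with the $L^2$ bound above, the triangle inequality, and Markov's inequality gives $\frac{1}{n}\|\m(\bG,\by(t))\|_2^2\to q_*(\beta,t)$ in probability. Since $t>0$ forces $q_*(\beta,t)>0$ (Lemma~\ref{lem:properties}\ref{it:gamma*-near0}), the event $\{\frac{1}{n}\|\m\|_2^2\ge q_*(\beta,t)/2\}$ has probability $1-o_n(1)$, and on it the relative-error event $\{\|\m-\hm^k\|_2\ge\eps\|\m\|_2\}$ forces $\|\m-\hm^k\|_2^2\ge\frac{\eps^2}{2}q_*(\beta,t)\,n$. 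A Markov bound then yields $\limsup_{n\to\infty}\P\big(\|\m-\hm^k\|_2\ge\eps\|\m\|_2\big)\le\frac{2}{\eps^2}(\beta/\beta_1)^{2k+2}$, which is (more than) \eqref{eq:m-converge}. The triangle-inequality step is only useful once $(\beta/\beta_1)^{2k+2}$ is small, but for smaller $k$ the claimed bound $\frac{8}{\eps^2}(\beta/\beta_1)^{2k+2}$ may exceed $1$ and there is nothing to prove.

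For \eqref{eq:z-converge} I would compute the deterministic limits of the two norms from state evolution. Reading off the joint limit of $(z^k_i,z^{k+1}_i,x_i,y_i)$ in Proposition~\ref{prop:state_evolution} and using $\Sigma_{k,k+1}=\gamma_k$, $\Sigma_{k+1,k+1}=\gamma_{k+1}$ from Proposition~\ref{prop:bayes-opt-SE} (now applying $W_2$-convergence to quadratic test functions), one gets $\frac{1}{n}\|\bz^{k+1}-\bz^k\|_2^2\to(\gamma_{k+1}-\gamma_k)(\gamma_{k+1}-\gamma_k+1)$ and $\frac{1}{n}\|\bz^k\|_2^2\to(\gamma_k+t)(\gamma_k+t+1)$ in probability, where $\gamma_k=\gamma_k(\beta,t)=\beta^2\xi'(q_k(\beta,t))$. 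Thus $\plim_{n\to\infty}\|\bz^{k+1}-\bz^k\|_2/\|\bz^k\|_2$ is a deterministic quantity bounded, for $k$ large enough that $\gamma_{k+1}-\gamma_k\le1$, by $\big(2(\gamma_{k+1}-\gamma_k)/(\gamma_k+t)\big)^{1/2}$. Since $q_k\uparrow q_*$ and $\xi'$ is increasing, $\gamma_k$ is nondecreasing in $k$, and Lemma~\ref{lem:properties}\ref{it:ExpConv} gives $\gamma_{k+1}-\gamma_k\le\beta^2\|\xi''\|_\infty(q_*-q_k)\le\beta^2\|\xi''\|_\infty q_*(\beta,t)(\beta/\beta_1)^{2k}$, while $\gamma_k(\beta,t)+t\ge\gamma_1(\beta,t)=\beta^2\xi'(\psi(t))$ for $k\ge1$. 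Hence $\frac{\gamma_{k+1}-\gamma_k}{\gamma_k+t}\le\|\xi''\|_\infty\,\frac{q_*(\beta,t)}{\xi'(\psi(t))}(\beta/\beta_1)^{2k}$; and because $q_*(\beta,t)\le C(\beta,T)t$ on $(0,T]$ by Lemma~\ref{lem:properties}\ref{it:gamma*-near0}, while $\xi'(\psi(t))$ is bounded below by a positive multiple of $t$ on $(0,T]$, the supremum $\sup_{t\in(0,T)}\frac{q_*(\beta,t)}{\xi'(\psi(t))}$ is finite. Taking $\sup_{t\in(0,T)}$ and then $k\to\infty$ proves \eqref{eq:z-converge}.

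The main obstacle is the high-probability lower bound $\frac{1}{n}\|\m(\bG,\by(t))\|_2^2\gtrsim q_*(\beta,t)$ used in the first step: one cannot obtain it from Gaussian Lipschitz concentration for $\by\mapsto\m(\bG,\by)$, since the covariance of the tilted measure need not be bounded in operator norm, so the argument must instead route through the AMP iterate $\|\hm^k\|_2$ and the Pythagorean identity. Everything else is a routine application of state evolution and of the regularity of $q_*(\beta,\cdot)$ from Lemma~\ref{lem:properties}.
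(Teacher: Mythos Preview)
Your approach is essentially the same as the paper's: for \eqref{eq:m-converge} you use the Pythagorean/bias--variance identity together with Lemma~\ref{lem:MSE-k} and Theorem~\ref{thm:IT} to get $\lim_n \frac{1}{n}\E\|\m-\hm^k\|^2=q_*-q_{k+1}$, then establish $\plim_n n^{-1/2}\|\m\|=\sqrt{q_*}$ via the triangle inequality through $\hm^k$, and finish with Markov; for \eqref{eq:z-converge} you compute both norms by state evolution and Proposition~\ref{prop:bayes-opt-SE}. This mirrors the paper exactly.

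There is, however, a genuine (if easily repaired) gap in your uniform-in-$t$ bound for \eqref{eq:z-converge}. You lower bound the denominator by $\gamma_k+t\ge\gamma_1=\beta^2\xi'(\psi(t))$ and then assert that ``$\xi'(\psi(t))$ is bounded below by a positive multiple of $t$ on $(0,T]$.'' This is false whenever $\xi''(0)=0$ (e.g.\ any pure $p$-spin model with $p\ge3$): then $\xi'(q)=O(q^{p-1})$ near $0$ and $\psi(t)\sim t$, so $\xi'(\psi(t))=O(t^{p-1})\ll t$, and your ratio $q_*(\beta,t)/\xi'(\psi(t))$ blows up as $t\downarrow0$. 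The fix is to use the cruder but always-valid bound $\gamma_k+t\ge t$; combined with $\gamma_{k+1}-\gamma_k\le\beta^2\|\xi''\|_\infty\,q_*(\beta,t)(\beta/\beta_1)^{2k}$ and $q_*(\beta,t)\le C(\beta,T)\,t$ from Lemma~\ref{lem:properties}\ref{it:gamma*-near0}, you get $(\gamma_{k+1}-\gamma_k)/(\gamma_k+t)\le\beta^2\|\xi''\|_\infty\,C(\beta,T)(\beta/\beta_1)^{2k}$ uniformly in $t\in(0,T]$, and the conclusion follows. The paper's proof invokes Eq.~\eqref{eq:uniform-gamma-limit} at this point without spelling the estimate out, but the implicit argument is this one.
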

 %
% \begin{remark}
% A somewhat similar result has recently been proved by Chen and Tang~\cite{chen2021convergence} where the external field vector $\by(t)$ is replaced by a multiple of the all-ones vector $h \mathbf{1}$, for any pair $(\beta,h)$ for which a certain condition of uniform concentration of the overlap between two independent draws from the measure $\mu_{\bG,h\mathbf{1}}$ holds. In our setting, we are concerned with a different family of external fields, namely the ones generated by the stochastic localization process~\eqref{eq:GeneralSDE}. The argument, which proceeds via the planted model, does not require the uniform concentration condition.   
% \end{remark}
 %
 \begin{proof}
 Throughout this proof we write $\by, q_*, q_k$ instead of $\by(t), q_*(\beta,t), q_k(\beta,t)$ respectively for ease of notation. 
 To show Eq.~\eqref{eq:m-converge}, 
 observe that the bias-variance decomposition yields
 (recalling the definition $\MSE_{\AMP}(\;\cdot\;)$ in Eq.~\eqref{eq:MSE_AMP})
 \begin{align*}
     \MSE_{\AMP}(k;\beta,t)
     &=
     \lim_{n\to\infty}\left\{
     \frac{1}{n}
     \E
     \Big[
     \big\|\hm^k(\bG,\by)-\m(\bG,\by)\big\|_2^2\Big]+
     \frac{1}{n}
     \E
     \Big[\big\|\bx_0- \m(\bG,\by)\big\|_2^2
     \Big]\right\}.
 \end{align*}
 Using Lemma \ref{lem:MSE-k} for the left-hand side and
 Theorem~\ref{thm:IT} for the second term on the right-hand side, 
 we get
 \begin{equation}\label{eq:limgamma}
     \lim_{n\to\infty}
     \frac{1}{n}
     \E\left[
     \big\|\hm^k(\bG,\by)-\m(\bG,\by) \big\|_2^2
     \right] = q_*-q_{k+1} \, .
 \end{equation}
Now by the triangle inequality,
  \begin{align*}
  \Big|\frac{1}{\sqrt{n}}\big\|\m(\bG,\by) \big\|_2 - \sqrt{q_{*}}\Big| &\le \frac{1}{\sqrt{n}}\big\|\hm^k(\bG,\by)-\m(\bG,\by) \big\|_2+ \Big|\frac{1}{\sqrt{n}}\big\|\hm^k(\bG,\by)\big\|_2 -\sqrt{q_{k+1}}\Big| \\
 &~~~+  \big|\sqrt{q_{k+1}}- \sqrt{q_{*}}\big| := A+B+C\, .
 \end{align*}
We have $\lim_{k \to \infty} C = 0$ by Lemma \ref{lem:properties}. We also have $\plim_{n \to \infty} B = 0$ by Proposition~\ref{prop:state_evolution}. Next we have $\lim_{k \to \infty} \lim_{n \to \infty}\E[A^2] = 0$ by Eq.~\eqref{eq:limgamma} and Lemma \ref{lem:properties}. Combined together we have 
\begin{equation}\label{eq:plim_m}
\plim_{n \to \infty} \frac{1}{\sqrt{n}}\big\|\m(\bG,\by) \big\|_2 = \sqrt{q_{*}} \, .
\end{equation}

Using Eq.~\eqref{eq:limgamma}, there exists $n_0(k)$ such that for all $\eps>0$ and all $n \ge n_0(k)$ we have
\begin{align}
\P\Big(\frac{1}{\sqrt{n}}\|\m(\bG,\by)-\hm^k(\bG,\by)\|_2 \ge \frac{\eps\sqrt{q_*}}{2}\Big) &\le \frac{4}{\eps^2q_*} \cdot 2(q_* - q_{k+1}) \nonumber\\
&\le \frac{8}{\eps^2} (\beta/\beta_1)^{2k+2} \label{eq:lim_D}\, ,
\end{align}
where the last line was obtained by Lemma \ref{lem:properties} (c).

Now, letting $E$ be the event that $\big|\big\|\m(\bG,\by) \big\|_2/\sqrt{n} -\sqrt{q_*}\big| \le \sqrt{q_*}/2$, it follows that
\begin{align}
\P\Big(\frac{\|\m(\bG,\by)-\hm^k(\bG,\by)\|_2}{\|\m(\bG,\by)\|_2} \ge \eps\Big) &\le \P\Big(\Big\{\frac{1}{\sqrt{n}}\|\m(\bG,\by)-\hm^k(\bG,\by)\|_2 \ge \frac{\eps\sqrt{q_*}}{2}\Big\} \cap E \Big)   + \P\big(E^c\big)\, .
\end{align}
Taking $n \to \infty$ and using Eq.~\eqref{eq:plim_m} and Eq.~\eqref{eq:lim_D} completes to the proof of the first claim~\eqref{eq:m-converge}.
% observe that by Proposition~\ref{prop:state_evolution}, $\big\|\hm^k(\bG,\by)\|^2/n \to q_{k+1}(\beta,t)$ in probability. Next, by Theorem~\ref{thm:IT}, since the $\E\big[(R_{1,2} -q_*(\beta,t))^2\big] \to 0$ (recall that $R_{1,2}= \langle \bx_1,\bx_2\rangle/n$ in the notation of Theroem~\ref{thm:IT}), Jensen's inequality implies that $\big\|\m(\bG,\by)\|^2/n\to q_*(\beta,t)$ in probability. Next, let $Q_n = \langle \hm^k(\bG,\by),\m(\bG,\by)\rangle/n$. By Proposition~\ref{prop:state_evolution} we have
% \begin{align}
% \E [Q_n] &= \E \Big[\big\langle \hm^k(\bG,\by),\bx_0\big\rangle/n\Big] \xrightarrow[n \to \infty]{~} q_{k+1}(\beta,t)\, , \\
%\mbox{and} ~~~~
% \E [Q_n^2] &\le \E \Big[\big(\big\langle \hm^k(\bG,\by),\bx_0\big\rangle/n\big)^2\Big] \xrightarrow[n \to \infty]{~}  q_{k+1}(\beta,t)^2 \, .
% \end{align}
%  It follows that $\plim_{n\to \infty} Q_n = q_{k+1}(\beta,t)$. In combination we have 
%   \begin{equation}\label{eq:limgamma}
%  \plim_{n\to \infty} \frac{1}{n}\big\|\hm^k(\bG,\by)-\m(\bG,\by)\big\|^2 =  q_{*}(\beta,t)-q_{k+1}(\beta,t) \, .
%  \end{equation}
 %Claim \eqref{eq:m-converge} now follows by combining Eq.~\eqref{eq:limgamma} 
 %with  Eqs.~\eqref{eq:uniform-gamma-limit} and \eqref{eq:gamma*-near0}
 %of Lemma \ref{lem:properties}.

 Finally, Eq.~\eqref{eq:z-converge}
 is an immediate consequence of Proposition~\ref{prop:state_evolution}
 and Proposition \ref{prop:bayes-opt-SE}. Indeed, by  Proposition~\ref{prop:state_evolution},
 we have
 \begin{align}
     \plim_{n\to\infty} \frac{1}{n}\big\|\bz^k\big\|_2^2 
     &=  \E\big[(\gamma_k X + W_k+ Y)^2\big] =  (\gamma_k+t)^2 + \gamma_k+t \, ,\\
       \plim_{n\to\infty} \frac{1}{n}\big\|\bz^{k+1}-\bz^k\big\|_2^2 
     &=  \E\big[\big((\gamma_{k+1}-\gamma_k) X +W_{k+1}-W_k)^2\big]\\
     & =  (\gamma_{k+1}-\gamma_k)^2 + (\Sigma_{k+1,k+1}-2\Sigma_{k,k+1}+\Sigma_{k,k}) \\
     & =  (\gamma_{k+1}-\gamma_k)^2 + (\gamma_{k+1}-\gamma_k)\, ,
 \end{align}
 %$Y=tX+\sqrt{t}Z$
 where in the last step we used  Proposition \ref{prop:bayes-opt-SE}.
 We therefore obtained
 we have
 \begin{align}
     \plim_{n\to\infty} \frac{\|\bz^{k+1}-\bz^k\big\|_2^2}{\|\bz^k\|_2^2} =
      \frac{(\gamma_{k+1}-\gamma_k)^2 + (\gamma_{k+1}-\gamma_k)}{(\gamma_k+t)^2 + \gamma_k+t}\, .
      \end{align} 
 Hence Eq.~\eqref{eq:z-converge} also follows from Eq.~\eqref{eq:uniform-gamma-limit}.
 \end{proof}

 We conclude this subsection with a lemma controlling the regularity of the 
 posterior  path $t\mapsto \m(\bG,\by(t))$, which will be useful later.
 \begin{lemma}
 \label{lem:uniform-path}
 Fix $\beta<\beta_1$ and $0\leq t_1<t_2\leq \T$. Then 
 \begin{align}
     \lim_{n\to\infty}\sup_{t\in [t_1,t_2]} \frac{1}{n} \big\|\m(\bG,\by(t))-\m(\bG,\by(t_1))\big\|_2^2&= 
     \lim_{n\to\infty} \frac{1}{n} \big\|\m(\bG,\by(t_2))-\m(\bG,\by(t_1)) \big\|_2^2\\
     &=q_*(\beta,t_2)-q_*(\beta,t_1)\, . %\frac{\gamma_*(\beta,t_2)-\gamma_*(\beta,t_1)}{\beta^2} \, .
     \label{eq:uniform-path}
 \end{align}
 \end{lemma}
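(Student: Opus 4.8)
\textbf{Overall approach.} The plan is to prove the second equality first, then deduce the first (the supremum version) by a monotonicity-plus-continuity argument. The key identity is the martingale structure of stochastic localization: for the Gibbs measure, $\by(t) = t\bx + \bB(t)$ and $\m(\bG,\by(t)) = \E_{\mu_t}[\bx]$ is exactly the posterior mean of $\bx$ given $(\bG,\by(t))$, so by Lemma~\ref{prop:stochloc1} the coordinate processes $t\mapsto m_i(\bG,\by(t))$ are martingales in the filtration generated by $(\bG,(\by(s))_{s\le t})$. Hence $t\mapsto \m(\bG,\by(t))$ has orthogonal increments, which gives the "Pythagorean" decomposition we need.

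\textbf{Step 1: the increment identity in expectation.} Using the martingale property of each coordinate, for $t_1<t_2$ we have $\E\langle \m(\bG,\by(t_2))-\m(\bG,\by(t_1)),\m(\bG,\by(t_1))\rangle = 0$, so
\begin{align}
\E\|\m(\bG,\by(t_2))\|_2^2 - \E\|\m(\bG,\by(t_1))\|_2^2 = \E\|\m(\bG,\by(t_2))-\m(\bG,\by(t_1))\|_2^2\, .
\end{align}
By Theorem~\ref{thm:IT} (more precisely, Eq.~\eqref{eq:MMSE} together with $\E\|\bx\|_2^2 = n$ and the bias–variance identity $\E\|\bx\|_2^2 = \E\|\m(\bG,\by(t))\|_2^2 + \E\|\bx-\m(\bG,\by(t))\|_2^2$ valid for the posterior mean), we get $\lim_{n\to\infty}\tfrac1n\E\|\m(\bG,\by(t))\|_2^2 = q_*(\beta,t)$. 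Combining these two displays yields $\lim_{n\to\infty}\tfrac1n\E\|\m(\bG,\by(t_2))-\m(\bG,\by(t_1))\|_2^2 = q_*(\beta,t_2)-q_*(\beta,t_1)$.

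\textbf{Step 2: from expectation to in-probability (and a.s.) convergence.} To upgrade to the almost-sure statement in Eq.~\eqref{eq:uniform-path} I would invoke the AMP approximation: by Proposition~\ref{prop:amp-posterior-mean}, $\tfrac1n\|\m(\bG,\by(t))-\hm^k(\bG,\by(t))\|_2^2 \to q_*(\beta,t)-q_{k+1}(\beta,t)$ in probability (combine Eq.~\eqref{eq:limgamma} with a concentration/Markov argument), and this error is uniformly small in $k$ by Lemma~\ref{lem:properties}\ref{it:ExpConv}. Meanwhile the AMP iterates themselves satisfy a state-evolution limit (Proposition~\ref{prop:state_evolution}), from which $\tfrac1n\|\hm^k(\bG,\by(t_2))-\hm^k(\bG,\by(t_1))\|_2^2$ converges almost surely to an explicit expression in $q_k,\Lambda_{k,k}$; using Proposition~\ref{prop:bayes-opt-SE} this expression equals $q_{k+1}(\beta,t_2) - 2\Lambda_{k+1,k+1}^{(t_1,t_2)} + q_{k+1}(\beta,t_1)$ for the appropriate cross-term, and a short computation with the sufficient-statistic property (as in the proof of Proposition~\ref{prop:bayes-opt-SE}) shows this tends to $q_*(\beta,t_2)-q_*(\beta,t_1)$ as $k\to\infty$. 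A triangle-inequality sandwich between the AMP quantity and the true-posterior quantity then gives the claimed a.s.\ limit for fixed endpoints $t_1,t_2$.

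\textbf{Step 3: the supremum.} Finally, for the $\sup_{t\in[t_1,t_2]}$ version: by the martingale (orthogonal-increments) property, $t\mapsto \tfrac1n\|\m(\bG,\by(t))-\m(\bG,\by(t_1))\|_2^2$ is, in expectation, nondecreasing, and its $n\to\infty$ limit $q_*(\beta,t)-q_*(\beta,t_1)$ is continuous in $t$ by Lemma~\ref{lem:properties}. A standard argument (partition $[t_1,t_2]$ into finitely many subintervals on which $q_*$ varies by at most $\eps$, apply Step 2 at the grid points, and use monotonicity of partial sums of squared increments to control the supremum between grid points) shows the supremum over $t$ has the same limit as the value at $t_2$. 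The monotonicity needed here is exactly the Doob-type inequality for the increasing process associated with the martingale $\m(\bG,\by(\cdot))$.

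\textbf{Main obstacle.} The delicate point is Step 2 / Step 3: turning the clean $L^2$-in-expectation identity from Step 1 into a uniform (over $t$) almost-sure statement. The expectation identity is essentially immediate from the martingale property plus Theorem~\ref{thm:IT}, but controlling fluctuations uniformly over an interval of $t$ requires either the AMP-based route sketched above (leveraging state evolution, which is uniform in $t\in(0,\T]$ by Proposition~\ref{prop:amp-posterior-mean}) or a direct concentration argument for $\tfrac1n\|\m(\bG,\by(t))\|_2^2$. I expect the AMP route to be cleanest, since it reduces everything to the already-established state-evolution machinery and the exponential-in-$k$ control of $q_*-q_k$.
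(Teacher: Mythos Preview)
Your Step~1 and the overall plan match the paper exactly. The real gap is in Step~3. The process $t\mapsto \tfrac{1}{n}\|\m(\bG,\by(t))-\m(\bG,\by(t_1))\|_2^2$ is \emph{not} pathwise monotone; it is only a submartingale, so ``monotonicity of partial sums of squared increments'' does not control the supremum between grid points---the trajectory can overshoot arbitrarily on any subinterval. The paper avoids gridding entirely by applying Doob's maximal inequality directly. Since $\m(\bG,\by(\cdot))$ is a bounded martingale, $M_{n,t}:=n^{-1/2}\|\m(\bG,\by(t))-\m(\bG,\by(t_1))\|_2$ is a nonnegative bounded submartingale on $[t_1,t_2]$, hence so is $Y_{n,t}:=(M_{n,t}-c)_+$ for $c:=\sqrt{q_*(\beta,t_2)-q_*(\beta,t_1)}$. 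Doob's inequality gives $\P(\sup_{t}Y_{n,t}\ge a)\le a^{-1}\E[Y_{n,t_2}]$, and since $M_{n,t_2}\to c$ in probability (from Step~2) and is bounded, the right side vanishes. You do mention a ``Doob-type inequality'' at the very end, so the correct tool was on your radar, but the argument as written relies on a monotonicity that fails.

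A smaller issue in Step~2: the cross-term $\Lambda^{(t_1,t_2)}_{k+1,k+1}$ you propose is not produced by Proposition~\ref{prop:state_evolution}, which describes AMP run on a \emph{single} input $\by(t)$; getting the joint limit of $(\hm^k(\bG,\by(t_1)),\hm^k(\bG,\by(t_2)))$ would require a coupled state evolution that the paper never sets up. The paper's route is lighter: it combines the expectation identity of Step~1 with the fact (Proposition~\ref{prop:amp-posterior-mean}) that each $\m(\bG,\by(t_i))$ is within $o_{\P}(\sqrt{n})$ of the concentrating quantity $\hm^k(\bG,\by(t_i))$, from which $\plim_{n\to\infty}\tfrac{1}{n}\|\m(\bG,\by(t_2))-\m(\bG,\by(t_1))\|_2^2=q_*(\beta,t_2)-q_*(\beta,t_1)$ follows without ever computing the cross-term explicitly.
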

 \begin{proof}
 We will exploit the fact that $(\m(\bG,\by(t)))_{t\ge 0}$ is a martingale,
 as a consequence of Lemma~\ref{prop:stochloc1} (with $\varphi:\mathbb R^n\to\mathbb R^n$ given by
  $\varphi(\bx)=\bx$).
 
 Using Theorem~\ref{thm:IT}, we obtain, for any $t_1<t_2$
 \begin{align*}
     \lim_{n\to\infty} 
      \frac{1}{n}\E\big[\big\|\m(\bG,\by(t_2))-\m(\bG,\by(t_1))\big\|_2^2 \big]
     &=  \lim_{n\to\infty} 
      \frac{1}{n}\Big\{\E\big[\big\|\bx-\m(\bG,\by(t_1))\big\|_2^2 \big]
      -\E\big[\big\|\bx-\m(\bG,\by(t_1))\big\|_2^2 \big]\Big\}\\
      & = 
     q_*(\beta,t_2)-q_*(\beta,t_1)\, ,
 \end{align*}
 where the first equality uses the fact that $\E[\m(\bG,\by(t_2))|\bG,\by(t_1)]=
 \m(\bG,\by(t_1))$. By Proposition \ref{prop:amp-posterior-mean}, we have, with high probability,
 $\|\m(\bG,\by(t_i))-\hm^k(\bG,\by(t))\|_2^2/n\le \eps_k$, for some deterministic constants
 $\eps_k$ so that $\eps_k\to 0$ as $k\to\infty$. As a consequence
 \begin{align}
     \plim_{n\to\infty} 
      \frac{1}{n}\big\|\m(\bG,\by(t_2))-\m(\bG,\by(t_1))\big\|_2^2
     &=  q_*(\beta,t_2)-q_*(\beta,t_1)\, .\label{eq:MM-gamma}
 \end{align}
    
 Now, since $t\to \m(\bG,\by(t))$ is a bounded martingale, it follows that,
 for any fixed constant $c$, the process 
 \begin{equation}
  Y_{n,t} := \max\big\{M_{n,t} - c, 0\big\} \, , ~~~\mbox{where}~~~ M_{n,t} := 
  \frac{1}{\sqrt{n}} \big\|\m(\bG,\by(t))-\m(\bG,\by(t_1))\big\|_2 \, ,
 \end{equation}
 is a positive bounded submartingale for $t\ge t_1$.
  Therefore by Doob's maximal inequality~\cite{durrett2019probability},
 \begin{equation}
     \P\Big(\sup_{t\in [t_1,t_2]}Y_{n,t} \geq a \Big)
     \leq 
     \frac{1}{a} \E\big[Y_{n,t_2} \big] \le \frac{1}{a} \E\big[Y_{n,t_2}^2 \big]^{1/2} \, ,
 \end{equation}
 for any $a>0$. We choose $c =  \sqrt{q_*(\beta,t_2)-q_*(\beta,t_1)}$. 
 By \eqref{eq:MM-gamma}, we have
 \[
     \plim_{n\to\infty} M_{n,t_2}^2 = q_*(\beta,t_2)-q_*(\beta,t_1) =c^2\, ,
 \]
 and therefore, since $M_{n,t}$ is bounded, for any fixed $a>0$
 \begin{align*}
 \lim_{n\to\infty}\P\Big(\sup_{t\in [t_1,t_2]}M_{n,t} \geq c + a \Big) &\le 
 \lim_{n\to\infty}\P\Big(\sup_{t\in [t_1,t_2]}Y_{n,t} \geq a \Big) \\
 &\le \frac{1}{a}\lim_{n\to\infty}\E\big[(M_{n,t_2}-c)_+^2 \big]^{1/2} = 0\, .
 \end{align*}
 Together with Eq.~\eqref{eq:MM-gamma}, this yields 
 \[
     \plim_{n\to\infty}\sup_{t\in [t_1,t_2]}M_{n,t}^2=q_*(t_2)-q_*(t_1) \, ,
 \]
 which coincides with the claim \eqref{eq:uniform-path}.
 \end{proof}

%%%%%%%%%%%%%%%
\subsection{Natural Gradient Descent}
\label{sec:ngd}

\begin{algorithm}
\label{alg:NGD}
\DontPrintSemicolon % Some LaTeX compilers require you to use \dontprintsemicolon instead
\KwIn{Initialization $\bu^0\in \R^n$, disorder $\bG$, $\hby\in \mathbb R^n$, step size
 $\eta>0$, $q\in (0,1)$, integer $K>0$.}
$\hm^{+,0} = \tanh(\bu^0)$. \\
\For{$k = 0,\cdots,K-1$} { 
$\bu^{k+1} \leftarrow \bu^k - \eta \cdot\nabla \widehat{\cuF}_{\sTAP}(\hm^{+,k};\by,q)$,  \label{line:explicit-NGD-step} \\
$\hm^{+,k+1} = \tanh(\bu^{+,k+1})$,
}
\Return{$\hm^{+,K}$}\;
\caption{{\sc Natural Gradient Descent on $\widehat{\cuF}_{\sTAP}(\;\cdot\; ;\by,q)$}}
\end{algorithm}

The main objective of this section is to show that
$\widehat{\cuF}_{\sTAP}(\m;\by,q)$ behaves well for $q=q_*(\beta,t)$
and for $\m$ in a neighborhood of $\hm^{K_{\sAMP}}$. Namely it has a unique local minimum 
$\m_* = \m_*(\bG,\by)$ in such a neighborhood, and
NGD approximates $\m_*$ well for large number of iterations $K$. 
 Crucially, the map 
 $\by \mapsto \m_*(\bG,\by)$ will be Lipschitz.
For reference, we recall that the modified TAP free energy functional was defined as
follows
\begin{align}\label{eq:TAP_reg2}
 \widehat{\cuF}_{\sTAP}(\m ; \by, q) :=  -\beta H_n(\m) - \langle \by , \m \rangle - 
 \sum_{i=1}^n h(m_i) - \ons(q) - \ons'(q)(Q(\m)-q)+\frac{n\Treg\beta}{8} (Q(\m)-q)^2\, ,
  \end{align}
where
\begin{align}
& \ons\big(Q\big) = \frac{\beta^2n}{2}\Big( \xi(1) - \xi(Q) - (1-Q)\xi'(Q \Big)\, ,\\
 & Q(\m) = 
  \frac{1}{n} \|\m\|^2 \, ,
~~~\mbox{and}~~~~~ h(m) = -\frac{1+m}{2}\log \left(\frac{1+m}{2}\right) - \frac{1-m}{2}\log \left(\frac{1-m}{2}\right)  \, .
\end{align}
We also reproduce the NGD algorithm as Algorithm \ref{alg:NGD}.
This corresponds to lines \ref{alg:NGD-begin}-\ref{alg:NGD-end} of Algorithm \ref{alg:Mean}.

In the next statement we omit mentioning the dependence of various constants on $\xi$.
 \begin{lemma}
 \label{lem:local-landscape}
 Let $\beta_1,\beta_3$ be defined by Eqs.~\eqref{eq:Beta0Def}, \eqref{eq:Beta3Def}.
Then,  for any $\beta<\min(\beta_1,\beta_3)$ and any $\T>0$,
there exists $\eps_0 = \eps_0(\beta,\T)$, $\Treg=\Treg(\beta)$ such that, for all $\eps\in (0,\eps_0)$
 there exists $K_{\sAMP} = K_{\sAMP}(\beta,\T,\eps)$ and 
  $\rho_0 =\rho_0(\beta,\T,\eps)$ such that 
  for all $\rho\in (0,\rho_0)$  there exists  $K_{\sNGD} = K_{\sNGD}(\beta,\T,\eps,\rho)$,
  such that the 
  following holds.
 
  Let $\hm^{\sAMP}= \AMP(\bG,\by(t);K_{\sAMP})$ be the output of the AMP after 
  $K_{\sAMP}$ iterations, when applied to $\by(t)$. Fix $K\ge K_{\sAMP}$.
 With probability $1-o_n(1)$ over $(\bG,\by)\sim\P$, 
 for all $t\in (0,\T]$ and all $\hby\in \Ball^n\left(\by(t),c\sqrt{\eps tn} / 4\right)$,
 setting $q_{*} := q_{*}(\beta,t)$:
 \begin{enumerate}
     \item 
     \label{it:landscape-basic}
     The function 
     \[
         \m\mapsto \widehat{\cuF}_{\sTAP}(\m ; \hby, q_{*})
     \]
     restricted to $\Ball^n\left(\hm^{\sAMP},\sqrt{\eps tn}\right)\cap (-1,1)^n$ has a unique stationary point 
     \[
         \m_*(\bG,\hby)\in  \Ball^n\left(\hm^{\sAMP},\sqrt{\eps tn} / 2\right)\cap (-1,1)^n
     \]
     which is also a local minimum. In the case $\hby=\by(t)$, $\m_*(\bG,\by(t))$ 
     also satisfies 
     \[
         \m_*(\bG,\by)\in  \Ball^n\left(\hm^{k'},\sqrt{\eps tn} / 2\right)\cap (-1,1)^n
     \]
     for all $k'\in [K_{\sAMP},K]$, where $\hm^{k'} = \AMP(\bG,\by(t);k')$.
     \item
     \label{it:landscape-stationary-point-good}
     The stationary point $\m_*(\bG,\hby)$ satisfies
     (recall that $\m(\bG,\hby)$ denotes the mean of the Gibbs measure)
     \[
         \big\|\m(\bG,\hby)-\m_*(\bG,\hby) \big\|_2\leq \rho\sqrt{tn} \, .
     \]
     \item The stationary point $\m_*$ satisfies the following Lipschitz property
     for all  $\hby,\hby'\in \Ball^n\left(\by(t),c\sqrt{\eps tn} / 4\right)$:
     \label{it:landscape-lipschitz}
     \begin{equation}
     \label{eq:stationary-point-lipschitz}
         \big\|\m_*(\bG,\hby)-\m_*(\bG,\hby')\big\| \leq c^{-1} \|\hby-\hby'\| \, .
     \end{equation}
     \item 
     \label{it:landscape-NGD}
     There exists a learning rate $\eta=\eta(\beta,\T,\eps)$ such that the following holds.
     Let $\hm^{\sNGD}(\bG,\hby)$ be the output of NGD (Algorithm
     \ref{alg:NGD}), when run for $K_{\sNGD}$ iterations with parameter $q_{*}$,
     $\hby$, $\eta$. Assume that the initialization $\bu^0$ satisfies 
     \begin{align}\label{ass:landscape-NGD}
         \big\|\bu^0-\atanh(\hm^{\sAMP})\big\|\leq \frac{c\sqrt{\eps t n}}{200}  \, .
     \end{align}
     Then the algorithm output satisfies 
     \begin{equation}
     \label{eq:landscape-NGD-convergence}
         \big\| \hm^{\sNGD}(\bG,\hby)-\m_*(\bG,\hby) \big\| \leq \rho\sqrt{tn} \, .
     \end{equation}
 \end{enumerate}
 \end{lemma}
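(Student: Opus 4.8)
The plan is to combine the local geometry of $\widehat{\cuF}_{\sTAP}$ supplied by Lemma~\ref{lem:local-convex}, the approximate stationarity of the AMP iterate supplied by Lemma~\ref{lem:TAP-stationary}, and the relative–error accuracy of AMP from Proposition~\ref{prop:amp-posterior-mean}, with routine quantitative convex–optimization estimates. I would work throughout in the coordinates $\bu$ with $\m=\tanh(\bu)$: one step of Algorithm~\ref{alg:NGD} is then an ordinary gradient step $\bu^{k+1}=\bu^k-\eta\,\nabla_\m\widehat{\cuF}_{\sTAP}(\tanh(\bu^k);\hby,q_*)$, whose fixed points in $(-1,1)^n$ are precisely the stationary points of $\widehat{\cuF}_{\sTAP}(\cdot;\hby,q_*)$. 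The first task is to pin down the scales: since $q_*(\beta,t)\asymp t$ for $t\le\T$ by Lemma~\ref{lem:properties}\ref{it:gamma*-near0}, the natural unit is $\sqrt{tn}\asymp\|\m(\bG,\by(t))\|_2$; by state evolution (Proposition~\ref{prop:state_evolution}) the coordinates of $\hm^{\sAMP}$ are $O(\sqrt t)$, so the ball $\Ball^n(\hm^{\sAMP},\sqrt{\eps tn})$ stays uniformly inside $(-1,1)^n$, and by Eq.~\eqref{eq:z-converge} all AMP iterates $\hm^{k'}$ with $k'\in[K_{\sAMP},K]$ lie within $\sqrt{\eps tn}/4$ of $\hm^{\sAMP}$ once $K_{\sAMP}$ is large. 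Lemma~\ref{lem:local-convex} (for $\beta<\min(\beta_1,\beta_3)$ and $\Treg=\Treg(\beta)$ suitably chosen) then supplies, on this ball and for the typical $\bG$, two–sided Hessian bounds $c_-\bI_n\preceq\nabla^2_\m\widehat{\cuF}_{\sTAP}(\m;\hby,q_*)\preceq c_+\bI_n$ — equivalently, on this ball where $1-m_i^2$ is bounded below, the same bounds up to constants in the metric relevant for NGD — and in particular strong monotonicity of $\m\mapsto\nabla_\m\widehat{\cuF}_{\sTAP}(\m;\hby,q_*)$; the role of the $\beta_3$ condition is exactly to make these bounds hold.

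For part~\ref{it:landscape-basic} and part~\ref{it:landscape-lipschitz} I would start from $\|\nabla_\m\widehat{\cuF}_{\sTAP}(\hm^{\sAMP};\by(t),q_*)\|_2=o(\sqrt n)$ — in fact below any prescribed fraction of $\sqrt{tn}$ for $K_{\sAMP}$ large — given by Lemma~\ref{lem:TAP-stationary}, and note that passing from $\by(t)$ to $\hby$ only shifts this gradient by the constant vector $\by(t)-\hby$, of norm $\le c\sqrt{\eps tn}/4$. Strong monotonicity on $\Ball^n(\hm^{\sAMP},\sqrt{\eps tn})$ then gives, by a damped–Newton / contraction argument, a \emph{unique} zero $\m_*(\bG,\hby)$ of $\nabla_\m\widehat{\cuF}_{\sTAP}(\cdot;\hby,q_*)$ in that ball, with $\|\m_*(\bG,\hby)-\hm^{\sAMP}\|_2\le c_-^{-1}\,c\sqrt{\eps tn}/3\le\sqrt{\eps tn}/2$ for $\eps$ small and $c$ small relative to $c_-$; uniqueness of the critical point of a strongly convex function makes it the local minimum, and the clustering of the $\hm^{k'}$ noted above yields the last claim of part~\ref{it:landscape-basic}. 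Since $\hby$ enters $\widehat{\cuF}_{\sTAP}$ only through the linear term $-\langle\hby,\m\rangle$, the implicit function theorem gives $\partial\m_*/\partial\hby=(\nabla^2_\m\widehat{\cuF}_{\sTAP})^{-1}$ with operator norm $\le c_-^{-1}=:c^{-1}$; integrating along the segment from $\hby$ to $\hby'$ (which stays in the strongly convex region when $c$ is small) gives \eqref{eq:stationary-point-lipschitz}.

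For part~\ref{it:landscape-stationary-point-good} the idea is to show $\m(\bG,\hby)$ is itself a good approximate critical point of $\widehat{\cuF}_{\sTAP}(\cdot;\hby,q_*)$ and then invoke $\nabla^2_\m\widehat{\cuF}_{\sTAP}\succeq c_-\bI_n$. For $\hby=\by(t)$ the cleanest route is to combine the relative–error bound $\|\m(\bG,\by(t))-\hm^{\sAMP}\|_2\le o_{\mathbb P}(1)\cdot\|\m(\bG,\by(t))\|_2$ of Proposition~\ref{prop:amp-posterior-mean} (which is $\le\rho\sqrt{tn}/3$ for $K_{\sAMP}$ large, using $\|\m(\bG,\by(t))\|_2\asymp\sqrt{tn}$) with $\|\hm^{\sAMP}-\m_*(\bG,\by(t))\|_2=o(\sqrt{tn})$ from the previous paragraph; for general $\hby$ one extends using part~\ref{it:landscape-lipschitz} together with the Lipschitz bound on $\by\mapsto\m(\bG,\by)$, observing that near the TAP solution these two maps have the same derivative up to a rank–one Onsager piece of size $O(t)$, so the gap grows by at most $O(t)\,\|\hby-\by(t)\|=o(\sqrt{tn})$. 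Finally, for part~\ref{it:landscape-NGD}, with $\eta\asymp c_+^{-1}$ the iteration is gradient descent in $\bu$ on $\widehat{\cuF}_{\sTAP}(\tanh(\cdot);\hby,q_*)$, whose Hessian — after absorbing the negligible extra curvature from differentiating $\diag(1-m_i^2)$, which is small because the gradient is small — is pinched between constant multiples of $c_-\bI_n$ and $c_+\bI_n$ on the relevant neighbourhood; it therefore contracts geometrically to $\atanh(\m_*(\bG,\hby))$ and reaches error $\le\rho\sqrt{tn}$ after $K_{\sNGD}=K_{\sNGD}(\beta,\T,\eps,\rho)$ steps, the trajectory never leaving the neighbourhood because \eqref{ass:landscape-NGD} places $\bu^0$ within $c\sqrt{\eps tn}/200$ of $\atanh(\hm^{\sAMP})$, hence within $o(\sqrt{tn})$ of $\atanh(\m_*)$, and a descent method started near the minimizer stays near it.

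The main obstacle is not any single step but keeping all the scales consistent: everything must be measured relative to $\sqrt{tn}\asymp\|\m(\bG,\by(t))\|_2$ rather than $\sqrt n$, which forces the AMP accuracy to be used in its \emph{relative}–error form and forces one to show that the quadratic Onsager surrogate perturbs the landscape only at the $o(\sqrt{tn})$ level near the solution — this is exactly why $q$ is pinned to $q_*(\beta,t)$ and $\Treg$ is chosen as in Lemma~\ref{lem:local-convex} — and one must check that the ball of guaranteed strong convexity, of radius $\sim\sqrt{\eps tn}$, simultaneously contains $\hm^{\sAMP}$, all later AMP iterates, $\m_*(\bG,\hby)$ for every admissible $\hby$, $\m(\bG,\hby)$ itself, and the whole NGD trajectory, with high probability over $\bG$. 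The genuinely hard analytic input, the Hessian bounds of Lemma~\ref{lem:local-convex}, is taken as given here.
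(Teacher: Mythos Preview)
Your high-level architecture matches the paper's: Lemma~\ref{lem:TAP-stationary} for approximate stationarity, Lemma~\ref{lem:local-convex} for curvature, Proposition~\ref{prop:amp-posterior-mean} for accuracy, and then convex-optimization bookkeeping. Parts~\ref{it:landscape-basic} and~\ref{it:landscape-lipschitz} of your sketch are essentially what the paper does (via Lemmas in the appendix that formalize ``small gradient $+$ relative strong convexity $\Rightarrow$ nearby minimizer, Lipschitz in the linear tilt'').

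There is, however, a genuine gap that breaks your Part~\ref{it:landscape-NGD} argument. You assert that ``the coordinates of $\hm^{\sAMP}$ are $O(\sqrt t)$, so the ball $\Ball^n(\hm^{\sAMP},\sqrt{\eps tn})$ stays uniformly inside $(-1,1)^n$.'' This is false: a Euclidean ball of radius $\sqrt{\eps t n}$ around \emph{any} point of $[-1,1]^n$ contains, for each coordinate $i$, the point $\hm^{\sAMP}+\sqrt{\eps t n}\,\bfe_i$, whose $i$-th coordinate diverges with $n$. Consequently $1-m_i^2$ is \emph{not} bounded below on the ball, $\D(\m)$ is unbounded, and the upper Hessian bound $\nabla^2_\m\widehat{\cuF}_{\sTAP}\preceq c_+\bI_n$ that you invoke does not hold; only the relative bound $c\,\D(\m)\preceq\nabla^2\widehat{\cuF}_{\sTAP}\preceq C\,\D(\m)$ of Lemma~\ref{lem:local-convex} is available. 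This is precisely why the paper works on $\Ball^n(\hm^{\sAMP},\sqrt{\eps tn})\cap(-1,1)^n$ and analyzes NGD as \emph{mirror descent} with Bregman divergence $D_{-h}$ (Eq.~\eqref{eq:NGD}), proving a relative-smoothness/relative-strong-convexity contraction (their Lemma on NGD convergence uses the three-point inequality of \cite{lu2018relatively} and an induction showing the iterates remain in the good region). Relatedly, your claim that NGD is ``gradient descent in $\bu$ on $\widehat{\cuF}_{\sTAP}(\tanh(\cdot);\hby,q_*)$'' is not correct: the $\bu$-gradient of $G(\bu)=\widehat{\cuF}_{\sTAP}(\tanh(\bu))$ is $\D(\m)^{-1}\nabla_\m\widehat{\cuF}_{\sTAP}(\m)$, whereas the NGD step subtracts $\eta\,\nabla_\m\widehat{\cuF}_{\sTAP}(\m)$; the discrepancy is a factor of $\D(\m)$, which is exactly the object you cannot bound on the full ball. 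Your parenthetical about ``absorbing the negligible extra curvature'' does not fix this.

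A second, smaller gap: your extension of Part~\ref{it:landscape-stationary-point-good} from $\hby=\by(t)$ to general $\hby$ appeals to a ``Lipschitz bound on $\by\mapsto\m(\bG,\by)$.'' No such bound is established anywhere; the Jacobian of this map is the Gibbs covariance, whose operator norm is not controlled here. The paper's proof of Part~\ref{it:landscape-stationary-point-good} in fact only treats $\hby=\by(t)$ (which is all that is used downstream), combining $\|\m(\bG,\by(t))-\hm^{\sAMP}\|\le\tfrac{\delta_0}{2}\sqrt{tn}$ from Proposition~\ref{prop:amp-posterior-mean} with $\|\hm^{\sAMP}-\m_*(\bG,\by(t))\|\le\tfrac{\delta_0}{2}\sqrt{tn}$ from the approximate-stationary-point lemma.
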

 The proof of this lemma is deferred to the appendix.

 Here we will prove the two
key elements: first that  $\hm^{\sAMP}$ is an approximate stationary point of
 $\widehat{\cuF}_{\sTAP}(\;\cdot\;;\by(t),q_*)$
  (Lemma~\ref{lem:TAP-stationary}), and second that
  $\widehat{\cuF}_{\sTAP}(\;\cdot\;;\hby,q_*)$ is strongly convex in a neighborhood of 
  $\hm^{\sAMP}$ (Lemma~\ref{lem:local-convex}). Let us point out that, in the local convexity 
  guarantee, it is important that the neighborhood has radius 
  $\Theta(\sqrt{tn})$ as $t\to 0$.

We recall below the expressions for the gradient and Hessian of  $\widehat{\cuF}_{\sTAP}(\;\cdot\;;\by,q)$
at
$\m\in (-1,1)^n$:
\begin{align}
\label{eq:grad-F}
    \nabla\widehat{\cuF}_{\sTAP}(\m;\by,q)    
    &=
    -\beta \nabla H_n(\m)-\by+\atanh(\m)
    +
    \beta^2(1-q)\xi''(q)\m +\frac{\Treg\beta}{2} (Q(\m)-q)\m\, ,
    \\
\label{eq:hess-F}
    \nabla^2\widehat{\cuF}_{\sTAP}(\m;\by,q)&=
    -\beta \nabla^2 H_n(\m)+\D(\m)
    +
   \Big( \beta^2(1-q)\xi''(q)+ \frac{\Treg\beta}{2}(Q(\m)-q)\Big)\bI_n 
   +\frac{\Treg\beta}{n}\m\m^{\sT}\,, \;\; 
    \\
\nonumber
    \bD(\m)&:={\rm diag}\big(\{(1-m_i^2)^{-1}\}_{i\le n}\big).
\end{align}
In \eqref{eq:grad-F}, $\atanh$ is applied coordinate-wise to $\m\in (-1,1)^n$.

Recall that, by  Lemma~\ref{lem:MSE-k}, we have
\begin{align}
\label{eq:qk*-lim}
    q_k(\beta,t) = \plim_{n\to\infty}\frac{\big\|\hm^k\big\|^2}{n} 
\, ,\;\;\;\;\;
  q_*(\beta,t) =\lim_{k\to\infty}q_k(\beta,t) \, .
\end{align}
 We will use the bounds~\eqref{eq:uniform-gamma-limit}, \eqref{eq:gamma*-near0} in Lemma~\ref{lem:properties} several times below, 
 which ensures that $(q_k(\beta,t)/t)\in [c,C]$ holds for constants $c,C>0$ 
 independent of $t\in (0,\T]$ and $k\geq 1$.

\begin{lemma}
\label{lem:TAP-stationary}
Let $\hm^k = \hm^k(\bG,\by(t))$ denote the AMP iterates on input $\bG,\by(t)$. 
Then for any $ \T>0$,
\[
    \lim_{k\to\infty}\sup_{t\in (0,\T]}\sup_{q\in [q_k(\beta,t),q_*(\beta,t)]}\plim_{n\to\infty} \frac{\big\|\nabla \widehat{\cuF}_{\sTAP}(\hm^k;\by(t),q)\big\|}{\sqrt{tn}}=0 \, .
\]
\end{lemma}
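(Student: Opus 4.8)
The strategy is to express $\nabla\widehat{\cuF}_{\sTAP}(\hm^k;\by(t),q)$ in terms of the AMP recursion and show all terms vanish (after normalization by $\sqrt{tn}$) as $k\to\infty$, uniformly over $t\in(0,\T]$ and $q$ in the indicated interval. Recall from \eqref{eq:grad-F} that
\[
\nabla\widehat{\cuF}_{\sTAP}(\hm^k;\by(t),q) = -\beta\nabla H_n(\hm^k) - \by(t) + \atanh(\hm^k) + \beta^2(1-q)\xi''(q)\hm^k + \frac{\Treg\beta}{2}(Q(\hm^k)-q)\hm^k \, .
\]
Since $\hm^k = \tanh(\bz^k)$, we have $\atanh(\hm^k) = \bz^k$, and the AMP update \eqref{eq:AMPreminder} gives $\bz^k = \beta\nabla H_n(\hm^{k-1}) + \by(t) - \sb_{k-1}\hm^{k-2}$ where $\sb_{k-1}=\beta^2(1-\hat q^{k-1})\xi''(\hat q^{k-1})$. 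Substituting, the $\by(t)$ terms cancel and we are left with
\[
\nabla\widehat{\cuF}_{\sTAP}(\hm^k;\by(t),q) = -\beta\big(\nabla H_n(\hm^k)-\nabla H_n(\hm^{k-1})\big) - \sb_{k-1}\hm^{k-2} + \beta^2(1-q)\xi''(q)\hm^k + \frac{\Treg\beta}{2}(Q(\hm^k)-q)\hm^k \, .
\]

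Now I would bound each piece. For the gradient-difference term, use the operator-norm bound $\sup_{\m\in[-1,1]^n}\|\nabla^2 H_n(\m)\|_{\op}\le K$ with probability $1-e^{-cn}$ (as used in the proof of Proposition~\ref{prop:state_evolution}, via \cite[Proposition 2.3]{huang2021tight}), so $\|\nabla H_n(\hm^k)-\nabla H_n(\hm^{k-1})\|_2\le K\|\hm^k-\hm^{k-1}\|_2 \le K\|\bz^k-\bz^{k-1}\|_2$ since $\tanh$ is $1$-Lipschitz. By Eq.~\eqref{eq:z-converge} of Proposition~\ref{prop:amp-posterior-mean}, $\|\bz^k-\bz^{k-1}\|_2/\|\bz^{k-1}\|_2 \to 0$ uniformly over $t\in(0,\T)$, and $\|\bz^{k-1}\|_2^2/n \to (\gamma_{k-1}+t)^2+\gamma_{k-1}+t \le C(\gamma_{k-1}+t)$; together with $\gamma_{k-1}=\beta^2\xi'(q_{k-1})\le \beta^2\xi''(1)q_{k-1} \le C t$ (using Lemma~\ref{lem:properties}\ref{it:gamma*-near0} and \ref{it:ExpConv}), this shows $\|\bz^k-\bz^{k-1}\|_2^2/(tn)\to 0$. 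For the remaining three terms, group them: $\hm^{k}$ and $\hm^{k-2}$ both have squared norm $\approx t\cdot(\text{bounded})$ by \eqref{eq:qk*-lim} and Lemma~\ref{lem:properties}, so it suffices to show the \emph{scalar} coefficients converge. Using state evolution, $\hat q^{k-1}\to q_{k-1}(\beta,t)$ and $Q(\hm^k)\to q_k(\beta,t)$ in probability; since $q\in[q_k,q_*]$ and $q_k\uparrow q_*$ with $q_*-q_k \le (\beta/\beta_1)^{2k}q_*$ by \eqref{eq:uniform-gamma-limit}, all of $\hat q^{k-1}, q_k, q$ are within $O((\beta/\beta_1)^{2(k-1)}q_*)$ of each other. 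Writing $g(q):=\beta^2(1-q)\xi''(q)$ (which is Lipschitz on $[0,1]$), the combination $-\sb_{k-1}\hm^{k-2} + g(q)\hm^k = -g(\hat q^{k-1})\hm^{k-2} + g(q)\hm^k = g(q)(\hm^k-\hm^{k-2}) + (g(q)-g(\hat q^{k-1}))\hm^{k-2}$; the first piece is controlled as before by $\|\bz^k-\bz^{k-2}\|_2$, the second by $|g(q)-g(\hat q^{k-1})|\cdot\|\hm^{k-2}\|_2 \le C(\beta/\beta_1)^{2(k-1)}q_* \cdot\sqrt{Ctn}$, which divided by $\sqrt{tn}$ goes to $0$ uniformly (since $q_*\le C t\le C\T$). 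Finally $\frac{\Treg\beta}{2}|Q(\hm^k)-q|\cdot\|\hm^k\|_2 / \sqrt{tn} \le C|Q(\hm^k)-q|\cdot\sqrt{t}/\sqrt{t} = C|Q(\hm^k)-q| \to C|q_k - q| = 0$ in the relevant range, again uniformly.

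Collecting these, every term divided by $\sqrt{tn}$ converges to $0$ in probability, uniformly in $t\in(0,\T]$ and $q\in[q_k,q_*]$, as $k\to\infty$. The main obstacle is making the uniformity over $t\in(0,\T]$ rigorous near $t=0$: both $\|\bz^k\|_2$ and $\|\hm^k\|_2$ degenerate like $\sqrt{t}$, so one must carefully track that every bound carries the right power of $t$ and that the relative-error statements \eqref{eq:z-converge}, \eqref{eq:uniform-gamma-limit} are genuinely uniform down to $t=0$ — this is exactly why Proposition~\ref{prop:amp-posterior-mean} and Lemma~\ref{lem:properties} were stated with suprema over $t\in(0,\T)$ and with the ratio bounds $q_k/t\in[c,C]$, $q_k/q_*\ge 1-(\beta/\beta_1)^{2k}$. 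A secondary technical point is exchanging $\plim_{n\to\infty}$ with the suprema over the continuum of $t$ and $q$ values; this can be handled by a standard $\eps$-net argument using the (deterministic, Lipschitz-in-$t$) nature of the state-evolution limits together with the almost-sure $W_2$ convergence in Proposition~\ref{prop:state_evolution}, or simply absorbed into the already-uniform bounds above since each bound is an explicit function of $k$ alone.
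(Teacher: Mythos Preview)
Your proposal is correct and follows essentially the same route as the paper. The only cosmetic difference is an index shift: the paper inserts $\pm\bz^{k+1}$ rather than substituting the recursion for $\bz^{k}$, so that the $\beta\nabla H_n(\hm^k)$ term cancels exactly against the one inside $\bz^{k+1}$ and no Hessian operator-norm bound is needed for that piece---the remaining terms are then $\|\bz^{k+1}-\bz^k\|$, $\|\hm^k-\hm^{k-1}\|$, and the Lipschitz variation of $q\mapsto\beta^2(1-q)\xi''(q)$, exactly as in your argument with indices shifted by one. Your final worry about exchanging $\plim_{n\to\infty}$ with the suprema over $t,q$ is unnecessary: in the statement the $\plim$ sits \emph{inside} the supremum, so one computes the limit in probability for each fixed $(t,q)$ (an explicit function of $k,t,q$ via state evolution) and then checks that this deterministic quantity, supremized over $(t,q)$, tends to $0$ as $k\to\infty$---which is precisely what the uniform bounds from Lemma~\ref{lem:properties} and Proposition~\ref{prop:amp-posterior-mean} deliver.
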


\begin{proof}
As in Algorithm \ref{alg:Mean}, let
\[
  \bz^{k+1}=\atanh(\hm^{k+1})=\beta \nabla H_n(\hm^k)+\by-\beta^2(1- \hat{q}^k) \xi''(\hat{q}^k)\, \hm^{k-1} \, , ~~~\hat{q}^k = \frac{1}{n}\big\|\hm^{k}\big\|^2 \, .
\]
Let $q\in [q_k(\beta,t),q_*(\beta,t)]$. Combining the above with 
Eqs.~\eqref{eq:grad-F} and \eqref{eq:qk*-lim} yields
\begin{align*}
    \frac{1}{\sqrt n}\|\nabla \widehat{\cuF}_{\sTAP}(\hm^k;\by,q)\|
    &=
    \frac{1}{\sqrt n}\left\|
    -
    \beta \nabla H_n(\hm^k)-\by
    +
    \atanh(\hm^k)
    +
    \beta^2(1-q)\xi''(q)\hm^k\right\| 
    \\
    &=
    \frac{1}{\sqrt n}\left\|
    \bz^k-\beta \nabla H_n(\hm^k)-\by
    +
    \beta^2(1-q)\xi''(q)\hm^k
    \right\| +\frac{\Treg\beta}{2}\big|\hat{q}^k-q\big|
    \\
    &\leq 
    \frac{1}{\sqrt n}\|\bz^{k+1}-\bz^{k}\|
    +
    \frac{1}{\sqrt n}\left\|\bz^{k+1}
    -
    \beta \nabla H_n(\hm^k)-\by+\beta^2(1-q)\xi''(q)\hm^k\right\|+\frac{\Treg\beta}{2}\big|\hat{q}^k-q\big|
    \\
    &=\frac{1}{\sqrt n}\big\|\bz^{k+1}-\bz^{k} \big\|
    +\frac{\beta^2}{\sqrt n}\left\| (1- \hat{q}^k) \xi''(\hat{q}^k)\hm^{k-1} - (1-q)\xi''(q)\hm^k \right\|
    +\frac{\Treg\beta}{2}\big|\hat{q}^k-q\big|
    \\
    &\leq 
    \frac{1}{\sqrt n}\big\|\bz^{k+1}-\bz^{k}\big\|
    +
    \frac{\beta^2 \xi''(1)}{\sqrt n} \, \big\|\hm^{k-1}-\hm^k\big\| \\
    &~~~+ 
    \sup_{q_k \le q \le q_*} \beta^2 \big| (1-q)\xi''(q)-(1-q_k)\xi''(q_k)\big|
    +\frac{\Treg\beta}{2}\big|q^k-q\big|+
    o_{n,\mathbb P}(1).
\end{align*}
Here $o_{n,\mathbb P}(1)$ denotes terms which converge to $0$ in probability as $n\to\infty$, and \ $\|\cdot\|_{\infty}$ denotes the supremum norm on $[0,1]$. Further, for the last term we have
\[ \sup_{q_k \le q \le q_*} \beta^2 \big| (1-q)\xi''(q)-(1-q_k)\xi''(q_k)\big| \le \beta^2(\|\xi''\|_{\infty} + \|\xi'''\|_{\infty})(q_* - q_k) \, .\] 
By \eqref{eq:z-converge}, \eqref{eq:qk*-lim} and the bound $(q_k(\beta,t)/t)\in [c,C]$ 
\[
    \lim_{k\to\infty}\sup_{t\in (0,T)}\plim_{n\to\infty}\frac{\|\bz^{k+1}-\bz^{k}\|}{\sqrt{tn}}=0\, .
\]
Moreover, $\|\hm^{k-1}-\hm^k\|\leq \|\bz^{k-1}-\bz^k\|$ since the function $x\mapsto \tanh(x)$ 
is $1$-Lipschitz. Finally \eqref{eq:uniform-gamma-limit} and \eqref{eq:gamma*-near0} of Lemma~\ref{lem:properties} imply
\[
    \lim_{k\to\infty}\sup_{t\in (0,\T]} \frac{q_*(\beta,t)-q_k(\beta,t)}{\sqrt{t}}=0 \, .
\]
Combining the above statements concludes the proof.
\end{proof}

We next control the Hessian $\nabla^2\widehat{\cuF}_{\sTAP}(\;\cdot\;;\by,q)$. We begin by 
a bound on the Hessian of the Hamiltonian, whose proof is deferred to Appendix 
\ref{app:HessianHamiltonian}.
\begin{lemma}\label{lemma:HessianHamiltonian}
There exist constants $\Treg_0(\xi)$, $C_0(\xi)>0$, depending uniquely on $\xi$
and a universal constant $C_*$, such that, defining
$\oxi^{(\ell)}(1)  :=\sum_{p=2}^Pc_p^2p^\ell$ and, for $\delta>0$,
\begin{equation}\label{eq:Kxi}
\begin{aligned}
K_{\delta}(\xi)&:= 
\begin{dcases}
(2 +\delta)\sqrt{\xi''(1)} & \mbox{ if  } \xi(t) = c_2^2t^2\, ,\\
C_*\sqrt{\xi''(1)\log\oxi^{(8)}(1)}& \mbox{ otherwise}\, ,
\end{dcases}
\end{aligned}
\end{equation}
the following holds. For any $\delta>0$, and any $\Treg>\Treg_0(\xi)$,
 the following happens with probability at least $1-2\exp(-n\delta^2/2)$ (for  $\xi(t) = c_2^2t^2$) 
 or $1-2\exp(-nC_0(\xi))$ (otherwise). For all  $\m\in (-1,1)^n$, we have
\begin{align}
 -K_{\delta}(\xi)\, \bI_n \preceq \D(\m)^{-1/2} \big(-\nabla^2 H_n(\m) + \frac{\Treg}{n}\m\m^{\sT}\big)\D(\m)^{-1/2} \preceq
 (K_{\delta}(\xi)+\Treg)\, \bI_n\, .\label{eq:lemma:HessianHamiltonian}
\end{align}
\end{lemma}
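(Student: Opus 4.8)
The plan is to decouple the two directions in the sandwiched matrix $\D(\m)^{-1/2}(-\nabla^2 H_n(\m)+\frac{\Treg}{n}\m\m^{\sT})\D(\m)^{-1/2}$, handling the rank-one correction $\frac{\Treg}{n}\m\m^{\sT}$ by hand and the genuinely random part $-\nabla^2 H_n(\m)$ via a uniform operator-norm bound over the cube. First I would note that since $\D(\m)^{-1/2}={\rm diag}((1-m_i^2)^{1/2})$ has entries in $[0,1]$, it is a contraction, and the rank-one term contributes $\frac{\Treg}{n}(\D(\m)^{-1/2}\m)(\D(\m)^{-1/2}\m)^{\sT}$, a PSD matrix of operator norm $\frac{\Treg}{n}\|\D(\m)^{-1/2}\m\|_2^2\le \frac{\Treg}{n}\|\m\|_2^2\le \Treg$. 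So it suffices to prove the two-sided bound $\|\D(\m)^{-1/2}\nabla^2 H_n(\m)\D(\m)^{-1/2}\|_{\op}\le K_\delta(\xi)$ uniformly over $\m\in(-1,1)^n$ with the claimed probability, and then add the $\Treg$ from the rank-one piece to the upper side (the lower side is unaffected since the rank-one term is PSD); the choice $\Treg>\Treg_0(\xi)$ is presumably needed only to absorb lower-order corrections in the operator-norm estimate below, not here.

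Next I would reduce to a statement about $\|\nabla^2 H_n(\m)\|_{\op}$ alone. Write $\nabla^2 H_n(\m)=\sum_{p=2}^P \frac{c_p p(p-1)}{n^{(p-1)/2}}\,\langle \Gp{p},\m^{\otimes(p-2)}\otimes\cdot\otimes\cdot\rangle$, the matrix obtained by contracting the $p$-tensor $\Gp{p}$ along $p-2$ coordinates against $\m$. Since $\D(\m)^{-1/2}$ is a contraction, $\|\D(\m)^{-1/2}\nabla^2 H_n(\m)\D(\m)^{-1/2}\|_{\op}\le\|\nabla^2 H_n(\m)\|_{\op}$, so I only need a uniform bound on the latter. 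The key ingredient is a uniform-over-$\m\in[-1,1]^n$ operator-norm bound on the contracted Gaussian tensor; this is exactly the content of the cited estimates \cite[Proposition 2.3]{huang2021tight} (also used in the proof of Proposition~\ref{prop:state_evolution}, where a bound $\sup_{\m}\|\nabla^2 H_n(\m)\|_{\op}\le K(\xi)$ holds with probability $1-e^{-cn}$). For the pure quadratic case $\xi(t)=c_2^2t^2$, one has simply $\nabla^2 H_n(\m)=\frac{2c_2}{\sqrt n}\bG^{(2)}$ (symmetrized), whose operator norm is $(2+o(1))\sqrt{\xi''(1)}$ with the stated Gaussian concentration $1-2e^{-n\delta^2/2}$ by standard GOE edge estimates (Bai–Yin / Gaussian concentration of the largest singular value). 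For the general mixture, one sums the $P-1$ contributions; each contracted tensor of degree $p$ has operator norm $O(\sqrt{p\log p}\cdot c_p\sqrt{p}\,\cdot(\text{const}))$ with exponential-in-$n$ probability after a net argument on the sphere, and summing and using Cauchy–Schwarz against $\sum c_p^2 p^\ell$ produces the bound $C_*\sqrt{\xi''(1)\log\oxi^{(8)}(1)}$; the power $8$ in $\oxi^{(8)}$ tracks the polynomial-in-$p$ losses from the net cardinality and the tensor moment bounds. The failure probability $1-2e^{-nC_0(\xi)}$ then comes from a union bound over the finitely many $p\le P$ and the $\varepsilon$-net.

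The main obstacle is the uniform control over all $\m\in(-1,1)^n$ in the general (non-quadratic) case: one cannot simply apply a fixed-$\m$ tail bound, so the argument needs an $\varepsilon$-net over the $\ell_2$-ball of radius $\sqrt n$ combined with a Lipschitz bound on $\m\mapsto\|\nabla^2 H_n(\m)\|_{\op}$ (which itself requires a bound on $\|\nabla^3 H_n\|$, i.e.\ one more contracted-tensor norm estimate), and then a careful accounting of how the net cardinality $e^{O(n)}$ interacts with the Gaussian tail $e^{-cn t^2}$ to fix the constant in front of $\sqrt{\xi''(1)\log\oxi^{(8)}(1)}$. This is precisely where the cited result \cite[Proposition 2.3]{huang2021tight} does the heavy lifting, so in the write-up I would invoke it directly rather than redo the chaining, and spend the remaining effort only on (i) the contraction property of $\D(\m)^{-1/2}$, (ii) the explicit handling of the rank-one $\Treg$ term, and (iii) specializing the constant in the quadratic case to get the sharp $(2+\delta)\sqrt{\xi''(1)}$ with Gaussian-type concentration.
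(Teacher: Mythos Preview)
Your treatment of the quadratic case $\xi(t)=c_2^2t^2$ is correct and matches the paper exactly. The general case, however, has a genuine gap: you have misidentified the role of the correction $\frac{\Treg}{n}\m\m^{\sT}$, and your proposed route cannot produce the specific constant $K_\delta(\xi)=C_*\sqrt{\xi''(1)\log\oxi^{(8)}(1)}$ asserted in the lemma.

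The point is that for a fixed $\m$ with $Q=\|\m\|^2/n$, the Hessian decomposes in law as
\[
-\nabla^2 H_n(\m)\stackrel{d}{=}\sqrt{\tfrac{\xi^{(4)}(Q)}{n^{3}}}\,g_0\,\m\m^{\sT}
+\sqrt{\tfrac{\xi^{(3)}(Q)}{n^{2}}}\,(\m\g^{\sT}+\g\m^{\sT})
+\sqrt{\tfrac{\xi^{(2)}(Q)}{n}}\,\bW,
\]
with $g_0,\g,\bW$ independent Gaussians. On the good event $\{|g_0|\le\Delta\sqrt n,\ \|\g\|\le\sqrt{(1+\Delta)n},\ \|\bW\|_{\op}\le 2(1+\Delta)\sqrt n\}$, the first two terms contribute $\Delta\sqrt{\xi^{(4)}(Q)}$ and $2\sqrt{(1+\Delta)\xi^{(3)}(Q)}$ to the operator norm. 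For the pure $p$-spin model these scale like $\Delta\, p^2$ and $p^{3/2}$, whereas the target $K_\delta(\xi_p)\asymp p\sqrt{\log p}$. A direct uniform bound on $\|\nabla^2 H_n\|_{\op}$ via tensor injective norms fares no better: following the computation in the paper's Lemma on $\nabla^3 H_n$ one level down gives $\sup_{\m}\|\nabla^2 H_n(\m)\|_{\op}\le C\sum_p c_p p^2\sqrt{\log p}\le C'\sqrt{\oxi^{(6)}(1)}$, which is $\asymp p^3$ for pure $p$-spin. So your plan of bounding $\|\nabla^2 H_n(\m)\|_{\op}$ first and only then appending the PSD term $\frac{\Treg}{n}\m\m^{\sT}$ on the upper side cannot recover the claimed form; the resulting $\beta_3=1/K_0(\xi)$ would be polynomially smaller in $p$, contradicting the statement and its downstream use in Theorem~\ref{thm:main}.

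The paper's mechanism is different: it uses $\Treg$ \emph{inside} the pointwise estimate, not afterwards. Writing $\m\g^{\sT}+\g\m^{\sT}\succeq -s\,\m\m^{\sT}-s^{-1}\g\g^{\sT}$ and choosing $s=\sqrt{\xi^{(3)}(Q)/\xi^{(2)}(Q)}$ (up to a power of $Q$), both the $g_0\,\m\m^{\sT}$ and the cross term are pushed into the rank-one direction $\m\m^{\sT}$, where they are cancelled by taking $\Treg\ge\Treg_0(\xi)=\sup_q\{\Delta\sqrt{\xi^{(4)}(q)}+q^{a/2}\xi^{(3)}(q)/\sqrt{\xi^{(2)}(q)}\}$. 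What remains at each fixed $\m$ is of size $3(1+\Delta)\sqrt{\xi''(Q)}$, i.e.\ governed by $\xi''$ alone. Only then does the $\eps$-net over $(-1,1)^n$ combined with the Lipschitz bound $\|\nabla^3 H_n\|_{\op}\le C\sqrt{\oxi^{(8)}(1)/n}$ force $\Delta\asymp\sqrt{\log\oxi^{(8)}(1)}$, yielding the stated $C_*\sqrt{\xi''(1)\log\oxi^{(8)}(1)}$. In short, $\Treg>\Treg_0(\xi)$ is not there to ``absorb lower-order corrections''; it is what allows the pointwise bound to depend on $\xi''$ rather than on $\xi^{(3)},\xi^{(4)}$, and without it the claimed constant is unreachable.
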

We next pass to the Hessian of the TAP free energy.
\begin{lemma}
\label{lem:local-convex}
Let  $K_{\delta}(\xi)$ be defined as in Eq.~\eqref{eq:Kxi} and, for 
 $\beta >0$, $\by \in \R^n$,
let $\widehat{\cuF}_{\sTAP}(\m;\by,q)$ be defined as per Eq.~\eqref{eq:TAP_reg2}
with $\Treg =  \Treg_0(\xi)$ chosen as in Lemma \ref{lemma:HessianHamiltonian}.
Then there exists $C_0 = C_0(\xi,\beta,\delta)$ and $\eps_1 = \eps_1(\xi)$ such that
the following holds with probability at least $1-2\exp(-C_0n)$. For all $\m\in (-1,1)^n$,
and $q$ such that $|Q(\m)-q|\le \eps_1$, we have 
 \begin{equation}
  \label{eq:global-smooth}
    \big(1-\beta K_{\delta}(\xi)\big) \D(\m) \preceq  \nabla^2\widehat{\cuF}_{\sTAP}(\m;\by,q)\preceq 
     \big(1 + \beta K_{\delta}(\xi) + \beta \Treg_0
     +2\beta^2\xi''(1)\big) \D(\m) \, .
 \end{equation}
 In particular, letting $\beta_3:=1/K_0(\xi) >0$, 
 as in \eqref{eq:Beta3Def} if $\beta<\beta_3$ 
  for all $\m\in (-1,1)^n$ and $q$ such that $|Q(\m)-q|\le \eps_1$, we have
 \begin{equation}  \label{eq:local-convex}
c \, \D(\m) \preceq \nabla^2\cuF_{\sTAP}(\m;\by,q) \preceq C \D(\m) \, ,
 \end{equation}
for some constants $c=c(\beta,\xi)$, $C= C(\beta,\xi)$.
\end{lemma}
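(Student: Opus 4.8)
The plan is to directly compute and sandwich the Hessian in \eqref{eq:hess-F}, using Lemma~\ref{lemma:HessianHamiltonian} to control the Hamiltonian part and treating the remaining terms by elementary bounds. First I would recall from \eqref{eq:hess-F} that
\[
\nabla^2\widehat{\cuF}_{\sTAP}(\m;\by,q)=
-\beta\nabla^2 H_n(\m)+\bD(\m)
+\Big(\beta^2(1-q)\xi''(q)+\tfrac{\Treg\beta}{2}(Q(\m)-q)\Big)\bI_n
+\tfrac{\Treg\beta}{n}\m\m^{\sT}\, ,
\]
and set $\Treg=\Treg_0(\xi)$ as in Lemma~\ref{lemma:HessianHamiltonian}. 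Conjugating by $\bD(\m)^{-1/2}$ (note $\bD(\m)\succeq \bI_n$ always, so $\bD(\m)^{-1/2}$ is well defined and has operator norm $\le 1$), the term $-\beta\nabla^2 H_n(\m)+\tfrac{\Treg_0\beta}{n}\m\m^{\sT}$ becomes exactly $\beta$ times the matrix controlled in \eqref{eq:lemma:HessianHamiltonian}, so with probability $1-2e^{-C_0(\xi)n}$ it lies between $-\beta K_\delta(\xi)\bI_n$ and $\beta(K_\delta(\xi)+\Treg_0)\bI_n$. Thus
\[
\bD(\m)^{-1/2}\nabla^2\widehat{\cuF}_{\sTAP}(\m;\by,q)\bD(\m)^{-1/2}
= \bI_n + \bigl(\text{that matrix}\bigr) + \Bigl(\beta^2(1-q)\xi''(q)+\tfrac{\Treg_0\beta}{2}(Q(\m)-q)\Bigr)\bD(\m)^{-1}\, .
\]

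Next I would bound the scalar coefficient of $\bD(\m)^{-1}$. Since $q\in[0,1]$ we have $0\le \beta^2(1-q)\xi''(q)\le \beta^2\xi''(1)$ (using monotonicity of $\xi''$ and $(1-q)\le 1$), and $|\tfrac{\Treg_0\beta}{2}(Q(\m)-q)|\le \tfrac{\Treg_0\beta}{2}\eps_1$ under the hypothesis $|Q(\m)-q|\le\eps_1$; choosing $\eps_1=\eps_1(\xi)$ small enough this is at most, say, $\beta^2\xi''(1)$ as well, so the coefficient lies in $[-2\beta^2\xi''(1),\,2\beta^2\xi''(1)]$. Since $\b0\preceq\bD(\m)^{-1}\preceq \bI_n$, multiplying these bounds back and absorbing into the $\bI_n$ term gives
\[
\bigl(1-\beta K_\delta(\xi)\bigr)\bI_n
\;\preceq\;
\bD(\m)^{-1/2}\nabla^2\widehat{\cuF}_{\sTAP}(\m;\by,q)\bD(\m)^{-1/2}
\;\preceq\;
\bigl(1+\beta K_\delta(\xi)+\beta\Treg_0+2\beta^2\xi''(1)\bigr)\bI_n\, ,
\]
and conjugating back by $\bD(\m)^{1/2}$ yields \eqref{eq:global-smooth}. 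For the lower bound I should double-check the sign bookkeeping so that the negative contributions ($-\beta K_\delta(\xi)$ from the Hamiltonian, and the possibly-negative $-2\beta^2\xi''(1)$) are the only ones pushing the lower eigenvalue below $1$; the $\tfrac{\Treg_0\beta}{n}\m\m^{\sT}$ term is PSD and only helps the upper side after being grouped with $\nabla^2 H_n$.

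Finally, for \eqref{eq:local-convex}: set $\beta_3:=1/K_0(\xi)$, which matches \eqref{eq:Beta3Def} since $K_0(\xi)=2\sqrt{\xi''(1)}$ in the SK case and $K_0(\xi)=C_*\sqrt{\xi''(1)\log\oxi^{(8)}(1)}$ otherwise (so $\beta_3=C_0/\sqrt{\xi''(1)\log\oxi^{(8)}(1)}$ with $C_0=1/C_*$). If $\beta<\beta_3$, then by continuity we may pick $\delta=\delta(\beta,\xi)>0$ small so that $\beta K_\delta(\xi)<1$ still holds (here $K_\delta$ is continuous and increasing in $\delta$ with $K_0$ the limit); with this $\delta$ the constant $c:=1-\beta K_\delta(\xi)>0$ and $C:=1+\beta K_\delta(\xi)+\beta\Treg_0+2\beta^2\xi''(1)$, and \eqref{eq:local-convex} follows from \eqref{eq:global-smooth}, with the claimed failure probability $2e^{-C_0(\xi,\beta,\delta)n}$ inherited from Lemma~\ref{lemma:HessianHamiltonian}. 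The only mild subtlety—and the main thing to get right—is the $\eps_1$ / coefficient-bounding step: ensuring a single $\eps_1(\xi)$ works uniformly in $\m$ and $q$ and that the constants stay as advertised; everything else is routine operator-inequality manipulation given Lemma~\ref{lemma:HessianHamiltonian}.
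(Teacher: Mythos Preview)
Your approach is the same as the paper's: conjugate by $\bD(\m)^{-1/2}$, invoke Lemma~\ref{lemma:HessianHamiltonian} for the block $-\beta\nabla^2 H_n(\m)+\tfrac{\Treg_0\beta}{n}\m\m^{\sT}$, and bound the remaining scalar $b(\m):=\beta^2(1-q)\xi''(q)+\tfrac{\Treg_0\beta}{2}(Q(\m)-q)$ using $\bI_n\preceq\bD(\m)$. The one point where your bookkeeping diverges from the paper is the handling of $b(\m)$: to land on the stated lower bound $(1-\beta K_\delta(\xi))\bD(\m)$ you need $b(\m)\ge 0$, not merely $|b(\m)|\le 2\beta^2\xi''(1)$ as you wrote (your version would leak a $-2\beta^2\xi''(1)$ into the lower constant, which you yourself flag). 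The paper fixes this by choosing $\eps_1$ so that $\tfrac{\Treg_0\beta}{2}\eps_1\le \beta^2(1-q)\xi''(q)$, i.e.\ $\eps_1=\beta(1-q)\xi''(q)/\Treg_0$, giving $0\le b(\m)\le 2\beta^2\xi''(1)$ directly; with that adjustment your argument goes through verbatim. (Note the paper's $\eps_1$ actually depends on $\beta,q$ despite the statement saying $\eps_1(\xi)$, so the looseness you worried about in your last paragraph is present in the paper too.)
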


\begin{proof}
Using the expression~\eqref{eq:hess-F} for $\nabla^2\widehat{\cuF}_{\sTAP}(\m;\by,q)$
and  Lemma \ref{lemma:HessianHamiltonian}, we obtain that, with probability 
at least $1-2\exp(-C_0n)$
\begin{align}
 \big(1-\beta K_{\delta}(\xi)\big) \D(\m) 
  +
  b(\m)\bI_n
\preceq
\nabla^2\widehat{\cuF}_{\sTAP}(\m;\by,q) 
  \preceq
  \big(1+\beta (K_{\delta}(\xi)+\Treg_0)\big) \D(\m) 
  + b(\m)\bI_n \,,
\end{align}
where $b(\m):=  ( \beta^2(1-q)\xi''(q)+ (\Treg_0\beta/2)(Q(\m)-q))$.
By choosing $\eps_1 =  \beta(1-q)\xi''(q)/\Treg_0$,  
we obtain $0\le b(\m)\le 2\beta^2\xi''(1)$. Using $\id_n\preceq \D(\m)$,
we obtain the claim \eqref{eq:global-smooth}, whence \eqref{eq:local-convex}
follows.
\end{proof}

As mentioned above, our convergence analysis of NGD, and proof of Lemma~\ref{lem:local-landscape}
are given in Appendix~\ref{app:NGD}. 
The key insight is that the main iterative step in line \ref{line:explicit-NGD-step} of 
Algorithm~\ref{alg:NGD} can be expressed as a version of mirror descent. 
Define the concave function $h(\m)=\sum_{i=1}^n h(m_i)$ for $\m\in (-1,1)^n$
(recall that $h(m) := -((1+m)/2)\log((1+m)/2)-((1-m)/2)\log((1-m)/2)$). 
Following \cite{lu2018relatively}, we define for $\m,\n\in (-1,1)^n$ the Bregman divergence
\begin{equation}
\begin{aligned}
\label{eq:bregman}
    \Dh(\m,\n)&=-\bh(\m)+\bh(\n)+\langle \nabla \bh(\n),\m-\n\rangle\ .
\end{aligned}
\end{equation}
Then with $L=1/\eta$, the update in line~\ref{line:explicit-NGD-step} admits the alternate description
\begin{equation}
\label{eq:NGD}
    \hm^{+,k+1}=\argmin_{\bx\in (-1,1)^n} \big\langle \nabla \widehat{\cuF}_{\sTAP}(\hm^{+,k};\by,q),\bx-\hm^{+,k} \big\rangle + L\cdot \Dh(\bx,\hm^{+,k})\, .
\end{equation} 
We will use this description to prove convergence.

\begin{remark}
If the Hessian $\nabla^2 \widehat{\cuF}_{\sTAP}$ were bounded above and below by constant multiples of the
 \textbf{identity} matrix instead of $\D(\m)$, then we could use simple gradient descent instead
 of NGD in Algorithm \ref{alg:Mean}. This would also simplify the proof.
 However, $\nabla^2 \widehat{\cuF}_{\sTAP}$ is not bounded above near the boundaries of $(-1,+1)^n$.
 The use of NGD to minimize TAP free energy was introduced in \cite{celentano2021local},
 which however considered a different regime in the planted model. 
\end{remark}

%%%%%%%%%%%%%%%
\subsection{Continuous time limit and proof of Theorem~\ref{thm:main}}
\label{sec:proofMain}

We fix $(\beta,\T)$ and assume, without loss of generality, $\eps\in (0,\eps_0(\beta,T))$.
We can then choose $K_{\sAMP}=K_{\sAMP}(\beta,\T,\eps)$,
 $\rho_0=\rho_0(\beta,\T, \eps, K_{\sAMP})$, $\rho\in (0,\rho_0)$
 and $K_{\sNGD} = K_{\sNGD}(\beta,\T,\eps,\rho)$ so that Lemma~\ref{lem:local-landscape} holds. 
 
 %We assume $\T\in\delta\Z$ and consider the times $(\delta,2\delta,\cdots,\T)$. 
 We couple the discretized process $(\hby_{\ell})_{\ell\ge 0}$ defined in Eq.~\eqref{eq:approx} 
(line \ref{step:DiscreteSDE} of Algorithm \ref{alg:Sampling})
 to the continuous time process $(\by(t))_{t\in \R_{\geq 0}}$ of Eqs.~\eqref{eq:Q},
 \eqref{eq:Q-bis}
 via the driving noise, as follows: 
\begin{equation}
\label{eq:BM-couple}
    \bw_{\ell+1}=\frac{1}{\sqrt{\delta}} \int_{\ell\delta}^{(\ell+1)\delta}\rmd \bW(t) \, .
\end{equation}
We denote by $\hm(\bA,\by)$ the output of the mean estimation procedure
(Algorithm 
\ref{alg:Mean}) on input $\bA,\by$.  By Lemma~\ref{lem:local-landscape} (combined with the contiguity in Theorem~\ref{thm:fluctuations_Z}), we know that, for any $t\in (0,\T]$, 
with probability $1-o_n(1)$,
\begin{equation}
\label{eq:TAP-near-optimizer}
    \big\|\hm(\bG,\by(t))-\m_*(\bG,\by(t);q_*(\beta,t))\big\|\leq 
     \rho \sqrt{t n} \, .
\end{equation}
Here and below we note explicitly the dependence of $\m_*$ on $t$ via $q_*$.

The next lemma is an imediate consequence of Lemma \ref{lemma:RoughBoundThirdDerivative}.
\begin{lemma}\label{lemma:LipGrad}
Define the event
\begin{align}
\cuG(J) :=\Big\{\bG:\; \|\nabla H(\m_1)-\nabla H(\m_2)\|\le J\|\m_1-\m_2\|\;\;\forall \m_1,\m_2\in[-1,1]^n\Big\}\, .
\end{align}
Then there exists $J_*,C_0>0$ depending uniquely on $\xi$ such that $\P(\cuG(J_*))\ge 1-2\exp(-nC_0)$.
\end{lemma}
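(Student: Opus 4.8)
The plan is to deduce the Lipschitz bound from a uniform bound on the operator norm of the Hessian $\nabla^2 H_n$ over the cube. Since $[-1,1]^n$ is convex, for $\m_1,\m_2\in[-1,1]^n$ the whole segment $\m_s:=(1-s)\m_2+s\m_1$ lies in $[-1,1]^n$, and the fundamental theorem of calculus gives
\begin{align}
\nabla H_n(\m_1)-\nabla H_n(\m_2)=\Big(\int_0^1\nabla^2 H_n(\m_s)\,\rmd s\Big)(\m_1-\m_2)\,,
\end{align}
hence $\|\nabla H_n(\m_1)-\nabla H_n(\m_2)\|\le\big(\sup_{\m\in[-1,1]^n}\|\nabla^2 H_n(\m)\|_{\op}\big)\,\|\m_1-\m_2\|$. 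So it suffices to exhibit $J_*=J_*(\xi)$ and $C_0=C_0(\xi)$ such that $\sup_{\m\in[-1,1]^n}\|\nabla^2 H_n(\m)\|_{\op}\le J_*$ on an event of probability at least $1-2\exp(-nC_0)$; that value $J_*$ then serves as the Lipschitz constant defining $\cuG(J_*)$.

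This uniform Hessian bound is exactly (a special case of) Lemma~\ref{lemma:RoughBoundThirdDerivative}, which is why the present statement is immediate. For orientation, the mechanism is standard: at a fixed $\m\in[-1,1]^n$, $\nabla^2 H_n(\m)$ is a symmetric Gaussian matrix whose entries have variance at most $C(\xi)/n$, uniformly over the cube since $\|\m\|_2\le\sqrt n$; so the usual spectral-norm estimates for Gaussian random matrices together with Gaussian concentration give $\|\nabla^2 H_n(\m)\|_{\op}\le C'(\xi)$ off an event of probability $\exp(-\Omega_\xi(n))$. To make this uniform over $\m\in[-1,1]^n$ one passes to a $\delta$-net of $[-1,1]^n$, of cardinality $e^{O(n\log(1/\delta))}$, and controls the fluctuation of $\m\mapsto\nabla^2 H_n(\m)$ between net points by the operator norm of $\nabla^3 H_n$ — precisely the quantity bounded uniformly in Lemma~\ref{lemma:RoughBoundThirdDerivative}. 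A union bound over the net, with the constants in the exponents taken large enough (possible because both the pointwise Hessian deviation and the third-derivative bound sharpen as the target constant is increased), yields the desired event.

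The only step that is not entirely routine is this passage from a pointwise estimate to a uniform one over the $e^{O(n)}$-entropy cube: a naive union bound is too lossy unless the per-point deviation probability decays exponentially with a sufficiently large rate, which is exactly what the third-derivative control provides. Since this is already packaged in Lemma~\ref{lemma:RoughBoundThirdDerivative}, the proof of Lemma~\ref{lemma:LipGrad} reduces to the one-line integral identity above together with an invocation of that lemma.
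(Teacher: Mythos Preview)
Your overall strategy is correct: reduce to a uniform bound on $\sup_{\m\in[-1,1]^n}\|\nabla^2 H_n(\m)\|_{\op}$, then invoke the integral identity. However, your sentence ``This uniform Hessian bound is exactly (a special case of) Lemma~\ref{lemma:RoughBoundThirdDerivative}'' is a misstatement: that lemma controls $\|\nabla^3 H_n\|_{\op}$, not $\|\nabla^2 H_n\|_{\op}$. What you presumably mean is that the uniform Hessian bound \emph{follows} from it via the mechanism you then describe.

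That mechanism (pointwise Hessian bound at every net point, union bound over an $\eps$-net of $[-1,1]^n$, then bridge off-net points via the third-derivative bound) is valid, but it is more work than necessary, and the interplay between the net entropy $e^{O(n\log(1/\eps))}$ and the per-point deviation rate requires some care. The paper's proof exploits a simpler observation: Lemma~\ref{lemma:RoughBoundThirdDerivative} gives $\sup_{\m}\|\nabla^3 H_n(\m)\|_{\op}\le C(\xi)/\sqrt n$ on a high-probability event, so $\m\mapsto\nabla^2 H_n(\m)$ is $(C(\xi)/\sqrt n)$-Lipschitz in operator norm. Since $[-1,1]^n$ has Euclidean diameter $2\sqrt n$, the Hessian varies by at most $2C(\xi)$ over the \emph{entire} cube. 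Hence it suffices to bound $\|\nabla^2 H_n\|_{\op}$ at a \emph{single} point, namely $\m=\bfzero$, where $\nabla^2 H_n(\bfzero)=\sqrt{\xi''(0)/n}\,\bW$ for a GOE matrix $\bW$, and $\|\bW\|_{\op}\le 3\sqrt n$ with probability at least $1-2\exp(-n/C)$. No net, no union bound.

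In short: your route works but overcomplicates the argument; the $1/\sqrt n$ scaling of the third derivative means one anchor point replaces the whole net.
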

\begin{proof}
At $\m = 0$, $\nabla^2 H(\bfzero) = \sqrt{\xi''(0)/n}\cdot\bW$ where $\bW$ is a GOE
matrix. Hence $\|\nabla^2 H(\bfzero)\|_{\op}\le 3\sqrt{\xi''(0)}$ with probability
at least $1-2\exp(-n/C)$. Since
 on the high event of Lemma \ref{lemma:RoughBoundThirdDerivative}
 $\m\mapsto \nabla^2 H(\m)$ is Lipschitz continuous in operator norm,
 we have $\max_{\m\in\Ball^n(\sqrt{n})} \|\nabla^2 H(\m)\|_{\op}\le C$ for some new constant $C$
 and therefore $\m\mapsto \nabla H(\m)$ is Lipschitz as well.
 \end{proof}
The above lemma implies Lipschitz continuity of AMP with respect to its 
input, as stated below.
\begin{lemma}
\label{lem:AMP-lip}
Recall that $\AMP(\bG,\by;k)\in \R^n$ denotes the output of the AMP 
algorithm on input $(\bG,\by)$, after $k$ iterations, cf. Eq.~\eqref{eq:AMP}.
On the high probability event $\cuG(J_*)$ of Lemma  \ref{lemma:LipGrad}
there exists a constant $C_{\#}=C_{\#}(\beta\xi)<\infty$ such that for any $\by,\hby\in \R^n$, 
\begin{equation}
\label{eq:AMP-lip}
    \big\|\atanh\big(\AMP(\bG,\by;k)\big) - \atanh\big(\AMP(\bG,\hby;k)\big)\big\|_2\leq C_{\#}^k \,\|\by-\hby\|_2\, .
\end{equation}
\end{lemma}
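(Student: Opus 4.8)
The plan is to prove the bound by induction on $k$, tracking how a perturbation in $\by$ propagates through the AMP recursion \eqref{eq:AMPreminder}. Write $\bz^k=\bz^k(\bG,\by)$, $\bz'^k=\bz^k(\bG,\hby)$, and correspondingly $\hm^k=\tanh(\bz^k)$, $\hm'^k=\tanh(\bz'^k)$, $\hat q^k$, $\hat q'^k$. Since $\atanh(\AMP(\bG,\by;k))=\bz^k$ by construction, the claim is exactly $\|\bz^k-\bz'^k\|_2\le C_\#^k\|\by-\hby\|_2$. The base cases $k=0$ (and $k=-1$) are immediate since $\bz^0=0$ independent of $\by$. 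For the inductive step, subtract the two copies of the recursion $\bz^{k+1}=\beta\nabla H_n(\hm^k)+\by-\sb_k\hm^{k-1}$:
\begin{align*}
\bz^{k+1}-\bz'^{k+1}
&= \beta\big(\nabla H_n(\hm^k)-\nabla H_n(\hm'^k)\big)
+(\by-\hby)
-\big(\sb_k\hm^{k-1}-\sb'_k\hm'^{k-1}\big)\, .
\end{align*}
On the event $\cuG(J_*)$, the first term is bounded by $\beta J_*\|\hm^k-\hm'^k\|_2\le\beta J_*\|\bz^k-\bz'^k\|_2$, using that $\tanh$ is $1$-Lipschitz. The middle term contributes $\|\by-\hby\|_2$. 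The Onsager correction term is handled by splitting $\sb_k\hm^{k-1}-\sb'_k\hm'^{k-1}=\sb_k(\hm^{k-1}-\hm'^{k-1})+(\sb_k-\sb'_k)\hm'^{k-1}$; here $|\sb_k|=\beta^2(1-\hat q^k)\xi''(\hat q^k)\le\beta^2\xi''(1)$ is bounded by a constant, $\|\hm^{k-1}-\hm'^{k-1}\|_2\le\|\bz^{k-1}-\bz'^{k-1}\|_2$, and $|\hat q^k-\hat q'^k|\le\tfrac2n\|\bz^k-\bz'^k\|_1\cdot\|\cdot\|_\infty\le\tfrac{2}{\sqrt n}\|\bz^k-\bz'^k\|_2$ while $\|\hm'^{k-1}\|_2\le\sqrt n$, so $\|(\sb_k-\sb'_k)\hm'^{k-1}\|_2\le C\|\bz^k-\bz'^k\|_2$ using Lipschitzness of $q\mapsto(1-q)\xi''(q)$.

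Combining, one obtains a recursion of the form
\[
\|\bz^{k+1}-\bz'^{k+1}\|_2\le C_1\|\bz^k-\bz'^k\|_2+C_1\|\bz^{k-1}-\bz'^{k-1}\|_2+\|\by-\hby\|_2
\]
for a constant $C_1=C_1(\beta,\xi)$ valid on $\cuG(J_*)$. A straightforward induction then gives $\|\bz^k-\bz'^k\|_2\le C_\#^k\|\by-\hby\|_2$ for $C_\#:=2C_1+1$ (or any constant exceeding the golden-ratio-type growth rate of the linear recursion); one checks the base case and verifies $C_\#^{k+1}\ge C_1 C_\#^k+C_1 C_\#^{k-1}+1$ holds for this choice.

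I do not expect a serious obstacle here: the only mild subtlety is controlling the term coming from the variation of the Onsager coefficient $\sb_k$, which requires passing from an $\ell_\infty$/$\ell_1$ bound on $\hat q^k-\hat q'^k$ to an $\ell_2$ bound on $\bz^k-\bz'^k$ (costing a harmless $\sqrt n$ that cancels against $\|\hm'^{k-1}\|_2\le\sqrt n$), and keeping track that all constants are deterministic on the event $\cuG(J_*)$ and independent of $n$. Everything else is a routine telescoping of the two-term linear recursion.
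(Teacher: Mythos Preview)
Your proposal is correct and follows essentially the same approach as the paper's proof: subtract the two AMP recursions, bound the gradient difference via the event $\cuG(J_*)$ and the $1$-Lipschitz property of $\tanh$, split the Onsager term as $\sb_k(\hm^{k-1}-\hm'^{k-1})+(\sb_k-\sb'_k)\hm'^{k-1}$, and control $|\sb_k-\sb'_k|$ via Lipschitzness of $q\mapsto(1-q)\xi''(q)$ together with $|\hat q^k-\hat q'^k|\le\frac{2}{\sqrt n}\|\bz^k-\bz'^k\|_2$. The only cosmetic difference is that the paper sets $E_j=\max_{i\le j}\|\bz^i-\hbz^i\|$ to collapse the two-term recursion into a one-term recursion $E_{j+1}\le C E_j+\|\by-\hby\|$, whereas you keep the two-term recursion and solve it directly; both close the induction in the same way.
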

\begin{proof}
For $0\leq j\leq k$, set:
\begin{align*}
 \m^j&=\AMP(\bG,\by;j),
 &&\bz^j=\atanh(\m^j),
 &&\sb_j=\beta^2 (1-\hat{q}^k)\xi''\big(\hat{q}^k) ,\;\;\;\;\hat{q}^k = 
    \frac{1}{n}\sum_{i=1}^n  \tanh^2(z^{k}_i)\, ,
 \\
\hm^j&=\AMP(\bG,\hby;j),
&&\hbz^j=\atanh(\hm^j),
&&\hat \sb_j=\beta^2 (1-\doublehat{q}^k)\xi''\big(\doublehat{q}^k) ,\;\;\;\;\doublehat{q}^k = 
    \frac{1}{n}\sum_{i=1}^n  \tanh^2(\hat z^{k}_i)\, .
\end{align*}
Using the AMP update equation (line \ref{eq:AMP-main-step} of Algorithm \ref{alg:Mean})
 and the fact that $\tanh(\, \cdot\, )$ is $1$-Lipschitz, we obtain
\begin{align*}
    \|\bz^{j+1}-\hbz^{j+1}\|
    &\leq \|\beta (\nabla H(\m^j)-\nabla H(\hm^j))\| + 
    \|\by-\hby\| + 
    \|\sb_j \m^{j-1}-\sb_j \hm^{j-1}\| + 
    \|\sb_j \hm^{j-1}-\hat \sb_j \hm^{j-1}\| 
    \\
     &\leq \beta J_* \|\bz^{j}-\hbz^{j}\| + \|\by-\hby\| + \sb_j \|\bz^{j-1}-\hbz^{j-1}\| + |\sb_j-\hat \sb_j|\sqrt{n} \, .
\end{align*}
Note that $\hq^k, \doublehat{q}^k\in [0,1]$ and therefore $|\sb_j|,
|\hat \sb_j|\leq \beta^2\xi''(1)$. 
Further $q\mapsto \beta^2(1-q)\xi''(q)$ is Lipschitz for $q\in [0,1]$. Denoting by 
$\Lip_{\sb}$ its Lipschitz constant, we have
\begin{align*}
 |\sb_j-\hat \sb_j|&\le \frac{\Lip_{\sb}}{n}\sum_{i=1}^n\big|  \tanh^2(z^{j}_i)
 - \tanh^2(\hat z^{j}_i)\big|\\
 & \le \frac{2\Lip_{\sb}}{n}\sum_{i=1}^n\big|  z^{j}_i
 - \hat z^{j}_i\big|\\
 & \le \frac{2\Lip_{\sb}}{\sqrt{n}}\cdot\|\bz-\hbz^k\|_2
 \end{align*}
 Setting $E_j=\max_{i\leq j}\|\bz^{i}-\hbz^{i}\|$, we  thus find
 event $\cuG(J_*)$ of Lemma  \ref{lemma:LipGrad}, 
 there exists a constant  $C(\beta,\xi)$ such that , for all $j$, 
\begin{align*}
    E_{j+1} &\le C(\beta,\xi)\cdot E_j + \|\by-\hby\|  \, .
\end{align*}
The proof is concluded by noting that, by induction 
\[
    E_j \leq \big(C(\beta,\xi)+1\big)^{j+1}\|\by-\hby\| \, .
\]
\end{proof}

In the next lemma we bound the random approximation errors defined below:
\begin{align}
    A_{\ell} &:= \frac{1}{\sqrt{n}} \big\|\hby_{\ell}-\by({\ell}\delta)\big\| \, ,\\
    B_{\ell} &:= \frac{1}{\sqrt{n}} \big\|\hm(\bG,\hby_{\ell})-\m(\bG,\by({\ell}\delta))\big\| \, .
\end{align} 
\begin{lemma}
\label{lem:discretization-works}
For $\beta<\bar\beta$ and $\T>0$, there exists a constant $C = C(\beta) <\infty$,
and a deterministic non-negative sequence $\eta(n)$ with $\lim_{n\to\infty}\eta(n)= 0$  
such that the following holds with probability $1-o_n(1)$. 
For every $\ell \geq 0$, $\delta \in (0,1)$ such that $\ell\delta \le \T$,
\begin{align}
    \label{eq:A-bound}
    A_{\ell}\leq Ce^{C\ell\delta}\ell\delta \big(\rho\sqrt{\ell\delta}+\sqrt{\delta}\big) + \eta(n) \, ,\\
    \label{eq:B-bound}
     B_{\ell}\leq Ce^{C\ell\delta}\ell\delta \big(\rho\sqrt{\ell\delta}+\sqrt{\delta}\big)+ C \rho\sqrt{\ell\delta}+ \eta(n) \, .
\end{align}

\end{lemma}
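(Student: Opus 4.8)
The plan is to prove \eqref{eq:A-bound} and \eqref{eq:B-bound} simultaneously by induction on $\ell$, running a discrete Gr\"onwall estimate on the coupled errors $(A_\ell,B_\ell)$. I work throughout under the planted law $\P$; since $\beta<\bar\beta\le\beta_2$, Proposition~\ref{prop:contig} lets me transfer the final high-probability statement to $\Q$, i.e.\ to the genuine Gibbs setting. Fix $\eps<\eps_0(\beta,\T)$ and the associated constants $K_{\sAMP},\rho_0,\rho\in(0,\rho_0),K_{\sNGD}$ from Lemma~\ref{lem:local-landscape}; $\rho$ and $\delta$ will be shrunk once more at the end. Because of the coupling \eqref{eq:BM-couple}, the Brownian increments in the discrete and continuous processes cancel, so
\[
 \hby_{\ell+1}-\by((\ell+1)\delta)
 =\big(\hby_{\ell}-\by(\ell\delta)\big)+\delta\,\hm(\bG,\hby_{\ell})
 -\int_{\ell\delta}^{(\ell+1)\delta}\m(\bG,\by(s))\,\rmd s ,
\]
and inserting $\pm\,\delta\,\m(\bG,\by(\ell\delta))$ and dividing by $\sqrt n$ yields $A_{\ell+1}\le A_{\ell}+\delta B_{\ell}+D_\ell$, where $D_\ell:=\tfrac1{\sqrt n}\int_{\ell\delta}^{(\ell+1)\delta}\|\m(\bG,\by(s))-\m(\bG,\by(\ell\delta))\|\,\rmd s$. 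By Lemma~\ref{lem:uniform-path} and the Lipschitz bound on $q_*$ from Lemma~\ref{lem:properties}~\ref{it:gamma*-Lipschitz}, $D_\ell\le\delta\sup_{s\in[\ell\delta,(\ell+1)\delta]}\tfrac1{\sqrt n}\|\m(\bG,\by(s))-\m(\bG,\by(\ell\delta))\|\le\delta\big(\sqrt{C\delta}+o_{n,\P}(1)\big)$, and a union bound over the finitely many indices $\ell$ with $\ell\delta\le\T$ (recall $\delta$ is fixed before $n\to\infty$) makes this simultaneous with probability $1-o_n(1)$, so $D_\ell\le C\delta^{3/2}+\delta\,\eta(n)$ for a deterministic $\eta(n)\to0$. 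It is essential here that $t\mapsto\m(\bG,\by(t))$ is a \emph{martingale} with increments of variance $O(\delta)$, not merely continuous, which is why the drift-discretization error is $O(\delta^{3/2})$ rather than $O(\delta)$.

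Next I bound $B_\ell$. For $\ell\ge1$ I assume inductively the ``standing hypothesis'' $\hby_\ell\in\Ball^n\!\big(\by(\ell\delta),c\sqrt{\eps\,\ell\delta\,n}/4\big)$ (verified below), and write $\hm(\bG,\hby_\ell)-\m(\bG,\by(\ell\delta))$ as the telescoping sum of $\hm(\bG,\hby_\ell)-\m_*(\bG,\hby_\ell)$, $\m_*(\bG,\hby_\ell)-\m_*(\bG,\by(\ell\delta))$ and $\m_*(\bG,\by(\ell\delta))-\m(\bG,\by(\ell\delta))$. On the high-probability event of Lemma~\ref{lem:local-landscape}, the first term is $\le\rho\sqrt{\ell\delta\,n}$ by the NGD convergence guarantee, item~\ref{it:landscape-NGD} (whose initialization hypothesis holds because Algorithm~\ref{alg:Mean} starts NGD at $\bu^0=\atanh(\hm^{K_{\sAMP}})$ exactly), the third is $\le\rho\sqrt{\ell\delta\,n}$ by item~\ref{it:landscape-stationary-point-good} applied with $\hby=\by(\ell\delta)$, and the middle is $\le c^{-1}\|\hby_\ell-\by(\ell\delta)\|$ by the Lipschitz bound, item~\ref{it:landscape-lipschitz}. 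Hence $B_\ell\le c^{-1}A_\ell+2\rho\sqrt{\ell\delta}$. The base case $\ell=0$ is clean: at $\by=\bfzero$ one has $\nabla H_n(\bfzero)=\bfzero$, so every AMP iterate equals $\bfzero$ and $\bfzero$ is a fixed point of the NGD step, giving $\hm(\bG,\bfzero)=\bfzero$ exactly; meanwhile $\lim_n\E\tfrac1n\|\m(\bG,\bfzero)\|^2=1-\lim_n\E\tfrac1n\|\bx-\m(\bG,\bfzero)\|^2=q_*(\beta,0)=0$ by Theorem~\ref{thm:IT}, so $B_0=o_{n,\P}(1)$ and the first step gives $A_1\le C\delta^{3/2}+\delta\,\eta(n)$.

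Combining the two estimates, $A_{\ell+1}\le(1+c^{-1}\delta)A_\ell+2\rho\delta\sqrt{\ell\delta}+C\delta^{3/2}+\delta\,\eta(n)$ with $A_0=0$; the discrete Gr\"onwall inequality then gives $A_\ell\le e^{c^{-1}\ell\delta}\,\ell\delta\,\big(2\rho\sqrt{\ell\delta}+C\sqrt\delta\big)+e^{c^{-1}\ell\delta}\ell\delta\,\eta(n)$. Absorbing the last summand into a fresh $\eta(n)$ (using $\ell\delta\le\T$) and $c^{-1}$ into the constant proves \eqref{eq:A-bound}, and \eqref{eq:B-bound} follows by feeding this back into $B_\ell\le c^{-1}A_\ell+2\rho\sqrt{\ell\delta}$. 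It remains to close the induction by verifying the standing hypothesis $A_\ell\le c\sqrt{\eps\,\ell\delta}/4$: from the bound just obtained, $A_\ell/\sqrt{\ell\delta}\le Ce^{C\T}\sqrt{\T}\,(\rho\sqrt{\T}+\sqrt\delta)+\eta(n)/\sqrt\delta$, so choosing first $\rho$ small enough (still below $\rho_0$), then $\delta$ small, then $n$ large makes the right-hand side $\le c\sqrt\eps/4$, as required. This coupling of the parameter choices --- forced because the neighbourhood of $\by(\ell\delta)$ on which Lemma~\ref{lem:local-landscape} applies has radius of order $\sqrt{\ell\delta}$, so the error itself must be $o(\sqrt{\ell\delta})$ --- is the only genuinely delicate point; everything else is bookkeeping, plus a final union bound over $\ell$ and over the high-probability events of Lemmas~\ref{lem:local-landscape}, \ref{lem:uniform-path} and \ref{lemma:LipGrad} and Proposition~\ref{prop:amp-posterior-mean}, and the $\P\to\Q$ transfer via Proposition~\ref{prop:contig}.
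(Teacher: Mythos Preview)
Your approach is essentially the paper's own: induction on $\ell$, the coupling~\eqref{eq:BM-couple} to kill the Brownian increments, Lemma~\ref{lem:uniform-path} for the $O(\delta^{3/2})$ drift error, the three-term decomposition of $\hm(\bG,\hby_\ell)-\m(\bG,\by(\ell\delta))$ via Lemma~\ref{lem:local-landscape}, and a discrete Gr\"onwall argument closed by shrinking $(\rho,\delta)$. Your treatment of the base case is actually more careful than the paper's (which simply asserts $B_0=0$).

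There is, however, one genuine gap. You justify the NGD convergence bound $\|\hm(\bG,\hby_\ell)-\m_*(\bG,\hby_\ell)\|\le\rho\sqrt{\ell\delta\,n}$ by saying the initialization hypothesis of Lemma~\ref{lem:local-landscape}, item~\ref{it:landscape-NGD} ``holds because Algorithm~\ref{alg:Mean} starts NGD at $\bu^0=\atanh(\hm^{K_{\sAMP}})$ exactly.'' This conflates two different AMP outputs. The $\hm^{\sAMP}$ appearing in the hypothesis~\eqref{ass:landscape-NGD} is $\AMP(\bG,\by(t);K_{\sAMP})$, i.e.\ AMP run on the \emph{true} process value $\by(\ell\delta)$, whereas Algorithm~\ref{alg:Mean} initializes NGD at $\atanh\!\big(\AMP(\bG,\hby_\ell;K_{\sAMP})\big)$, i.e.\ AMP run on the \emph{algorithmic} iterate $\hby_\ell$. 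The hypothesis is therefore \emph{not} satisfied with equality; you must bound
\[
\big\|\atanh\!\big(\AMP(\bG,\hby_\ell;K_{\sAMP})\big)-\atanh\!\big(\AMP(\bG,\by(\ell\delta);K_{\sAMP})\big)\big\|
\]
by $c\sqrt{\eps\,\ell\delta\,n}/200$. This is exactly what Lemma~\ref{lem:AMP-lip} provides: it yields a bound of $C_{\#}^{K_{\sAMP}}\sqrt{n}\,A_\ell$, and then one needs the stronger standing hypothesis $A_\ell\le c\sqrt{\eps\,\ell\delta}\big/\!\big(200\,C_{\#}^{K_{\sAMP}}\big)$ rather than merely $A_\ell\le c\sqrt{\eps\,\ell\delta}/4$. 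The paper does precisely this (see the paragraph invoking Lemma~\ref{lem:AMP-lip} in its proof). Once you insert this step, your argument goes through; note that the extra $C_{\#}^{K_{\sAMP}}$ factor is harmless since $K_{\sAMP}$ is fixed before $\rho,\delta$ are chosen.
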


\begin{proof}
Throughout the proof, we denote by $\eta(n)$ a deterministic 
non-negative sequence $\eta(n)$ with $\lim_{n\to\infty}\eta(n)= 0$, which can change from 
line to line. Also, $C$ will denote a generic constant that may depend on $\beta,\T,K_{\sAMP}$.

The proof proceeds by induction on $\ell$. The base case is trivial,
since $A_0=B_0=0$.
We assume the result holds for all $ j \le \ell$ and we prove it for $\ell+1$.
 We first claim that with probability $1-o_n(1)$,
\begin{equation}
\label{eq:A-recursion}
    A_{{\ell}+1}\leq A_{\ell}+\delta B_{\ell}+C\delta^{3/2}.
\end{equation}
Indeed, using Eq.~\eqref{eq:BM-couple} we find
\begin{align*}
    A_{{\ell}+1}-A_{\ell}&\leq n^{-1/2}\int_{{\ell}\delta}^{({\ell}+1)\delta} \big\|\hm(\bA,\hby_{\ell})-\m(\bA,\by(t)) \big\|\, \rmd t
    \\
    &\leq 
    \delta n^{-1/2}\Big(\big\|\hm(\bA,\hby_{\ell})-\m(\bA,\by({\ell}\delta))\big\| 
    +
     \sup_{t\in [{\ell}\delta,({\ell}+1)\delta]} \big\|\m(\bA,\by(t))-\m(\bA,\by({\ell}\delta))\big\|\Big)
    \\
    &\leq 
    \delta B_{\ell} 
    + 
    \delta n^{-1/2}\cdot\sup_{t\in [{\ell}\delta,({\ell}+1)\delta]} \big\|\m(\bA,\by(t))-\m(\bA,\by({\ell}\delta))\big\| 
    \\
    &\leq 
    \delta B_{\ell} + C(\beta)\delta^{3/2} + \eta(n)\, ,
\end{align*}
where the last line holds with high probability by Lemma~\ref{lem:uniform-path} and
 Eq.~\eqref{eq:gamma*-Lipschitz} of Lemma~\ref{lem:properties}.
 Using this bound together with the inductive hypothesis on
 $A_{\ell}$ and $B_{\ell}$, we obtain
\begin{align*}
    A_{{\ell}+1}&\leq Ce^{C({\ell}+1)\delta}\ell\delta (\rho\sqrt{\ell\delta} + \sqrt{\delta})+ C\rho\delta\sqrt{\ell\delta} + C\delta^{3/2} + \eta(n)\\
    &\leq Ce^{C(\ell+1)\delta}(\ell+1)\delta (\rho +\sqrt{\delta}) + \eta(n) \, .
\end{align*}
This implies Eq.~\eqref{eq:A-bound} for ${\ell}+1$.

We next show that Eq.~\eqref{eq:B-bound} holds with $\ell$ replaced by  $\ell+1$.
% we first use Lemma~\ref{lem:local-landscape}. 
By the bound \eqref{eq:A-bound} for ${\ell}+1$,
taking $\delta \le \delta(\beta,\eps,K_{\sAMP},\T)$ and $\rho\in (0,\rho_0)$
 $\rho=  \rho(\beta,\eps,K_{\sAMP},\T)$ ensures that
\[
    A_{\ell+1}\leq \frac{c\sqrt{\eps \ell \delta}}{200 C_{\#}^{K_{\sAMP}}} \, ,
\]
where $C_{\#}$ is the constant of Lemma \ref{lem:AMP-lip} and $\eps$ can be chosen an arbitrarily small constant.
So by Lemma~\ref{lem:AMP-lip}, we have with probability $1-o_n(1)$,
\begin{align*}
    \big\|\atanh(\AMP(\bA,\by((\ell+1)\delta);K_{\sAMP}))-\atanh(\AMP(\bA,\hby_{\ell+1};K_{\sAMP})) \big\|_2
    &\leq 
    K_{\sAMP} C_{\#}^{K_{\sAMP}}  A_{\ell+1} \sqrt{n}\\
    &\leq \frac{c\sqrt{\eps \ell \delta n}}{200}\, .
\end{align*}
By choosing $\eps\le \eps_0(\beta,\T)$, we obtain that Lemma~\ref{lem:local-landscape}, 
part \ref{it:landscape-NGD} applies. We thus find
\[
    \|\hm(\bA,\hby_{\ell+1})-\m_*(\bA,\hby_{\ell+1})\|\leq \rho\sqrt{\ell\delta n}\, .
\]
Using parts 3 and 2 respectively of Lemma~\ref{lem:local-landscape} on the other terms below, 
by triangle inequality  we obtain
(writing for simplicity $q_{\ell}:=q_{*}(\beta,\ell\delta)$)
\begin{equation}
\begin{aligned}
\label{eq:low-error-TAP}
    \|\hm(\bG,\hby_{{\ell}+1})-\m(\bG,\by(({\ell}+1)\delta))\|
    &\leq 
    \|\hm(\bG,\hby_{\ell+1})-\m_*(\bG,\hby_{\ell+1};q_{\ell+1})\| \\
    &\quad\quad+ \|\m_*(\bG,\hby_{\ell+1};q_{\ell+1})-\m_*(\bG,\by((\ell+1)\delta);q_{\ell+1})\| \\
    &\quad\quad + \|\m_*(\bG,\by((\ell+1)\delta);q_{\ell+1})-\m(\bG,\by(({\ell}+1)\delta))\|\\
    &\leq 
    \big(\rho\sqrt{\ell\delta} + c^{-1}A_{{\ell}+1}+\rho\sqrt{\ell\delta} + \eta(n)\big)\sqrt{n} \, .
\end{aligned}
\end{equation}
% We next write
% \begin{align*}
%     \|\hat{\m}_{\NGD}(\bA,\hby_{{\ell}+1})-\m(\bA,\by(({\ell}+1)\delta))\|&\leq \|\hat{\m}_{\NGD}(\bA,\hby_{{\ell}+1})-\hat{\m}_{\NGD}(\bA,\by(({\ell}+1)\delta))\|]\\
%     &\quad +\|\hat{\m}_{\NGD}(\bA,\by(({\ell}+1)\delta))-\m(\bA,\by(({\ell}+1)\delta))\|.
% \end{align*}
% The first term of the right-hand side is bounded by \eqref{eq:low-error-TAP}. Summing the approximation errors in Lemma~\ref{lem:local-landscape} (the middle of which is $0$) shows that the last term is at most $\frac{\delta^{3/2}\sqrt{n}}{2}$. 
In other words with probability $1-o_n(1)$,
\[
    B_{{\ell}+1}\leq c^{-1}A_{{\ell}+1} +2\rho\sqrt{\ell\delta} + \eta(n) \, .
\]
Using this together with the bound \eqref{eq:A-bound} for $\ell+1$
  verifies the inductive step for \eqref{eq:B-bound} and concludes the proof. 
% \mscomment{Remainder of proof: if small error so far, then $B_{\ell}\leq CA_{\ell}$ thanks to local Lipschitz result. Then, verify that the induction fits together and everything works.}
\end{proof}

Finally we show that standard randomized  rounding is continuous in $W_{2,n}$. 
\begin{lemma}
\label{lem:rounding-safe}
Suppose probability distributions $\mu_1,\mu_2$ on $[-1,1]^n$ are given. 
Sample $\m_1\sim \mu_1$ and $\m_2\sim\mu_2$ and let $\bx_1,\bx_2\in \{-1,+1\}^n$ be standard 
randomized roundings, respectively of $\m_1$ and $\m_2$. (Namely,
the coordinates of $\bx_i$ are conditionally independent given $\m_i$,
with $\E[\bx_i|\m_i]=\m_i$.) Then
\[
    W_{2,n}(\mathcal L(\bx_1),\mathcal L(\bx_2))\leq 2\sqrt{W_{2,n}(\mu_1,\mu_2)}\, .
\] 
\end{lemma}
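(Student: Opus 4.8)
The plan is to exhibit an explicit coupling of $\mathcal L(\bx_1)$ and $\mathcal L(\bx_2)$ built on top of an optimal coupling of $\mu_1$ and $\mu_2$, and to estimate its cost. Let $\pi$ be a coupling of $(\m_1,\m_2)$ achieving (or nearly achieving) $W_{2,n}(\mu_1,\mu_2)$, so that $\frac1n\E_\pi\|\m_1-\m_2\|_2^2 = W_{2,n}(\mu_1,\mu_2)^2 =: w^2$. Conditionally on $(\m_1,\m_2)$, I would round coordinatewise and \emph{jointly}: for each $i$, draw a single uniform $U_i\sim\Unif[0,1]$, independent across $i$ and independent of everything else, and set $x_{1,i} = \sign(m_{1,i}-(2U_i-1))$ and likewise $x_{2,i}=\sign(m_{2,i}-(2U_i-1))$ (equivalently, $x_{j,i}=+1$ iff $2U_i-1 < m_{j,i}$). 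This is a valid randomized rounding: marginally $\E[x_{j,i}\mid \m_j]=m_{j,i}$, so $\mathcal L(\bx_j)$ is the stated rounding of $\mu_j$. The key pointwise observation is that, for this common-randomness rounding, $x_{1,i}\ne x_{2,i}$ happens exactly when $2U_i-1$ lies between $m_{1,i}$ and $m_{2,i}$, an event of probability $\tfrac12|m_{1,i}-m_{2,i}|$ conditional on $(\m_1,\m_2)$. Since $x_{1,i},x_{2,i}\in\{-1,+1\}$, we have $(x_{1,i}-x_{2,i})^2 = 4\cdot\indi\{x_{1,i}\ne x_{2,i}\}$.

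Taking conditional expectations then gives
\begin{equation}
\frac1n\E\big[\|\bx_1-\bx_2\|_2^2 \,\big|\, \m_1,\m_2\big] = \frac1n\sum_{i=1}^n 4\cdot\frac12|m_{1,i}-m_{2,i}| = \frac{2}{n}\sum_{i=1}^n |m_{1,i}-m_{2,i}| = \frac{2}{n}\|\m_1-\m_2\|_1\, .
\end{equation}
Now take the expectation over $\pi$ and apply Cauchy--Schwarz, $\|\m_1-\m_2\|_1 \le \sqrt{n}\,\|\m_1-\m_2\|_2$, together with Jensen's inequality (concavity of $\sqrt{\cdot}$):
\begin{equation}
\frac1n\E_\pi\big[\|\bx_1-\bx_2\|_2^2\big] \le \frac{2}{\sqrt{n}}\,\E_\pi\big[\|\m_1-\m_2\|_2\big] \le 2\sqrt{\frac1n\E_\pi\big[\|\m_1-\m_2\|_2^2\big]} = 2\sqrt{w^2}\cdot\frac{1}{\sqrt{\cdot}}\, .
\end{equation}
Here the last step uses $\frac{2}{\sqrt n}\E\|\m_1-\m_2\|_2 = 2\E\big[\tfrac1n\|\m_1-\m_2\|_2^2\big]^{1/2}\cdot\big(\tfrac1n\big)^{0}$; cleanly, $\frac{2}{\sqrt n}\E\|\m_1-\m_2\|_2 \le 2\big(\tfrac1n\E\|\m_1-\m_2\|_2^2\big)^{1/2} = 2w$. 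Since $W_{2,n}(\mathcal L(\bx_1),\mathcal L(\bx_2))^2$ is the infimum over all couplings and the one just constructed has normalized cost at most $2w = 2W_{2,n}(\mu_1,\mu_2)$, we conclude $W_{2,n}(\mathcal L(\bx_1),\mathcal L(\bx_2)) \le \sqrt{2W_{2,n}(\mu_1,\mu_2)} \le 2\sqrt{W_{2,n}(\mu_1,\mu_2)}$, which is the claim. If $\pi$ is only an $\epsilon$-optimal coupling one lets $\epsilon\downarrow 0$ at the end.

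There is no serious obstacle here; the only thing to be careful about is the logic of \emph{which} quantities get the square root and the Cauchy--Schwarz/Jensen bookkeeping that turns an $\ell_1$-in-expectation bound into an $\ell_2$-Wasserstein bound — i.e.\ making sure the common-randomness rounding yields the exact identity $\E[(x_{1,i}-x_{2,i})^2\mid \m_1,\m_2] = 2|m_{1,i}-m_{2,i}|$ rather than a one-sided inequality, and then not losing a factor when passing from $\frac2n\|\m_1-\m_2\|_1$ to $\frac{2}{\sqrt n}\|\m_1-\m_2\|_2$. Everything else is a routine chain of Cauchy--Schwarz and concavity of the square root.
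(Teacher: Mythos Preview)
Your proof is correct and follows essentially the same approach as the paper: couple $(\m_1,\m_2)$ optimally, round coordinatewise using shared uniform randomness so that $\E[(x_{1,i}-x_{2,i})^2\mid \m_1,\m_2]=2|m_{1,i}-m_{2,i}|$, then apply Cauchy--Schwarz and Jensen. Aside from a garbled display (the dangling $\frac{1}{\sqrt{\cdot}}$ factor), which you immediately correct in the next sentence, the argument is the paper's proof.
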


\begin{proof}
Let $(\m_1,\m_2)$ be distributed according to a $W_{2,n}$-optimal coupling
 between $\mu_1,\mu_2$. Couple the roundings $\bx_1,\bx_2$ by choosing i.i.d.\ uniform 
 random variables $u_i\sim\Unif ([0,1])$ for $i\in [n]$, and for $(i,j)\in [n]\times \{1,2\}$ setting
\begin{align*}
 (\bx_j)_i &= 
\begin{cases}
 +1, & \mbox{if}~ u\leq \frac{1+(\m_j)_i}{2} \, ,\\
-1, & \mbox{else.} 
    \end{cases}
\end{align*}
Then it is not difficult to see that
\begin{align*}
    \frac{1}{n} \E\big[\|\bx_1-\bx_2\|^2~|(\m_1,\m_2)\big] &= \frac{2}{n}\sum_{i=1}^n |(\m_1)_i-(\m_2)_i|\\
    & \leq 2\sqrt{\frac{1}{n}\|\m_1-\m_2\|^2 }.
\end{align*}
Averaging over the choice of $(\m_1,\m_2)$ implies the result.
\end{proof}

\begin{proof}[Proof of Theorem~\ref{thm:main}]
Set $\ell=L=\T/\delta$ and $\rho=\sqrt{\delta}$ in Eq.~\eqref{eq:B-bound}. With all laws 
$\cL(\, \cdot\,)$ conditional on $\bA$ below, we find
\begin{align*}
    \E W_{2,n}(\mu_{\bG},\cL(\hm(\bG,\hby_L)))
    &\leq 
    \E W_{2,n}(\mu_{\bG},\cL(\m(\bG,\by(\T))))
    +
    \E W_{2,n}(\cL(\m(\bG,\by(\T)))),\cL(\hm(\bG,\hby_L)))
    \\
    &\leq 
    \T^{-1/2}+C(\beta,\T)\sqrt{\delta} +o_n(1).
\end{align*}
Here the first term was bounded by Eq.~\eqref{eq:W2bound} in Section~\ref{sec:stochloc} and the second by Eq.~\eqref{eq:B-bound}. Taking $\T$ sufficiently large, $\delta$ sufficiently small, and $n$ sufficiently large, we may obtain 
\[
   \E  W_{2,n}\big(\mu_{\bG},\cL(\hat{\m}_{\NGD}(\bG,\hby_L))\big)\leq \frac{\eps^2}{4}
\]
for any desired $\eps>0$. Applying Lemma~\ref{lem:rounding-safe} shows that
\[
    \E W_{2,n}(\mu_{\bG},\bx^{\salg})\leq \eps \, .
\]
The Markov inequality now implies that \eqref{eq:main} holds with probability $1-o_n(1)$ as desired.
\end{proof}

%%%%%%%%%%%%%%%
\section{Algorithmic stability and disorder chaos}
\label{sec:stable}

In this section we
prove Theorem \ref{thm:stable} establishing that our sampling algorithm is stable. 
Next, we prove that the Gibbs measure $\mu_{\bG,\beta}$ exhibits $W_2$-disorder chaos for
 $\beta>\beta_c$ and prove Theorem \ref{thm:disorder_chaos_sk}. Finally,
 we deduce Theorem \ref{thm:disorder-stable-LB} establishing hardness for 
 stable algorithms, either under RSB or in the shattering phase.

\subsection{Algorithmic stability: Proof of Theorem \ref{thm:stable}}
\label{sec:algo_stable}
Recall Definition \ref{def:Stable}, defining  sampling algorithms
as measurable functions $\ALG_n:(\bG,\beta,\omega)\mapsto \ALG_n(\bG,\beta,\omega)\in [-1,1]^n$ 
where $\beta\geq 0$ and $\omega$ is an independent random variable taking values in some probability 
space. 

\begin{remark}
In light of Lemma~\ref{lem:rounding-safe}, we can always turn a stable sampling algorithm 
$\ALG$ with codomain $[-1,1]^n$ into a stable sampling algorithm with binary output:
\[
    \widetilde{\ALG}_n(\bG,\beta,\widetilde\omega) \in \{-1,+1\}^n\, .
\]  
Indeed this is achieved by standard randomized rounding, i.e., drawing a
(conditionally independent)  random binary value with mean 
$\big(\widetilde{\ALG}(\bG,\beta,\widetilde\omega)\big)_i$ for each 
coordinate $1 \le i \le n$.
\end{remark}

Recall the definition of the interpolating family $(\bG_s)_{s\in [0,1]}$ in which $\bG_{0},\bG_{1}$ are i.i.d.\ and 
\begin{equation}
\label{eq:interpolation-path}
    \bG_{s}^{(p)} = \sqrt{1-s^2}\, \bG_{0}^{(p)} + s\, \bG_{1}^{(p)} \, ,\quad s\in [0,1],~2\leq p\leq P.
\end{equation}
We let  $H^{(s)}_n(\bx)$ denote the Hamiltonian with disorder  $\bG_{s}$ 
and $\mu_{\bG_s,\beta}(\bx)\propto e^{\beta H^{(s)}_n(\bx)}$ be the corresponding Gibbs measure on
 $\{-1,+1\}^n$. We begin with the following simple estimate.
\begin{lemma}
\label{lem:grad-stable}
There exists a universal constant $C >0$ depending only on $\xi$ such that
\begin{equation}
\label{eq:standard-alg-stable}
    \inf_{s\in (0,1)}
    \mathbb P\Big(
    \big\| \nabla H_n^{(0)}(\bu)- \nabla H_n^{(s)}(\bv) \big\| \leq C(\|\bu-\bv\|+s\sqrt{n}) \, ,~~\forall~\bu,\bv\in [-1,1]^n \Big)=1-o_n(1) \, .
\end{equation}
\end{lemma}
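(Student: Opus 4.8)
The plan is to split the difference $\nabla H_n^{(0)}(\bu)-\nabla H_n^{(s)}(\bv)$ into two pieces via the triangle inequality: a \emph{spatial} term $\nabla H_n^{(0)}(\bu)-\nabla H_n^{(0)}(\bv)$ controlling the effect of moving the argument, and a \emph{disorder} term $\nabla H_n^{(0)}(\bv)-\nabla H_n^{(s)}(\bv)$ controlling the effect of perturbing $\bG_0$ to $\bG_s$. For the spatial term, I would invoke exactly the Lipschitz bound already established in Lemma~\ref{lemma:LipGrad} (equivalently the event $\cuG(J_*)$): on an event of probability $1-2e^{-nC_0}$, $\|\nabla H_n^{(0)}(\bu)-\nabla H_n^{(0)}(\bv)\|\le J_*\|\bu-\bv\|$ for all $\bu,\bv\in[-1,1]^n$. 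This immediately handles the first piece with constant $J_*$ depending only on $\xi$.

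For the disorder term, write $\bG_0-\bG_s=(1-\sqrt{1-s^2})\bG_0-s\bG_1$. Since $1-\sqrt{1-s^2}=O(s^2)\le s$ for $s\in[0,1]$, the tensor difference $\bG_0^{(p)}-\bG_s^{(p)}$ has the form $a_s\bG_0^{(p)}-s\bG_1^{(p)}$ with $|a_s|\le s$, so $\bG_0-\bG_s$ is (stochastically dominated in each coordinate by) a Gaussian tensor of entrywise variance $O(s^2)$. The difference $\nabla H_n^{(0)}(\bv)-\nabla H_n^{(s)}(\bv)$ is then the gradient, evaluated at $\bv$, of a mixed $p$-spin Hamiltonian built from the tensor $\bG_0-\bG_s$ with the same mixture $\xi$. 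The key point is a uniform (over $\bv\in[-1,1]^n$) operator-norm / sup-norm bound on the gradient of such a Hamiltonian: concretely, $\sup_{\bv\in[-1,1]^n}\|\nabla \wt H_n(\bv)\|\le C\sqrt{n}$ with probability $1-2e^{-cn}$ when $\wt H_n$ has order-one entrywise noise variance, and this scales linearly in the noise standard deviation. I would get this either directly from a net argument over $[-1,1]^n$ combined with Gaussian concentration for $\langle \nabla\wt H_n(\bv),\bw\rangle$ (a centered Gaussian of variance $O(n)$ uniformly in $\bv,\bw$ on the ball), or by quoting the same operator-norm control on Hamiltonian derivatives already used in the paper, e.g.\ \cite[Proposition 2.3]{huang2021tight} / Lemma~\ref{lemma:RoughBoundThirdDerivative}. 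Rescaling by the noise level $O(s)$ yields $\sup_{\bv}\|\nabla H_n^{(0)}(\bv)-\nabla H_n^{(s)}(\bv)\|\le Cs\sqrt{n}$ with probability $1-2e^{-cn}$, uniformly in $s\in(0,1)$.

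Combining the two bounds on the intersection of the two high-probability events gives $\|\nabla H_n^{(0)}(\bu)-\nabla H_n^{(s)}(\bv)\|\le J_*\|\bu-\bv\|+Cs\sqrt{n}\le C'(\|\bu-\bv\|+s\sqrt{n})$ for all $\bu,\bv\in[-1,1]^n$ simultaneously, on an event of probability $1-o_n(1)$, and since the exceptional probability is exponentially small and uniform in $s$, we may take the infimum over $s\in(0,1)$ outside the probability as in the statement. The main obstacle — though a mild one — is making the disorder-term bound genuinely uniform over all $\bv\in[-1,1]^n$ and over all $s$ at once: this is handled by a standard $\eps$-net argument over the (compact) cube together with the Lipschitz-in-$\bv$ property of $\bv\mapsto\nabla\wt H_n(\bv)$ (itself controlled by the operator norm of $\nabla^2\wt H_n$, again bounded via Lemma~\ref{lemma:RoughBoundThirdDerivative}), and by absorbing the $s$-dependence into the explicit linear factor. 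Everything else is routine Gaussian concentration with constants depending only on $\xi$ (through $P$, $\max_p c_p$, and $\xi''(1)$).
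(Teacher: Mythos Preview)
Your proposal is correct and matches the paper's proof essentially step for step: the same triangle-inequality split into a spatial term (handled via the Lipschitz bound of Lemma~\ref{lemma:LipGrad}) and a disorder term, the same decomposition $\bG_0-\bG_s=(1-\sqrt{1-s^2})\bG_0-s\bG_1$ together with $1-\sqrt{1-s^2}\le s$, and the same conclusion via the high-probability sup bounds $\sup_{\bx}\|\nabla H_n(\bx)\|\le C\sqrt{n}$ and $\sup_{\bx}\|\nabla^2 H_n(\bx)\|_{\op}\le C$. The only cosmetic difference is that the paper bounds $\|\nabla H_n^{(0)}(\bv)\|$ and $\|\nabla H_n^{(1)}(\bv)\|$ separately (on one $s$-independent event) rather than phrasing the disorder term as the gradient of a Hamiltonian with small-variance tensor, which makes the uniformity in $s$ immediate without any net argument.
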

\begin{proof}
We write 
\begin{align*} 
    \big\| \nabla H_n^{(0)}(\bu)- \nabla H_n^{(s)}(\bv)\big\|  
    &\le 
    \big\| \nabla H_n^{(0)}(\bu)- \nabla H_n^{(0)}(\bv) \big\|  
    + 
    \big\| \nabla H_n^{(0)}(\bv)- \nabla H_n^{(s)}(\bv) \big\| 
    \\
    &\le 
    \sup_{\bx\in [-1,1]^n}\|\nabla^2 H_n^{(0)}(\bx)\|_{\text{op}} \, \big\|\bu - \bv\big\| 
    +
    \big\| (1-\sqrt{1-s^2})\nabla H_n^{(0)}(\bv) - s \nabla H_n^{(1)}(\bv)\big\| \, 
    .
\end{align*}
% We note that $(1-\sqrt{1-s^2})\bG_0 - s \bG_1 \stackrel{\rmd}{=} \sqrt{2(1-\sqrt{1-s^2})} \bG_0$ and $\sqrt{2(1-\sqrt{1-s^2})} \sim s$ for small $s$ and this quantity is bounded above by a constant for any $s \in [0,1]$.
Because $1-\sqrt{1-s^2}\leq s$, the result follows from the fact that 
\begin{align*}
\sup_{\bx\in [-1,1]^n}\|\nabla H_n(\bx)\| \le C\sqrt{n}, \;\;\;
\sup_{\bx\in [-1,1]^n}\|\nabla^2 H_n(\bx)\|_{\text{op}} \le C\, ,
\end{align*}
 are both exponentially likely 
(recall Lemma \ref{lemma:LipGrad}).
\end{proof}

\begin{proposition}
\label{prop:standard-alg-stable}
Suppose an algorithm $\ALG$ is given by an iterative procedure
\begin{align*}
    &\bz^{k+1}
    = 
    F_k\left((\bz^j,\beta \nabla H_n(\m^j),\nabla H_n(\m^j),\beta^2\m^j,\bw^j)_{0\leq j\leq k}\right),
    \quad 0\leq k\leq K-1,\\
    &\m^k =\rho_k(\bz^k),\quad 0\leq k\leq K-1,\\
    &\ALG_n(\bG,\beta,\omega) := \m^K \, ,
\end{align*}
where the sequence $\omega=(\bw^0,\dots,\bw^{K-1})\in (\mathbb R^n)^K$, the initialization 
$\bz^0\in\mathbb R^n$, and $\bG$ are mutually independent, and the functions 
$F_k:(\mathbb R^n)^{5k+5}\to \mathbb R^n$ and $\rho_k:\mathbb R^n\to [-1,1]^n$ are $L_0$-Lipschitz for 
$L_0\geq 0$ an $n$-independent constant. Then $\ALG$ is both disorder-stable and temperature-stable.
\end{proposition}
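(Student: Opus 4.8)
The plan is to prove disorder-stability and temperature-stability separately, but both rely on the same deterministic Lipschitz-propagation estimate combined with the good events established in Lemma~\ref{lem:grad-stable} and Lemma~\ref{lemma:LipGrad}. For disorder-stability, fix $\beta$ and consider the interpolating family $\bG_s$ of \eqref{eq:interpolation-path}. Run the iterative procedure once with disorder $\bG_0$ and once with $\bG_s$, using the \emph{same} random seed $\omega=(\bw^0,\dots,\bw^{K-1})$ and the \emph{same} initialization $\bz^0$; call the two iterate sequences $(\bz^k,\m^k)$ and $(\bz^k_s,\m^k_s)$. Set $E_k := \|\bz^k-\bz^k_s\| + \|\m^k-\m^k_s\|$. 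Since each $\rho_k$ is $L_0$-Lipschitz, $\|\m^k-\m^k_s\|\le L_0\|\bz^k-\bz^k_s\|$, so it suffices to track $\Delta_k := \|\bz^k-\bz^k_s\|$. Applying $L_0$-Lipschitzness of $F_k$ to the update rule, $\Delta_{k+1}$ is bounded by $L_0$ times a sum of the increments in each of its five argument blocks: the block $(\bz^j)_{j\le k}$ contributes $\sum_{j\le k}\Delta_j$; the blocks $\beta\nabla H_n(\m^j)$ and $\nabla H_n(\m^j)$ contribute $\sum_{j\le k}\|\nabla H^{(0)}_n(\m^j)-\nabla H^{(s)}_n(\m^j_s)\|$, which by Lemma~\ref{lem:grad-stable} is at most $C\sum_{j\le k}(\|\m^j-\m^j_s\|+s\sqrt n)\le C\sum_{j\le k}(L_0\Delta_j + s\sqrt n)$ on an event of probability $1-o_n(1)$; the block $\beta^2\m^j$ contributes $\sum_{j\le k}\|\m^j-\m^j_s\|\le L_0\sum_{j\le k}\Delta_j$; and the $\bw^j$ block contributes $0$ since the seed is shared. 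Thus on the good event there is a constant $C_1=C_1(\beta,\xi,L_0,K)$ with $\Delta_{k+1}\le C_1\big(\sum_{j\le k}\Delta_j + (k+1)s\sqrt n\big)$, and since $\Delta_0=0$, a discrete Grönwall/induction argument over the $K$ steps gives $\Delta_K\le C_2(\beta,\xi,L_0,K)\, s\sqrt n$ on the good event.

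This deterministic bound yields $\|\ALG_n(\bG_0,\beta,\omega)-\ALG_n(\bG_s,\beta,\omega)\|=\|\m^K-\m^K_s\|\le L_0 C_2 \, s\sqrt n$ on the good event, for every fixed realization of $\omega,\bz^0$. To convert this into the definition of disorder-stability, I would bound $W_{2,n}(\mu^{\salg}_{\bG,\beta},\mu^{\salg}_{\bG_s,\beta})^2$ by the coupling in which the two runs share $(\omega,\bz^0)$: since both outputs lie in $[-1,1]^n$ the difference is always at most $2\sqrt n$ in norm, so
\begin{equation}
W_{2,n}(\mu^{\salg}_{\bG,\beta},\mu^{\salg}_{\bG_s,\beta})^2 \le \frac1n\,\E\big[\|\m^K-\m^K_s\|^2\big] \le L_0^2 C_2^2 s^2 + 4\,\P(\text{bad event})\, .
\end{equation}
Since $\P(\text{bad event})=o_n(1)$ by Lemmas~\ref{lem:grad-stable} and~\ref{lemma:LipGrad}, we get $\plim_{n\to\infty} W_{2,n}(\mu^{\salg}_{\bG,\beta},\mu^{\salg}_{\bG_s,\beta})^2 \le L_0^2 C_2^2 s^2$, and letting $s\to0$ gives exactly the disorder-stability condition of Definition~\ref{def:Stable}. (Strictly, $C_2$ depends on $\beta$ but not on $n$ or $s$, which is all that is needed.)

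For temperature-stability the argument is parallel: fix $\bG$, compare runs at inverse temperatures $\beta$ and $\beta'$ with the same $(\omega,\bz^0)$, and set $\Delta_k:=\|\bz^k-\bz^k_{\beta'}\|$. Now the disorder is identical, so the $\nabla H_n(\m^j)$ arguments differ only through $\m^j$ versus $\m^j_{\beta'}$, contributing $\sup_{\bx}\|\nabla^2 H_n(\bx)\|_{\op}\cdot\|\m^j-\m^j_{\beta'}\|\le C\,\|\m^j-\m^j_{\beta'}\|$ on the event of Lemma~\ref{lemma:LipGrad}; the blocks $\beta\nabla H_n(\m^j)$ and $\beta^2\m^j$ additionally contribute $|\beta-\beta'|\cdot\|\nabla H_n(\m^j)\|\le |\beta-\beta'| C\sqrt n$ and $|\beta^2-(\beta')^2|\cdot\|\m^j\|\le |\beta-\beta'|\,C\sqrt n$ respectively (again on the good event, using $\sup_\bx\|\nabla H_n(\bx)\|\le C\sqrt n$ and $\|\m^j\|\le\sqrt n$). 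So $\Delta_{k+1}\le C_1(\sum_{j\le k}\Delta_j + (k+1)|\beta-\beta'|\sqrt n)$ on the good event, and the same Grönwall induction gives $\|\ALG_n(\bG,\beta,\omega)-\ALG_n(\bG,\beta',\omega)\|\le C_3\,|\beta-\beta'|\sqrt n$, hence $\plim_n W_{2,n}(\mu^{\salg}_{\bG,\beta},\mu^{\salg}_{\bG,\beta'})^2\le C_3^2|\beta-\beta'|^2$, and $\beta'\to\beta$ closes the argument. The one point requiring a little care is that the constants $C_1,C_2,C_3$ grow with $K$ (e.g.\ like $C_1^K$), but since $K$ is a fixed $n$-independent parameter this is harmless; the main—and only slightly delicate—step is verifying that Lemma~\ref{lem:grad-stable} supplies precisely the $\|\bu-\bv\|+s\sqrt n$ form needed so that the $s\sqrt n$ (resp.\ $|\beta-\beta'|\sqrt n$) terms accumulate only linearly and do not interact with the exponential-in-$K$ blow-up of the homogeneous part.
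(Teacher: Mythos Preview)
Your proposal is correct and follows essentially the same approach as the paper: couple the two runs through a shared seed $(\omega,\bz^0)$, use Lemma~\ref{lem:grad-stable} (and the bound $\sup_{\bx}\|\nabla^2 H_n(\bx)\|_{\op}\le C$ from Lemma~\ref{lemma:LipGrad}) to control the gradient perturbations, propagate the error through the $K$ Lipschitz steps by a Gr\"onwall-type induction, and then convert to a $W_{2,n}$ bound by exploiting boundedness of the outputs on the complement of the $1-o_n(1)$ good event. The only cosmetic difference is that the paper runs a single argument with parameters $(\bG_s,\wtbeta)$ so that disorder- and temperature-perturbations are handled simultaneously (and it tracks $A_k=\max_{j\le k}\Delta_j/\sqrt{n}$ rather than the cumulative sums $\sum_{j\le k}\Delta_j$), but this yields the same final bound.
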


\begin{proof}

Let us generate iterates $\bz^k=\bz^{k}(\bG_0,\beta)$ and $\wtbz^k=\bz^{k}(\bG_s,\wtbeta)$ for $0\leq k\leq K$ using the same initialization $\bz^0=\wtbz^0$ and external randomness $\omega=(\bw^0,\dots,\bw^{K-1})$, but with different Hamiltonians and inverse temperatures. Similarly let $\m^k=\rho_k(\bz^k)$ and $\wtm^k=\rho_k(\wtbz^k)$. We will allow $C$ to vary from line to line in the proof below.

First by Lemma~\ref{lem:grad-stable}, with probability $1-o_n(1)$,
\begin{align*}
    \|\beta \nabla H_n^{(0)}(\m^k)- \wtbeta\nabla H_n^{(s)}(\wtm^k)\|
    &\leq
    \|\beta  \nabla H_n^{(0)}(\m^k) - \beta \nabla H_n^{(s)}(\wtm^k)\|  
    +
    \|\beta \nabla H_n^{(s)}(\wtm^k) - \wtbeta \nabla H_n^{(s)}(\wtm^k)\|
    \\
    &\leq
    C\beta \|\m^k-\wtm^k\| + C\beta s\sqrt{n}
    + 
    |\beta-\wtbeta|\cdot \| \nabla H_n^{(s)}(\wtm^k)\|\\
    &\leq
    C(\|\m^k-\wtm^k\|+s\sqrt{n}+|\beta-\wtbeta|\sqrt{n})\, .
\end{align*}
 Similarly as long as $\wtbeta\leq 2\beta$ so that $|\beta^2-\wtbeta^2|\leq 3\beta|\beta-\wtbeta|$, we have
\begin{align*}
    \|\beta^2\m^k-\wtbeta^2\wtm^k\|
    &\leq
    \|\beta^2\m^k-\beta^2\wtm^k\|
    +
    \|\beta^2\wtm^k-\wtbeta^2\wtm^k\|\\
    &\leq
    \beta^2 \|\m^k-\wtm^k\| 
    +
    3\beta|\beta-\wtbeta|\sqrt{n}.
\end{align*}
It follows that the error sequence 
\[
    A_k=\frac{1}{\sqrt{n}}\max_{j\leq k}\|\bz^{j+1}(\bG_0,\beta)-\bz^{j+1}(\bG_s,\wtbeta)\|
\] 
satisfies with probability $1-o_n(1)$ the recursion
\begin{align*}
    A_{k+1} & \leq L_0k^{1/2}C(A_k + s + |\beta-\wtbeta|) \, ,\\
    A_0 &= 0 \, ,
\end{align*}
for a suitable $C=C(\beta)$. It follows that with probability $1-o_n(1)$,
\begin{equation}\label{eq:A_k}
    A_K\leq \sum_{k=1}^K (L_0k^{1/2}C)^k (s+ |\beta-\wtbeta|) \leq K(L_0KC)^K (s+|\beta-\wtbeta|) \, .
\end{equation}
Since $\|\m^K(\bG_0)-\m^K(\bG_s)\|\leq 2\sqrt{n}$ almost surely, we obtain for any $\eta>0$
\[
    n^{-1} \E\left[\big\|\m^K(\bG_0)-\m^K(\bG_s)\big\|^2\right] \leq \big(L_0K(L_0KC)^K (s+|\beta-\wtbeta|)\big)^2 + \eta
\]
if $n\geq n_0(\eta)$ is large enough so that Eq.~\eqref{eq:A_k} holds with probability at least $1-\frac{\eta}{4}$. The stability of the algorithm follows.   
\end{proof}

\begin{proof}[Proof of Theorem~\ref{thm:stable}]

% \mscomment{Commented out; will be the same but maybe easier to wait until the algorithm is written down.}
We show that Algorithm \ref{alg:Sampling} with 
$n$-independent parameters $(\beta,\eta,K_{\sAMP},K_{\sNGD},L,\delta)$
 is of the form in Proposition~\ref{prop:standard-alg-stable} for a constant 
 $L_0=L_0(\beta,\eta,K_{\sAMP},K_{\sNGD},L,\delta)$.
 Indeed note that the algorithm goes through $L$ iterations, indexed by
 $\ell\in\{0,\dots,L-1\}$. 
 
 During each of these iterations, two loops 
 are run (here we modify the notation introduced in Algorithm \ref{alg:Mean} and Algorithm \ref{alg:Sampling},
 to account for the dependence on $\ell$, and to get closer to the notation of Proposition~\ref{prop:standard-alg-stable}):
\begin{enumerate}
    \item\label{it:stable-case-1}
    The AMP loop, whereby, for $k = 0,\cdots,K_{\sAMP}-1$,
    \begin{align}
    \hm^{\ell,k} &= \tanh(\bz^{\ell,k} ) , ~~~~~~~ \sb_{k,\ell} = \beta^2 (1-\hat{q}_{\ell}^k) \xi''( \hat{q}_{\ell}^k)\, ,\\
\bz^{\ell,k+1} &= \beta \nabla H_n(\hm^{\ell,k}) + \hby_{\ell} - \sb_{k,\ell}\,\hm^{\ell,k-1}\, .
    \end{align}
    Here $\tanh'(x)$ denotes the first derivative of $\tanh(x)$, and $q_{\ell} = q_{K_{\sAMP}}(\beta,t=\ell\delta)$.
    \item\label{it:stable-case-2}
    The NGD loop, whereby, for $k = K_{\sAMP},\cdots,K_{\sAMP}+K_{\sNGD}-1$:
    \begin{align}
    \hm^{\ell,k} & = \tanh(\bz^{\ell,k} ) \, ,\\
    \bz^{\ell,k+1} &= \bz^{\ell,k} + \eta
     \big[\beta \nabla H_n(\hm^{\ell,k})+\by_{\ell}-\bz^{\ell,k}
    - \beta^2\left(1-q_{\ell}\right)\m^{\ell,k} \big]\, . 
    \end{align}
\end{enumerate}
Further, recalling line  \ref{step:DiscreteSDE} of Algorithm \ref{alg:Sampling}, $\hby_{\ell}$
 is updated via
\begin{align}
\hby_{\ell+1} = \hby_{\ell} + \hm^{\ell,K_{\sAMP}+K_{\sNGD}} \, \delta + \sqrt{\delta} \, \bw_{\ell+1}\, .
\label{eq:Yell}
\end{align}

These updates take the same form as in  Proposition~\ref{prop:standard-alg-stable},
with iterations indexed by $(\ell,k)$, $\omega =(\bw_{\ell})_{\ell\le L}$, 
$\rho_{\ell,k}(\bz)=\tanh(\bz)$ for all $\ell,k$,  and 
\begin{align}
 F_{\ell,k}
 \left((\bz^{\ell',j},\beta\nabla H_n(\hm^{\ell',j}),\nabla H_n(\hm^{\ell',j}),\beta^2\hm^{\ell',j},\bw_{\ell'})_{\ell',j}\right)
 &=\beta \nabla H_n(\hm^{\ell,k}) + \hby_{\ell} - \sb_{k,\ell} \hm^{\ell,k-1}\, ,\;\;\;\;\;
 0\le k\le K_{\sAMP}-1\, ,\label{eq:G-AMP}\\
  F_{\ell,k}\left(
  (\bz^{\ell',j},\beta \nabla H_n(\hm^{\ell',j}),\nabla H_n(\hm^{\ell',j}),\beta^2\hm^{\ell',j},\bw_{\ell'})_{\ell',j}
 \right)& \nonumber\\
  =\bz^{\ell,k} + \eta
     \big[\beta \nabla H_n(\hm^{\ell,k})+\by_{\ell}-&\bz^{\ell,k}
    - \beta^2\left(1-q_{\ell}\right)\m^{\ell,k} \big]\, ,\;\;\;\;\;
K_{\sAMP} \le k\le K_{\sAMP}+K_{\sNGD}-1\, .\label{eq:G-NGD}
\end{align}
 Notice that these functions depend on previous iterates both explicitly, as 
 noted, and implicitly through $\hby_{\ell}$. By summing up Eq.~\eqref{eq:Yell},
 we obtain
\begin{align}
\hby_{\ell} = \sum_{j=0}^{\ell-1} \hm^{j,K_{\sAMP}+K_{\sNGD}} \, \delta + 
\sqrt{\delta} \sum_{j=1}^{\ell}  \bw_{\ell+1}\, ,
\end{align}
which is Lipschitz in the previous iterates  $(\m^{j,k})_{j\le \ell-1,k<K_{\sAMP}+K_{\sNGD}}$. 
Since both \eqref{eq:G-AMP} and \eqref{eq:G-NGD} depend linearly on $\hby_{\ell}$
(with $n$-independent coefficients), it is sufficient to consider 
the explicit dependence on previous iterates of $F_{\ell,k}$.
Namely, it is sufficient to control the Lipschitz modulus of the following functions
\begin{align}
 \tF_{\ell,k}\left(\bz^{\ell,k},\beta\nabla H_n(\hm^{\ell,k}),\hm^{\ell,k-1}\right)
 &=\beta \nabla H_n(\hm^{\ell,k}) - \sb_{k,\ell} \hm^{\ell,k-1}
 \, ,\;\;\;\;\;
 k< K_{\sAMP}\label{eq:TG1}\\
  \tF_{\ell,k}\left(\bz^{\ell,k},\beta \nabla H_n(\m^{\ell,k}),\beta^2\m^{\ell,k}\right)
  &=\bz^{\ell,k} + \eta
     \big[\beta \nabla H_n(\hm^{\ell,k})-\bz^{\ell,k}
    - \beta^2\left(1-q_{\ell}\right)\hm^{\ell,k} \big]  
     \, ,\;\;\;\;\;
 k> K_{\sAMP}\, .\label{eq:TG2}
\end{align}

Consider first Eq.~\eqref{eq:TG1}. Since $|\tanh''(x)|\leq 2$ for all $x\in \R$, it follows that
\[
    |\sb(\bz)-\sb(\wtbz)|\leq \frac{2\beta^2}{n} \sum_{i=1}^n  |z_i-\tilde{z}_i|\leq \frac{2\beta^2}{\sqrt{n}} \|\bz-\wtbz\|_2 .
\]
Therefore, that for any $(\bu,\bv,\beta,\wtbu,\wtbv,\wtbeta)$
(noting explicitly the dependence of $\sb$ upon $\beta$):
\begin{align*}
    \|\sb_{\beta}(\bu)\tanh(\bv)-\sb_{\wtbeta}(\wtbu)\tanh(\wtbv)\|
    &\leq
    \|\sb_{\beta}(\bu)\tanh(\bv)-\sb_{\beta}(\wtbu)\tanh(\bv)\|
    +
    \|\sb_{\beta}(\wtbu)\tanh(\bv)-\sb_{\wtbeta}(\wtbu)\tanh(\wtbv)\|
    \\
    &\leq \frac{2\beta^2}{\sqrt{n}} \|\bu-\wtbu\| \cdot \|\tanh(\bv)\| + \Big(\frac{1}{n} \sum_{i=1}^n \tanh'(\tilde{u}_i)\Big)  \|\beta^2\tanh(\bv)-\wtbeta^2\tanh(\wtbv)\| 
    \\
    &\leq
    2\beta^2\|\bu-\wtbu\| + \|\beta^2\tanh(\bv)-\wtbeta^2\tanh(\wtbv)\| .
\end{align*}
Using this bound implies that the function $\tG$ of Eq.~\eqref{eq:TG1}
satisfies the Lipschitz assumption of Proposition~\ref{prop:standard-alg-stable}. 

Consider next Eq.~\eqref{eq:TG2}. Since this function is linear in its arguments, 
with coefficients independent of $n$, it follows that it satisfies  Lipschitz assumption of 
Proposition~\ref{prop:standard-alg-stable}. This completes the proof.
\end{proof}

\subsection{Hardness for stable algorithms: Proof of Theorems \ref{thm:disorder_chaos_sk} and \ref{thm:disorder-stable-LB}}
\label{sec:disorder}

Before proving Theorem \ref{thm:disorder_chaos_sk}, we recall a known result about disorder chaos, already stated in Eq.~\eqref{eq:FirstDisorderChaos}. 
Draw $\bx^0\sim \mu_{\bG_0,\beta}$ independently of  
$\bx^s\sim \mu_{\bG_s,\beta}$, and denote by $\mu^{(0,s)}_{\bG,\beta}:=\mu_{\bG_0,\beta}\otimes
\mu_{\bG^s,\beta}$ their joint distribution.
Then \cite[Theorem 10.5]{chatterjee2014superconcentration} implies that, for all $\beta\in (0,\infty)$,
\begin{align}
\label{eq:FirstDisorderChaos-B}
\lim_{s\to 0}\lim_{n\to\infty}\E_{\mu^{(0,s)}_{\bG,\beta}} \lt[\lt(\frac{\<\bx^0,\bx^s\>}{n}\rt)^2\rt]= 0
\, .
\end{align}
Indeed we have assumed $\xi$ is even in Theorem \ref{thm:disorder_chaos_sk}, and \cite[Theorem 10.5]{chatterjee2014superconcentration} shows that for all $\xi$, 
\[
\lim_{s\to 0}\lim_{n\to\infty}
\E_{\mu^{(0,s)}_{\bG,\beta}} \lt[\xi\lt(\frac{\<\bx^0,\bx^s\>}{n}\rt)\rt]= 0.
\]

The following simple estimate will be used in our proof.
\begin{lemma}\label{lem:continuity}
Recall that $\cuP(\{-1,+1\}^n)$ denotes the space of probability distributions over 
$\{-1,+1\}^n$, and 
let the function $f : \cuP(\{-1,+1\}^n)^2 \to \R$ be defined as
\[
f(\mu,\mu')=  \E_{(\bx,\bx')\sim \mu\otimes\mu'} \lt[\frac{|\langle\bx,\bx'\rangle|}{n} \rt]\, .
\]
Then, for all $\mu_1,\mu_2, \nu_1,\nu_2 \in \cuP(\{-1,+1\}^n)$, we have 
\[
\big| f(\mu_1,\nu_1) - f(\mu_2,\nu_2) \big| \le W_{2,n}(\mu_1,\mu_2) + W_{2,n}(\nu_1,\nu_2) \, .
\]
\end{lemma}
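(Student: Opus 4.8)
The plan is to prove this by exhibiting an explicit coupling and invoking the triangle inequality for $W_{2,n}$, so that the Lipschitz estimate reduces to a one-dimensional comparison of the function $x\mapsto |x|$ against the $\ell_2$ metric. First I would fix $W_{2,n}$-optimal couplings $(\bx_1,\bx_2)\sim\pi$ of $(\mu_1,\mu_2)$ and $(\bx_1',\bx_2')\sim\pi'$ of $(\nu_1,\nu_2)$, chosen independently of each other, so that $(\bx_1,\bx_1')\sim\mu_1\otimes\nu_1$ and $(\bx_2,\bx_2')\sim\mu_2\otimes\nu_2$ are valid couplings of the product measures appearing in $f$. Then write
\[
f(\mu_1,\nu_1)-f(\mu_2,\nu_2)=\E\Big[\frac{|\langle\bx_1,\bx_1'\rangle|}{n}-\frac{|\langle\bx_2,\bx_2'\rangle|}{n}\Big],
\]
and bound the integrand using the reverse triangle inequality $\big||a|-|b|\big|\le|a-b|$ with $a=\langle\bx_1,\bx_1'\rangle$, $b=\langle\bx_2,\bx_2'\rangle$.

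The key step is then to control $\tfrac1n|\langle\bx_1,\bx_1'\rangle-\langle\bx_2,\bx_2'\rangle|$. I would split this as
\[
\langle\bx_1,\bx_1'\rangle-\langle\bx_2,\bx_2'\rangle
=\langle\bx_1-\bx_2,\bx_1'\rangle+\langle\bx_2,\bx_1'-\bx_2'\rangle,
\]
and apply Cauchy–Schwarz to each term together with the fact that $\bx_1',\bx_2\in\{-1,+1\}^n$ so $\|\bx_1'\|_2=\|\bx_2\|_2=\sqrt n$. This gives
\[
\frac1n\big|\langle\bx_1,\bx_1'\rangle-\langle\bx_2,\bx_2'\rangle\big|
\le\frac{\|\bx_1-\bx_2\|_2}{\sqrt n}+\frac{\|\bx_1'-\bx_2'\|_2}{\sqrt n}.
\]
Taking expectations, using Jensen's inequality to pass from $\E[\|\cdot\|_2/\sqrt n]$ to $\sqrt{\E[\|\cdot\|_2^2/n]}$, and recalling that the couplings were chosen to be $W_{2,n}$-optimal, yields $\E[\|\bx_1-\bx_2\|_2/\sqrt n]\le W_{2,n}(\mu_1,\mu_2)$ and likewise for the primed pair, which gives the claimed bound after noting the estimate is symmetric in swapping $(\mu_1,\nu_1)\leftrightarrow(\mu_2,\nu_2)$.

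This argument is essentially routine; there is no real obstacle, only the bookkeeping of choosing the two optimal couplings independently so that the induced coupling of the product measures is legitimate, and being slightly careful that $W_{2,n}$ is defined with a square root (so Jensen is needed rather than being able to pull the expectation inside directly). One could alternatively phrase the whole thing as: $f(\mu,\nu)$ is $1$-Lipschitz in each argument separately with respect to $W_{2,n}$, and then use the decomposition $|f(\mu_1,\nu_1)-f(\mu_2,\nu_2)|\le|f(\mu_1,\nu_1)-f(\mu_2,\nu_1)|+|f(\mu_2,\nu_1)-f(\mu_2,\nu_2)|$; the per-argument Lipschitz bound follows from exactly the computation above with one of the two coupling pairs taken to be the diagonal. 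Either presentation is short.
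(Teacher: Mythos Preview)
Your proposal is correct and follows essentially the same approach as the paper: take independent $W_{2,n}$-optimal couplings of $(\mu_1,\mu_2)$ and $(\nu_1,\nu_2)$, use that $\bx\mapsto|\langle\bv,\bx\rangle|$ is $\|\bv\|_2$-Lipschitz together with $\|\bv\|_2=\sqrt n$ on the hypercube, and then apply Jensen to pass from $\E\|\cdot\|_2/\sqrt n$ to $W_{2,n}$. The only cosmetic difference is that the paper splits via the intermediate point $|\langle\bx_2,\bx_1'\rangle|$ whereas you first apply the reverse triangle inequality and then the bilinear split; these are the same computation.
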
 
\begin{proof}
Let the vector pairs $(\bx^{\mu_1},\bx^{\mu_2})$ and $(\bx^{\nu_1},\bx^{\nu_2})$ be 
independently drawn from the optimal $W_{2,n}$-couplings of the pairs $(\mu_1,\mu_2)$ 
and $(\nu_1,\nu_2)$, respectively. Then we have:
\begin{align*}
 \Big| \E\big\{|\< \bx^{\mu_1} , \bx^{\nu_1}\>|\big\} -  
 \E\big\{|\< \bx^{\mu_2} , \bx^{\nu_2}\>|\big\} \Big| 
 &\le 
 \Big| \E\big\{|\< \bx^{\mu_1} , \bx^{\nu_1}\>| -  
 |\< \bx^{\mu_2} , \bx^{\nu_1}\>|\big\} \Big| +
 \Big| \E\big\{|\< \bx^{\mu_2} , \bx^{\nu_1}\>| -  
 |\< \bx^{\mu_2} , \bx^{\nu_2}\>|\big\} \Big| \\
&\le \sqrt{n} \Big(\E\big\|\bx^{\mu_1} -  \bx^{\mu_2}\big\| +  \E\big\|\bx^{\nu_1} - \bx^{\nu_2}\big\|\Big) \\
&\le \sqrt{n} \Big(\E\Big[\big\|\bx^{\mu_1} -  \bx^{\mu_2}\big\|^2\Big]^{1/2} +  \E\Big[\big\|\bx^{\nu_1} - \bx^{\nu_2}\big\|^2\Big]^{1/2}\Big)  \, ,
\end{align*}
where the second inequality follows from the fact that $\bx \mapsto |\<\bv,\bx\>|$ is Lipschitz
continuous with Lipschitz constant $\|\bv\|_2$. 
\end{proof}

We are now in position to prove Theorem~\ref{thm:disorder_chaos_sk}.
\begin{proof}[Proof of Theorem~\ref{thm:disorder_chaos_sk}]
Using the notations of the last lemma Eq.~\eqref{eq:FirstDisorderChaos-B}
 implies that for all $s \in (0,1]$,
\begin{equation}\label{eq:mu0mus} 
\lim_{n \to \infty} \E f( \mu_{\bG_s,\beta}, \mu_{\bG_0,\beta}) = 0 \, . 
\end{equation}
Therefore, Theorem~\ref{thm:disorder_chaos_sk} follows from Lemma \ref{lem:continuity} if we can 
show that 
$ f( \mu_{\bG_0,\beta}, \mu_{\bG_0,\beta})$ remains bounded away from zero in the double limit of \eqref{eq:FirstDisorderChaos-B}. This is in turn a well-known consequence of the Parisi formula, as we recall below. 
 
Recall from \eqref{eq:free_energy} that the free energy density of the mixed $p$-spin model is
\[
F_n(\beta) = \frac{1}{n} \E\, \log \Big\{\sum_{\bx \in \{-1,+1\}^n} e^{\beta H_n(\bx)}
\Big\} \, .
\]
$F_n$ is almost surely convex in $\beta$ and one obtains by Gaussian integration parts that
\begin{equation} 
\label{eq:derivative_free_energy}
\frac{\rmd ~}{\rmd \beta} F_n(\beta)
= 
\beta\lt(\xi(1) - 
\E_{\bx_1,\bx_2\sim \mu_{\bG_0,\beta}}
\lt[\xi\Big(\frac{\<\bx_1, \bx_2\>}{n}\Big)\rt]\rt) \, .
\end{equation}  
The convexity of $F_n$ implies that for almost all $\beta>0$, 
\[
    \plim_{n\to\infty}F_n'(\beta) = \frac{\rmd ~}{\rmd \beta} \Par_{\beta}(\zeta_{\beta}^*).
\]
Moreover as shown in e.g. \cite[Theorem 3.7]{panchenko2013sherrington} or \cite[Theorem 1.2]{talagrand2006parisi-b}, the map $\beta \mapsto \Par_{\beta}(\zeta_{\beta}^*)$ is convex and differentiable at all $\beta > 0$, and
    \begin{equation} 
    \label{eq:derivative_parisi}
    \frac{\rmd ~}{\rmd \beta} \Par_{\beta}(\zeta_{\beta}^*) 
    = 
    \beta\Big( \xi(1) - \int \xi(q) \zeta_{\beta}^*(\rmd q) \Big) \, .
    \end{equation} 

Using Eq.~\eqref{eq:derivative_free_energy} and  Eq.~\eqref{eq:derivative_parisi} we obtain 
\begin{equation}
\label{eq:lim_f}
    \lim_{n \to \infty} 
    \beta
    \Big( 
    \xi(1) 
    - 
    \E_{\bx_1,\bx_2\sim \mu_{\bG_0,\beta}}
    \lt[\Big(\frac{\<\bx_1, \bx_2\>}{n}\Big)^2\rt]\Big)
     = 
     \beta\Big( 1 - \int \xi(q)  \zeta_{\beta}^*(\rmd q) \Big)< \frac{\beta}{2}-\eps(\beta)\, ,
\end{equation}
where the last inequality holds for almost all $\beta>\beta_c$ by Property~\ref{it:lowtemp}
above. Since the both sides are non-decreasing and the right hand side is 
continuous, the inequality holds for all $\beta$. This is equivalent to
\begin{equation}
\label{eq:mu0mu0}
    \lim_{n \to \infty} \E f(\mu_{\bG_0,\beta}, \mu_{\bG_0,\beta}) >0 \, . 
\end{equation}
Now, using Eq.~\eqref{eq:mu0mus} and Eq.~\eqref{eq:mu0mu0}, together with the continuity of $f$ 
(Lemma~\ref{lem:continuity}) implies the claim of the theorem.
\end{proof}

We next prove that Theorem~\ref{thm:disorder-stable-LB} is a consequence
of \eqref{eq:transport_disorder_chaos}.
\begin{proof}[Proof of Theorem \ref{thm:disorder-stable-LB}]
Fix $s\in (0,1)$ and $\mu^{\salg}_{\bG_s,\beta}$ be the law of  $\ALG_n(\bG_s,\beta,\omega)$ 
conditional on $\bG_s$. By the triangle inequality,
\begin{align*}
W_{2,n}(\mu_{\bG_s,\beta}, \mu_{\bG_0,\beta}) 
\le W_{2,n}(\mu_{\bG_s,\beta}, \mu^{\salg}_{\bG_s,\beta}) + 
W_{2,n}(\mu^{\salg}_{\bG_s,\beta}, \mu^{\salg}_{\bG_0,\beta}) + 
W_{2,n}(\mu^{\salg}_{\bG_0,\beta}, \mu_{\bG_0,\beta,0}) \, .
\end{align*}
 Taking expectations over $\bG$ and $\bG_s$, we have 
 $\E\big[W_{2,n}(\mu_{\bG_s,\beta}, \mu^{\salg}_{\bG_s,\beta})\big] = \E\big[W_{2,n}(\mu^{\salg}_{\bG_0,\beta}, 
 \mu_{\bG_0,\beta})\big]$. Further, stability of the algorithm implies
 \begin{equation}
 \label{eq:stability-implies-W2-small}
    \lim_{s\to 0}
    \lim_{n\to 0}
    \E\big[W_{2,n}(\mu^{\salg}_{\bG_s,\beta}, \mu^{\salg}_{\bG_0,\beta})\big] = 0.
\end{equation}
Therefore, using \eqref{eq:transport_disorder_chaos} and choosing $s$ sufficiently small, we obtain
 \[\liminf_{n\to\infty} \E \big[W_{2,n}(\mu^{\salg}_{\bG_0,\beta},~\mu_{\bG_0,\beta}) \big] \ge W_*>0\, .
 \qedhere
 \]
 \end{proof}

\begin{proof}[Proof of Corollary~\ref{cor:concrete-shattering-hardness}]
    Case~\ref{it:shattering-Ising} follows directly from the shattered case of Theorem~\ref{thm:disorder-stable-LB} and \cite{gamarnik2023shattering}, see \cite[Remark 5.2]{alaoui2023shattering}.
    To obtain Case~\ref{it:shattering-spherical}, we note that \cite[Theorem 5.1]{alaoui2023shattering} shows \eqref{eq:transport_disorder_chaos} for $p>C$ and $\beta\in (C,\beta_c)$.
    Finally, Remark~\ref{rem:no-degeneration-near-criticality} is justified by the fact that the positive constant obtained in \eqref{eq:transport_disorder_chaos} via \cite[Theorem 5.1]{alaoui2023shattering} depends only on $(p,r,s)$ for $\beta\in (C,\beta_c)$ (in particular not on the constant $c$ in Definition~\ref{def:Shattering}). 
    Moreover the values of $r,s$ do not degenerate as $\beta\uparrow \beta_c$ (see e.g. \cite[Remark 2.2]{alaoui2023shattering}), from which it easily follows that \eqref{eq:transport_disorder_chaos} remains uniformly positive in the limit $\lim_{\beta\uparrow\beta_c}\lim_{n\to\infty}$. 
\end{proof}

\section*{Acknowledgments}

AM was supported by the NSF through award DMS-2031883, the Simons Foundation through
Award 814639 for the Collaboration on the Theoretical Foundations of Deep Learning, the NSF
grant CCF-2006489 and the ONR grant N00014-18-1-2729. Part of this work was carried out while
Andrea Montanari was on partial leave from Stanford and a Chief Scientist at Ndata Inc dba
Project N. The present research is unrelated to AM’s activity while on leave.

% \newpage
\bibliographystyle{amsalpha}
% \bibliography{all-bibliography}

\newcommand{\etalchar}[1]{$^{#1}$}
\providecommand{\bysame}{\leavevmode\hbox to3em{\hrulefill}\thinspace}
\providecommand{\MR}{\relax\ifhmode\unskip\space\fi MR }
% \MRhref is called by the amsart/book/proc definition of \MR.
\providecommand{\MRhref}[2]{%
  \href{http://www.ams.org/mathscinet-getitem?mr=#1}{#2}
}
\providecommand{\href}[2]{#2}

 \newpage

 \appendix

\section{Replica symmetry breaking and dynamical phase transitions}
\label{app:prelim}

\subsection{Parisi formula and the RSB phase transition}

With $\xi$ fixed throughout, define the quenched free energy density of the mixed $p$-spin model as
\begin{equation} 
\label{eq:free_energy}
  F_n(\beta) = \frac{1}{n} \E\, \log \Big(\frac{1}{2^n}\sum_{\bx \in \{-1,+1\}^n} e^{\beta H_n(\bx)}
  \Big) \, .
\end{equation}
The limit of $F_n(\beta)$ for large $n$ is known to exist for all $\beta>0$ 
and its value is given by the Parisi formula \cite{talagrand2006parisi}:
\begin{equation} \label{eq:parisi}
\lim_{n\to\infty} F_n(\beta) = \inf_{\zeta \in \cuP([0,1])} \Par_{\beta}(\zeta) \, .
\end{equation}
Here $\cuP([0,1])$ denotes the set of Borel probability measures supported on $[0,1]$, and
 $\Par_{\beta}=\Par_{\xi,\beta}$ is the Parisi functional at inverse temperature $\beta$
  defined as follows. For $\zeta\in \cuP([0,1])$, define $\Phi_{\zeta}:[0,1]\times \bbR\to\bbR$ 
  to be the solution of the following `Parisi PDE'
  (with an abuse of notation, we write $\zeta(t)$ for the distribution function $\zeta([0,t])$)
\begin{align}
    \label{eq:intro-1dParisiPDEdefn}
    \partial_t \Phi_{\zeta}(t,x)+\frac{\beta^2}{2}\xi''(t)\left(\partial_{xx}\Phi_{\zeta}(t,x)+\zeta(t)(\partial_x \Phi_{\zeta}(t,x))^2\right)=0 &\, ,\\
    % \nonumber
    \Phi_{\zeta}(1,x)=\log\cosh(x) &\, .
\end{align}

Existence and uniqueness properties for this PDE are established in \cite{auffinger2015parisi,jagannath2016dynamic}.
The Parisi functional $\Par_{\beta}: \cuP([0,1]) \to \bbR$ is given by
\begin{equation}
    \label{eq:def-parisi-functional-is}
    \Par_{\beta}(\zeta) 
    = 
    \Phi_{\zeta}(0,0) - \frac{\beta^2}{2}\int_{0}^1 t\xi''(t)\zeta(t) \rmd t\, .
\end{equation}
The SDE 
 % This SDE already appears in the physics papers of the 80s, so I do not think we
 % should name it after Auffinger-Chen !
%
\begin{equation}
\label{eq:1dParisiSDE}
  \de X_t=\beta^2\xi''(t)\zeta(t)\partial_x \Phi_{\zeta}(t,X_t)\de t+\sqrt{\beta^2\xi''(t)}\de B_t\, , \quad X_0=0\, .
\end{equation}
is intimately connected to the PDE \eqref{eq:intro-1dParisiPDEdefn}.

The following properties are known:
\begin{enumerate}
    \item A unique minimizer $\zeta_{\beta}^*\in\cuP([0,1])$ of $\Par_{\beta}$ exists for all $\beta$~\cite{auffinger2015parisi}.
    \item 
    \label{it:lowtemp}
    There exists a critical inverse temperature $\beta_c\in(0,\infty)$ such that $\beta>\beta_c$ if and only if $\zeta_{\beta}^* \neq \delta_0$, i.e., $\zeta_{\beta}^*$ is not an atom at $0$. 
    This follows from e.g. \cite[Footnote 2]{subag2021tap}.
\end{enumerate}

In the replica-symmetric case $\zeta^*_{\beta}=\delta_0$, the Parisi solution simplifies. 
Then we have the following standard result (which can be proved simply by substituting the claimed solutions).
\begin{proposition}
\label{prop:Parisi-RS-simple}
Assume   $\zeta=\delta_0$. Then the unique solution \eqref{eq:intro-1dParisiPDEdefn} is given by
\begin{align}
 \Phi_{\zeta}(t,x)=\log\cosh(x)+\frac{\beta^2}{2}\big(\xi'(1)-\xi'(t)\big)\, .
 \end{align}
Further, the solution of the SDE \eqref{eq:1dParisiSDE} has distribution
  \begin{equation}
  X_t\ed  \gamma X+\sqrt{\gamma}Z\, ,
\end{equation}
  for $\gamma=\xi'(t)$, and  $X\in \{-1,+1\}$  uniform and independent of $Z\sim\normal(0,1)$. 
 Finally, 
$\partial_x\Phi_x(t,x)=\tanh(X_t)$.
\end{proposition}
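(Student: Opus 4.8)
The plan is to verify all three assertions by direct computation, exploiting the fact that the replica-symmetric profile $\zeta=\delta_0$ renders the Parisi PDE Hopf--Cole integrable. First observe that for $\zeta=\delta_0$ we have $\zeta(t)=\zeta([0,t])=1$ for every $t\in[0,1]$, so \eqref{eq:intro-1dParisiPDEdefn} reads
\begin{equation*}
\partial_t\Phi_\zeta(t,x)+\frac{\beta^2}{2}\xi''(t)\Big(\partial_{xx}\Phi_\zeta(t,x)+\big(\partial_x\Phi_\zeta(t,x)\big)^2\Big)=0,\qquad \Phi_\zeta(1,x)=\log\cosh x .
\end{equation*}
This is exactly the situation in which $u:=\exp(\Phi_\zeta)$ linearizes the equation: $u$ solves the backward heat equation $\partial_t u+\tfrac{\beta^2}{2}\xi''(t)\,\partial_{xx}u=0$ with $u(1,x)=\cosh x$, so $u(t,x)=\E\big[\cosh(x+\sigma_t Z)\big]$ with $\sigma_t^2=\int_t^1\beta^2\xi''(s)\,\de s=\beta^2(\xi'(1)-\xi'(t))$ and $Z\sim\normal(0,1)$; since $\E[\cosh(x+\sigma Z)]=\cosh(x)\,e^{\sigma^2/2}$, this gives precisely the claimed formula $\Phi_\zeta(t,x)=\log\cosh x+\tfrac{\beta^2}{2}(\xi'(1)-\xi'(t))$. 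In practice it is cleanest to substitute this candidate directly into \eqref{eq:intro-1dParisiPDEdefn}: with $\partial_x\Phi_\zeta=\tanh x$, $\partial_{xx}\Phi_\zeta=1-\tanh^2 x$ and $\partial_t\Phi_\zeta=-\tfrac{\beta^2}{2}\xi''(t)$, one has $\partial_{xx}\Phi_\zeta+\zeta(t)(\partial_x\Phi_\zeta)^2\equiv 1$, so the PDE and the terminal condition hold identically; the well-posedness theory of \cite{auffinger2015parisi,jagannath2016dynamic} then identifies this expression with $\Phi_\zeta$. The identity $\partial_x\Phi_\zeta(t,X_t)=\tanh(X_t)$ is immediate from $\partial_x\Phi_\zeta(t,x)=\tanh x$.

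It remains to identify the law of $X_t$. Substituting $\zeta(t)=1$ and $\partial_x\Phi_\zeta(t,x)=\tanh x$ into \eqref{eq:1dParisiSDE} gives $\de X_t=\beta^2\xi''(t)\tanh(X_t)\,\de t+\sqrt{\beta^2\xi''(t)}\,\de B_t$, $X_0=0$. After the deterministic time change defined by $\gamma_t:=\int_0^t\beta^2\xi''(s)\,\de s=\beta^2\xi'(t)$ (so that the quadratic variation of $X$ up to time $t$ equals $\gamma_t$), the reparametrized process $\widetilde X_\gamma:=X_{t(\gamma)}$ solves $\de\widetilde X_\gamma=\tanh(\widetilde X_\gamma)\,\de\gamma+\de\widetilde B_\gamma$, $\widetilde X_0=0$, which is a Doob $h$-transform of Brownian motion. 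Indeed, if $W$ is a standard Brownian motion then $M_\gamma:=\cosh(W_\gamma)\,e^{-\gamma/2}$ is a positive martingale with $M_0=1$ (by It\^{o}'s formula, using $\cosh''=\cosh$, the drift cancels), and under the measure with density $M_\gamma$ Girsanov's theorem shows that $W$ acquires drift $\tanh(W_\gamma)$, i.e.\ has the law of $\widetilde X$. Hence the law of $\widetilde X_\gamma$ is absolutely continuous with respect to $\normal(0,\gamma)$ with density proportional to $\cosh x$; completing the square,
\begin{equation*}
e^{-\gamma/2}\cosh(x)\cdot\frac{1}{\sqrt{2\pi\gamma}}\,e^{-x^2/2\gamma}
=\tfrac12\cdot\frac{1}{\sqrt{2\pi\gamma}}\,e^{-(x-\gamma)^2/2\gamma}+\tfrac12\cdot\frac{1}{\sqrt{2\pi\gamma}}\,e^{-(x+\gamma)^2/2\gamma},
\end{equation*}
so $\widetilde X_\gamma\ed \gamma X+\sqrt\gamma\,Z$ with $X$ uniform on $\{-1,+1\}$ independent of $Z\sim\normal(0,1)$. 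Undoing the time change yields $X_t\ed \gamma_t X+\sqrt{\gamma_t}\,Z$ with $\gamma_t=\beta^2\xi'(t)$, which is the asserted identity. (An alternative to the Girsanov step is to check directly that the two-point Gaussian mixture $\tfrac12\normal(\gamma,\gamma)+\tfrac12\normal(-\gamma,\gamma)$ solves the Fokker--Planck equation of the $\widetilde X$-SDE with initial datum $\delta_0$.)

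I do not expect a genuine obstacle here: the proposition is ``standard'' precisely because at $\zeta=\delta_0$ the Parisi PDE is Hopf--Cole integrable and the companion SDE is an $h$-transform of Brownian motion. The only points demanding a little care are (i) invoking PDE well-posedness from \cite{auffinger2015parisi,jagannath2016dynamic} so as to conclude that the explicit expression \emph{is} the Parisi solution rather than merely \emph{a} solution, and (ii) carrying the $\beta$-dependence through the time change so that the Gaussian components emerge with variance $\gamma_t=\int_0^t\beta^2\xi''(s)\,\de s=\beta^2\xi'(t)$. Both are routine.
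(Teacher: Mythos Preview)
Your proof is correct. The paper itself gives no proof beyond the parenthetical remark that the result ``can be proved simply by substituting the claimed solutions.'' Your treatment of the PDE is precisely this substitution, and the identity $\partial_x\Phi_\zeta(t,x)=\tanh x$ falls out immediately. For the SDE you go a bit beyond substitution: rather than verifying the Fokker--Planck equation for the mixture density (which you note as an alternative), you identify the time-changed process as a Doob $h$-transform of Brownian motion via the martingale $\cosh(W_\gamma)e^{-\gamma/2}$ and read off the density from Girsanov. This is more conceptual than brute verification and explains \emph{why} the two-point Gaussian mixture appears, but both routes are standard and equally short.

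One point worth flagging: your time change correctly gives $\gamma_t=\int_0^t\beta^2\xi''(s)\,\de s=\beta^2\xi'(t)$, whereas the proposition as stated has $\gamma=\xi'(t)$ without the $\beta^2$. Your version is the right one---the diffusion coefficient in \eqref{eq:1dParisiSDE} is $\sqrt{\beta^2\xi''(t)}$, and the $\beta^2$ is needed for consistency with the downstream use of the proposition (for instance in the replica-symmetric criterion $\psi(\beta^2\xi'(s))\le s$ of Proposition~\ref{prop:RS-condition}). The printed statement appears to have dropped the $\beta^2$.
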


Recall the definitions of $\psi$, Eq.~\eqref{eq:psi}:
\begin{equation}
  \psi(\gamma)=\bbE[\tanh(\gamma + \sqrt{\gamma}Z)] \, , ~~~~Z\sim\normal(0,1) \, ,
\end{equation}
and $\phi$ the inverse function of $\psi$. It is not hard to show that both are smooth and strictly increasing with $\psi(0)=\phi(0)=0$. Moreover, $\psi$ is concave and $\phi$ is convex.

\begin{proposition}
\label{prop:RS-condition}
  $\beta\leq \beta_c$ holds if and only if
  \begin{align}\label{eq:RS-Condition}
  \RS(t):= \int_0^t \xi''(s)\big(\psi(\beta^2\xi'(s)) - s\big) \rmd s\leq 0\quad\forall t\in [0,1]\, .
  \end{align}
\end{proposition}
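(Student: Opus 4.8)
By Property~\ref{it:lowtemp}, $\beta\le\beta_c$ is the same as $\zeta_\beta^*=\delta_0$, i.e.\ the statement that the constant distribution function $\zeta\equiv 1$ minimizes the Parisi functional $\Par_\beta$ over $\cuP([0,1])$. Since $\Par_\beta$ is convex on $\cuP([0,1])$ and has a unique minimizer \cite{auffinger2015parisi,chen2017variational}, $\delta_0$ is that minimizer exactly when the one-sided directional derivative of $\Par_\beta$ at $\delta_0$ toward every $\nu\in\cuP([0,1])$ is nonnegative. So the plan is to compute this directional derivative in closed form using the explicit replica-symmetric solution of Proposition~\ref{prop:Parisi-RS-simple}, and then to observe that the resulting family of inequalities, ranged over all $\nu$, is exactly equivalent to the single pointwise condition $\RS(t)\le 0$ for all $t\in[0,1]$.

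\textbf{Computing the derivative.} For $\nu\in\cuP([0,1])$ set $\zeta_u:=(1-u)\delta_0+u\nu$, so that as distribution functions $\zeta_u(t)=1-u\,\nu((t,1])$ and the perturbation direction is $\nu(t)-1=-\nu((t,1])\le 0$. I would invoke the standard Gâteaux-derivative formula for the Parisi functional along a perturbation of $\zeta$ (see \cite[Ch.~3]{talagrand2011mean1} or \cite{chen2017variational}):
\[
\frac{\de}{\de u}\,\Par_\beta(\zeta_u)\Big|_{u=0^+}
=\frac{\beta^2}{2}\int_0^1\xi''(t)\Big(\E\big[(\partial_x\Phi_{\delta_0}(t,X_t))^2\big]-t\Big)\big(\nu(t)-1\big)\,\de t,
\]
with $X_t$ the solution of \eqref{eq:1dParisiSDE} at $\zeta=\delta_0$. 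By Proposition~\ref{prop:Parisi-RS-simple}, $\partial_x\Phi_{\delta_0}(t,x)=\tanh x$ and $X_t\ed\gamma_t X+\sqrt{\gamma_t}\,Z$ with $\gamma_t=\beta^2\xi'(t)$, $X$ uniform on $\{-1,+1\}$ and $Z\sim\normal(0,1)$ independent; since $\tanh(\gamma_t X+\sqrt{\gamma_t}Z)=\E[X\mid X_t]$ (a Nishimori identity) and $X^2=1$, one gets $\E[\tanh^2(X_t)]=\E[X\tanh(\gamma_t X+\sqrt{\gamma_t}Z)]=\psi(\beta^2\xi'(t))$. Substituting, writing $R(t):=\xi''(t)\big(\psi(\beta^2\xi'(t))-t\big)$ so that $\RS(t)=\int_0^t R(s)\,\de s$, and using $\nu(t)-1=-\nu((t,1])$ together with Fubini,
\[
\frac{\de}{\de u}\,\Par_\beta(\zeta_u)\Big|_{u=0^+}
=-\frac{\beta^2}{2}\int_0^1 R(t)\,\nu((t,1])\,\de t
=-\frac{\beta^2}{2}\int_{[0,1]}\RS(s)\,\nu(\de s).
\]

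\textbf{Conclusion and the hard part.} This last quantity is $\ge 0$ for every $\nu\in\cuP([0,1])$ iff $\int_{[0,1]}\RS\,\de\nu\le 0$ for all such $\nu$, which (take $\nu=\delta_s$; the converse is immediate) is exactly $\RS(s)\le 0$ for all $s\in[0,1]$; combined with convexity and uniqueness of the Parisi minimizer and with Property~\ref{it:lowtemp}, this yields the proposition. The only nonroutine ingredients are the Gâteaux-derivative formula for $\Par_\beta$ and convexity of $\Par_\beta$, both by now classical (Talagrand; Auffinger--Chen; Chen); I expect the bulk of the care to go into making the derivative formula rigorous. However, since at $\zeta=\delta_0$ both $\Phi_{\delta_0}$ and the law of $X_t$ are explicit, one can instead derive it directly by differentiating the Parisi PDE \eqref{eq:intro-1dParisiPDEdefn} in $\zeta$ and integrating by parts against the (Gaussian) replica-symmetric kernel, which avoids any appeal to the regularity of $\zeta\mapsto\Phi_\zeta$ at general $\zeta$. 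As a consistency check, for the SK model $\xi(t)=t^2/2$ concavity of $\psi$ with $\psi'(0)=1$ gives $R(t)\le(\beta^2-1)t$, so $\RS\le 0$ on $[0,1]$ when $\beta\le 1$, while $\RS(t)\sim(\beta^2-1)t^2/2$ near $0$ recovers $\beta_c=1$.
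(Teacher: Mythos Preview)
Your approach is correct and essentially identical to the paper's: both reduce to first-order optimality of $\delta_0$ via (strict) convexity of $\Par_\beta$, then evaluate the resulting condition using the explicit replica-symmetric solution of Proposition~\ref{prop:Parisi-RS-simple}. The only difference is packaging: the paper compresses your directional-derivative computation into a single citation of \cite[Theorem~2]{chen2017variational}, which already states the optimality criterion in the form $\RS(t)\le 0$, whereas you re-derive that criterion by hand from the G\^ateaux-derivative formula and a Fubini step. Your more explicit route has the minor advantage of being self-contained at $\zeta=\delta_0$; the paper's citation is shorter but relies on the reader knowing Chen's result. One incidental remark: you correctly take $\gamma_t=\beta^2\xi'(t)$, which is what the SDE~\eqref{eq:1dParisiSDE} gives after the time change $\tau=\beta^2\xi'(t)$ and what is needed for $\E[\tanh^2(X_t)]=\psi(\beta^2\xi'(t))$; the statement $\gamma=\xi'(t)$ in Proposition~\ref{prop:Parisi-RS-simple} appears to be a typo.
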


\begin{proof}
   By strict convexity of the Parisi functional \cite{auffinger2015parisi}, $\beta\leq \beta_c$ is equivalent to first-order optimality of $\delta_0$ in $\Par_{\beta}$. The result now follows from \cite[Theorem 2]{chen2017variational} and Proposition~\ref{prop:Parisi-RS-simple}.
\end{proof}

\begin{lemma}\label{lemma:Beta1Betac}
It holds that $ \xi''(1)^{-1/2}\leq \beta_1 \le \beta_c$.
\end{lemma}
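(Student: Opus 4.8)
The plan is to prove the two inequalities $\xi''(1)^{-1/2}\le\beta_1$ and $\beta_1\le\beta_c$ separately, each by relating the relevant defining condition to a monotonicity/sign statement about an explicit one-dimensional function.

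\textbf{Lower bound $\xi''(1)^{-1/2}\le\beta_1$.} Recall the characterization \eqref{eq:beta0-equiv}: $\beta\le\beta_1$ iff $\beta^2\xi''(q)\le\phi'(q)$ for all $q\in[0,1]$. Since $\xi$ is a polynomial with nonnegative coefficients $c_p^2$, the map $q\mapsto\xi''(q)$ is nondecreasing on $[0,1]$, hence $\xi''(q)\le\xi''(1)$. On the other hand, $\phi=\psi^{-1}$ is convex with $\phi(0)=0$, so $\phi'$ is nondecreasing; and from $\psi'(0)=1$ (Lemma~\ref{lem:properties}\ref{it:mmse-boundary-values}) we get $\phi'(0)=1$, so $\phi'(q)\ge 1$ for all $q\in[0,1]$. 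Therefore, for $\beta=\xi''(1)^{-1/2}$ we have $\beta^2\xi''(q)\le\beta^2\xi''(1)=1\le\phi'(q)$ for all $q$, which by \eqref{eq:beta0-equiv} gives $\xi''(1)^{-1/2}\le\beta_1$.

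\textbf{Upper bound $\beta_1\le\beta_c$.} By Proposition~\ref{prop:RS-condition}, $\beta\le\beta_c$ iff $\RS(t)=\int_0^t\xi''(s)\big(\psi(\beta^2\xi'(s))-s\big)\,\rmd s\le 0$ for all $t\in[0,1]$. I would show that for $\beta<\beta_1$ the integrand is pointwise $\le 0$, i.e.\ $\psi(\beta^2\xi'(s))\le s$ for all $s\in[0,1]$, which immediately yields $\RS(t)\le 0$ and hence $\beta\le\beta_c$; taking $\beta\uparrow\beta_1$ gives $\beta_1\le\beta_c$. To prove $\psi(\beta^2\xi'(s))\le s$, equivalently $\beta^2\xi'(s)\le\phi(s)$, define $g(s):=\phi(s)-\beta^2\xi'(s)$ on $[0,1]$. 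Then $g(0)=\phi(0)-\beta^2\xi'(0)=0$ (using $\xi'(0)=0$ since $\xi$ has no linear term), and $g'(s)=\phi'(s)-\beta^2\xi''(s)\ge 0$ for all $s\in[0,1]$ by \eqref{eq:beta0-equiv} since $\beta<\beta_1$. Hence $g$ is nondecreasing with $g(0)=0$, so $g\ge 0$ on $[0,1]$, i.e.\ $\beta^2\xi'(s)\le\phi(s)$, as desired.

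\textbf{Main obstacle.} The argument is essentially bookkeeping once the equivalent characterization \eqref{eq:beta0-equiv} of $\beta_1$ and the RS criterion of Proposition~\ref{prop:RS-condition} are in hand; there is no serious analytic difficulty. The one point that requires a little care is checking that $\beta<\beta_1$ implies the \emph{pointwise} sign of the $\RS$ integrand (rather than only the integral being nonpositive) — but as shown above this follows cleanly from integrating the differential inequality $\phi'\ge\beta^2\xi''$ from $0$, using $\phi(0)=\xi'(0)=0$. A secondary subtlety is the boundary/limiting step $\beta\uparrow\beta_1$: since the condition $\psi(\beta^2\xi'(s))\le s$ for all $s$ is closed under this limit (by continuity of $\psi$ and $\xi'$), we get $\beta_1\le\beta_c$ rather than just $<$; and the strictness issue at $q=1$ in the definition of $\beta_1$ via the infimum is harmless because we only need the weak inequality $\beta^2\xi''(q)\le\phi'(q)$, which holds for all $\beta\le\beta_1$.
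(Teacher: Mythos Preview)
Your proof is correct and follows essentially the same approach as the paper: for the lower bound you use $\xi''(q)\le\xi''(1)$ and $\phi'(q)\ge\phi'(0)=1$, and for the upper bound you integrate the differential inequality $\phi'\ge\beta^2\xi''$ from $0$ to get $\phi(s)\ge\beta^2\xi'(s)$, then apply $\psi$ and invoke the RS criterion of Proposition~\ref{prop:RS-condition}. The paper's proof is identical in substance, just slightly more terse (it says ``integrating the above yields $\beta^2\xi'(q)\le\phi(q)$'' and applies $\psi$ directly at $\beta=\beta_1$ via the equivalence \eqref{eq:beta0-equiv}, rather than taking a limit $\beta\uparrow\beta_1$).
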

\begin{proof}
Since $\xi''(q)\leq \xi''(1)$ and $\phi'(q)\geq \phi'(0)=1/\psi'(0)=1$, we obtain the lower bound $\xi''(1)^{-1/2}\leq \beta_1$.

To see that $\beta_1\leq \beta_c$ as defined in the previous subsection, recall from Eq.~\eqref{eq:beta0-equiv} that $\beta\leq \beta_1$ is equivalent to 
\[\beta^2\xi''(q)\leq \phi'(q) ~~~\forall q\in [0,1] \, .\]
Integrating the above yields $\beta^2 \xi'(q)\leq \phi(q)$. Therefore
\[\psi(\beta^2 \xi'(s))\leq \psi(\phi(s))\leq s\quad\forall s\in [0,1]\, ,\]
which immediately implies the condition of Proposition~\ref{prop:RS-condition}. 
\end{proof}

In the SK model, $\beta_1=\beta_c=  1/\sqrt{\xi''(0)}$ so the two thresholds are equal. 
In fact this remains true for small perturbations around the SK model.
\begin{proposition}
\label{prop:beta0=betac}
We always have $\beta_c\le 1/\sqrt{\xi''(0)}$. 

Further assume  $\xi''(0) =c_2^2/2> 0$ strictly  and $c_p \leq a\cdot c_2 $ for all $3\leq p\leq P$ and some small 
$a=a(P)>0$. Then 
  \[
    \beta_1(\xi)=\beta_c(\xi)= \frac{1}{\sqrt{\xi''(0)}}\, .
  \]
\end{proposition}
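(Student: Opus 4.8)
\textbf{Proof plan for Proposition~\ref{prop:beta0=betac}.}
The first claim $\beta_c\le 1/\sqrt{\xi''(0)}$ is the easy direction: specialize the condition of Proposition~\ref{prop:RS-condition} to small $t$. Since $\psi(\gamma) = \gamma - O(\gamma^3)$ near $0$ (because $\psi(0)=0$ and $\psi'(0)=1$ by Lemma~\ref{lem:properties}\ref{it:mmse-boundary-values}), we have $\psi(\beta^2\xi'(s)) - s = \beta^2\xi''(0)s - s + O(s^2) = (\beta^2\xi''(0)-1)s + O(s^2)$ as $s\downarrow 0$. Hence if $\beta^2\xi''(0) > 1$, the integrand $\xi''(s)(\psi(\beta^2\xi'(s))-s)$ is strictly positive for all small $s>0$, so $\RS(t) > 0$ for small $t>0$, forcing $\beta > \beta_c$. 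Contrapositively $\beta\le\beta_c \Rightarrow \beta^2\xi''(0)\le 1$, which is the claim.

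For the second claim, I would argue $\beta_1 \le \beta_c \le 1/\sqrt{\xi''(0)} \le \beta_1$, so all three coincide. The middle inequality is the first claim, and $\beta_1\le\beta_c$ is Lemma~\ref{lemma:Beta1Betac}. So the real content is the reverse bound $\beta_1 \ge 1/\sqrt{\xi''(0)}$ under the perturbative hypothesis. By the characterization \eqref{eq:beta0-equiv}, this amounts to showing that when $\beta^2 = 1/\xi''(0)$ (or anything slightly below), one has $\beta^2\xi''(q) \le \phi'(q)$ for all $q\in[0,1]$, i.e.\ $\xi''(q)/\xi''(0) \le \phi'(q)$. Now $\phi'(q) \ge \phi'(0) = 1$ since $\phi$ is convex with $\phi'(0)=1$; more usefully, since $\phi$ is convex and smooth, $\phi'(q) \ge 1 + \phi''(0)q + o(q)$ near $0$, and one can get an explicit lower bound $\phi'(q)\ge 1 + cq$ on $[0,1]$ for a constant $c>0$ depending only on $\psi$ (equivalently on nothing, as $\psi$ is universal). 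On the other hand $\xi''(q) = \sum_p c_p^2 p(p-1)q^{p-2} = c_2^2 + \sum_{p\ge 3} c_p^2 p(p-1)q^{p-2}$, so $\xi''(q)/\xi''(0) = 1 + \xi''(0)^{-1}\sum_{p\ge 3} c_p^2 p(p-1)q^{p-2}$. Under the hypothesis $c_p\le a c_2$ for $p\ge 3$, the correction term is bounded by $a^2 \cdot (\text{const}(P))$ uniformly on $[0,1]$, which can be made smaller than $cq$... except at $q=0$ the correction is $O(a^2 q)$ only if all contributions have $p\ge 3$ hence $q^{p-2}$ with $p-2\ge 1$, so indeed the correction vanishes at $q=0$ and is $O(a^2 q)$ on $[0,1]$. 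Choosing $a$ small enough that $a^2\,\text{const}(P) < c$ gives $\xi''(q)/\xi''(0)\le \phi'(q)$ on $[0,1]$, hence $1/\sqrt{\xi''(0)}\le\beta_1$.

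The one technical point to nail down carefully is the \emph{uniform-in-}$q$ lower bound $\phi'(q)\ge 1+cq$ on all of $[0,1]$ (not just near $0$), together with the fact that the perturbation $\xi''(q)/\xi''(0)-1$ is genuinely $O(a^2 q)$ rather than merely $O(a^2)$ — both rely on the common factor of $q^{p-2}$ with $p-2\ge 1$. For $\phi'$: writing $\phi = \psi^{-1}$, one has $\phi'(q) = 1/\psi'(\phi(q))$, and since $\psi$ is strictly concave and smooth on $[0,1)$ (Lemma~\ref{lem:properties}), $\psi'$ is strictly decreasing, so $\psi'(\phi(q)) \le \psi'(0) - c' \phi(q) \le 1 - c'' q$ for constants $c',c''>0$ on any compact subinterval; a small amount of care is needed because $\phi(q)\to\infty$ only as $q\to 1$, but on $q\in[0,1-\epsilon]$ everything is fine and for $q$ near $1$ the bound $\phi'(q)\ge$ const $>1$ is even easier since $\psi'$ is bounded away from $1$. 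I expect this $\phi'$-lower-bound bookkeeping — rather than the spin-glass input, which is entirely black-boxed via Lemma~\ref{lemma:Beta1Betac} and the first claim — to be the main (though routine) obstacle; it is purely a one-dimensional real-analysis estimate on the universal functions $\psi,\phi$.
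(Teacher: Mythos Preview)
Your proposal is correct and follows essentially the same route as the paper. For the first claim both of you Taylor-expand the RS condition near $t=0$; for the second, both arguments reduce to $\phi'(q)\ge \xi''(q)/\xi''(0)$ on $[0,1]$ via \eqref{eq:beta0-equiv}, use the chain $\beta_1\le\beta_c\le 1/\sqrt{\xi''(0)}$ from Lemma~\ref{lemma:Beta1Betac} and Part~1, and pivot on the same two facts about the universal function $\phi$: convexity (so $\phi'$ is nondecreasing with $\phi'(0)=1$) and $\phi''(0)>0$. The only difference is packaging: the paper forms the ratio $f(q;\bc)=\phi'(q)/\xi''(q)$, shows $f(0;\bc)=1$, and argues $f'>0$ near $0$ and $f>1$ away from $0$ by perturbing from the SK case $\bc=\bfzero$; you instead bound $\phi'(q)\ge 1+cq$ and $\xi''(q)/\xi''(0)\le 1+C(P)a^2 q$ separately and compare slopes, which is a slightly cleaner way to organize the same estimate. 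Your technical worry about $\phi'(q)\ge 1+cq$ is exactly the right one, and your sketch (use $\phi''(0)>0$ for small $q$, then monotonicity of $\phi'$ for $q\ge q_0$) is correct; the paper's proof relies on the same computation $\phi''(0)>0$, asserted there as a ``direct calculation.''
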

\begin{proof}
In order to prove $\beta_c\le 1/\sqrt{\xi''(0)}$,  recall the definition
of $\RS(t)$ given in Eq.~\eqref{eq:RS-Condition}. Using $\psi(t) = t+o(t)$ for $t\to 0$,
we get 
\[
 \RS(t)= \frac{1}{2}\xi''(0)\big(\beta^2\xi''(0)-1\big) t^2+o(t^2)\, ,
 \]
 and hence  $\RS(t)>0$ for $t$ small enough unless  $\beta\le 1/\sqrt{\xi''(0)}$.

To prove $\beta_1(\xi)=\beta_c(\xi)= 1/\sqrt{\xi''(0)}$ when
$c_p \leq a\cdot c_2 $, $c_2>0$, we will assume, without loss of generality, $\xi''(0)=1$,
and therefore $\xi(t)=t^2/2 +\sum_{p=3}^Pc_p^2t^p$.
Let $\bc =  (c_3,\dots,c_P)$.

  First we show $\beta_1=1$.  Let $f(q)=f(q;\bc)=\phi'(q)/\xi''(q)$. Since $\phi'(0)=1$ and therefore 
  $f(0)=1$, it suffices to show that $f(q)\geq 1$ on $q\in (0,1)$.
  
  Consider the case $\bc=\bfzero$, $\xi(t)=t^2/2$. Then $f(q)= \phi'(q)$. Since $\gamma\mapsto \psi(\gamma)$
  is concave by Lemma \ref{lem:properties}.$(a)$, it follows thar $q\mapsto \phi(q)$
  is convex, and therefore $f(q)$ is non-decreasing, which proves our claim. 
  Further by a direct calculation $f'(0)=\phi''(0)>0$ in this case, and therefore there
  exists $q_0, A_1, A_2>0$ such that $f'(q;\bfzero)\ge A_1$  on $(0,q_0]$, $f(q;\bfzero)>A_2$ on $[q_0,1)$.
  
  Next consider the general case $\bc\neq \bfzero$.
Note that $f(q;\bc)/\phi(q)\to f(q;\bfzero)/\phi(q)$  uniformly on $[0,1]$
as $\bc\to \bfzero$, and therefore $f(q;\bc)>A_2/2$ on $[q_0,1)$ if  $\|\bc\|_{\infty}<a$
for a small enough $a$.
  Further compute the derivative with respect to $q$:
  \[
    f'(q;\bc)
    =
    \frac{\phi''(q)}{\xi''(q)}
    -
    \frac{\phi'(q)\xi'''(q)}{\xi''(q)^2}.
  \]
  Therefore $f'(q;\bc)\to f'(q;\bfzero)$  uniformly on $[0,q_0]$
as $\bc\to \bfzero$, whence  $f(q;\bc)>A_1/2$ on $[0,q_0]$ if  $\|\bc\|_{\infty}<a$
for small enough $a$. This completes the proof that $f(q;\bc)\ge 1$ on $[0,1]$
and therefore $\beta_1(\xi)=1$.

  Since we showed $\beta_1(\xi)\leq \beta_c(\xi)$ in Lemma \ref{lemma:Beta1Betac}, 
  and we proved already  that $\beta_c(\xi)\le 1/\sqrt{\xi''(0)}=1$, the claim 
  follows.
\end{proof}

\subsection{The dynamical phase transition}

In this appendix we report, for the reader's convenience, the statistical physics
prediction for the location of the dynamical phase transition,
and derive its large-$p$ asymptotics for the `pure' model $\xi(t) = t^p$.
We refer to \cite{montanari2003nature,ferrari2012two} for pointers to the physics literature
and to \cite{montanari2022short} for further background.
Define the `effective potential' $\Sigma_{\beta}:[0,1]\to \R$ by
\begin{align}
\Sigma_{\beta}(q) &:= -\frac{\beta^2}{2}\big[\xi(q)-q\xi'(q)\big]- 
\E\big\{\cosh(\lambda G)\log \cosh(\lambda G)\big\}\, ,
\;\;\;\;\;\lambda := \beta\sqrt{\xi'(q)}\, .
\end{align}
(Here expectation is taken with respect to $G\sim\normal(0,1)$.)
The point $q=0$ is always stationary for $\Sigma$. The dynamical phase transition 
is predicted to be located at the infimum $\beta$ such that a second
stationary point appears:
\begin{align}
\beta_{\sdyn}(\xi)= \inf\Big\{\beta>0: \; \exists q\in (0,1] \mbox{ such that }
\Sigma'_{\beta}(q)=0\Big\}\, .
\end{align}
By differentiating $\Sigma$, $\beta_{\sdyn}(\xi)$ is equivalently given by the infimum 
of all $\beta$'s such that the following equations admit a solution
$q\in (0,1]$:
\begin{align}
q &= \frac{\E\{\cosh (\lambda G)\tanh( \lambda G)^2 \}}{\E\{\cosh ( \lambda G) \}}
=:H(\lambda)\, ,\label{eq:Dyn1}\\
\lambda & = \beta\sqrt{\xi'(q)}\, .\label{eq:Dyn2}
\end{align}
We next consider the special case $\xi(t) = t^p$ and derive the large $p$
asymptotics. To this end, we note that, as $\lambda\to \infty$
\begin{align}
H(\lambda)&= 1- \frac{\E\{\cosh ( \lambda G)^{-1} \}}{\E\{\cosh ( \lambda G) \}}\nonumber\\
& = 1- e^{-\lambda^2/2}\E\{\cosh ( \lambda G)^{-1} \}\nonumber\\
& = 1- \frac{K}{\lambda}e^{-\lambda^2/2}\big(1+O(\lambda^{-1})\big)\, ,\label{eq:AsympH}
\end{align}
where $K:= (2\pi)^{-1/2}\int (\cosh t)^{-1} \de t$. We next set $\lambda= \sqrt{2z\log p}$,
$\beta^2= (2\alpha\log p)/p$
and rewrite the above equations as
\begin{align}
z &= F(z;\alpha,p)\, ,\\
F(z;\alpha,p) &:= \alpha\, H(\sqrt{2z\log p})^{p-1}\, .
\end{align}
Using the asymptotics \eqref{eq:AsympH}, we get, as $p\to\infty$ and $z>0$ fixed:
\begin{align*}
F(z;\alpha,p) &= \alpha\, \Big(1- \frac{Kp^{-z}}{\sqrt{2 z\log p}}\big(1+o(1)\big) \Big)^{p-1}\\
& = \alpha\, \exp\Big\{- \frac{Kp^{1-z}}{\sqrt{2 z\log p}}\big(1+o(1)\big)\Big\}\, .
\end{align*}
Using this expression (and a similar one for $F'(z;\alpha,p)$,
it is easy to check that
\begin{enumerate}
\item[$(a)$] $z\mapsto F(z;\alpha,p)$ is non-decreasing, with $F(0;\alpha,p)=0$.
\item[$(b)$] For $z\in [0,1)$ fixed, we have 
$F(z;\alpha,p),F'(z;\alpha,p)\to 0$ as $p\to\infty$.
\item[$(c)$] For $z\in (1,\infty)$ fixed, we have 
$F(z;\alpha,p)\to \alpha$ as $p\to\infty$.
\end{enumerate}
From these it follows that:
$(i)$~If $\alpha<1$, Eqs.~\eqref{eq:Dyn2}, \eqref{eq:Dyn2} admit the unique solution $q=0$;
$(ii)$~If $\alpha>1$, Eqs.~\eqref{eq:Dyn2}, \eqref{eq:Dyn2} admit a solution $q\in (0,1)$.
This in turns yields
\begin{align}
\beta_{\sdyn}(\xi_p) = \sqrt{\frac{2\log p}{p}}\cdot\big(1+o_p(1)\big)\, ,
\end{align}
as claimed in the main text.

 \section{Convergence analysis of Natural Gradient Descent}

 \label{app:NGD}

The main objective of this appendix is to prove Lemma \ref{lem:local-landscape},
which we will do in Section \ref{app:local-landscape}, after some technical preparations in Section \ref{app:Preliminaries}.

\subsection{Technical preliminaries}
\label{app:Preliminaries}

\begin{definition}
\label{defn:c-strongly-convex}
Let $Q\subseteq (-1,1)^n $ be a convex set. We say that a twice differentiable function 
$\F:Q\to \mathbb R$ is relatively 
$c$-strongly convex if it satisfies 
\begin{equation}
\label{eq:c-strongly-convex}
    \nabla^2 \F(\m) \succeq  c \Dm \;\;\;\forall\;\m\in Q \, .
\end{equation}
We say it is relatively $C$-smooth if 
it satisfies 
\begin{equation}
\label{eq:C-smooth}
    \nabla^2 \F(\m) \preceq   C\Dm \;\;\;\forall\;\m\in Q \, .
\end{equation}
\end{definition}

As $\Dm=\nabla^2 (-\bh(\m))\succeq \bI_n$, it follows that \eqref{eq:c-strongly-convex} 
implies ordinary $c$-strong convexity in Euclidean norm. The next proposition connects 
relative strong convexity with the Bregman divergence introduced in Eq.~\eqref{eq:bregman}.
\begin{proposition}[Proposition 1.1 in \cite{lu2018relatively}]
A twice differentiable function 
$\F:Q\to \mathbb R$ is relatively 
$c$-strongly convex if  and only if
\begin{equation}
\label{eq:alt-strongly-convex}
    \F(\m)\geq \F(\n)+\langle \nabla\F(\n),\m-\n\rangle + c \Dh(\m,\n),\quad\quad \forall\m,\n\in Q \, .
\end{equation}

\end{proposition}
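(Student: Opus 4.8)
The plan is to reduce this to the classical first‑order characterization of convexity, after absorbing the multiple of $\D(\m)$ into a shifted potential. First I would introduce
\[
    g(\m):=\F(\m)+c\,h(\m)\,,\qquad \m\in Q\,,
\]
where $h(\m)=\sum_{i=1}^n h(m_i)$ is the concave entropy. On $(-1,1)^n$ one has $\nabla^2\big(-h(\m)\big)=\D(\m)$, hence $\nabla^2 g(\m)=\nabla^2\F(\m)-c\,\D(\m)$, and therefore relative $c$-strong convexity of $\F$ in the sense of \eqref{eq:c-strongly-convex} is \emph{equivalent} to $\nabla^2 g(\m)\succeq 0$ for all $\m\in Q$.

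Next I would invoke the standard equivalence for a twice differentiable function $g$ on a convex domain: $\nabla^2 g\succeq 0$ throughout $Q$ if and only if $g(\m)\ge g(\n)+\langle\nabla g(\n),\m-\n\rangle$ for all $\m,\n\in Q$. Both implications reduce, by restricting to the segment $t\mapsto \n+t(\m-\n)$ (which lies in $Q$ by convexity), to one‑dimensional facts: a second‑order Taylor expansion with integral remainder for the direction Hessian-nonnegative $\Rightarrow$ first‑order inequality, and differentiating the one‑dimensional restriction twice for the converse. Finally, substituting $\nabla g(\n)=\nabla\F(\n)+c\,\nabla h(\n)$ and $g=\F+c\,h$ into this first‑order inequality and rearranging yields
\[
    \F(\m)\ge \F(\n)+\langle\nabla\F(\n),\m-\n\rangle+c\big(-h(\m)+h(\n)+\langle\nabla h(\n),\m-\n\rangle\big)\,,
\]
whose bracketed term is exactly $\Dh(\m,\n)$ by the definition \eqref{eq:bregman}. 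Since every step in this chain is an equivalence, this establishes \eqref{eq:alt-strongly-convex}.

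I do not anticipate a genuine obstacle; the only delicate point is sign bookkeeping. Because $h$ is concave, it is $-h$ — not $h$ — that is the convex reference function generating the Bregman divergence, so $\Dh=D_{-h}\ge 0$; this is consistent with relative $c$-strong convexity being a strengthening of ordinary convexity, since $\D(\m)\succeq\bI_n$ forces $\nabla^2\F(\m)\succeq c\,\bI_n$.
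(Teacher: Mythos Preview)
Your argument is correct; this is exactly the standard reduction: relative $c$-strong convexity of $\F$ with respect to $-h$ is the same as ordinary convexity of $\F+ch$, and unwinding the first-order characterization of convexity of $\F+ch$ yields the Bregman inequality~\eqref{eq:alt-strongly-convex}. The paper does not supply its own proof of this proposition --- it simply cites \cite{lu2018relatively} --- so there is nothing to compare against beyond noting that your proof is the expected one.
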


\begin{lemma}
\label{lem:Bregman}
For $\m,\n\in (-1,1)^n$, 
\begin{align}
    \label{eq:Bregman-bigger}
    \Dh(\m,\n) &\geq \frac{\|\m-\n\|_2^2}{2} \, ,\\
    \label{eq:Bregman-bound}
    \Dh(\m,\n)&\leq 10 n\left(1+\frac{\|\atanh(\n)\|_2}{\sqrt{n}}\right)\, ,\\
\label{eq:Bregman-Lip}
    \Dh(\m,\n)&\leq \|\atanh(\m)-\atanh(\n)\|_2^2 \, .
\end{align}

\end{lemma}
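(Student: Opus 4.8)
\textbf{Proof proposal for Lemma~\ref{lem:Bregman}.}

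The plan is to establish the three bounds on the Bregman divergence $\Dh(\m,\n)$ by direct computation, exploiting the fact that $\bh(\m)=\sum_{i=1}^n h(m_i)$ is separable, so that $\Dh(\m,\n)=\sum_{i=1}^n D_i$ where $D_i=-h(m_i)+h(n_i)+h'(n_i)(m_i-n_i)$ is the one-dimensional Bregman divergence of the single-variable function $h$. Since $-h$ is convex with $(-h)''(m)=(1-m^2)^{-1}$, each $D_i$ is nonnegative, and the whole lemma reduces to one-dimensional estimates on $D_i$ in terms of $m_i$, $n_i$ and $\atanh(n_i)$ (recalling $h'(n)=-\atanh(n)$, so $\nabla\bh(\n)=-\atanh(\n)$).

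First I would prove \eqref{eq:Bregman-bigger}. By Taylor's theorem with integral remainder, $D_i=\int_0^1 (1-\theta)\,(1-m_i(\theta)^2)^{-1}\,(m_i-n_i)^2\,\rmd\theta$ along the segment $m_i(\theta)=n_i+\theta(m_i-n_i)$; since $(1-x^2)^{-1}\ge 1$ for $x\in(-1,1)$, we get $D_i\ge \frac{1}{2}(m_i-n_i)^2$, and summing over $i$ gives \eqref{eq:Bregman-bigger}. Next, \eqref{eq:Bregman-Lip}: changing variables to $u_i=\atanh(m_i)$, $v_i=\atanh(n_i)$, the map $u\mapsto -h(\tanh u)$ has second derivative $\tanh'(u)=1-\tanh^2(u)\le 1$, and $D_i$ equals the Bregman divergence of this composed function evaluated at $(u_i,v_i)$ (using the chain rule, $h'(n_i)=-v_i$ and $\tfrac{\rmd}{\rmd v}(-h(\tanh v))=v$); by $1$-smoothness of $u\mapsto -h(\tanh u)$ this Bregman divergence is at most $\tfrac12(u_i-v_i)^2$, in fact $\le (u_i-v_i)^2$, so $D_i\le(\atanh(m_i)-\atanh(n_i))^2$, and summing gives \eqref{eq:Bregman-Lip}. (One should double-check whether the intended constant is $\tfrac12$ or $1$; the stated bound with constant $1$ follows a fortiori.)

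For \eqref{eq:Bregman-bound} I would bound each $D_i$ crudely. Write $D_i=[h(n_i)-h(m_i)]+\atanh(n_i)(n_i-m_i)$ using $h'(n)=-\atanh(n)$. Since $0\le h(m)\le \log 2$ for $m\in[-1,1]$, the bracketed term is at most $\log 2\le 1$ in absolute value, and $|n_i-m_i|\le 2$, so $D_i\le 1+2|\atanh(n_i)|$. Summing over $i$ and applying Cauchy--Schwarz, $\sum_i|\atanh(n_i)|\le\sqrt n\,\|\atanh(\n)\|_2$, which yields $\Dh(\m,\n)\le n+2\sqrt n\,\|\atanh(\n)\|_2\le 10n\big(1+\|\atanh(\n)\|_2/\sqrt n\big)$ with room to spare. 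None of these steps presents a genuine obstacle; the only mild care needed is to keep track of the sign conventions relating $h'$, $\atanh$, and $\nabla\bh$, and to make sure the one-dimensional smoothness/strong-convexity constants (namely $(-h)''\ge 1$ in the $m$-variable and $\le 1$ in the $\atanh$-variable) are invoked consistently.
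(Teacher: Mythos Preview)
Your arguments for \eqref{eq:Bregman-bigger} and \eqref{eq:Bregman-bound} are correct and essentially identical to the paper's. The issue is with \eqref{eq:Bregman-Lip}.

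Your change-of-variables argument contains a computational error. With $g(u):=-h(\tanh u)$ you assert $g'(v)=v$ and $g''(u)=\tanh'(u)$. In fact, since $-h'(\tanh u)=u$ but the chain rule contributes an extra factor $\tanh'(u)$, one has
\[
g'(u)=u\,\tanh'(u)=u(1-\tanh^2 u),\qquad g''(u)=(1-\tanh^2 u)\bigl(1-2u\tanh u\bigr),
\]
which is \emph{negative} for $|u|$ moderately large (e.g.\ $u=1$), so $g$ is not even convex. Consequently $D_i$ is \emph{not} the Bregman divergence of $g$ at $(u_i,v_i)$: that identity would require $\tanh(u_i)-\tanh(v_i)=\tanh'(v_i)(u_i-v_i)$, which is only a first-order approximation.

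Your underlying idea can be rescued by using the Legendre dual rather than the composition. The function $(-h)^*(u)=\log(2\cosh u)$ \emph{does} satisfy $((-h)^*)''(u)=1-\tanh^2 u\le 1$, and Bregman duality gives the exact identity $D_{-h}(m_i,n_i)=D_{(-h)^*}\bigl(\atanh(n_i),\atanh(m_i)\bigr)$, from which $D_i\le \tfrac12(u_i-v_i)^2$ follows. The paper's own route is more direct: concavity of $h$ yields $\Dh(\m,\n)\le \langle \nabla h(\n)-\nabla h(\m),\m-\n\rangle=\langle \atanh(\m)-\atanh(\n),\m-\n\rangle$, and then $(m_i-n_i)$ has the same sign as, and is bounded in absolute value by, $\atanh(m_i)-\atanh(n_i)$ since $\tanh$ is $1$-Lipschitz and monotone, giving each summand $\le (\atanh(m_i)-\atanh(n_i))^2$.
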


\begin{proof}

Observe that $h''(x)=-1/(1-x^2)\leq -1$ for all $x\in (-1,1)$ with equality if and only if $x=0$. 
Therefore
\begin{align*}
    \Dh(\m,\n)&=
    \sum_{i=1}^n \int_{m_i}^{n_i}  (x-m_i) (-h''(x))\, \rmd x \\
    & = \sum_{i=1}^n \frac{(n_i-m_i)^2}{2} \, .
\end{align*}
This proves Eq.~\eqref{eq:Bregman-bigger}.

 Next, Eq.~\eqref{eq:Bregman-bound} follows from Eq.~\eqref{eq:bregman} and the fact that the binary 
 entropy $h:\mathbb R\to\mathbb R$ is uniformly bounded. 
 
 Finally Eq.~\eqref{eq:Bregman-Lip} follows from
\begin{align*}
    \Dh(\m,\n)&\le \<\nabla h(\n)-\nabla h(\m), \m-\n\>\\
    & = \big\langle \atanh(\m)-\atanh(\n),\m-\n\big\rangle \\
    & \leq \big\|\atanh(\m)-\atanh(\n)\big\|_2^2\, .
\end{align*}
Here in the last step we used that $\tanh(\cdot)$ is $1$-Lipschitz.
\end{proof}

\begin{lemma}
If $\F:Q\to \mathbb R$ is relatively $c$-strongly convex for some convex set $Q\subseteq (-1,1)^n$,
 and $\nabla \mathcal F(\m_*)=0$ for $\m_*\in Q$, it follows that 
\[
    \F(\m)-\F(\m_*)\geq \frac{c\|\m-\m_*\|_2^2}{2} \, .
\]
for all $\m\in Q$.
\end{lemma}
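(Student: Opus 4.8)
The statement follows immediately by combining the two ingredients already established just above in the excerpt: the Bregman characterization of relative strong convexity (Proposition 1.1 of \cite{lu2018relatively}) and the lower bound on the Bregman divergence in Lemma~\ref{lem:Bregman}, namely $\Dh(\m,\n)\geq \|\m-\n\|_2^2/2$.

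First I would invoke the alternative characterization \eqref{eq:alt-strongly-convex}: since $\F$ is relatively $c$-strongly convex on the convex set $Q$, for every $\m,\n\in Q$ we have $\F(\m)\geq \F(\n)+\langle \nabla\F(\n),\m-\n\rangle + c\,\Dh(\m,\n)$. Specializing to $\n=\m_*$ and using the hypothesis $\nabla\F(\m_*)=0$, the linear term vanishes, leaving $\F(\m)-\F(\m_*)\geq c\,\Dh(\m,\m_*)$ for all $\m\in Q$. Then I would apply \eqref{eq:Bregman-bigger}, which gives $\Dh(\m,\m_*)\geq \|\m-\m_*\|_2^2/2$, and conclude $\F(\m)-\F(\m_*)\geq \tfrac{c}{2}\|\m-\m_*\|_2^2$, as claimed.

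There is essentially no obstacle here; the only point worth a line of care is that applying \eqref{eq:alt-strongly-convex} at $\n=\m_*$ requires $\m_*\in Q$, which is part of the hypothesis, and that both $\m$ and $\m_*$ lie in $(-1,1)^n$ so that $\Dh$ and $\nabla\F$ are well defined there — both guaranteed since $Q\subseteq(-1,1)^n$. Optionally one could give a self-contained argument bypassing \cite{lu2018relatively}: write $\F(\m)-\F(\m_*)=\int_0^1 (1-s)\,\langle \m-\m_*,\nabla^2\F(\m_*+s(\m-\m_*))(\m-\m_*)\rangle\,\mathrm{d}s$ by Taylor expansion with the $\nabla\F(\m_*)=0$ term dropped, then bound the integrand below using $\nabla^2\F(\cdot)\succeq c\,\D(\cdot)\succeq c\,\bI_n$ (the last step because $\D(\m)\succeq \bI_n$, as noted after Definition~\ref{defn:c-strongly-convex}), which yields the same conclusion; I would keep the short Bregman-based proof as the main one.
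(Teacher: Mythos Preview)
Your proposal is correct and matches the paper's proof essentially line for line: the paper also combines \eqref{eq:alt-strongly-convex} at $\n=\m_*$ (using $\nabla\F(\m_*)=0$) with the Bregman lower bound \eqref{eq:Bregman-bigger}, just written as a chain of ratio inequalities rather than spelled out. Your optional Taylor-expansion alternative is fine but not needed.
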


\begin{proof}

% We recall the equivalence of \cite[Definition 12]{lu2018relatively} with the strict convexity condition
% \[
% c \Dm \preceq \nabla^2 \mathcal F(\m).
% \] 
Using \eqref{eq:alt-strongly-convex} and \eqref{eq:Bregman-bigger}, and observing that $\nabla \mathcal F(\m_*)=0$, we obtain
\[
    \frac{\mathcal F(\m)-\mathcal F(\m_*)}{\|\m-\m_*\|_2^2}
    \geq
    \frac{\mathcal F(\m)-\mathcal F(\m_*)}{2\cdot\Dh(\m,\m_*)}
    \geq 
    \frac{c}{2} \, .
\]
\end{proof}

\begin{lemma}
\label{lem:Boundary}
Suppose $\F:Q_*\to \mathbb R$ is $c$-strongly convex in the convex set $Q_* :=B(\m_*,\rho)\cap (-1,1)^n$.
If $\bx_*\in \partial Q_*$, $x_{*,k} =+1$ (respectively, $x_{*,k}=-1$) and
 $|x_j|<1$ for all $j\in[n]\setminus\{k\}$,
then $\lim_{t\to 0+}\partial_{x_k}\F(\bx_*-t\bfe_k) = +\infty$
(respectively $\lim_{t\to 0+}\partial_{x_k}\F(\bx_*+t\bfe_k) = -\infty$.)
\end{lemma}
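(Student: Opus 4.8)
The plan is to isolate the single coordinate along which we approach the boundary and show the gradient of $\widehat{\cuF}_{\sTAP}$ blows up there, because the only term in $\nabla\widehat{\cuF}_{\sTAP}$ that is unbounded as $\m\to\partial(-1,1)^n$ is the entropy term $\atanh(m_k)$; all other terms ($-\beta\nabla H_n(\m)$, $-\by$, the linear-in-$\m$ Onsager correction, and the quadratic penalty $\frac{\Treg\beta}{2}(Q(\m)-q)\m$) are continuous and hence bounded on the compact set $[-1,1]^n$. So first I would write, using the gradient formula \eqref{eq:grad-F},
\[
\partial_{x_k}\widehat{\cuF}_{\sTAP}(\bx;\by,q) = -\beta (\nabla H_n(\bx))_k - y_k + \atanh(x_k) + \Big(\beta^2(1-q)\xi''(q)+\tfrac{\Treg\beta}{2}(Q(\bx)-q)\Big)x_k\, ,
\]
and evaluate this at $\bx = \bx_*\mp t\bfe_k$ (sign chosen per the two cases).

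Next I would argue that each of the ``regular'' terms stays bounded as $t\to 0+$: the map $\bx\mapsto\nabla H_n(\bx)$ is continuous (indeed smooth) on $[-1,1]^n$, so $(\nabla H_n(\bx_*\mp t\bfe_k))_k$ converges to a finite limit; $y_k$ is a fixed constant; and $Q(\bx_*\mp t\bfe_k)=\frac1n\|\bx_*\mp t\bfe_k\|_2^2$ converges to $Q(\bx_*)$, while $x_k = x_{*,k}\mp t \to \pm 1$, so the last bracketed term also converges. Then I would observe that in the case $x_{*,k}=+1$ we have $x_k = 1-t\uparrow 1$ as $t\to 0+$, hence $\atanh(x_k)=\atanh(1-t)\to+\infty$; combined with the boundedness of the remaining terms this gives $\partial_{x_k}\widehat{\cuF}_{\sTAP}(\bx_*-t\bfe_k;\by,q)\to+\infty$. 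The case $x_{*,k}=-1$ is identical after replacing $t$ by $-t$: $x_k=-1+t\downarrow -1$, so $\atanh(x_k)\to-\infty$ and the whole partial derivative $\to-\infty$. The hypothesis that $|x_{*,j}|<1$ for $j\neq k$ is only used to guarantee that $\bx_*\mp t\bfe_k$ lies in $(-1,1)^n$ for small $t>0$ so that the formula \eqref{eq:grad-F} is valid (equivalently, that the point is in the domain $Q_*\subseteq(-1,1)^n$ where $\F=\widehat{\cuF}_{\sTAP}$ is defined), and it identifies $k$ as the unique boundary-active coordinate of $\bx_*$.

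There is essentially no obstacle here — the lemma is a one-line consequence of the structure of the gradient — so the only thing to be careful about is bookkeeping: confirming that $\bx_*-t\bfe_k$ (resp.\ $\bx_*+t\bfe_k$) is interior for $t\in(0,t_0)$ with $t_0$ depending on $\min_{j\neq k}(1-|x_{*,j}|)$ and on whether $\rho$ is large enough that this segment stays in $B(\m_*,\rho)$, and stating the limit as $t\to 0+$ through positive values only. I would write the two cases in parallel with a single computation and a sign convention, rather than duplicating the argument. No use of the strong-convexity hypothesis is actually needed for the conclusion as stated, though it is harmless to keep it in the hypothesis list since the lemma is invoked in that setting.
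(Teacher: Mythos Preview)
Your argument is correct for the specific function $\widehat{\cuF}_{\sTAP}$, but the lemma is stated for an \emph{arbitrary} relatively $c$-strongly convex $\F$ on $Q_*$, not just for the TAP functional. You have implicitly specialized to $\F=\widehat{\cuF}_{\sTAP}$ by invoking the explicit gradient formula \eqref{eq:grad-F}; for a general $\F$ there is no $\atanh$ term to point to, and your claim that ``no use of the strong-convexity hypothesis is actually needed'' is false at that level of generality (indeed, a merely smooth $\F$ on $(-1,1)^n$ need not have diverging partial derivatives at the boundary).

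The paper's proof instead uses the hypothesis directly: it parametrizes the segment $\bx(s)=\bx_*-(t_0-s)\bfe_k$, writes
\[
\partial_{x_k}\F(\bx(s))=\partial_{x_k}\F(\bx(0))+\int_0^s\langle\bfe_k,\nabla^2\F(\bx(u))\bfe_k\rangle\,\de u,
\]
and applies the relative strong convexity $\nabla^2\F(\m)\succeq c\,\D(\m)$ to get the integrand bounded below by $c(1-x_k(u)^2)^{-1}$, whose integral diverges as $s\uparrow t_0$. This is the right abstract argument and is exactly where the hypothesis is used. Your approach does cover every application in the paper (since Lemma~\ref{lem:approx-stationary-pt-to-minimizer} and its consequences are ultimately only applied with $\F=\widehat{\cuF}_{\sTAP}(\cdot;\by,q)$), so it is a valid shortcut for the paper's purposes, but it does not prove the lemma as stated.
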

\begin{proof}
Consider the case $x_k=+1$ (as the case $x_k=-1$ follows by symmetry.)
Then there exists $t_0>0$ such that $\bx_*-t\bfe_k\in Q_*$ for all $t\in (0,t_0]$.
 Let $\bx(s) := \bx_*-(t_0-s)\bfe_k$, $s\in [0,t_0)$.
 Then 
 \begin{align*}
 \partial_{x_k}F(\bx(s))&= \partial_{x_k}F(\bx(0))+\int_0^s \partial_{x_k}^2F(\bx(u))\, \de u\\
 & = \partial_{x_k}F(\bx(0))+\int_0^s \<\bfe_k,\nabla^2F(\bx(u))\bfe_k\>\, \de u\\
 & \ge \partial_{x_k}F(\bx(0))+c\int_0^s (1-x_k(u)^2)^{-1}\, \de u\\
 & \ge \partial_{x_k}F(\bx(0))+c \int_0^s (1-(1-t_0+u)^2)^{-1}\, \de u, .
 \end{align*}
 The last integral diverges as $s\uparrow t_0$, thus proving the claim.
\end{proof}

\begin{lemma}
\label{lem:approx-stationary-pt-to-minimizer}
Suppose $\F:Q\to \mathbb R$ is $c$-strongly convex for a convex set $Q\subseteq (-1,1)^n$. Moreover suppose that 
\[
    \|\nabla \mathcal F(\m)\|\leq c\sqrt{\eps n}
\]
for some $\m\in Q$ with 
\[
    B\left(\m,2\sqrt{\eps n}\right)\cap (-1,1)^n\subseteq Q \, .
\]
Then there exists a unique
$\m_*\in \Ball^n\left(\m,2\sqrt{\eps n}\right)\cap (-1,1)^n$ satisfying $\nabla \F(\m_*)=0$, which 
is in fact a global minimizer of $\F$ on $Q$. Moreover
\begin{equation}
\label{eq:approx-stationary-approx-opt}
    \F(\m)-\F(\m_*)\leq 2c \eps n \, .
\end{equation}

\end{lemma}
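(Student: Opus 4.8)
The plan is to exploit relative $c$-strong convexity of $\F$ on $Q$ together with the assumed smallness of $\|\nabla\F(\m)\|$ to show that a gradient-flow-type argument cannot escape the ball $\Ball^n(\m,2\sqrt{\eps n})$, and then use strong convexity to conclude uniqueness and the quantitative optimality bound. First I would set $Q_* := \Ball^n(\m,2\sqrt{\eps n})\cap(-1,1)^n$, which is convex and, by hypothesis, contained in $Q$. The key observation is that on $Q_*$ the function $\F$ is relatively $c$-strongly convex, hence in particular $c$-strongly convex in Euclidean norm (since $\D(\m)\succeq\bI_n$). I would then argue that $\F$ attains its minimum over the \emph{closed} ball $\overline{\Ball^n(\m,2\sqrt{\eps n})}\cap[-1,1]^n$ at an interior point $\m_*$ of $(-1,1)^n$. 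For this I invoke Lemma~\ref{lem:Boundary}: if a putative minimizer $\bx_*$ lay on a face $x_{*,k}=\pm1$ of the cube (with other coordinates interior), the directional derivative of $\F$ pointing back into the cube would tend to $-\infty$ (for $x_{*,k}=+1$, $\partial_{x_k}\F\to+\infty$ as one approaches from below, so moving inward strictly decreases $\F$), contradicting minimality; the case of several coordinates hitting $\pm1$ is handled by the same argument applied coordinatewise. And the minimizer cannot lie on the sphere $\|\bx-\m\|_2=2\sqrt{\eps n}$ either, by the following computation.

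Next I would verify that the minimizer over $Q_*$ is in fact \emph{strictly inside} the ball of radius $2\sqrt{\eps n}$, so that it is an unconstrained stationary point. Suppose for contradiction the minimizer $\m_*$ satisfies $\|\m_*-\m\|_2 = 2\sqrt{\eps n}$. Using relative $c$-strong convexity in the form of Eq.~\eqref{eq:alt-strongly-convex} with $\n=\m$, $\m\mapsto\m_*$, together with the lower bound $\Dh(\m_*,\m)\geq \|\m_*-\m\|_2^2/2$ from Eq.~\eqref{eq:Bregman-bigger}, we get
\[
\F(\m_*) \geq \F(\m) + \langle\nabla\F(\m),\m_*-\m\rangle + \frac{c}{2}\|\m_*-\m\|_2^2
\geq \F(\m) - \|\nabla\F(\m)\|\,\|\m_*-\m\|_2 + \frac{c}{2}\|\m_*-\m\|_2^2.
\]
Plugging in $\|\nabla\F(\m)\|\leq c\sqrt{\eps n}$ and $\|\m_*-\m\|_2 = 2\sqrt{\eps n}$ yields $\F(\m_*)\geq \F(\m) - 2c\eps n + 2c\eps n = \F(\m)$, so $\m_*$ does no better than $\m$ itself, which lies in the open ball; hence the minimum over $Q_*$ is attained at an interior point of the ball (and of the cube, by the previous paragraph). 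Therefore $\nabla\F(\m_*)=0$.

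Finally, uniqueness of such a stationary point in $Q_*$ (indeed in all of $Q$, and the fact that $\m_*$ is the global minimizer of $\F$ on $Q$) follows from strict convexity: a $c$-strongly convex function on a convex set has at most one critical point, which is its unique global minimizer. For the quantitative bound \eqref{eq:approx-stationary-approx-opt}, I apply the chain of inequalities displayed just above with the roles reversed — namely use Eq.~\eqref{eq:alt-strongly-convex} at $\n=\m_*$ (where $\nabla\F(\m_*)=0$) to get $\F(\m)\geq\F(\m_*)$, and use instead the convexity/gradient bound at $\m$ together with $\|\m-\m_*\|_2\leq 2\sqrt{\eps n}$:
\[
\F(\m) - \F(\m_*) \leq \langle\nabla\F(\m),\m-\m_*\rangle \leq \|\nabla\F(\m)\|\,\|\m-\m_*\|_2 \leq c\sqrt{\eps n}\cdot 2\sqrt{\eps n} = 2c\eps n,
\]
where the first inequality is convexity of $\F$ (the tangent line at $\m$ lies below $\F$, applied at $\m_*$, rearranged). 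This gives exactly \eqref{eq:approx-stationary-approx-opt}. I expect the only delicate point to be the boundary exclusion argument — ensuring the constrained minimizer does not sit on $\partial(-1,1)^n$ — which is precisely what Lemma~\ref{lem:Boundary} was set up to handle, so with that lemma in hand the proof is short; the rest is bookkeeping with the Bregman inequalities of Lemma~\ref{lem:Bregman}.
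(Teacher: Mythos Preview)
Your proposal is correct and follows essentially the same route as the paper's proof: both use the strong-convexity inequality $\F(\bx)\ge\F(\m)+\langle\nabla\F(\m),\bx-\m\rangle+\tfrac{c}{2}\|\bx-\m\|^2$ to confine any candidate minimizer to the ball of radius $2\sqrt{\eps n}$, invoke Lemma~\ref{lem:Boundary} to rule out the cube boundary, appeal to strict convexity for uniqueness, and finish with $\F(\m)-\F(\m_*)\le\|\nabla\F(\m)\|\cdot\|\m-\m_*\|\le 2c\eps n$. The only cosmetic difference is that the paper organizes the argument via the sublevel set $Q_{\le}=\{\bx\in Q:\F(\bx)\le\F(\m)\}$ rather than the closed ball; your phrasing about a ``putative minimizer on a face $x_{*,k}=\pm1$'' should be read as a statement about minimizing sequences (since $\F$ is not defined there), which is exactly how the paper handles it.
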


\begin{proof}
Let $Q_\le :=\{\bx\in Q: \F(\bx)\le \F(\m) \}$. Then, for any $\bx\in Q_0$, we have
\begin{align*}
0&\ge \F(\bx) -\F(\m)\\
& \ge -c\sqrt{\eps n}\|\bx-\m\|_2+c  \Dh(\bx;\m)\\
& \ge  -c\sqrt{\eps n}\|\bx-\m\|_2+\frac{c}{2}\|\bx-\m\|_2^2\, .
\end{align*}
Hence $Q_\le \subseteq Q_* := B\left(\m,\sqrt{\eps n}\right)\cap (-1,1)^n$, $Q_*\subseteq Q$.
By  continuity three cases are possible: $(i)$~The minimum of $\F$ is
achieved in the interior of $Q_{\le}$; $(ii)$~The minimum is achieved along a sequence
$(\bx_i)_{i\ge 0}$, $\|\bx_i\|_{\infty}\to 1$; $(iii)$ the minimum is achieved at $\m_*\neq \m$
such that $\F(\m_*) = \F(\m)$. Case $(iii)$ cannot hold by strong convexity,
and case $(ii)$ cannot hold by Lemma \ref{lem:Boundary}.

Uniqueness of $\m_*$ follows by strong convexity, and 
$\nabla \F(\m_*)=0$ by differentiability.
Finally
\[
    \F(\m)-\F(\m_*)\leq \|\nabla \F(\m)\|\cdot \|\m-\m_*\|\leq 2c\eps n \, .
\]

\end{proof}

\begin{lemma}

Suppose $\F:Q\to \mathbb R$ is relatively $c$-strongly convex. Let $\m_*$ be a local minimum of $\F$ belonging to the interior of $Q$, and suppose that $B\left(\m_*,2\sqrt{\eps n}\right)\cap (-1,1)^n\subseteq Q$. Consider for $\by\in\mathbb R^n$ the function
\[
    \F_{\by}(\m)=\F(\m)-\langle\by,\m\rangle.
\] 
Then $\F_{\by}$ is relatively $c$-strongly convex on $Q$ for any $\by\in \mathbb R^n$. 
If $\|\by\|\leq (c/2)\sqrt{\eps n}$, then $\F_{\by}$ has a unique stationary point and minimizer $\m_*(\by) \in Q$. Moreover if $\|\by\|,\|\hby\|\leq \frac{c\sqrt{\eps n}}{2}$ then
\begin{equation}
\label{eq:local-lip-minimizer}
    \|\m_*(\by)-\m_*(\hby)\| \leq \frac{\|\by-\hby\|}{c}.
\end{equation}
\end{lemma}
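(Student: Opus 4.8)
The plan is to establish three claims: relative $c$-strong convexity of $\F_{\by}$, existence and uniqueness of its minimizer, and the Lipschitz bound \eqref{eq:local-lip-minimizer}. The first claim is immediate: $\nabla^2\F_{\by}(\m)=\nabla^2\F(\m)\succeq c\,\D(\m)$ since the linear term $-\langle\by,\m\rangle$ has vanishing Hessian, so relative $c$-strong convexity of $\F$ transfers to $\F_{\by}$ for every $\by\in\R^n$. For the second claim, I would apply Lemma~\ref{lem:approx-stationary-pt-to-minimizer} to $\F_{\by}$ with base point $\m=\m_*$: since $\m_*$ is a local minimum of $\F$ in the interior of $Q$ we have $\nabla\F(\m_*)=0$, hence $\nabla\F_{\by}(\m_*)=-\by$, and the hypothesis $\|\by\|\le (c/2)\sqrt{\eps n}$ gives $\|\nabla\F_{\by}(\m_*)\|\le \tfrac{c}{2}\sqrt{\eps n}\le c\sqrt{\eps n}$. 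Combined with the containment $B(\m_*,2\sqrt{\eps n})\cap(-1,1)^n\subseteq Q$, Lemma~\ref{lem:approx-stationary-pt-to-minimizer} produces a unique stationary point $\m_*(\by)\in B(\m_*,2\sqrt{\eps n})\cap(-1,1)^n$ which is the global minimizer of $\F_{\by}$ on $Q$.

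For the Lipschitz estimate, fix $\by,\hby$ with $\|\by\|,\|\hby\|\le \tfrac{c}{2}\sqrt{\eps n}$ and write $\m_1=\m_*(\by)$, $\m_2=\m_*(\hby)$. The first-order conditions read $\nabla\F(\m_1)=\by$ and $\nabla\F(\m_2)=\hby$. I would use relative $c$-strong convexity in the two-sided form: applying the inequality \eqref{eq:alt-strongly-convex} at the pair $(\m_2,\m_1)$ and at $(\m_1,\m_2)$ and adding gives
\begin{equation}
\langle \nabla\F(\m_1)-\nabla\F(\m_2),\,\m_1-\m_2\rangle \;\ge\; c\big(\Dh(\m_2,\m_1)+\Dh(\m_1,\m_2)\big)\;\ge\; c\,\|\m_1-\m_2\|_2^2,
\end{equation}
where the last step uses \eqref{eq:Bregman-bigger} of Lemma~\ref{lem:Bregman} on each Bregman term. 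The left-hand side equals $\langle \by-\hby,\m_1-\m_2\rangle\le \|\by-\hby\|\,\|\m_1-\m_2\|$ by Cauchy--Schwarz, so dividing through by $\|\m_1-\m_2\|$ (the case $\m_1=\m_2$ being trivial) yields $\|\m_1-\m_2\|\le \|\by-\hby\|/c$, which is exactly \eqref{eq:local-lip-minimizer}.

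The only subtlety I anticipate is bookkeeping about domains: one must check that $\m_1,\m_2$ both lie in the convex set $Q$ (so that \eqref{eq:alt-strongly-convex} applies along the relevant segments), which is guaranteed by the previous paragraph since both minimizers lie in $B(\m_*,2\sqrt{\eps n})\cap(-1,1)^n\subseteq Q$, and $Q$ is convex so the segment between them stays in $Q$. No genuine obstacle arises; the argument is a standard strong-convexity perturbation bound, with Lemma~\ref{lem:Bregman} supplying the comparison between the Bregman divergence and the squared Euclidean norm.
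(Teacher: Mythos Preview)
Your proposal is correct. The first two claims (relative strong convexity of $\F_{\by}$ and existence/uniqueness of $\m_*(\by)$ via Lemma~\ref{lem:approx-stationary-pt-to-minimizer}) match the paper's argument exactly.

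For the Lipschitz bound \eqref{eq:local-lip-minimizer}, you take a genuinely different route. The paper obtains it by a second application of Lemma~\ref{lem:approx-stationary-pt-to-minimizer}: it observes that $\|\nabla\F_{\hby}(\m_*(\by))\|=\|\by-\hby\|=c\sqrt{\eps' n}$ with $\eps'=\|\by-\hby\|^2/(c^2 n)\le \eps$, checks the required ball containment, and reads off $\|\m_*(\by)-\m_*(\hby)\|\le\sqrt{\eps' n}=\|\by-\hby\|/c$ from the conclusion of that lemma. Your argument instead goes directly through the monotonicity of the gradient: adding the two instances of \eqref{eq:alt-strongly-convex} and invoking \eqref{eq:Bregman-bigger} yields $\langle\by-\hby,\m_1-\m_2\rangle\ge c\|\m_1-\m_2\|^2$, and Cauchy--Schwarz finishes. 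Your approach is shorter and avoids the bookkeeping of verifying the ball-containment hypothesis needed to re-invoke Lemma~\ref{lem:approx-stationary-pt-to-minimizer}; the paper's approach has the minor advantage of reusing a single black-box lemma throughout. Both are standard strong-convexity perturbation arguments and neither offers a material advantage over the other.
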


\begin{proof}

The relative $c$-strong convexity of $\F_{\by}$ is clear as the Hessian of $\F_{\by}$ 
does not depend on $\by$. For $\|\by\|\leq (c/2)\sqrt{\eps n}$, because
\[
    \|\nabla \F_{\by}(\m_*)\|= \|\by\| \le \frac{c\sqrt{\eps n}}{2} \quad\text{ and }\quad B\left(\m_*,\sqrt{\eps n}\right)\cap (-1,1)^n\subseteq Q \, ,
\]
Lemma~\ref{lem:approx-stationary-pt-to-minimizer} implies the existence of a unique minimizer
\[
    \m_*(\by)\in B\left(\m_*,\sqrt{\eps n}\right)\cap (-1,1)^n\subseteq Q \, .
\]
If $\|\hby\|\leq (c/2)\sqrt{\eps n}$ also holds, $\F_{\hby}$ is $c$-strongly convex on 
\[
    B\left(\m_*(\hby),\sqrt{\eps n}\right)\cap (-1,1)^n\subseteq \Ball^n\left(\m_*,2\sqrt{\eps n}\right)\cap (-1,1)^n \subseteq Q.
\]
Moreover since $\|\by-\hby\|\leq c\sqrt{\eps n}$, we obtain
\begin{align*}
    \|\nabla \F_{\hby}(\m_*(\by))\| 
    & = 
    \|\by-\hby\|
    = 
    c\sqrt{\eps' n} \, ,
\end{align*}
for $\eps' = \frac{\|\by-\hby\|^2}{c^2 n}\leq \eps$. Therefore the conditions of Lemma~\ref{lem:approx-stationary-pt-to-minimizer} are satisfied with $(\F_{\hby},\m_*(\by),\eps')$ in place of $(\F,\m,\eps)$. Equation~\eqref{eq:local-lip-minimizer} now follows since 
\[
    \|\m_*(\by)-\m_*(\hby)\|\leq \sqrt{\eps' n} = \frac{\|\by-\hby\|}{c} \, .
\]
\end{proof}

We now analyze the convergence of Algorithm~\ref{alg:NGD} from a good initialization.
\begin{lemma}
\label{lem:NGD-convergence-general}
Assume $\F$ has a local minimum at $\m_*$, it
is relatively $c$-strongly-convex on $B(\m_*,\sqrt{\eps n})\cap (-1,1)^n$ and $C$-relatively
 smooth on $(-1,1)^n$, and  
$\max_{\m\in\Ball^n(\sqrt{n})}\|\nabla\F(\m)+\nabla \bh(m)\|_2\le C\sqrt{n}$, for a constant $C$.
  Suppose 
\begin{align}
\label{eq:NGD-Tech}
    \hm^0\in B\left(\m_*,\sqrt{\eps n}\right)\cap (-1,1)^n
\end{align}
satisfies
\begin{equation}
\label{eq:good-init-NGD}
\F(\hm^0)<\F(\m_*)+\frac{c\eps n}{8}.
\end{equation}
Then there exist constants $\eta_0,C'>0$ depending only on $(C,c,\eps)$ such that the following holds.
 If Algorithm~\ref{alg:NGD} is initialized at $\hm^0$ with learning rate $\eta=1/L\in (0,\eta_0)$, 
 then, for every $K\geq 1$ 
\begin{align}
\label{eq:NGD-good-value}
    \F(\hm^K) 
    & \leq 
    \F(\m_*)+C'n\left(1+\frac{\|\atanh(\hm^0)\|_2}{\sqrt{n}}\right)(1-c\eta)^K,\\
\label{eq:NGD-good-approx}
    \|\hm^K-\m_*\|_2 
    & \leq 
    C'\sqrt{n}\left(1+\frac{\|\atanh(\hm^0)\|_2}{\sqrt{n}}\right)(1-c\eta)^{K/2}.
\end{align}
\end{lemma}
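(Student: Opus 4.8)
\textbf{Proof plan for Lemma~\ref{lem:NGD-convergence-general}.}

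The plan is to analyze Algorithm~\ref{alg:NGD} via its mirror-descent description \eqref{eq:NGD} with mirror map $-\bh$, exploiting relative strong convexity and relative smoothness with respect to the Bregman divergence $\Dh$. First I would record the standard one-step descent inequality for relatively smooth functions: since $\F$ is relatively $C$-smooth on $(-1,1)^n$ and $\hm^{+,k+1}$ is the exact minimizer of the surrogate $\bx\mapsto \langle\nabla\F(\hm^{+,k}),\bx-\hm^{+,k}\rangle + L\,\Dh(\bx,\hm^{+,k})$ with $L=1/\eta\geq C$, a three-point identity for the Bregman divergence gives
\[
\F(\hm^{+,k+1}) \leq \F(\hm^{+,k}) - (L-C)\,\Dh(\hm^{+,k+1},\hm^{+,k}) \,.
\]
In particular, for $\eta$ small enough ($L\geq 2C$, say), the objective value $\F(\hm^{+,k})$ is non-increasing. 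Combined with the hypothesis \eqref{eq:good-init-NGD} and the fact that $Q_\le := \{\F\leq \F(\m_*)+\tfrac{c\eps n}{8}\}$ is contained in $B(\m_*,\sqrt{\eps n})\cap(-1,1)^n$ (by the relative $c$-strong convexity argument already used in Lemma~\ref{lem:approx-stationary-pt-to-minimizer}, via \eqref{eq:Bregman-bigger}), every iterate $\hm^{+,k}$ stays in the region of strong convexity. This confinement is the crucial structural point: it lets me use relative $c$-strong convexity at all iterates even though it is only assumed locally.

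Next I would convert the per-step decrease into linear convergence. Inside $Q_\le$, relative $c$-strong convexity in the form \eqref{eq:alt-strongly-convex} applied with $\n=\hm^{+,k}$ and $\m=\m_*$ gives the Polyak–Łojasiewicz-type bound $\F(\hm^{+,k})-\F(\m_*) \leq \tfrac{1}{c}\cdot(\text{something controlling }\Dh(\hm^{+,k+1},\hm^{+,k}))$; more precisely, the standard relatively-smooth/strongly-convex mirror descent argument (as in \cite{lu2018relatively}) yields
\[
\F(\hm^{+,k+1}) - \F(\m_*) \leq (1-c\eta)\big(\F(\hm^{+,k}) - \F(\m_*)\big)\,,
\]
for $\eta\in(0,\eta_0)$ with $\eta_0$ depending only on $(C,c)$. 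Iterating from $k=0$ and bounding the initial gap $\F(\hm^0)-\F(\m_*)$ by $C'n(1+\|\atanh(\hm^0)\|_2/\sqrt{n})$ --- using the relative $C$-smoothness together with the gradient bound $\max_{\m\in\Ball^n(\sqrt n)}\|\nabla\F(\m)+\nabla\bh(\m)\|_2\le C\sqrt n$ and the Bregman bound \eqref{eq:Bregman-bound} --- gives \eqref{eq:NGD-good-value}. Finally, \eqref{eq:NGD-good-approx} follows by feeding this back into the strong-convexity inequality $\tfrac{c}{2}\|\hm^K-\m_*\|_2^2 \le c\,\Dh(\hm^K,\m_*) \leq \F(\hm^K)-\F(\m_*)$, using $\nabla\F(\m_*)=0$ and \eqref{eq:Bregman-bigger}, and taking square roots.

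The main obstacle I anticipate is \emph{not} the convergence rate itself --- that is routine relatively-smooth mirror descent once the pieces are in place --- but rather the bookkeeping that keeps all iterates inside the strong-convexity ball $B(\m_*,\sqrt{\eps n})\cap(-1,1)^n$. One has to verify that relative $C$-smoothness on all of $(-1,1)^n$ (not merely on the ball) is genuinely needed to justify the one-step inequality at the very first step when $\hm^0$ is only assumed in the ball but the minimizer $\hm^{+,1}$ could a priori leave it; the monotonicity $\F(\hm^{+,k+1})\le \F(\hm^{+,k})$ then retroactively confines the entire trajectory to $Q_\le\subseteq B(\m_*,\sqrt{\eps n})$. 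A secondary technical nuisance is controlling $\Dh(\hm^{+,k+1},\hm^{+,k})$ and the initial divergence $\Dh(\m_*,\hm^0)$ near the boundary of the cube, where $\atanh$ blows up; this is exactly what the bound \eqref{eq:Bregman-bound} and the gradient hypothesis are designed to handle, so the dependence on $\|\atanh(\hm^0)\|_2/\sqrt n$ in the conclusion is unavoidable and should be carried explicitly throughout.
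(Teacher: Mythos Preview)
Your overall architecture---mirror-descent description, monotonicity from relative smoothness, linear rate from relative strong convexity, then conversion to distance via \eqref{eq:Bregman-bigger}---matches the paper's proof. The one genuine gap is your confinement argument. You assert that the sublevel set $Q_\le=\{\F\le \F(\m_*)+c\eps n/8\}$ is contained in $B(\m_*,\sqrt{\eps n})\cap(-1,1)^n$, and that monotonicity of $\F$ therefore traps the trajectory. But relative $c$-strong convexity is only assumed on the ball, so the inclusion you want holds only for the \emph{connected component} of $Q_\le$ through $\m_*$; nothing rules out other low-value regions far away. Since the mirror step minimizes over all of $(-1,1)^n$, $\hm^{+,1}$ could in principle jump to such a region, and monotonicity alone does not bring it back.

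The paper closes this gap by a per-step displacement bound, and this is exactly where the hypothesis $\max_{\m}\|\nabla\F(\m)+\nabla\bh(\m)\|_2\le C\sqrt{n}$ enters. From the optimality of $\hm^{i+1}$ one has $\langle\nabla\F(\hm^i),\hm^{i+1}-\hm^i\rangle+L\,\Dh(\hm^{i+1},\hm^i)\le 0$. Rewriting $\Dh$ via its definition and using the gradient bound plus boundedness of $\bh$ gives $\frac{L-1}{2}\|\hm^{i+1}-\hm^i\|^2\le C_1 n(1+\|\hm^{i+1}-\hm^i\|/\sqrt n)$, hence $\|\hm^{i+1}-\hm^i\|\le C_3\sqrt{n}/\sqrt{L-1}$. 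Inductively: if $\hm^i$ is in the ball with $\F(\hm^i)\le\F(\m_*)+c\eps n/8$, strong convexity forces $\|\hm^i-\m_*\|\le\sqrt{\eps n}/2$; for $L$ large (i.e.\ $\eta$ small) the step bound then keeps $\hm^{i+1}\in B(\m_*,\sqrt{\eps n})$, and monotonicity closes the induction. So the gradient hypothesis is needed not merely for bounding the initial Bregman divergence (your ``secondary nuisance''), but essentially for the confinement itself. Once you incorporate this step-size bound, the rest of your plan goes through essentially as in the paper.
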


\begin{proof}
Recall Eq.~\eqref{eq:NGD}, which we copy here for the reader's convenience:
\begin{equation}
\label{eq:NGD-App}
    \hm^{i+1}=\argmin_{\bx\in (-1,1)^n} \big\langle \nabla \F(\hm^i),\bx-\hm^{i} \big\rangle + L\cdot \Dh(\bx,\hm^{i}).
\end{equation} 

If $\eta_0\leq \frac{1}{2C}$ then \cite[Lemma 3.1]{lu2018relatively} applied to the linear
 (hence convex) function $\langle \nabla\F(\hm^i),\,\cdot\,\rangle$ states that for all
  $\m\in (-1,1)^n$,
\begin{equation}
\label{eq:3-point}
    \langle \nabla\F(\hm^i),\hm^{i+1}\rangle + L\Dh(\hm^{i+1},\hm^i)+L\Dh(\m,\hm^{i+1})\leq \langle  \nabla \F(\hm^i),\m\rangle + L\Dh(\m,\hm^i).
\end{equation}
Moreover the global relative smoothness shown in \eqref{eq:global-smooth} implies that for $\m,\m'\in (-1,1)^n$,
\begin{equation}
\label{eq:global-smooth-bregman}
\F(\m)\leq \F(\m')+\langle\nabla\F(\m'),\m-\m'\rangle+ C\cdot\Dh(\m,\m').
\end{equation}
Combining  Eqs.~\eqref{eq:3-point} and \eqref{eq:global-smooth-bregman} yields
\begin{equation}
\begin{aligned}
\label{eq:weird-ineq}
\F(\hm^{i+1}) & \leq \F(\hm^i)+\langle \nabla \F(\hm^i),\hm^{i+1}-\hm^i\rangle + L\Dh(\hm^{i+1},\hm^i)\\
&\leq \F(\hm^i)+\langle \nabla \F(\hm^i),\m-\hm^i\rangle + L\Dh(\m,\hm^i)-L\Dh(\m,\hm^{i+1}).
\end{aligned}
\end{equation}
Setting $\m=\hm^i$, we find
\[
    \F(\hm^{i+1})\leq \F(\hm^i),\quad \forall ~i\in [K].
\] 
We next prove by induction that for each $i\geq 1$, 
\begin{equation}
\label{eq:continue-good-NGD}
\F(\hm^i)<\F(\m_*)+\frac{c\eps n}{8},
\quad\quad 
\|\hm^i-\m_*\|<\sqrt{\eps n}.
\end{equation}
The base case $i=0$ holds by assumption. Suppose \eqref{eq:continue-good-NGD} holds for $i$. It follows that
\[
    \F(\hm^{i+1})\leq \F(\hm^i)\leq \F(\m_*)+\frac{c\eps n}{8}.
\] 
In fact, local $c$-strong convexity
\[
    \nabla^2 \F(\m) \succeq c\Dm \succeq c\bI_n,\quad \m\in B(\m_*,\sqrt{\eps n})\cap (-1,1)^n
\]
implies $\hm^i$ is even closer to $\m_*$ than required by \eqref{eq:continue-good-NGD}:
\[
    \|\hm^i - \m_*\|_2 \leq \sqrt{\frac{\F(\hm^i)-\F(\m_*)}{c}}\leq \frac{\sqrt{\eps n}}{2}.
\]
Next we bound the movement from a single NGD step. Comparing values of \eqref{eq:NGD-App} at $\hm^i$ and the minimizer $\hm^{i+1}$ implies
\begin{equation}
\label{eq:another-ineq}
    \langle \nabla \F(\hm^i),\hm^{i+1}-\hm^i\rangle + L\Dh(\hm^{i+1},\hm^i)\leq 0.
\end{equation}
From definition of Bregman divergence and the fact that 
$\max_{\m\in\Ball^n(\sqrt{n})}\|\nabla\F(\m)+\nabla \bh(m)\|_2\le C\sqrt{n}$,
\begin{align*}
    |\langle \nabla \F(\hm^i),\hm^{i+1}-\hm^i\rangle + \Dh(\hm^{i+1},\hm^i)| 
    & = 
    |\langle \nabla \F(\hm^i)+\nabla \bh(\hm^i),\hm^{i+1}-\hm^i\rangle - \bh(\hm^{i+1})+\bh(\hm^i)|\\
    & \leq C_1 n\left(1+\frac{\|\hm^{i+1}-\hm^i\|}{\sqrt{n}}\right).  
\end{align*}
Moreover assuming $L>1$, \eqref{eq:Bregman-bigger} implies
\[
    (L-1) \Dh(\hm^{i+1},\hm^i)\geq \frac{L-1}{2} \|\hm^{i+1}-\hm^i\|^2.
\]
Substituting the previous two displays into \eqref{eq:another-ineq} yields
\[
    0\geq \frac{L-1}{2} \|\hm^{i+1}-\hm^i\|^2 - C_2\sqrt{n} \|\hm^{i+1}-\hm^i\|_2 - C_2n
\]
and so
\[
    \|\hm^{i+1}-\hm^i\|_2\leq \frac{C_3\sqrt{n}}{\sqrt{L-1}}.
\]
Taking $L$ large enough, it follows that 
\[
    \|\hm^{i+1}-\m_*\|\leq \|\hm^{i+1}-\hm^i\|_2 + \|\hm^{i}-\m_*\|_2\leq \sqrt{\eps n}.
\]
This completes the inductive proof of Eq.~\eqref{eq:continue-good-NGD}, which we now use to 
analyze convergence of Algorithm~\ref{alg:NGD}. Indeed from the first part of \eqref{eq:continue-good-NGD}, the local relative strong convexity of $\F$ implies
\[
    \F(\hm^i)+\langle \nabla\F(\hm^i),\m_*-\hm^i\rangle\leq \F(\m_*)-c\Dh(\m_*,\hm^i),\quad\quad \forall~i\in [K].
\]
Setting $\m=\m_*$ in \eqref{eq:weird-ineq} and combining yields
\[
    \F(\hm^{i+1})\leq \F(\m_*)+(L-c)\Dh(\m_*,\hm^i)-L\Dh(\m_*,\hm^{i+1}).
\]
Multiplying by $\left(\frac{L}{L-c}\right)^{i+1}$ and summing over $i$ gives 
\[
    \sum_{i=0}^{K-1} \left(\frac{L}{L-c}\right)^{i+1}\F(\hm^{i+1})\leq \sum_{i=0}^{K-1} \left(\frac{L}{L-c}\right)^{i+1} \F(\m_*)+L\Dh(\m_*,\hm^0).
\]
Since the values $\F(\hm^i)$ are decreasing, we find
\begin{align*}
    \F(\hm^K) 
    & \leq 
    \F(\m_*)+L\left(\sum_{i=0}^{K-1}\left(\frac{L}{L-c}\right)^{i+1}\right)^{-1} \Dh(\m_*,\hm^0)\\
    & \leq 
    \F(\m_*)+L\left(1-c\eta\right)^{K} \Dh(\m_*,\hm^0).
\end{align*}
Using Eq.~\eqref{eq:Bregman-bound} together with the last display
proves Eq.~\eqref{eq:NGD-good-value}.

 It was shown above by induction that $\hm^K$ is in a $c$-strongly convex neighborhood of $\m_*$. 
 Using strong convexity in Euclidean norm yields
\[
    \|\hm^k-\m_*\|\leq \sqrt{\frac{\F(\hm^K)-\F(\m_*)}{c}}
\]
and so \eqref{eq:NGD-good-approx} follows as well. 
\end{proof}

\begin{lemma}
There exist constants $C_0$, $C_1$ depending uniquely on $\beta$, $L$, $\xi$
such that the following holds with probability at least $1-C_0\exp(-n/C_0)$.
 For any $\m_1,\m_2\in (-1,1)^n$, and $\by_1,\by_2\in\mathbb R^n$, 
and $q\in [0,1]$
\begin{align}
    \label{eq:grad-F-atanh}
    &\|\nabla \hcuF_{\sTAP}(\m_1;\by_1,q)-\nabla \hcuF_{\sTAP}(\m_2;\by_2,q)\| 
    \leq 
    C_1\|\atanh(\m_1)-\atanh(\m_2)\| + \|\by_1-\by_2\|,\\
 \label{eq:grad-F-Bis}
     &\max_{\m\in\Ball^n(\sqrt{n})}\|\nabla\hcuF_{\sTAP}(\m_1;\by_1,q)+\nabla \bh(\m_1)\|_2\le C\sqrt{n}\, .
    % \\
    % \label{eq:F-atanh}
    % \|\F_{\sTAP}(\m,\by;q)-\F_{\sTAP}(\n,\hby;q)\|
    % &\leq \big((4\beta^2+4) n^{1/2} + \|\by\|\big)\|\atanh(\m)-\atanh(\n)\|+n^{1/2}\|\by-\hby\|.
\end{align}
\end{lemma}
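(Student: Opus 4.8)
The plan is to read the two inequalities directly off the closed form~\eqref{eq:grad-F} of $\nabla\hcuF_{\sTAP}$, using the elementary identity $\nabla\bh(\m)=-\atanh(\m)$ (so that the entropy gradient cancels exactly the $\atanh(\m)$ term in $\nabla\hcuF_{\sTAP}$ appearing in the second display), together with the crude regularity of the Hamiltonian already recorded in Lemma~\ref{lemma:LipGrad}. Everything is deterministic once we work on the event $\cuG(J_*)$ of that lemma, whose complement has probability at most $2\exp(-nC_0)$. On $\cuG(J_*)$ the map $\m\mapsto\nabla H_n(\m)$ is $J_*$-Lipschitz on $[-1,1]^n$, and since $\nabla H_n(\bfzero)=\bfzero$ (the Hamiltonian is a sum of homogeneous polynomials of degree $\ge 2$), this also yields $\|\nabla H_n(\m)\|\le J_*\|\m\|\le J_*\sqrt n$ for all $\m\in\Ball^n(\sqrt n)$.

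For~\eqref{eq:grad-F-atanh}, I would expand the difference $\nabla\hcuF_{\sTAP}(\m_1;\by_1,q)-\nabla\hcuF_{\sTAP}(\m_2;\by_2,q)$ term by term. The $-\by$ term contributes exactly $\|\by_1-\by_2\|$; the $\atanh(\m)$ term contributes $\|\atanh(\m_1)-\atanh(\m_2)\|$; the $-\beta\nabla H_n(\m)$ and the linear $\beta^2(1-q)\xi''(q)\,\m$ terms contribute at most $\beta J_*\|\m_1-\m_2\|$ and $\beta^2\xi''(1)\|\m_1-\m_2\|$ (using $q\in[0,1]$ and monotonicity of $\xi''$ on $[0,1]$). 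Because $\tanh$ is $1$-Lipschitz, $\|\m_1-\m_2\|=\|\tanh(\atanh\m_1)-\tanh(\atanh\m_2)\|\le\|\atanh(\m_1)-\atanh(\m_2)\|$, so all of these are absorbed into $C_1\|\atanh(\m_1)-\atanh(\m_2)\|$. The only genuinely new piece is the quadratic Onsager-replacement term $\tfrac{\Treg\beta}{2}(Q(\m)-q)\,\m$, for which I would write
\[ (Q(\m_1)-q)\m_1-(Q(\m_2)-q)\m_2=\big(Q(\m_1)-Q(\m_2)\big)\m_1+\big(Q(\m_2)-q\big)(\m_1-\m_2), \]
and bound it by $3\|\m_1-\m_2\|\le 3\|\atanh(\m_1)-\atanh(\m_2)\|$ using $\|\m_i\|\le\sqrt n$, $|Q(\m_2)-q|\le 1$, and $|Q(\m_1)-Q(\m_2)|=\tfrac1n|\langle\m_1-\m_2,\m_1+\m_2\rangle|\le\tfrac{2}{\sqrt n}\|\m_1-\m_2\|$. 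Collecting terms gives~\eqref{eq:grad-F-atanh} with $C_1=1+\beta J_*+\beta^2\xi''(1)+\tfrac{3}{2}\Treg\beta$, which depends only on $\beta,\xi,\Treg$.

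For~\eqref{eq:grad-F-Bis}, cancelling $\atanh(\m)$ against $\nabla\bh(\m)$ leaves
\[ \nabla\hcuF_{\sTAP}(\m;\by,q)+\nabla\bh(\m)=-\beta\nabla H_n(\m)-\by+\beta^2(1-q)\xi''(q)\,\m+\tfrac{\Treg\beta}{2}(Q(\m)-q)\,\m, \]
and for $\m\in\Ball^n(\sqrt n)$ we have $\|\m\|\le\sqrt n$, hence $|Q(\m)-q|\le 1$; together with $\|\nabla H_n(\m)\|\le J_*\sqrt n$ on $\cuG(J_*)$ and the standing bound $\|\by\|\le C\sqrt n$, each of the four summands is $O(\sqrt n)$, which is~\eqref{eq:grad-F-Bis}.

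I do not expect a substantive obstacle: this is a bookkeeping lemma feeding Lemma~\ref{lem:NGD-convergence-general}. The one point requiring care is the quadratic term $(Q(\m)-q)\,\m$, whose $\atanh$-Lipschitz control must hold uniformly over all of $(-1,1)^n$ even though $\atanh$ blows up at the boundary; the resolution, as above, is that $\m$ itself always stays bounded while $\tanh$ is a contraction, so no increment of $\m$ can exceed the corresponding increment of $\atanh(\m)$. One should also track that every constant produced depends only on $\beta$, $\xi$ and the regularization parameter $\Treg$ — in particular not on $n$, $q$, or the $\by_i$.
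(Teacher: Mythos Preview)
Your proposal is correct and follows exactly the approach the paper takes: invoke the Lipschitz bound on $\nabla H_n$ from Lemma~\ref{lemma:LipGrad}, plug into the explicit gradient formula~\eqref{eq:grad-F}, and convert every $\|\m_1-\m_2\|$ into $\|\atanh(\m_1)-\atanh(\m_2)\|$ via the $1$-Lipschitzness of $\tanh$. The paper's proof is a terse two sentences that elide the term-by-term bookkeeping (in particular the quadratic $(Q(\m)-q)\m$ piece) which you carry out explicitly; your observation that~\eqref{eq:grad-F-Bis} needs an implicit bound $\|\by\|\le C\sqrt n$ is also correct and reflects a sloppiness in the stated lemma rather than a gap in your argument.
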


% \mscomment{Second part doesn't seem to be used. Leaving for now just in case.}

\begin{proof}
Lemma \ref{lemma:LipGrad} yields that $\m\mapsto \nabla H(\m)$ is Lipschitz continuous.
The claim follows using 
\eqref{eq:grad-F} and the fact that 
 $\m\mapsto \tanh(\m)$ is $1$-Lipschitz. 
 
Equation \eqref{eq:grad-F-Bis} also follows from the Lipschitz property 
of $\nabla H(\m)$, implying that $\|\nabla H(\m)\|/\sqrt{n}$ is bounded.
\end{proof}

\subsection{Proof of Lemma~\ref{lem:local-landscape}}
\label{app:local-landscape}

We split the proof into four parts.

\begin{proof}[Proof of Lemma~\ref{lem:local-landscape}, Part~\ref{it:landscape-basic}]
Fix $c = (1/4)-(\beta/2)>0$.
Lemma~\ref{lem:TAP-stationary} implies that for $K_{\sAMP}=K_{\sAMP}(\beta,\T,\eps)$ sufficiently 
large, we have with probability $1-o_n(1)$ 
\begin{align}
    \|\nabla \hcuF_{\sTAP}(\hm^{\sAMP};\by,q_*)\|\leq \frac{c\sqrt{\eps t n}}{4}\, ,
    \label{eq:again-small-grad}\\
    \hm^{\sAMP}:=\AMP(\bG,\by(t);K_{\sAMP}),\;\; q_*:=q_*(\beta,t)\, .\nonumber
\end{align}
Therefore, if $\|\by(t)-\hby\|\leq (c\sqrt{\eps tn})/4$ then
\begin{align*}
    \|\nabla \hcuF_{\sTAP}(\hm^{\sAMP};\hby,q_*)\|\leq 
    \|\nabla \hcuF_{\sTAP}(\hm^{\sAMP};\by(t),q_*)\|+\|\by-\hby\|\leq \frac{c}{2}\sqrt{\eps t}\, 
\end{align*}
Moreover Lemma~\ref{lem:local-convex} implies that there exist $\eps_0$, $c>0$
such that for all $\eps\in (0,\eps_0)$, 
\[
    \nabla^2 \hcuF_{\sTAP}(\m;\hby,q_*)
    =
    \nabla^2 \hcuF_{\sTAP}(\m;\by(t),q_*)
    \succeq 
    c\Dm,\quad\quad \forall~\m\in B\left(\hm^{\sAMP},\sqrt{\eps tn}\right)\cap (-1,1)^n.
\]
Using $\eps t/4$ in place of $\eps$ in Lemma~\ref{lem:approx-stationary-pt-to-minimizer}, 
it follows that there exists a local minimum
\[
    \m_*(\bG,\hby;q_*)\in \Ball^n\left(\hm^{\sAMP},\frac{\sqrt{\eps tn}}{2}\right)\cap (-1,1)^n
\]
of $\hcuF_{\sTAP}(\,\cdot\, ,\hby;q_*)$ which is 
also the unique stationary point in $\Ball^n\left(\hm^{\sAMP},(1/2)\sqrt{\eps tn}\right)\cap (-1,1)^n$.

We next claim that, for any $K>K_{\sAMP}$, with probability $1-o_n(1)$, this local minimum 
is also the unique stationary point in 
$\Ball^n\left(\AMP(\bG,\by(t);k),(1/2)\sqrt{\eps tn}\right)\cap (-1,1)^n$.
Indeed for $K_{\sAMP}$ sufficiently large (writing for simplicity $\by=\by(t)$):
\begin{align*}
    \plim_{n\to\infty}\sup_{k_1,k_2\in [K_{\sAMP},K]} \|\AMP(\bG,\by;k_1)-\AMP(\bG,\by;k_2)\|^2 
    &=
    \sup_{k_1,k_2\in [\kalg,K]} \plim_{n\to\infty} \|\AMP_{\beta}(\bG,\by;k_1)-\AMP_{\beta}(\bG,\by;k_2)\|^2\\
    & \leq n\cdot \sup_{k_1,k_2\geq K_{\sAMP}} |q_{k_1}(\beta,t) - q_{k_2}(\beta,t)|.
\end{align*}
From Eq.~\eqref{eq:uniform-gamma-limit}, by eventually increasing $K_{\sAMP}$, we have 
\[
    \sup_{k_1,k_2\geq K_{\sAMP}}|q_{k_1}(\beta,t) - q_{k_2}(\beta,t)|\leq \frac{\eps t}{16}.
\]
For such $K_{\sAMP}$, with probability $1-o_n(1)$, all $k\in [K_{\sAMP},K]$ satisfy
\begin{align*}
    \|\m_*(\bG,\by;q_{K_{\sAMP}}) - \AMP(\bG,\by;k)\|
    & \leq 
    \|\m_*(\bG,\by;q_{\sAMP}) - \AMP(\bG,\by;K_{\sAMP})\|
    \\
    &
    \quad\quad
    +
    \|\AMP(\bG,\by;k) - \AMP(\bG,\by;K_{\sAMP})\|\\
    & \leq
    \frac{\sqrt {\eps t n}}{2}+\sqrt{\frac{\eps t n}{4}}\\
    & \le \frac{3}{4}\sqrt{\eps t n}.
\end{align*}
Let 
\[
    S(k,\rho) : = \Ball^n\left(\AMP_{\beta}(\bG,\by;k),\rho\right)\cap (-1,1)^n\, ,\;\;\;
    \rho_{n,t}:=\sqrt{\eps nt}
\]
Recall that $\m_*(\bG,\by;q_*)$ is the unique stationary point of 
$\hcuF_{\sTAP}(\, \cdot\, ;\by,q_*)$  in  $S(K_{\sAMP},\rho_{n,t})$.
By the above, it is also a stationary point in  $S(k,\rho_{n,t})$, for $k\in [K_{\sAMP},K]$.
Repeating the same argument as before, there is only one stationary point inside 
$S(k,\rho_{n,t})$, hence this must coincide with  $\m_*(\bG,\by;q_*)$.
\end{proof}

\begin{proof}[Proof of Lemma~\ref{lem:local-landscape}, Part~\ref{it:landscape-stationary-point-good}]

Because $K_{\sAMP}$ is large depending on $\delta_0$, Lemma~\ref{lem:TAP-stationary} implies that with 
probability $1-o_n(1)$,
\[
    \|\nabla \hcuF_{\sTAP}(\AMP(\bG,\by;K_{\sAMP}),\by;q_*)\|\leq \frac{c\delta_0\sqrt{ t n}}{4}.
\]
Using $\frac{\delta_0\sqrt{t}}{4} $ in place of $\eps$ in 
Lemma~\ref{lem:approx-stationary-pt-to-minimizer}, it follows that the local minimizer
$\m_*(\bG,\by;q_*)$
of $\hcuF_{\sTAP}(\,\cdot\,;\by,q_*)$ satisfies
\[
    \|\AMP(\bG,\by;K_{\sAMP})-\m_*(\bG,\by;q_*)\|\leq \frac{\delta_0\sqrt{tn}}{2}. 
\]
Since $K$ is sufficiently large depending on $\delta_0$, Lemma 
implies that with probability $1-o_n(1)$,
\[
    \|\m(\bG,\by)-\AMP(\bG,\by;K_{\sAMP})\|\leq \frac{\delta_0\sqrt{tn}}{2}.
\]
Combining, we obtain that with probability $1-o_n(1)$,
\begin{align*}
    \|\m(\bG,\by)-\m_*(\bG,\by;q_*)\| 
    & \leq \|\m(\bG,\by)-\AMP(\bG,\by;K_{\sAMP})\| + \|\AMP(\bG,\by;K_{\sAMP})-
    \m_*(\bG,\by;q_*)\|
    \\
    &\leq \delta_0\sqrt{tn}.
\end{align*}
\end{proof}

\begin{proof}[Proof of Lemma~\ref{lem:local-landscape}, Part~\ref{it:landscape-lipschitz}]
The result is immediate from \eqref{eq:local-lip-minimizer}.
\end{proof}

\begin{proof}[Proof of Lemma~\ref{lem:local-landscape}, Part~\ref{it:landscape-NGD}]
We apply Lemma~\ref{lem:NGD-convergence-general} with $\F(\,\cdot\,)=\hcuF_{\sTAP}(\,\cdot\, ;\hby,q_*)$ 
and $\m_*=\m_*(\bG,\hby;q_*))$ (with $q_* = q_*(\beta,t)$).
We need to check that assumptions  \eqref{eq:NGD-Tech}, \eqref{eq:good-init-NGD} of
Lemma~\ref{lem:NGD-convergence-general} hold for $\hm^0=\tanh(\bu^0)$ with $\bu^0$ satisfying 
Eq.~\eqref{ass:landscape-NGD}. 

To check assumption  \eqref{eq:NGD-Tech}, we take $K_{\sAMP}$ sufficiently large
and $\delta_0$ sufficiently small, obtaining
\begin{align*}
    \|\hm^0-\m_*(\bG,\hby;q_*)\|
    &\leq 
    \|\hm^0-\AMP(\bG,\by;K_{\sAMP})\|
    + \|\AMP(\bG,\by;K_{\sAMP}) -\m(\bG,\by) \|\\
    &\quad\quad
    + \|\m(\bG,\by) - \m_*(\bG,\by;q_*)\|
    + \|\m_*(\bG,\by;q_*) - \m_*(\bG,\hby;q_*)\|\\
    &\stackrel{(a)}{\leq} \frac{c\sqrt{\eps tn}}{96(\beta^2+1)} + \frac{1}{100}\sqrt{\eps tn} + 
    \delta_0\sqrt{tn} + \frac{\|\by-\hby\|}{c}\\
    &\leq
    \frac{\sqrt{\eps tn}}{3}
\end{align*}
 where  inequality $(a)$ holds
 with probability $1-o_n(1)$. In the last step we used $c\leq 1$.

To check Eq.~\eqref{eq:good-init-NGD}, 
we use \eqref{eq:grad-F-atanh} we find 
that with probability $1-o_n(1)$,
\begin{align*}
    \|\nabla\hcuF_{\sTAP}(\hm^0;\hby,q_*)\|
    &\leq 
    \|\nabla\hcuF_{\sTAP}(\AMP(\bG,\by;K_{\sAMP});\by,q_*)\| + \|\by-\hby\|\\
    &\quad\quad+(4\beta^2+4)\|\atanh(\hm^0)-\atanh(\AMP(\bG,\hby;K_{\sAMP}))\| \\
    &\leq 
    \|\nabla\hcuF_{\sTAP}(\AMP(\bG,\by;K_{\sAMP});\by,q_*)\| 
    + \frac{c\sqrt{\eps t n}}{24}
    + \frac{c\sqrt{\eps t n}}{4}.
\end{align*}
Combining with Eq.~\eqref{eq:again-small-grad}, we find that with probability $1-o_n(1)$,
\[
     \|\nabla\hcuF_{\sTAP}(\hm^0;\hby,q_*)\|\leq \frac{c\sqrt{\eps t n}}{6}.
\]
Finally, we apply 
 Lemma~\ref{lem:approx-stationary-pt-to-minimizer} with $\frac{\eps t}{9}$ in place of
 $\eps$, to get
\[
    \hcuF_{\sTAP}(\hm^0;\hby,q_*)\leq \hcuF_{\sTAP}(\m_*(\bG,\hby;q_*);\hby,q_*) + \frac{nc\eps t}{9}.
\]

Lemma~\ref{lem:NGD-convergence-general} now applies for $\eta_0$ sufficiently small. 
Moreover, with probability $1-o_n(1)$ the initialization $\bx^0$ satisfies
\begin{align*}
    \|\atanh(\hm^0)\| 
    &\leq  
    \|\atanh(\hm^0)-\atanh(\AMP(\bG,\by;K_{\sAMP}))\|+\|\atanh(\AMP(\bG,\by;K_{\sAMP}))\|
    \\
    &\leq
    \frac{c\sqrt{\eps tn}}{96(\beta^2+1)}+  \sqrt{3(\gamma_*(\beta,t)+t)}\sqrt{n}\\
    &\leq
    C(\beta,c,\T)\sqrt{tn}.
\end{align*}
Thus, \eqref{eq:NGD-good-approx} implies \eqref{eq:landscape-NGD-convergence} for a sufficiently 
large number $K_{\sNGD}$ of natural gradient iterations.
\end{proof}

\section{Proof of Lemma \ref{lemma:HessianHamiltonian}}
\label{app:HessianHamiltonian}

We will separately consider the simpler case $\xi(t) = c_2^2t^2$, and the case of a general $\xi$.
Before proceeding, we note that it is immediate to compute the   distribution of
the Hessian at a point $\m\in\Ball^n(\sqrt{n})$. Letting $Q=Q(\m):= \|\m\|^2/n$, we have
\begin{align}
-\nabla^2 H(\m) &\stackrel{d}{=} \sqrt{\frac{\xi^{(4)}(Q)}{n^3}} \cdot g_0\cdot\m\m^{\sT}
+\sqrt{ \frac{\xi^{(3)}(Q)}{n^2}}\Big(\m\g^{\sT}+
\g\m^{\sT}\Big)+ \sqrt{\frac{\xi^{(2)}(Q)}{n}}
\bW\, ,\label{eq:HessianAtAPoint}
\end{align}
where $(g_0,\g,\bW)\sim \normal(0,1)\otimes\normal(\bfzero,\id_n)\otimes\GOE(n)$.

We also introduce the matrix of interest 
\begin{align}
\bX(\m;\Treg) := \D(\m)^{-1/2} &\big(-\nabla^2 H_n(\m) + \frac{\Treg}{n}\m\m^{\sT}\big)\D(\m)^{-1/2}\, .
\end{align}

\subsection{The case $\xi(t) = c_2^2t^2$}
By Eq.~\eqref{eq:HessianAtAPoint}, we have
\begin{align}
-\nabla^2 H(\m) &= \sqrt{\frac{\xi''(1)}{n}}
\bW\, ,
\end{align}
Recall that $\|\bW\|_{\op}\le (2+\delta)\sqrt{n}$ with probability at least $1-2\exp(-Cn\delta^2)$
\cite{Guionnet}. Using $\|\m\|_2\le \sqrt{n}$, 
we have with the same probability 
\begin{align}
-K_{\delta}(\xi)\,\id_n
\preceq -\nabla^2 H_n(\m) + \frac{\Treg}{n}\m\m^{\sT} \preceq \big(K_{\delta}(\xi)+\Treg\big)\id_n
\;\;\;\;\;\;\;\forall\m\in (-1,1)^n\, ,
\end{align}
and the claim follows since $\|\D(\m)^{-1/2}\|_{\op}\le 1$.

\subsection{The case of a general $\xi$}

 Throughout this section, we use the notation
$\xi^{(\ell)}(q)$ for $\ell$-th derivative of $\xi$, and define
\begin{align}
\oxi^{(\ell)}(q) = \Big(q\frac{\rmd \phantom{q}}{\rmd q}\Big)^{\ell}\xi(q) =\sum_{p=2}^Pc_p^2p^\ell
q^p\, .
\end{align}
We further denote by denote by $\hcuF$ the modified TAP free energy of Eq.~\eqref{eq:TAP_reg2}, dropping the subscript
for simplicity.
Recall that $\Ball^n(r)$ denotes the Euclidean
ball of radius $r$ in $\R^n$ centered at the origin.

Before proving the general case of Lemma \ref{lemma:HessianHamiltonian}, we establish a crude
bound on the third derivative of the Hamiltonian  $H$.
\begin{lemma}\label{lemma:RoughBoundThirdDerivative}
For $H$ defined as in Eq.~\eqref{eq:def-hamiltonian}, there exists a universal constant 
$C$, and a constant $C_0=C_0(\xi)$ such that, with probability at least 
$1-2\exp(-C_0n)$, we have
\begin{align}
\sup_{\bx\in\Ball^n(\sqrt{n})}\|\nabla^3H(\bx)\|_{\op}\le C\sqrt{
\frac{\oxi^{(8)}(1)}{n}}\, .
\end{align}
\end{lemma}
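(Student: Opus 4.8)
\textbf{Plan for the proof of Lemma~\ref{lemma:RoughBoundThirdDerivative}.}
The third derivative tensor $\nabla^3 H(\bx)$ is, componentwise, a Gaussian object, and the plan is to bound its operator norm uniformly over the ball $\Ball^n(\sqrt n)$ via a chaining/$\eps$-net argument combined with a union bound over the degrees $p$. First I would write out $\nabla^3 H(\bx)$ explicitly from \eqref{eq:def-hamiltonian}: for the $p$-th term, $\nabla^3 \langle \Gp{p},\bx^{\otimes p}\rangle = p(p-1)(p-2)\,\Gp{p}[\,\cdot\,,\cdot\,,\cdot\,,\bx^{\otimes(p-3)}]$, contracted with the last $p-3$ coordinates of the tensor against $\bx$. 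So $\nabla^3 H(\bx) = \sum_p \frac{c_p}{n^{(p-1)/2}} p(p-1)(p-2)\, \Gp{p}(\bx^{\otimes(p-3)})$, where $\Gp{p}(\bx^{\otimes(p-3)})$ denotes the resulting random $3$-tensor with i.i.d.\ mean-zero Gaussian entries of variance $\|\bx\|_2^{2(p-3)} \le n^{p-3}$ when $\|\bx\|_2 \le \sqrt n$. The operator norm of a $3$-tensor $T$ here means $\sup_{\|\bu\|=\|\bv\|=\|\bw\|=1} T[\bu,\bv,\bw]$ (or the symmetric version), which controls the spectral norm of the Hessian's directional derivatives, as used in the Lipschitz-in-operator-norm claim of Lemma~\ref{lemma:LipGrad}.

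The heart of the argument is the standard bound on the injective norm of a random Gaussian $3$-tensor: for a $3$-tensor with i.i.d.\ $\normal(0,\sigma^2)$ entries in dimension $n$, one has $\|T\|_{\op} \le C\sigma\sqrt{n}$ with probability at least $1-2\exp(-cn)$, proven by taking $1/4$-nets $\cN$ of the unit sphere (of size $9^n$), using the Gaussian tail bound $\P(T[\bu,\bv,\bw] \ge t\sigma) \le \exp(-t^2/2)$ for fixed unit vectors, a union bound over $\cN^3$ (giving a factor $\exp(3n\log 9)$), and a standard net-to-sup comparison that loses only a constant factor. Applying this to $\Gp{p}(\bx^{\otimes(p-3)})$ with $\sigma^2 \le n^{p-3}$ gives, for a \emph{fixed} $\bx$, that the $p$-th contribution has operator norm at most $C\, \frac{c_p}{n^{(p-1)/2}}\, p^3\, n^{(p-3)/2}\sqrt{n} = C\, c_p p^3 / n = C\sqrt{c_p^2 p^6}/n \le C\sqrt{c_p^2 p^8}/n$ with exponentially high probability; summing over $p\in\{2,\dots,P\}$ and using Cauchy–Schwarz (or simply $\sum_p a_p \le \sqrt{P}\sqrt{\sum a_p^2}$, absorbing $P \le $ some power into the constant, or better, noting $p^6 \le p^8$ and the $c_p^2 p^8$ terms sum to $\oxi^{(8)}(1)$) yields the claimed bound $C\sqrt{\oxi^{(8)}(1)/n}$ for that fixed $\bx$.

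To upgrade from a fixed $\bx$ to the supremum over $\Ball^n(\sqrt n)$, I would take an $\eps$-net $\cM$ of $\Ball^n(\sqrt n)$ with $\eps = 1$ (say), of cardinality at most $3^n$, apply the fixed-point bound and a union bound over $\cM$ (the $\exp(-cn)$ probabilities beat the $3^n$ cardinality for $n$ large, absorbing into $C_0(\xi)$), and then control the fluctuation $\|\nabla^3 H(\bx) - \nabla^3 H(\bx')\|_{\op}$ for $\|\bx - \bx'\| \le 1$ by a bound on $\nabla^4 H$ on $\Ball^n(\sqrt n + 1)$; but $\nabla^4 H$ is handled by exactly the same net argument (its operator norm is $O(\sqrt{\oxi^{(8)}(1)/n}\cdot\sqrt n) = O(\sqrt{\oxi^{(8)}(1)})$, which is what makes the net discretization consistent — a unit step changes $\nabla^3 H$ by at most a constant times $\sqrt{\oxi^{(8)}(1)}$, actually one should be slightly more careful and either use a finer net of scale $\eps = n^{-1/2}$ or simply note the fourth-derivative bound gives Lipschitz constant $O(\sqrt{\oxi^{(8)}(1)})$ in operator norm which, over a ball of radius $\sqrt n$, would require $\eps \lesssim 1/\sqrt n$; this just changes $3^n$ to $(C\sqrt n)^n = \exp(n\log(C\sqrt n))$, which is still beaten by $\exp(-cn')$ for a slightly different exponent — so one should run the net at scale $n^{-1/2}$ from the start). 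The main obstacle, and the only place requiring care, is bookkeeping the powers of $n$ and $p$ correctly through the tensor contraction and making sure the net cardinality (which grows like $\exp(n\log n)$ if one insists on scale $n^{-1/2}$, or one can be cleverer and note the fourth-derivative Lipschitz bound is actually $o(\sqrt{n})$ so a constant-scale net already suffices up to constants) does not overwhelm the Gaussian concentration; since all of these are $\exp(\Theta(n))$ or $\exp(\Theta(n\log n))$ versus $\exp(-\Theta(n))$, one simply needs the exponents to line up, which forces the $C_0(\xi)$ in the probability bound and the universal constant $C$ in the norm bound. Everything else is routine Gaussian concentration and the standard $\eps$-net estimate for injective tensor norms.
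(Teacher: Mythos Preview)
Your approach is workable in spirit but more laborious than necessary, and as written it has a genuine gap in the net-over-$\bx$ step. The paper's proof is much shorter because it avoids netting over $\bx$ entirely: it bounds the full $p$-tensor injective norm $\|\bG^{(p)}\|_{\op}$ once (citing a standard estimate $\|\bG^{(p)}\|_{\op}\le C\sqrt{n\log p}$ with probability $1-e^{-cn}$), and then uses the deterministic inequality
\[
|\langle \obG^{(p)},\bx^{\otimes(p-3)}\otimes \bv^{\otimes 3}\rangle|
\le \|\bG^{(p)}\|_{\op}\,\|\bx\|^{p-3}\|\bv\|^3
\]
to get uniformity in $\bx\in\Ball^n(\sqrt n)$ for free. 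Summing over $p$ and applying Cauchy--Schwarz ($\sum_p c_p p^3\sqrt{\log p}\le (\sum_p c_p^2 p^8)^{1/2}(\sum_p p^{-2}\log p)^{1/2}$) finishes the proof in a few lines. The point you missed is that the $\bx$-dependence factors out of the tensor contraction once $\|\bG^{(p)}\|_{\op}$ is controlled, so there is no need to treat the $\bx$-variable and the three ``test'' directions separately.

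Concretely, your union bound over $\bx$ does not close as stated. A $1$-net of $\Ball^n(\sqrt n)$ has cardinality $(C\sqrt n)^n=\exp(\Theta(n\log n))$, not $3^n$; an $\exp(-cn)$ tail bound cannot beat this. You acknowledge the issue but the proposed fixes are either circular (bounding $\nabla^4 H$ uniformly requires the same argument one level down, and inducting to $\nabla^{P+1}H=0$ works but is ugly) or lossy (boosting the Gaussian threshold to $\Theta(\sqrt{n\log n})$ introduces an unwanted $\sqrt{\log n}$). There is also an arithmetic slip: $n^{-(p-1)/2}\cdot n^{(p-3)/2}\cdot\sqrt n = n^{-1/2}$, not $n^{-1}$, so the per-$p$ bound is $Cc_p p^3/\sqrt n$ (consistent with the target). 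All of this is sidestepped by bounding $\|\bG^{(p)}\|_{\op}$ directly.
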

\begin{proof}
Let $\obG^{(p)}:= (p!)^{-1}\sum_{\pi\in \fS_p} (\bG^{(p)})^{\pi}$ where $\fS_p$ is the group of
permutation over $p$ objects and  $(\bG^{(p)})^{\pi}$ is the tensor obtained by permuting
 the indices of $\bG^{(p)}$. We then have
 \begin{align}
\<\nabla^3H(\bx),\bv^{\otimes 3}\> =\sum_{p\ge 2}\frac{c_p}{n^{(p-1)/2}}p(p-1)(p-2)\<\obG^{(p)},\bx^{\otimes (p-3)}
\otimes \bv^{\otimes 3}\>\,.
 \end{align}
 Therefore
 \begin{align}
\sup_{\bx\in\Ball^n(\sqrt{n})}\|\nabla^3H(\bx)\|_{\op} \le\frac{1}{n}
\sum_{p\ge 2}c_pp^3\|\bG^{(p)}\|_{\op}\, .
\end{align}
Using, e.g. \cite[Proposition A.1]{montanari2023solving} and Gaussian concentration, we get 
$\|\bG^{(p)}\|_{\op}\le C\sqrt{n\log p}$ for all $p\le P$ with probability at least 
$1-2\exp(-C_0n)$. Therefore, with the same probability
 \begin{align}
\sup_{\bx\in\Ball^n(\sqrt{n})}\|\nabla^3H(\bx)\|_{\op}& \le\frac{C}{\sqrt{n}}
\sum_{p\ge 2}c_pp^3\sqrt{\log p}\\
&\le\frac{C}{\sqrt{n}}
\left(\sum_{p\ge 2}c_p^2p^8\right)^{1/2}\left(\sum_{p\ge 2}\frac{\log p}{p^2}\right)^{1/2}\\
&\le \frac{C''}{\sqrt{n}} \sqrt{\oxi^{(8)}(1)}\, .
\end{align}
\end{proof}

We first control $\bX(\m;\Treg)$  on any finite collection of points
at a single point $\m$.
\begin{lemma}\label{lemma:HessianHamiltonian-OnePoint}
For any $\Delta>0$, exist a constant $\Treg_1(\Delta,\xi)$ depending uniquely on $\xi$
and a universal constant $C_*$, such that
the following holds for any fixed $\m\in(-1,1)^n$.

For any $\Treg\ge \Treg_1(\Delta,\xi)$, with probability at least
$1-2\exp(-n\Delta^2)$, we have (letting $Q=\|\m\|^2/n$):
\begin{align}
 -3(1+\Delta)\sqrt{\xi''(Q)Q^{-a(\xi)}}\, \bI_n \preceq 
  -\nabla^2 H_n(\m) + \frac{\Treg}{n}\m\m^{\sT}  \preceq
 3((1+\Delta)\sqrt{\xi''(Q)Q^{-a(\xi)}}+\Treg)\, \bI_n\, ,
\end{align}
where $a(\xi) = 1$ if $\xi^{(3)}(0)>0$, $\xi^{(2)}(0) = 0$, and $a(\xi) =0$
otherwise.
\end{lemma}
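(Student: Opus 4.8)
\textbf{Proof plan for Lemma~\ref{lemma:HessianHamiltonian-OnePoint}.} The starting point is the explicit law of the Hessian at a single point given in Eq.~\eqref{eq:HessianAtAPoint}: for fixed $\m$ with $Q=\|\m\|^2/n$,
\[
-\nabla^2 H(\m) \ed \sqrt{\tfrac{\xi^{(4)}(Q)}{n^3}}\, g_0\,\m\m^{\sT}
+\sqrt{\tfrac{\xi^{(3)}(Q)}{n^2}}\big(\m\g^{\sT}+\g\m^{\sT}\big)
+\sqrt{\tfrac{\xi^{(2)}(Q)}{n}}\,\bW,
\]
with $(g_0,\g,\bW)\sim\normal(0,1)\otimes\normal(\bfzero,\id_n)\otimes\GOE(n)$. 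The plan is to bound the operator norm of each of the three pieces separately and then absorb the two rank-$\le 2$ pieces (the $g_0$ term and the $\g$ term) into the added regularization $\tfrac{\Treg}{n}\m\m^{\sT}$, which is itself a positive rank-one matrix of spectral norm $\Treg Q$. The rank-one matrix $\m\m^{\sT}$ has operator norm $nQ$; the cross term $\m\g^{\sT}+\g\m^{\sT}$ has rank $\le 2$ and operator norm at most $2\|\m\|\,\|\g\| \le 2\sqrt{nQ}\cdot(1+\Delta)\sqrt{n}$ with probability $\ge 1-2e^{-cn\Delta^2}$ by Gaussian concentration of $\|\g\|$; and the GOE term satisfies $\|\bW\|_{\op}\le(2+\Delta)\sqrt n$ with probability $\ge 1-2e^{-cn\Delta^2}$ by the standard deviation bound for the largest eigenvalue of a GOE matrix (cf.~\cite{Guionnet}).

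Collecting these, on an event of probability $\ge 1-2e^{-n\Delta^2}$ (after adjusting constants in the exponent), we get
\[
\big\|-\nabla^2 H(\m)\big\|_{\op} \le
\tfrac{1}{\sqrt n}|g_0|\,Q\sqrt{\xi^{(4)}(Q)}
+\tfrac{2}{\sqrt n}(1+\Delta)\sqrt{Q}\sqrt{\xi^{(3)}(Q)}
+(2+\Delta)\sqrt{\xi^{(2)}(Q)}.
\]
The leading term as $n\to\infty$ is the last one, $(2+\Delta)\sqrt{\xi^{(2)}(Q)}$; the first term is $O(1/\sqrt n)$ and disappears. The subtlety, and the reason for the exponent $a(\xi)$, is in how the middle (cross) term compares to $\sqrt{\xi^{(2)}(Q)}$ when $Q$ is small. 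If $\xi^{(2)}(0)>0$ then $\sqrt{\xi^{(2)}(Q)}$ is bounded below by a constant and the cross term $\tfrac{2}{\sqrt n}(1+\Delta)\sqrt{Q\,\xi^{(3)}(Q)}$ is $O(1/\sqrt n)$ uniformly, giving the bound with $a(\xi)=0$. If instead $\xi^{(2)}(0)=0$ but $\xi^{(3)}(0)>0$ (so $\xi$ has a cubic leading term), then $\xi^{(2)}(Q)\asymp Q$ near $0$, and we must divide through by $\sqrt{Q}$ — i.e.\ compare everything against $\sqrt{\xi^{(2)}(Q) Q^{-1}}$, which is bounded below; this is exactly the role of $Q^{-a(\xi)}$ with $a(\xi)=1$, and a short computation shows $Q\sqrt{\xi^{(4)}(Q)}/\sqrt{Q} = \sqrt{Q\,\xi^{(4)}(Q)}$ and $\sqrt{Q\,\xi^{(3)}(Q)}/\sqrt{Q}=\sqrt{\xi^{(3)}(Q)}$ are both bounded, so the lower-order terms are again controllable. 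In either case, choosing $\Treg_1(\Delta,\xi)$ large enough to dominate the finite-$n$ constants coming from the $g_0$ and $\g$ terms (which are bounded in terms of $\|\xi^{(3)}\|_\infty$, $\|\xi^{(4)}\|_\infty$ on $[0,1]$), we can write
\[
-\nabla^2 H(\m)+\tfrac{\Treg}{n}\m\m^{\sT} \preceq 3\big((1+\Delta)\sqrt{\xi^{(2)}(Q)Q^{-a(\xi)}}+\Treg\big)\bI_n,
\]
and symmetrically from below (here the added $\tfrac{\Treg}{n}\m\m^{\sT}$ only helps the upper bound and does not hurt the lower bound since it is positive semidefinite, so the lower bound is just $-\|{-\nabla^2 H(\m)}\|_{\op}\bI_n\succeq -3(1+\Delta)\sqrt{\xi^{(2)}(Q)Q^{-a(\xi)}}\bI_n$); the factor $3$ is slack to absorb the $(2+\Delta)$ and the lower-order terms into a clean constant.

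\textbf{Main obstacle.} The routine part is the norm bounds on the GOE and Gaussian-vector pieces. The part requiring care is the uniformity of the small-$Q$ analysis: one must verify that the ratios $\xi^{(3)}(Q)/\xi^{(2)}(Q)$ and $Q^2\xi^{(4)}(Q)/\xi^{(2)}(Q)$ (suitably with the $Q^{-a(\xi)}$ normalization) stay bounded on $(0,1]$ in the degenerate case $\xi^{(2)}(0)=0$, using that $\xi$ is a polynomial with nonnegative coefficients $c_p^2$ and leading order $p=3$ (so $\xi^{(2)}(Q)=6c_3^2 Q + O(Q^2)$). This is the source of the case split on $a(\xi)$ and is the only place where the structure of $\xi$ genuinely enters; once that is pinned down, the choice of $\Treg_1(\Delta,\xi)$ and the final spectral sandwich are mechanical. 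Note that the transition from a single point to all $\m\in(-1,1)^n$ (the actual statement of Lemma~\ref{lemma:HessianHamiltonian}) is handled separately via the Lipschitz bound on $\nabla^2 H$ from Lemma~\ref{lemma:RoughBoundThirdDerivative} and an $\eps$-net argument, and is not part of this lemma.
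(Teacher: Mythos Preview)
There is a genuine gap. Your bound on the cross term is off by a factor of $\sqrt{n}$: with $\|\m\|=\sqrt{nQ}$ and $\|\g\|\le(1+\Delta)\sqrt{n}$,
\[
\sqrt{\tfrac{\xi^{(3)}(Q)}{n^2}}\,\big\|\m\g^{\sT}+\g\m^{\sT}\big\|_{\op}
\le \tfrac{1}{n}\sqrt{\xi^{(3)}(Q)}\cdot 2(1+\Delta)n\sqrt{Q}
= 2(1+\Delta)\sqrt{Q\,\xi^{(3)}(Q)},
\]
which is $O(1)$, not $O(1/\sqrt{n})$. So this term does not vanish, and for $\xi(t)=t^p$ one has $\sqrt{Q\xi^{(3)}(Q)}=\sqrt{p-2}\,\sqrt{\xi^{(2)}(Q)}$: the naive operator-norm bound would give a constant of order $\sqrt{p}$ in front of $\sqrt{\xi^{(2)}(Q)}$, not the stated $3(1+\Delta)$.

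More fundamentally, the idea of ``absorbing the rank-$\le 2$ pieces into $\tfrac{\Treg}{n}\m\m^{\sT}$'' cannot work as written, because $\m\g^{\sT}+\g\m^{\sT}$ is not dominated by any multiple of $\m\m^{\sT}$ in the PSD order (it has a nontrivial component along $\g$). The paper handles this with the Young-type inequality
\[
\m\g^{\sT}+\g\m^{\sT}\succeq -s\,\m\m^{\sT}-\tfrac{1}{s}\,\g\g^{\sT},
\]
choosing $s=\sqrt{Q^{a}\xi^{(3)}(Q)/\xi^{(2)}(Q)}$ so that the $\g\g^{\sT}$ contribution matches $\sqrt{\xi^{(2)}(Q)Q^{-a}}$, while the (possibly large, $\xi$-dependent) $s\,\m\m^{\sT}$ piece is what gets absorbed by taking $\Treg\ge\Treg_1(\Delta,\xi)=\sup_{q}\{\Delta\sqrt{\xi^{(4)}(q)}+q^{a/2}\xi^{(3)}(q)/\sqrt{\xi^{(2)}(q)}\}$. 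This splitting is the missing idea; it is precisely why $\Treg_1$ is allowed to depend on $\xi$ and why the identity-part constant stays at $3(1+\Delta)$ regardless of how large $\xi^{(3)}/\sqrt{\xi^{(2)}}$ is. Your discussion of the role of $a(\xi)$ is on the right track but is about the boundedness of $\Treg_1$ (i.e.\ of $q^{a/2}\xi^{(3)}(q)/\sqrt{\xi^{(2)}(q)}$ near $q=0$), not about the identity-part bound itself.
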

\begin{proof}
Recall the decomposition \eqref{eq:HessianAtAPoint} for the Hessian at a point $\m$.
As a consequence, defining   $\bY(\m,\Treg):= -\nabla^2 H_n(\m) + \frac{\Treg}{n}\m\m^{\sT}$,
we have
\begin{align}
 \bY(\m;\Treg)=\frac{1}{n}\Big(\Treg+\sqrt{\xi^{(4)}(Q)} \cdot \frac{g_0}{n^{1/2}}\Big)\,\m\m^{\sT}
+\sqrt{ \frac{\xi^{(3)}(Q)}{n^2}}\Big(\m\g^{\sT}+
\g\m^{\sT}\Big)+ \sqrt{\frac{\xi^{(2)}(Q)}{n}} \bW\, .
\end{align}
Note that, for any $s>0$, 
\begin{align}
\m\g^{\sT}+
\g\m^{\sT} \succeq -s\m\m^{\sT}-\frac{1}{s}\g\g^{\sT}\, .\label{eq:SquareDecomp}
\end{align}
On the event 
\begin{align}
\cuG:=\Big\{|g_0|\le \Delta\sqrt{n}, \;\; \|\g\|\le \sqrt{(1+\Delta)n}, \;\; \|\bW\|\le 
2(1+\Delta)\sqrt{n}\Big\}\, ,
\end{align}
we thus have 
\begin{align*}
 \bY(\m;\Treg)&\succeq\frac{1}{n}\Big(\Treg+\sqrt{\xi^{(4)}(Q)} \cdot \frac{g_0}{n^{1/2}}
 -s\sqrt{\xi^{(3)}(Q)} \Big)\,\m\m^{\sT}-\frac{1}{s}
 \sqrt{ \frac{\xi^{(3)}(Q)}{n^2}} \g\g^{\sT}
+ \sqrt{\frac{\xi^{(2)}(Q)}{n}}
 \bW\\
&\succeq\frac{1}{n}\Big(
\Treg-\Delta\sqrt{\xi^{(4)}(Q)} 
 -s\sqrt{\xi^{(3)}(Q)} 
 \Big)\,\m\m^{\sT}-\frac{1}{s}
  (1+\Delta)\sqrt{\xi^{(3)}(Q)} \id_n
-2(1+\Delta)\sqrt{\xi^{(2)}(Q)}
 \id_n \, .
\end{align*}
We then choose $s =  \sqrt{Q^{a}\xi^{(3)}(Q)/\xi^{(2)}(Q)}$, where $a = a(\xi)$, to get
\begin{align*}
\bY(\m;\Treg)&\succeq 
\frac{1}{n}\Big(\Treg-\Delta\sqrt{\xi^{(4)}(Q)} 
 -Q^{a/2}\frac{\xi^{(3)}(Q)}{\sqrt{\xi^{(2)}(Q)}}\Big)\,\m\m^{\sT}-
 3(1+\Delta)\sqrt{Q^{-a}\xi^{(2)}(Q)}
 \, \id_n\, .
\end{align*}
We finally choose $\Treg_1(\Delta,\xi) =\sup_{q\in [0,1]}\big\{\Delta\sqrt{\xi^{(4)}(q)} 
 +q^{a/2}\xi^{(3)}(q)/\sqrt{\xi^{(2)}(q)}\big\}$, whence
\begin{align*}
\bY(\m;\Treg)&\succeq -3(1+\Delta)\sqrt{Q^{-a}\xi^{(2)}(Q)}
 \, \id_n\, .
\end{align*}

Proceeding in the same way for the upper bounds, with Eq.~\eqref{eq:SquareDecomp}
replaced by $\m\g^{\sT}+
\g\m^{\sT} \preceq -s\m\m^{\sT}-s^{-1}\g\g^{\sT}$, we get
\begin{align*}
\bY(\m;\Treg)&\preceq 
\frac{1}{n}\Big(\Treg+\Delta\sqrt{\xi^{(4)}(Q)} 
 +Q^{a/2}\frac{\xi^{(3)}(Q)}{\sqrt{\xi^{(2)}(Q)}}\Big)\,\m\m^{\sT}+3(1+\Delta)\sqrt{Q^{-a}\xi^{(2)}(Q)}
 \, \id_n\\
 & \preceq \Big(2\Treg+3(1+\Delta)\sqrt{Q^{-a}\xi^{(2)}(Q)}\Big)\, \id_n\, .
\end{align*}
Finally the claim follows by noting that $\P(\cuG)\ge 1-C\exp(-n\Delta^2/C)$
by Gaussian concentration arguments \cite{Guionnet}.
\end{proof}

We are now in position to prove Lemma \ref{lemma:HessianHamiltonian}.
We let $N^n(\eta)$, $\eta<1$ be an $\eta\sqrt{n}$-net on $(-1,1)^n$ of cardinality
$|N^n(\eta)|\le (10/\eta)^n$.
Note that $\sup_{q\in(0,1]}q^{-a}\xi^{(2)}(q) = \xi^{(2)}(1)$. 
 Using Lemma \ref{lemma:RoughBoundThirdDerivative}
and Lemma \ref{lemma:HessianHamiltonian-OnePoint} (with $\Treg\ge \Treg_1(\Delta;\xi)$ as in the latter) 
and keeping using the notation $\bY(\m,\Treg):= -\nabla^2 H_n(\m) + \frac{\Treg}{n}\m\m^{\sT}$,
\begin{align*}
\P\Big(\min_{\m\in (-1,1)^n}&\lambda_{\min}\big(\bY(\m;\Treg)\big)\le -3(1+\Delta)\sqrt{\xi^{(2)}(1)}-
C\eta\sqrt{\oxi^{(8)}(1)}
\Big)\\ 
&\le |N^n(\eta)|\max_{\m\in N^n(\eta)} \P\Big(\lambda_{\min}\big(\bY(\m;\Treg)\big)\le 
-3(1+\Delta)\sqrt{\xi^{(2)}(1)}\Big)+ C_0\, e^{-n/C_0}\\
& \le \Big(\frac{10}{\eta}\Big)^n \,2\, e^{-n\Delta^2/C_1}+  C_0\, e^{-n/C_0}\, .
\end{align*}
We take $\eta = 1/\sqrt{C\oxi^{(8)}(1)}$ and $\Delta=C'\sqrt{\log(\oxi^{(8)}(1))}=:\Delta_{\xi}$,
$\Treg\ge \Treg_1(\Delta_{\xi};\xi) =:\Treg_0(\xi)$,
and obtain
\begin{align*}
\P\Big(\min_{\m\in (-1,1)^n}\lambda_{\min}\big(\bY(\m;\Treg)\big)\le -C''
\sqrt{\xi''(1)\log(\oxi^{(8)}(1))}\Big)\le Ce^{-n/C}\, .
\end{align*}
Proceeding analogously for the lower bound, we obtain
\begin{align}
 -K_{\delta}(\xi)\, \bI_n \preceq \big(-\nabla^2 H_n(\m) + \frac{\Treg}{n}\m\m^{\sT}\big)\preceq
 (K_{\delta}(\xi)+\Treg)\, \bI_n\, ,
\end{align}
and the desired claim \eqref{eq:lemma:HessianHamiltonian} holds because
$\|\D(\m)^{-1/2}\|_{\op}\le 1$

\end{document}